\newtheorem{theorem}{Theorem}[section]
\theoremstyle{plain}
\newtheorem{conjecture}[theorem]{Conjecture}
\newtheorem{corollary}[theorem]{Corollary}
\newtheorem{lemma}[theorem]{Lemma}
\newtheorem{proposition}[theorem]{Proposition}
\newtheorem{question}[theorem]{Question}
\theoremstyle{remark}
\newtheorem{example}[theorem]{Example}
\numberwithin{equation}{section}
\newcommand{\Mfrak}{\mathfrak{M}}
\newcommand{\Nfrak}{\mathfrak{N}}
\newcommand{\pfrak}{\mathfrak{p}}
\newcommand{\Qfrak}{\mathfrak{Q}}
\newcommand{\Acal}{\mathscr{A}}
\newcommand{\Dcal}{\mathscr{D}}
\newcommand{\Kcal}{\mathscr{K}}
\newcommand{\Lcal}{\mathscr{L}}
\newcommand{\Mcal}{\mathscr{M}}
\newcommand{\Rcal}{\mathscr{R}}
\newcommand{\Scal}{\mathscr{S}}
\newcommand{\Gcal}{\mathscr{G}}
\newcommand{\Pro}{\mathbb{P}}
\newcommand{\Z}{\mathbb{Z}}
\newcommand{\C}{\mathbb{C}}
\newcommand{\F}{\mathbb{F}}
\newcommand{\Q}{\mathbb{Q}}
\newcommand{\R}{\mathbb{R}}
\newcommand{\N}{\mathbb{N}}
\newcommand{\A}{\mathbb{A}}
\newcommand{\ord}{\mathrm{ord}}
\newcommand{\rk}{\mathrm{rank}}
\newcommand{\supp}{\mathrm{supp}\,}
\newcommand{\Id}{\mathrm{Id}}
\newcommand{\im}{\mathrm{im}}
\newcommand{\dom}{\mathrm{dom}}
\newcommand{\Rat}{\mathrm{Rat}}
\newcommand{\Pole}{\mathrm{Pole}}
\newcommand{\Symm}{\mathrm{Symm}}
  \DeclareFontFamily{U}{wncy}{}
    \DeclareFontShape{U}{wncy}{m}{n}{<->wncyr10}{}
    \DeclareSymbolFont{mcy}{U}{wncy}{m}{n}
    \DeclareMathSymbol{\Sha}{\mathord}{mcy}{"58}
\begin{document}
\title{Notes on the DPRM property for listable structures}

\author{Hector Pasten}
\address{Departamento de Matem\'aticas,
Pontificia Universidad Cat\'olica de Chile.
Facultad de Matem\'aticas,
4860 Av.\ Vicu\~na Mackenna,
Macul, RM, Chile}
\email[H. Pasten]{hpasten@gmail.com}%

\thanks{Supported by ANID (ex CONICYT) FONDECYT Regular grant 1190442 from Chile. }

\date{\today}
\subjclass[2010]{Primary: 11U09; Secondary: 03D45, 11G35, 11J68} %
\keywords{Diophantine set, listable structure, global fields}%
%\dedicatory{}

\begin{abstract} A celebrated result by M. Davis, H. Putnam, J. Robinson, and Y. Matiyasevich shows that a set of integers is listable if and only if it is positive existentially definable in the language of arithmetic. We investigate analogues of this result over structures endowed with a listable presentation. When such an analogue holds, the structure is said to have the DPRM property.  We prove several results addressing foundational aspects around this problem, such as uniqueness of the listable presentation, transference of the DPRM property under interpretation, and its relation with positive existential bi-interpretability. A first application of our results is the rigorous proof of (strong versions of) several folklore facts regarding transference of the DPRM property. Another application of the theory we develop is that it will allow us to link various Diophantine conjectures to the question of whether the DPRM property holds for global fields. This last topic includes a study of the number of existential quantifiers needed to define a Diophantine set.
\end{abstract}

\maketitle

\setcounter{tocdepth}{1}
%\tableofcontents

%%%%%%%%%%%%%%%%%%%%%%%%%%%%%%%%%%%%%%
%%%%%%%%%%%%%%%%%%%%%%%%%%%%%%%%%%%%%%
%%%%%%%%%%%%%%%%%%%%%%%%%%%%%%%%%%%%%%
%%%%%%%%%%%%%%%%%%%%%%%%%%%%%%%%%%%%%%
%%%%%%%%%%%%%%%%%%%%%%%%%%%%%%%%%%%%%%
%%%%%%%%%%%%%%%%%%%%%%%%%%%%%%%%%%%%%%

\section{Introduction}

\subsection{The DPRM theorem} We write $\N$ for the semi-ring of non-negative integers. A set $X\subseteq \N^r$ is \emph{listable} (aka. computably or recursively enumerable) if its elements can be listed by a Turing machine. On the other hand, $X\subseteq \N^r$ is \emph{Diophantine} if there is a polynomial $P\in \Z[x_1,...,x_r,y_1,...,y_k]$ (depending on $X$) satisfying $X=\{{\bf a}\in \N^r : \exists {\bf b}\in \N^k, P({\bf a},{\bf b})=0\}$. An elementary fact is that $X\subseteq \N^r$ is Diophantine if and only if it is positive existentially definable over $\N$ in the language of arithmetic $\Lcal_a=\{0,1,+,\times, =\}$.  Also, it is a classical remark that every Diophantine set is listable. A celebrated result by Davis-Putnam-Robinson \cite{DPR} and Matiyasevich \cite{Matiyasevich} gives the converse.
\begin{theorem}[DPRM theorem] A set $X\subseteq \N^r$ is Diophantine if and only if it is listable. 
\end{theorem}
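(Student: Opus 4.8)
This is one of the cornerstones of the subject and genuinely hard in one direction; the plan is to prove the easy implication directly and to establish the converse by the classical four-stage route of Davis, Putnam, Robinson and Matiyasevich.

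For the easy direction, suppose $X=\{\mathbf{a}\in\N^r:\exists\mathbf{b}\in\N^k,\ P(\mathbf{a},\mathbf{b})=0\}$. A Turing machine can enumerate all tuples $(\mathbf{a},\mathbf{b})\in\N^{r+k}$ in a fixed effective order, compute the integer $P(\mathbf{a},\mathbf{b})$, and print $\mathbf{a}$ exactly when this value is $0$; this lists $X$, so every Diophantine set is listable. For the converse, the first step (Davis normal form) is to start from a Turing machine that lists $X$, code its finite halting computations by natural numbers via an arithmetical sequence-coding function, and observe that ``$n$ codes a halting computation producing output $\mathbf{a}$'' is expressed by one unbounded existential quantifier (for $n$), a bounded universal quantifier (checking that consecutive configurations obey the transition table), and bounded existential quantifiers, all over a polynomial matrix. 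This rewrites every listable $X$ as
\[
\mathbf{a}\in X \ \Longleftrightarrow\ \exists z\ \forall y\le z\ \exists x_1,\dots,x_m\le z\ \big(Q(\mathbf{a},z,y,x_1,\dots,x_m)=0\big)
\]
for some integer polynomial $Q$.

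The second step (Davis--Putnam--Robinson) is to eliminate the bounded universal quantifier at the price of allowing exponentiation: I would show that the class of \emph{exponential Diophantine} sets — those of the form $\exists\mathbf{b}\,(E(\mathbf{a},\mathbf{b})=0)$ with $E$ built from the variables, constants, $+$, $\times$ and the operation $u^v$ — is closed under bounded universal quantification. The two technical inputs are that $w=\binom{u}{v}$ is exponential Diophantine (reading $\binom{u}{v}$ off as a digit block of $(1+t)^u$ in base $t=2^u$) and that a G\"odel-style sequence coding is exponential Diophantine via the Chinese Remainder Theorem; with these one replaces ``$\forall y\le z\ \exists\mathbf{x}\le z\ Q=0$'' by the existence of a single number encoding the witness tuples for all $y\le z$, the validity of every instance being packed into one exponential Diophantine congruence. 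Combined with the first step, this shows every listable set is exponential Diophantine. The third step (Julia Robinson's reduction) is to note that if the graph $\{(u,v,w):w=u^v\}$ is Diophantine then so is every exponential Diophantine set (substitute a Diophantine definition for each exponential term), and that for this it suffices to produce one Diophantine relation of ``exponential growth'' — bounded by $u^u$ but, for each $u$, attaining values that outgrow every fixed power of $u$.

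The fourth and decisive step (Matiyasevich) is to build such a relation from the Pell equation $x^2-(a^2-1)y^2=1$, $a\ge2$: its solutions $(x_n(a),y_n(a))$ obey linear recurrences, the sequence $y_n(a)$ grows roughly like $(2a)^n$, and a short list of divisibility and congruence identities among the $x_n,y_n$ for various parameters — such as $y_n\mid y_m\iff n\mid m$, $\ y_n^2\mid y_m\iff ny_n\mid m$, and $y_n\equiv n\pmod{a-1}$ — pins down ``$y=y_n(a)$'' using only $+$, $\times$ and solvability of Pell-type equations, hence Diophantinely; since $y_n(a)$ is exponential in $n$ this yields a Julia Robinson relation and thence a Diophantine definition of $u^v$. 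The main obstacle is exactly this last step: characterizing ``$y$ is the $n$-th Pell solution'' by finitely many polynomial and congruence conditions forces one to control the solutions of several auxiliary Pell equations simultaneously and to exclude every spurious solution of the resulting system, and it was precisely this point — the Julia Robinson hypothesis, to which Davis--Putnam--Robinson had reduced everything — that stayed open for about a decade. The second step is also technical (all auxiliary size bounds must themselves be made exponential Diophantine), but it is routine once the digit-extraction and Chinese-remainder lemmas are in place.
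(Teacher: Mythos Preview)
The paper does not prove this theorem; it states the DPRM theorem as a classical result and cites the original sources \cite{DPR} and \cite{Matiyasevich}. So there is no ``paper's own proof'' to compare against. Your proposal is a faithful outline of the standard four-stage proof (Davis normal form, the Davis--Putnam--Robinson bounded-quantifier elimination, Julia Robinson's reduction, and Matiyasevich's Pell-sequence construction), and it correctly identifies the hard step and the key technical lemmas. For the purposes of this paper, a citation is all that is expected; if you wanted to include a proof, your sketch would be an appropriate roadmap, though turning it into a complete argument would of course require filling in the substantial details you allude to, particularly in the Matiyasevich step.
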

An immediate consequence of this result is a negative solution to Hilbert's tenth problem. However, the DPRM theorem goes far beyond undecidability; it gives a complete and satisfactory classification of the positive existentially definable sets of the structure $\N$ over $\Lcal_a$. 

In this article we investigate extensions of the DPRM theorem in the setting of \emph{listable structures} (cf. Section \ref{SecIntroListable}). When such a structure satisfies an analogue of the DPRM theorem, we will say that it has the \emph{DPRM property} (cf. Section \ref{SecIntroDPRM}). We prove a number of results addressing foundational aspects of the relation between listable sets and positive existentially definable sets in listable structures; see Section \ref{SecIntroDPRM} for a brief summary of our results on this setting. 

Finally, we discuss applications of our results in two different directions. First, we will provide rigorous proofs for several folklore facts concerning transference of the DPRM property, in a strong form (cf. Section \ref{SecIntroAppFolklore}). And secondly, we will apply our results to show that several number-theoretical conjectures are in fact closely related to the question of whether global fields have the DPRM property or not (cf. Section \ref{SecIntroAppGlobal}).

\subsection{Analogues of the DPRM theorem.} \label{SecIntroAnalogues} Given a first order language $\Lcal$, an $\Lcal$-structure $\Mfrak=(M;\Lcal)$  is \emph{recursive} if there is a surjective map onto the domain $\theta:\N\to M$ satisfying that the pull-back of the interpretation of each element of the signature $s\in \Lcal$ is decidable; cf. \cite{FroShe}. 

Starting with Denef's work \cite{DenefZT} on $\Z[T]$, analogues of the DPRM over other structures have been investigated in the context of \emph{recursive rings}. We refer the reader to Section \ref{SecDPRMknown} for a summary of the known results on analogues of the DPRM theorem.

However, the setting of recursive rings is too restrictive. For instance, the crucial result by Davis-Putnam-Robinson \cite{DPR} does not concern rings as the signature contains an exponential function. Furthermore, the condition that the structure under consideration be recursive is unnatural in the study of extensions of the DPRM theorem. If $\Mfrak$ is a recursive  and we expand its signature  by a positive existentially definable relation, the new structure can fail to be recursive. For instance, consider a listable undecidable set $H\subseteq \N$ (which is Diophantine by DPRM) and note that the structure $(\N;0,1,+,\times, H,=)$ is not recursive, although its positive existentially definable sets are  the same as those of $(\N;0,1,+,\times,=)$. This problem is avoided by considering \emph{listable structures}. 

%%%
%%%
%%%

\subsection{Listable structures} \label{SecIntroListable} Let $\Lcal$ be a first order language and let $\Mfrak=(M;\Lcal)$ be an $\Lcal$-structure. A \emph{listable presentation} of $\Mfrak$ is a surjective map $\rho:\N\to M$ such that for every $s\in \Lcal$ the pull-back under $\rho$ of the interpretation of $s$ is a listable set. In this way, $\rho$ affords a notion of listable set on $\Mfrak$ with respect to $\rho$:  A set $X\subseteq M^r$ is \emph{$\rho$-listable} if its pull-back under $\rho$ is listable.

If  $\Mfrak$ admits a listable presentation, we say that it is a \emph{listable structure}, aka. positive  or  recursively enumerable structure, see \cite{Selivanov1, Selivanov2}. In Section \ref{SecListable} we give a detailed study of listable presentations tailored to our intended applications on extensions of the DPRM property. Among other results, we prove a transference criterion (Proposition \ref{PropIntimplList}), a characterization of $\rho$-listable sets of $\Mfrak$ (Lemma \ref{LemmaCharList}), and the existence of universal $\rho$-listable sets (cf. Lemma \ref{LemmaUnivList} together with Corollary \ref{CoroBij}). We give rigorous proofs of all these results ---our arguments do not rely on a naive notion of ``real-world algorithm''. These statements seem to be well-known to the experts, but we were unable to find proofs in the literature.

In Section \ref{SecEquivListStr} we discuss a notion of equivalence of listable presentations. A central theme in our analysis is whether all the listable presentations of a given listable structure are equivalent to each other; i.e. the problem of \emph{unique listability} for a structure. Our Theorem \ref{ThmUL} provides a very general criterion for unique listability, which  implies unique listability of finitely generated structures (Proposition \ref{PropFG}). Uniquely listable structures are convenient since they have an intrinsic notion of listable sets. We show that this last feature essentially characterizes unique listability (Theorem \ref{ThmEquivCompare}). We remark that listable structures are not our main goal, and we cover them because they are necessary in our study of the DPRM property.

%%%
%%%
%%%

\subsection{The DPRM property} \label{SecIntroDPRM} All first order definitions are understood to be   \emph{without parameters}, unless explicitly stated otherwise. Let $\Mfrak$ be a listable $\Lcal$-structure. It is easy to show that if a set $X\subseteq M^r$ is positive existentially $\Lcal$-definable, then it is $\rho$-listable for every listable presentation $\rho$ of $\Mfrak$ (Corollary \ref{CoroTotList}).  A listable $\Lcal$-structure $\Mfrak$ has the \emph{DPRM property} when the converse holds, namely, if the positive existentially $\Lcal$-definable sets over $\Mfrak$ are the same as those which are $\rho$-listable for every listable presentation $\rho$ of $\Mfrak$ ---naturally, the definition simplifies when $\Mfrak$ is uniquely listable. Our main results on the DPRM property concern the following aspects.

%%%
%%%
\subsubsection{The number of existential quantifiers} Let $\Mfrak$ be an $\Lcal$-structure with domain $M$. Given a set $X\subseteq M^r$ we define $\rk^{p.e.}_\Mfrak(X)$ as the least number of existential quantifiers needed to give a positive existential $\Lcal$-definition of $X$ with parameters from $M$. For instance, if $X$ is a singleton then $\rk^{p.e.}_\Mfrak(X)=0$. The quantity $\rk^{p.r.}_\Mfrak(X)$ will be called \emph{positive existential rank} of $X$. Theorem \ref{ThmCatDPRM} shows that if a uniquely listable structure $\Mfrak=(M;\Lcal)$ satisfies the DPRM property, then (under some mild assumptions) for each $r\ge 1$ there is a uniform parametric positive existential definition of all the positive existentially definable sets of $\Mfrak$ contained in $M^r$. 

As a consequence, in this setting we have uniform boundedness of the positive existential rank: There is a bound $B_\Mfrak(r)$ depending only on $\Mfrak$ and $r$ such that $\rk^{p.e.}_\Mfrak(X)\le B_\Mfrak(r)$ for every positive existentially $\Lcal$-definable set $X\subseteq M^r$.

Such a uniform boundedness property is remarkable and it can fail even in some very natural structures. In fact, building on work by Koll\'ar \cite{Kollar}, in Theorem \ref{ThmCt} we prove unboundedness of the positive existential rank for the field of complex rational functions $\C(t)$. With a similar argument, the same result can be obtained over any uncountable large field of characteristic zero (e.g. $\R$ or $\Q_p$) but we only state the results over $\C$ for the sake of simplicity.

%%%
%%%
\subsubsection{Transference of the DPRM property} Denef's proof \cite{DenefZT} of the DPRM property for the ring $\Z[T]$ implicitly developed a transference principle that takes as input the DPRM property in the semi-ring $\N$ in order to deduce the DPRM property for a recursive integral domain. See for instance \cite{ZahidiOKt} where Denef's transference principle is made explicit, and see the detailed discussion in \cite{DemeyerThesis}. 

In Theorem \ref{ThmTransferDPRM} we prove a much more general transference result. Consider languages $\Lcal_1,\Lcal_2$ and uniquely listable structures $\Mfrak_1, \Mfrak_2$ over these languages respectively, such that $\Mfrak_2$ has the DPRM property and each structure admits a positive existential interpretation in the other. Theorem \ref{ThmTransferDPRM} shows that $\Mfrak_1$ inherits the DPRM property if and only if the graph of the self-interpretation of $\Mfrak_1$ obtained by composing the two given interpretations, is positive existentially definable over $\Mfrak_1$. In the special case when $\Mfrak_2$ is the semi-ring $\N$ and $\Mfrak_1$ is a recursive integral domain, this specializes to Denef's transference principle (see Corollary \ref{CoroDPRM}).

Furthemore, Theorem \ref{ThmTransferDPRM} shows that under the previous assumptions, $\Mfrak_2$ has the DPRM property if and only if $\Mfrak_1$ and $\Mfrak_2$ are \emph{positive existentially bi-interpretable} in the sense introduced in Section \ref{SecHomotopy}. This aspect is not covered by the classical version of Denef's transference principle.

Let us stress the fact that for two structures $\Mfrak_1$ and $\Mfrak_2$ to be positive existentially bi-interpretable it is not enough that each one admits a positive existential interpretation in the other. By definition, we moreover require that the self-interpretations of $\Mfrak_1$ and $\Mfrak_2$ obtained by composing the two interpretations be \emph{positive existentially homotopic} to the identity interpretation, see Section \ref{SecHomotopy} for details. If we drop the condition that all formulas involved in the discussion be positive existential, then we are back in the classical setting of homotopy of interpretations introduced in \cite{Ambos}. Several useful fundamental facts about bi-interpretability in this sense have been  recently proved in \cite{AKNS}.  The necessary material for the positive existential counterpart is developed in Section \ref{SecHomotopy}.

%%%
%%%
\subsubsection{Model-theoretical characterization} Our transference results allow us to obtain, under some mild assumptions, a characterization of the uniquely listable structures having the DPRM property.
\begin{theorem}[cf. Theorem \ref{ThmCharDPRM}] Let $\Lcal$ be a fist order language and let $\Mfrak$ be a uniquely listable $\Lcal$-structure with an infinite domain such that the relation $\ne$ is positive existentially $\Lcal$-definable on $\Mfrak$. Then the following are equivalent:
\begin{itemize}
\item[(i)] $\Mfrak$ has the DPRM property. 
\item[(ii)] $\Mfrak$ is positive existentially bi-interpretable with the semi-ring $\N$.
\end{itemize}
\end{theorem}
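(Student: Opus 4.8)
The plan is to deduce this characterization from the general transference machinery, with the semi-ring $\N$ playing the role of the reference structure $\Mfrak_2$. The direction (ii) $\Rightarrow$ (i) is the easy one: if $\Mfrak$ is positive existentially bi-interpretable with $\N$, then in particular each of $\Mfrak$ and $\N$ admits a positive existential interpretation in the other, and $\N$ has the DPRM property by the classical DPRM theorem; so Theorem \ref{ThmTransferDPRM} applies and gives that $\Mfrak$ has the DPRM property (the bi-interpretability hypothesis is exactly the extra condition that theorem needs, or equivalently it forces the composed self-interpretation of $\Mfrak$ to be positive existentially homotopic to the identity, hence to have positive existentially definable graph). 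So the substance is in (i) $\Rightarrow$ (ii).

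For (i) $\Rightarrow$ (ii) the first task is to produce, out of thin air, a positive existential interpretation of $\N$ in $\Mfrak$ and a positive existential interpretation of $\Mfrak$ in $\N$; only then can Theorem \ref{ThmTransferDPRM} be invoked to upgrade ``$\Mfrak$ has the DPRM property'' to ``$\Mfrak$ and $\N$ are positive existentially bi-interpretable''. The interpretation of $\Mfrak$ in $\N$ is essentially the listable presentation: fix a listable presentation $\rho\colon\N\to M$; since $\Mfrak$ is uniquely listable and has an infinite domain, Lemma \ref{LemmaCharList} and the analysis of listable presentations in Section \ref{SecListable} show that the graphs of the pulled-back operations and relations, and the pulled-back equality relation, are listable subsets of powers of $\N$, hence Diophantine by the DPRM theorem; this is precisely a positive existential interpretation of $\Mfrak$ in $\N$ (one may have to pass to a Diophantine set of ``codes'' rather than all of $\N$, but that is routine, and the hypothesis that $\ne$ is positive existentially definable on $\Mfrak$ is what lets the pulled-back diagonal be cut out positively). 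For the interpretation of $\N$ in $\Mfrak$: this is where the DPRM hypothesis on $\Mfrak$ is used. Because $\Mfrak$ is uniquely listable, there is an intrinsic notion of listable subset of $M^r$ (Theorem \ref{ThmEquivCompare}), and the DPRM property says these coincide with the positive existentially definable sets. Now the graph of $\rho$, viewed appropriately, and the image under $\rho$ of the arithmetic operations, are listable relations between $\N$ and $M$; transporting them along any fixed listable presentation makes them listable subsets of powers of $\N$, and one checks they are in fact $\rho$-listable as subsets of powers of $M$ (here the infinitude of $M$ and positive existential definability of $\ne$ are again used to get a Diophantine domain of representatives inside some $M^k$). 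By the DPRM property of $\Mfrak$ these sets are positive existentially $\Lcal$-definable, which is exactly a positive existential interpretation of $\N$ in $\Mfrak$.

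With both interpretations in hand, the composite self-interpretation of $\Mfrak$ (go to $\N$, come back) has as its graph exactly the comparison relation between two listable presentations of $\Mfrak$, which is $\rho$-listable, hence—by the DPRM property of $\Mfrak$—positive existentially $\Lcal$-definable on $\Mfrak$. Likewise the composite self-interpretation of $\N$ has Diophantine graph by the classical DPRM theorem. This is precisely the input Theorem \ref{ThmTransferDPRM} needs to conclude that $\Mfrak$ and $\N$ are positive existentially bi-interpretable, giving (ii). The main obstacle, and the step requiring genuine care rather than bookkeeping, is manufacturing the positive existential interpretation of $\N$ in $\Mfrak$ from the DPRM property alone: one must check that the relevant relations (graph of $\rho$, pushed-forward arithmetic) really are $\rho$-listable as subsets of Cartesian powers of $M$—not merely listable after pulling back to $\N$—so that the DPRM property can be applied to them; this is exactly the point where the hypotheses ``infinite domain'' and ``$\ne$ positive existentially definable'' are essential, since they allow a Diophantine (equivalently, positive existentially definable) choice of a domain of codes inside $M^k$ with the diagonal removed. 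Once that is set up, everything else is an application of Theorem \ref{ThmTransferDPRM} and the classical DPRM theorem.
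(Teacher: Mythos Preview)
Your overall strategy is correct, and the direction (ii)$\Rightarrow$(i) matches the paper. For (i)$\Rightarrow$(ii), however, you are working harder than necessary and leaving the crucial construction vague. The paper's proof avoids your ``Diophantine domain of representatives inside some $M^k$ with the diagonal removed'' entirely, via one clean observation: since $M$ is infinite and $\ne$ is p.e.\ definable, Corollary~\ref{CoroBij} furnishes a \emph{bijective} listable presentation $\rho\colon\N\to M$. Then $\theta:=\rho^{-1}\colon M\to\N$ has domain all of $M$, and for each $s\in\Lcal_a$ one has $\rho^*(\theta^*(s^\N))=s^\N$, so $\theta^*(s^\N)$ is $\rho$-listable and hence p.e.\ $\Lcal$-definable by the DPRM property of $\Mfrak$. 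That is already the desired p.e.\ interpretation $\theta\colon\Mfrak\dasharrow\N$, with no need to carve out a subset of $M^k$ or worry about saturating relations under a non-injective $\rho$.

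Once $\theta$ is in hand, you do not need to verify both composite graphs separately as you propose: the paper simply invokes Corollary~\ref{CoroDPRM}, which (given the existence of some p.e.\ interpretation $\Mfrak\dasharrow\N$) already packages the equivalence between ``$\Mfrak$ has DPRM'' and ``$\Mfrak$ is p.e.\ bi-interpretable with $\N$''. Your sketch of this final step via Theorem~\ref{ThmTransferDPRM} is not wrong, but it duplicates work already absorbed into that corollary.
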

Thus, uniquely listable structures having the DPRM property are essentially the same as those which are positive existentially bi-interpretable with $\N$. In particular, under the assumptions of the theorem, uniquely listable structures which are not bi-interpretable with $\N$ do not have the DPRM property.

In view of Proposition \ref{PropFG}, unique listability holds for finitely generated structures. In particular, this leaves the problem of classifying which finitely generated structures are positive existentially bi-interpretable with $\N$. Let us remark that the analogous problem for \emph{first order} bi-interpretations with $\N$ is fully solved in the recent work \cite{AKNS} in the case of commutative finitely generated rings.

%%%
%%%
%%%
%%%
%%%
%%%
%%%
%%%
%%%

\subsection{Application: Some folklore transference results} \label{SecIntroAppFolklore} 

In Section \ref{SecExamplesDPRM} we use our general results on the DPRM property to prove various folklore facts for which no complete proof seems to be available in the literature. These include, for instance, the fact that $\Z$ is Diophantine in $\Q$ if and only if $\Q$ has the DPRM property, as well as analogues for rings of integers and function fields. In fact, we give a more precise version of the aforementioned equivalence, which relates these conditions to positive existential bi-interpretability with $\N$.

%%%
%%%
%%%
%%%
%%%
%%%
%%%
%%%
%%%

\subsection{Application: Diophantine conjectures over global fields}\label{SecIntroAppGlobal}

%%%%%%%%
%%%%%%%%
%%%%%%%%
%%%%%%%%
\subsubsection{An algebraicity conjecture} A real number $x\in \R$ will be called \emph{left-Diophantine} if it is the supremum of a Diophantine subset of $\Q$. We will show that every real algebraic number is left-Diophantine, and that the class $\Dcal$ of left-Diophantine numbers is contained in the class $\Lambda$ of real numbers that can be ``described'' by a Turing machine by producing rational approximations from below: the \emph{left-listable numbers} (also known as left recursively enumerable numbers, or left computably enumerable numbers). These lower and upper bounds for $\Dcal$ are far apart, since the class $\Lambda$ contains rather exotic transcendental numbers. We conjecture that, in fact, the set $\Dcal$ is as small as possible: it is just the field of real algebraic numbers. In particular, we conjecture that $\Dcal$ is a field, which should be contrasted with the fact that $\Lambda$ is not a field \cite{Ambos}.

As we will explain,  the existence of at least \emph{one} left-listable real number which is not left-Diophantine would be enough to imply that $\Z$ is not Diophantine in $\Q$.

See Section \ref{SecLeft} for  details. In particular, we show that the conjecture that $\Dcal$ is exactly the field of real algebraic numbers follows from a version of Mazur's conjecture on the topology of rational points \cite{MazurConj1,MazurConj2,CTSSD} (the necessary material on Mazur's conjecture is recalled in Section \ref{SecMazur}).

%%%
%%%
%%%

\subsubsection{The number of existential quantifiers for global fields} Our result on unboundedness of the positive existential (p.e.) rank for the field $\C(t)$ (Theorem \ref{ThmCt}) leads us to conjecture that the same failure of uniform boundedness might hold for global fields such as $\Q$ and $k(t)$ with $k$ a finite field. As we will show in Proposition \ref{PropBddnonDioph}, the latter would imply that $\Z$ and $k[t]$ are not Diophantine in $\Q$ and $k(t)$ respectively. Furthermore, Proposition \ref{PropBddnonDioph} shows that unboundedness of the p.e. rank over a global field $K$ would imply that $K$ is not positive existentially bi-interpretable with $\N$.

After a first version of this work was released, Philip Dittmann informed us about his joint work with Nicolas Daans and Arno Fehm \cite{DDF} where they carry out an independent and very detailed study the minimal number of existential quantifiers required to define Diophantine sets over fields. In particular, they relate this notion to other measures of complexity of Diophantine sets, and they also point out the link with the question of whether $\Z$ is Diophantine in $\Q$. However,  our result on $\C(t)$ and the connection of the p.e. rank with the DPRM property are not covered in \cite{DDF}.

%%%%%%%%
%%%%%%%%
%%%%%%%%
%%%%%%%%
\subsubsection{A Diophantine approximation conjecture}
In Section \ref{SecKey} we introduce a conjecture in Diophantine approximation (Conjecture \ref{ConjKey}) which would imply that the analogue of the DPRM theorem fails over every global field. 

In simple terms, the main idea of the conjecture is the following: If $K$ is a global field, $v$ is a place of $K$, and $X$ is a variety over $K$ whose $K$-rational points are Zariski dense, then we expect that there is an effective divisor $D$ on $X$ defined over $K$  such that some sequence of $K$-rational points on $X$ $v$-adically approaches $D$. The precise statement of the conjecture is more technical because  it allows one to choose $D$ in a family of divisors, and it allows one to discard families of $K$-rational points coming from lower dimensional varieties, as such points might have anomalous Diophantine approximation properties. We provide some evidence for this conjecture in Section \ref{SecKey}; especially, we show that in the number field case it would follow from general conjectures of Mazur on the topology of rational points.  In Section \ref{SecnD} we show that our Diophantine approximation conjecture would imply that various  natural subsets of global fields are not Diophantine, such as $\Z$  in $\Q$, as well as $\F_p[t]$ and $\{t^n: n\in \N\}$ in $\F_p(t)$ ---this last case requires results on functional transcendence.

%%%%%%%%
%%%%%%%%
%%%%%%%%
%%%%%%%%

%%%%%%%%%%%%%%%%%%%%%%%%%%%%%%%%%%%%%%
%%%%%%%%%%%%%%%%%%%%%%%%%%%%%%%%%%%%%%
%%%%%%%%%%%%%%%%%%%%%%%%%%%%%%%%%%%%%%
%%%%%%%%%%%%%%%%%%%%%%%%%%%%%%%%%%%%%%
%%%%%%%%%%%%%%%%%%%%%%%%%%%%%%%%%%%%%%
%%%%%%%%%%%%%%%%%%%%%%%%%%%%%%%%%%%%%%

\section{Notation and basic facts}\label{SecPrelim}

%%%%%%%%
%%%%%%%%
%%%%%%%%
%%%%%%%%
\subsection{Functions and sets} Given sets $X,Y$ and a function $f:X\to Y$, we define 
$$
\Gamma(f)=\{(y,x)\in Y\times X : y=f(x)\}\subseteq Y\times X
$$

For a positive integer $r$ we let $f^{(r)}:X^r\to Y^r$ be the map $(x_1,...,x_r)\mapsto (f(x_1),...,f(x_r))$. Given $S\subseteq Y^r$ we let $f^*(S)=(f^{(r)})^{-1}(S)\subseteq X^r$. Although the notation $f^*$ does not refer to $r$, the value of $r$ will always be clear from the context. We will use bold fonts to indicate a tuple whenever the number of coordinates is clear from the context, for instance, ${\bf a}=(a_1,...,a_r)$.

%%%%%%%%
%%%%%%%%
%%%%%%%%
%%%%%%%%
\subsection{Recursive functions} 

A partial function over $\N$ is a function $f:A\to \N$ where $A\subseteq \N^r$ for some $r$ called arity. The set of \emph{recursive functions} $\Rcal$ is the smallest class of partial functions over $\N$ satisfying the next two conditions:
\begin{itemize}
\item It contains the successor function $S(x)=x+1$, all coordinate projections, and the constant function $0$ of each arity $r\ge 1$. 
\item It is closed under composition, recursion, and the minimalization operator $\mu$.
\end{itemize}
For details, see (for instance) \cite{CoriLascar} or any other textbook on the subject.

It is a classical result that $\Rcal$ is precisely the class of partial functions that can be computed by Turing machines ---the domain corresponds to those inputs where the corresponding machine halts.

More generally, we say that a function $f:A\to \N^k$ for some $A\subseteq \N^r$ is \emph{recursive} if $f=(f_1,...,f_k)$ where each $f_j:A\to \N$ is recursive in the previous sense. For $k=1$ the definitions agree, of course.

A recursive function of arity $r$ is \emph{total} if its domain is $\N^r$.

%%%%%%%%
%%%%%%%%
%%%%%%%%
%%%%%%%%
\subsection{Decidable and listable sets}

Let $X\subseteq \N^r$ be a set. We say that $X$ is \emph{decidable} if its characteristic function $\chi_X:\N^r\to \{0,1\}\subseteq \N$ is total recursive. We say that $X$ is \emph{listable} if it is the domain of a recursive function. We have the following standard characterizations:
\begin{itemize}
\item $X$ is decidable if and only if $X$ and its complement $X^c$ are both listable.
\item $X$ is listable if and only if  it is either empty or the image of a total recursive function $f:\N\to \N^r$. Furthermore, if $X$ is infinite, then one can ask $f$ to be injective.
\end{itemize}
The next two lemmas are straightforward.
\begin{lemma}\label{LemmaListImage} Let $f:\N^a\to \N^b$ be a total recursive function and let $B\subseteq \im(f)\subseteq \N^b$. We have that $B$ is listable if and only if $f^{-1}(B)$ is listable. 
\end{lemma}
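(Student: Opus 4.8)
The plan is to derive both implications directly from the definition of listability and the two standard characterizations recalled just above, together with the closure of the class $\Rcal$ of recursive functions under composition.

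For the implication ``$B$ listable $\Rightarrow f^{-1}(B)$ listable'', first dispose of the trivial case $B=\emptyset$, in which $f^{-1}(B)=\emptyset$ is listable. Otherwise, write $B=\dom(g)$ for some recursive function $g$ of arity $b$. Since $f(\xx)\in B$ precisely when $g$ is defined at $f(\xx)$, we obtain $f^{-1}(B)=\dom(g\circ f)$, where $g\circ f$ denotes the composite $\xx\mapsto g(f(\xx))$. As $f$ is total recursive and $g$ is recursive, $g\circ f$ is recursive, so $f^{-1}(B)$, being its domain, is listable.

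For the converse, the key input is the hypothesis $B\subseteq\im(f)$: in general one only has $f(f^{-1}(B))=B\cap\im(f)$, but under this hypothesis $f(f^{-1}(B))=B$. Now suppose $f^{-1}(B)$ is listable. If $f^{-1}(B)=\emptyset$, then $B=f(f^{-1}(B))=\emptyset$ is listable. Otherwise, by the standard characterization $f^{-1}(B)=\im(h)$ for some total recursive $h\colon\N\to\N^a$; then $B=f(\im(h))=\im(f\circ h)$, and $f\circ h\colon\N\to\N^b$ is total recursive, so $B$ is listable.

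I do not expect any genuine obstacle: the only points requiring care are the two empty-set edge cases forced by the ``empty or image of a total recursive function'' characterization, and the set-theoretic identity $f(f^{-1}(B))=B$, which is exactly where the hypothesis $B\subseteq\im(f)$ is used — and without which the reverse implication fails (for instance, take $f$ constant with value $0$ and $B=\{0\}\cup C$ for a non-listable $C\subseteq\N\setminus\{0\}$, so that $f^{-1}(B)=\N$ is listable while $B$ is not).
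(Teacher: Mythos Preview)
Your proof is correct. The paper omits the proof entirely, declaring this lemma ``straightforward'', so your argument---using the definition of listability via domains of recursive functions in one direction and the ``image of a total recursive function'' characterization in the other, together with the identity $f(f^{-1}(B))=B$ under the hypothesis $B\subseteq\im(f)$---is exactly the kind of direct verification the paper has in mind.
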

\begin{lemma}[Basic operations with listable sets]\label{LemmaBooleanN} The class of listable sets is closed under finite unions, finite intersections, permutation of coordinates, coordinate projections, and image and preimage under recursive functions.
\end{lemma}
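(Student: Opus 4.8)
The plan is to reduce every assertion to the two standard characterizations of listable sets recalled just above: namely, that a nonempty listable set is exactly the image of a total recursive function $\N\to\N^r$, and that (equivalently) $X\subseteq\N^r$ is listable iff it is the domain of a recursive function. I would treat the empty set separately in each clause, since it is vacuously listable and is preserved by all the operations in question, and then assume all sets in sight are nonempty.

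For closure under \emph{finite unions}, given listable $X_1=\im(f_1)$ and $X_2=\im(f_2)$ with $f_i:\N\to\N^r$ total recursive, define $g:\N\to\N^r$ by $g(2n)=f_1(n)$ and $g(2n+1)=f_2(n)$; this $g$ is total recursive (a definition by cases on the parity of the argument, using the recursive predecessor and parity functions, composed with $f_1,f_2$) and has image $X_1\cup X_2$. Induction handles arbitrary finite unions. For closure under \emph{permutation of coordinates} and \emph{coordinate projections}, if $X=\im(f)$ with $f:\N\to\N^r$ total recursive and $\pi:\N^r\to\N^s$ is either a coordinate permutation or a projection onto a subset of coordinates, then $\pi$ is total recursive (it is built from projections), so $\pi\circ f$ is total recursive with image $\pi(X)$; hence $\pi(X)$ is listable. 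For closure under \emph{image under a recursive function}: if $X=\im(f)$, $f:\N\to\N^r$ total recursive, and $h:A\to\N^s$ is recursive with $A\subseteq\N^r$, then $h(X)=\im(h\circ f|_{f^{-1}(A)})$; here $f^{-1}(A)=\{n:f(n)\in A\}$ is the domain of the recursive function $h\circ f$, which is listable, and one composes with the enumeration of $f^{-1}(A)$ (guaranteed by the second characterization) to realize $h(X)$ as the image of a single total recursive function when $f^{-1}(A)\ne\emptyset$, and as $\emptyset$ otherwise.

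For closure under \emph{preimage under a recursive function}: if $h:A\to\N^s$ with $A\subseteq\N^r$ is recursive and $B\subseteq\N^s$ is listable, say $B=\dom(\phi)$ for a recursive $\phi$, then $h^{-1}(B)=\{\xx\in A:\phi(h(\xx))\text{ halts}\}=\dom(\phi\circ h)$, which is listable since $\phi\circ h$ is recursive (closure of $\Rcal$ under composition). Finally, for closure under \emph{finite intersections}, I would use the decomposition: $X_1\cap X_2$ is the preimage of $X_1\times X_2\subseteq\N^{2r}$ under the total recursive diagonal map $\delta:\N^r\to\N^{2r}$, $\xx\mapsto(\xx,\xx)$; since $X_1\times X_2=\dom(\psi)$ where $\psi(\xx,\yy)$ halts iff both $\phi_1(\xx)$ and $\phi_2(\yy)$ halt (combine the two machines sequentially — a recursive function), $X_1\times X_2$ is listable, and then apply the preimage clause already proved. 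Induction extends this to arbitrary finite intersections.

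The only point requiring any care — and the closest thing to an obstacle — is the image-under-recursive-function clause, because here the function $h$ is merely partial and one must pass to $f^{-1}(\dom h)$, which can be empty, so the reduction to "image of a single total recursive function" is not completely automatic; the clean way around this is to phrase everything through the domain characterization (a set is listable iff it is $\dom(\phi)$ for some recursive $\phi$) and reserve the image characterization only where the relevant domain is visibly nonempty, or else simply note that $\emptyset$ is listable and dispatch that case by hand. Everything else is a routine unwinding of the closure properties of $\Rcal$ under composition and definition by cases, which is exactly why the lemma is stated as "straightforward."
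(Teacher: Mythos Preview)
Your proof is correct. The paper does not actually supply a proof of this lemma: it simply declares it ``straightforward'' and moves on. Your argument fills in the details in the standard way, using the two characterizations of listable sets stated just above the lemma in the paper, and handles the one genuinely non-automatic case (image under a \emph{partial} recursive function) with appropriate care. There is nothing to compare against.
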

We also recall the following important result together with two remarkable consequences.
\begin{theorem}[Universal recursive function; the enumeration theorem, cf. \cite{CoriLascar}] Let $r\ge 1$. There is a partial recursive function $\phi^{univ}_r:U_r\to \N$ with domain $U_r\subseteq \N^{r+1}$ such that for every partial recursive function $f$ of arity $r$, there is an integer $i(f)\in \N$ such that $\phi^{univ}_r(i(f),x_1,...,x_r)=f(x_1,...,x_r)$ as partial functions.
\end{theorem}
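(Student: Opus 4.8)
The plan is to construct $\phi^{univ}_r$ by arithmetizing the operation of Turing machines and then invoking the equivalence, recalled above, between $\Rcal$ and the class of Turing-computable partial functions. First I would fix a recursive coding of Turing machines: each natural number $e$ encodes, via a standard sequence-coding, a finite transition table, so that we have a machine $M_e$, and conversely every Turing machine occurs as $M_e$ for some $e$. Alongside this I would fix a primitive recursive coding of finite sequences over $\N$ (say Gödel's $\beta$-function or a prime-power coding) together with the attendant primitive recursive operations: length, projections, update at a given position, and concatenation. This lets me code an instantaneous configuration of a machine (tape contents, head position, internal state) as a single natural number, and to code the initial configuration of $M_e$ on an input tuple $(x_1,\dots,x_r)$ in a primitive recursive way.

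Next I would check that the one-step transition map $\mathrm{Next}(e,c)$, which sends the code $c$ of a configuration of $M_e$ to the code of its successor configuration, is primitive recursive: it is a finite case distinction driven by the currently scanned symbol and the current state, looked up in the table coded by $e$, followed by a bounded modification of the configuration code, all assembled from the primitive recursive sequence operations. From this, by recursion on the step count $t$, the function $\mathrm{Conf}(e,x_1,\dots,x_r,t)$ returning the code of the configuration of $M_e$ on input $(x_1,\dots,x_r)$ after $t$ steps is primitive recursive, and likewise the halting predicate $\mathrm{Halt}(e,x_1,\dots,x_r,t)$ ("$M_e$ on that input has reached a halting state within $t$ steps") is primitive recursive. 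Applying the minimalization operator $\mu$ produces the partial recursive function $T(e,x_1,\dots,x_r)=\mu t\,[\mathrm{Halt}(e,x_1,\dots,x_r,t)=1]$, whose domain is exactly the set of tuples on which $M_e$ halts. Composing with the primitive recursive function $\mathrm{Out}$ that reads the output off a halting configuration yields a partial recursive function
$$
\phi^{univ}_r(e,x_1,\dots,x_r)=\mathrm{Out}\bigl(\mathrm{Conf}(e,x_1,\dots,x_r,T(e,x_1,\dots,x_r))\bigr),
$$
whose domain $U_r\subseteq\N^{r+1}$ is the halting set just described.

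Finally I would verify universality. Given a partial recursive $f$ of arity $r$, the cited equivalence provides a Turing machine computing $f$; let $i(f)$ be one of its codes. By construction $\mathrm{Conf}(i(f),x_1,\dots,x_r,t)$ reproduces the run of that machine, so $\phi^{univ}_r(i(f),x_1,\dots,x_r)$ is defined precisely when the machine halts on $(x_1,\dots,x_r)$, i.e. precisely when $f(x_1,\dots,x_r)$ is defined, and in that case it equals $f(x_1,\dots,x_r)$. Hence $\phi^{univ}_r(i(f),x_1,\dots,x_r)=f(x_1,\dots,x_r)$ as partial functions, as required.

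I expect the main obstacle to be the bookkeeping in the arithmetization step: choosing a configuration encoding for which $\mathrm{Next}$ is manifestly primitive recursive and checking that all the sequence-manipulation primitives are primitive recursive. This is conceptually routine but must be carried out with some care; everything afterwards is a short application of recursion and minimalization. One could instead bypass Turing machines and directly Gödel-number the inductive definition of $\Rcal$, simulating a recursive-function "program" by primitive recursion on the syntax tree coded by $e$, but the same arithmetization burden reappears there in the form of manipulating codes of derivation trees, so the Turing-machine route is the cleaner one to write out.
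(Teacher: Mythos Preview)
Your proposal is correct and follows the standard textbook route via arithmetization of Turing machines; the paper itself does not give a proof of this statement but simply cites it as a classical result from \cite{CoriLascar}, so there is nothing to compare against beyond noting that your sketch is exactly the kind of argument one finds in such references.
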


\begin{corollary}[A universal listable set]\label{CoroUr} Let $r\ge 1$. The set $U_r=\dom(\phi^{univ}_r)\subseteq \N^{r+1}$ is listable and it has the following property: For every listable set $X\subseteq \N^r$ there is $n_X\in \N$ such that 
$X= \{{\bf a}\in \N^r : (n_X,{\bf a})\in U_r\}$.
\end{corollary}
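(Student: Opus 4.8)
The plan is to deduce both assertions directly from the enumeration theorem quoted above, using only the definition of a listable set as the domain of a (partial) recursive function.

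First, $U_r$ is listable essentially by construction: the enumeration theorem furnishes a partial recursive function $\phi^{univ}_r$ whose domain is exactly $U_r\subseteq \N^{r+1}$, and by definition a subset of $\N^{r+1}$ is listable precisely when it is the domain of a recursive function. So the first claim requires no further work.

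For the universal property, fix a listable set $X\subseteq \N^r$. By definition there is a partial recursive function $f$ of arity $r$ with $\dom(f)=X$. Applying the enumeration theorem to $f$, we obtain an index $i(f)\in\N$ such that $\phi^{univ}_r(i(f),x_1,\dots,x_r)=f(x_1,\dots,x_r)$ as partial functions; put $n_X:=i(f)$. Equality of partial functions entails equality of domains, so the partial function $\mathbf{a}\mapsto \phi^{univ}_r(n_X,\mathbf{a})$ has domain $X$. On the other hand, since $U_r=\dom(\phi^{univ}_r)$, the value $\phi^{univ}_r(n_X,\mathbf{a})$ is defined if and only if $(n_X,\mathbf{a})\in U_r$; hence the domain of $\mathbf{a}\mapsto \phi^{univ}_r(n_X,\mathbf{a})$ equals $\{\mathbf{a}\in\N^r : (n_X,\mathbf{a})\in U_r\}$. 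Comparing the two descriptions of this domain yields $X=\{\mathbf{a}\in\N^r : (n_X,\mathbf{a})\in U_r\}$, as desired.

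I do not anticipate any real obstacle: the only subtlety is the bookkeeping around ``equality as partial functions'' (same domain and same values on it), which is exactly what lets one pass from the slice of $\phi^{univ}_r$ at the fixed first coordinate $n_X$ to the prescribed set $X$. The case $X=\emptyset$ needs no separate treatment, since $\emptyset$ is the domain of the nowhere-defined recursive function of arity $r$ and the argument above applies verbatim (and $\{\mathbf{a} : (n_X,\mathbf{a})\in U_r\}$ may then simply be empty).
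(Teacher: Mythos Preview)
Your proof is correct and matches the paper's approach: the paper states this corollary immediately after the enumeration theorem without giving a separate proof, treating it as an immediate consequence, which is exactly what you have spelled out.
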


\begin{corollary}[An undecidable listable set: The halting problem] Define the partial recursive function $h(x)=\phi^{univ}_1(x,x)$. The set $H=\dom(h)\subseteq \N$ is listable but it is not decidable.
\end{corollary}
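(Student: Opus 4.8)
The plan is to treat the two assertions separately. Listability of $H$ is just a matter of unwinding definitions, while undecidability is the classical diagonal argument for the halting problem, packaged through the enumeration theorem.

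For listability, observe that the diagonal map $\delta\colon\N\to\N^2$, $x\mapsto(x,x)$, is total recursive (it pairs two coordinate projections), so $h=\phi^{univ}_1\circ\delta$ is partial recursive of arity $1$, being a composition of recursive functions. Since a subset of $\N$ is by definition listable exactly when it is the domain of a recursive function, it follows that $H=\dom(h)$ is listable.

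For undecidability I would argue by contradiction: suppose $H$ is decidable. Then $H^c$ is decidable as well (its characteristic function is $1-\chi_H$), hence listable, so $H^c=\dom(g)$ for some recursive function $g$ of arity $1$. By the enumeration theorem there is an index $i(g)\in\N$ with $\phi^{univ}_1(i(g),x)=g(x)$ as partial functions. Evaluating on the diagonal yields the equivalences
\[
 i(g)\in H \iff h(i(g))=\phi^{univ}_1(i(g),i(g))=g(i(g))\ \text{is defined}\ \iff i(g)\in\dom(g)=H^c ,
\]
which is absurd. Hence $H$ is not decidable.

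I do not expect any genuine obstacle; the argument is entirely classical. The only step that deserves a line of care is the passage from decidability of $H$ to a recursive function with domain exactly $H^c$. If one prefers not to invoke closure of the decidable sets under complement, one can instead produce such a $g$ directly by minimalization, for instance $g(x)=\mu y\,(\chi_H(x)=0)$, whose domain is $\{x:\chi_H(x)=0\}=H^c$, and then run the same diagonal evaluation at $i(g)$.
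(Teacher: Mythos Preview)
Your argument is correct and is the classical diagonalization proof of undecidability of the halting problem; the paper itself states this corollary without proof, treating it as a standard consequence of the enumeration theorem, so there is nothing to compare against.
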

%%%%%%%%
%%%%%%%%
%%%%%%%%
%%%%%%%%
\subsection{Structures} Let $\Lcal$ be a first order language consisting of symbols of constants, relations, and functions. For an $\Lcal$-structure $\Mfrak$, its domain is denoted by $M=|\Mfrak|$ and for each $s\in \Lcal$ we let $s^\Mfrak$ be the interpretation of $s$. Thus: 
\begin{itemize}
\item If $s$ is a symbol of a constant, then $s^\Mfrak\in M$.
\item If $s$ is a symbol of an $n$-ary relation, then $s^\Mfrak\subseteq M^n$. 
\item If $s$ is a symbol of an $n$-ary function, then $s^\Mfrak\subseteq M^{n+1}$ (the graph).
\end{itemize}

We make the important assumption that \emph{we only consider languages containing the binary relation symbol `` $=$'', and we only consider structures where this symbol is interpreted as equality}. We refer to this assumption as \emph{equality hypothesis}.

For an $\Lcal$-formula $\phi$, the notation $\phi[x_1,...,x_n]$ means that the free variables of $\phi$ are among the variables $x_1,...,x_n$, and all these variables are free or do not occur in $\phi$. We do not allow parameters unless explicitly stated otherwise. Given an $\Lcal$-formula $\phi[x_1,...,x_n]$, the interpretation of it over $\Mfrak$ is
$
\phi^\Mfrak = \{{\bf a}\in M^n : \Mfrak\models \phi[{\bf a}]\}\subseteq M^n
$
where $\phi[{\bf a}]$ means that $\phi[x_1,...,x_n]$ is interpreted in $\Mfrak$ by interpreting $(x_1,...,x_n)$ as ${\bf a}\in M^n$.

An $\Lcal$-formula $\phi$ is \emph{positive existential} (denoted as p.e.) if the only quantifier it uses is $\exists$, it does not use negations, and the only connectives that it uses are $\vee$ and $\wedge$.

%%%%%%%%
%%%%%%%%
%%%%%%%%
%%%%%%%%
\subsection{Interpretations}  Let $\Lcal,\Kcal$ be languages, and let $\Mfrak,\Nfrak$ be structures over these languages with domains $M,N$ respectively. Let $r\ge 1$. An \emph{interpretation of $\Nfrak$ in $\Mfrak$ of rank $r$} is a map $\theta:X\to N$ where $X=\dom(\theta)\subseteq M^r$, satisfying the following properties:
\begin{itemize}
\item[(Int1)] $\theta:\dom(\theta)\to N$ is surjective onto $N$.
\item[(Int2)] $\dom(\theta)$ is $\Lcal$-definable over $\Mfrak$.
\item[(Int3)] For each $s\in \Kcal$, the set $\theta^*(s^\Nfrak)$ is $\Lcal$-definable over $\Mfrak$.
\end{itemize}
We use the notation $\theta:\Mfrak\dasharrow \Nfrak$ to indicate that $\theta$ is an interpretation of $\Nfrak$ in $\Mfrak$, and the rank $r$ is denoted by $\rk(\theta)$. We say that the interpretation $\theta:\Mfrak\dasharrow \Nfrak$ is \emph{positive existential} (p.e.) if $\dom(\theta)$ and each  $\theta^*(s^\Nfrak)$ for $s\in \Kcal$ are p.e. $\Lcal$-definable.

\begin{lemma}[Pull-back of definable sets under interpretations] Let $\Lcal$ and $\Kcal$ be languages. Consider an $\Lcal$-structure $\Mfrak$, a $\Kcal$-structure $\Nfrak$, and an interpretation $\theta:\Mfrak\dasharrow \Nfrak$ of rank $r$. Let $S\subseteq N^m$. If $S$ is $\Kcal$-definable, then $\theta^*(S)\subseteq M^{rm}$ is $\Lcal$-definable. Furthermore, if the interpretation is p.e. and the set $S$ is p.e. $\Kcal$-definable, then $\theta^*(S)$ is p.e. $\Lcal$-definable. 
\end{lemma}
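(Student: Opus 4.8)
The statement is a standard ``pull-back of definable sets'' lemma for interpretations, and the natural approach is induction on the structure of a defining formula, carried out in parallel with building an explicit syntactic translation $\theta^\sharp$ sending $\Kcal$-formulas to $\Lcal$-formulas. First I would fix the interpretation $\theta:\Mfrak\dasharrow\Nfrak$ of rank $r$, with $X=\dom(\theta)$, and set up the translation on variables: each $\Kcal$-variable $y_i$ is replaced by a block $\bar x_i=(x_{i,1},\dots,x_{i,r})$ of $r$ fresh $\Lcal$-variables, so that a $\Kcal$-formula $\psi[y_1,\dots,y_m]$ will be translated to an $\Lcal$-formula $\psi^\sharp[\bar x_1,\dots,\bar x_m]$ with $rm$ free variables. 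The design goal of $\psi^\sharp$ is the equivalence: for all $\mathbf a_1,\dots,\mathbf a_m\in X$,
\[
\Mfrak\models \psi^\sharp[\mathbf a_1,\dots,\mathbf a_m]
\iff
\Nfrak\models \psi[\theta(\mathbf a_1),\dots,\theta(\mathbf a_m)].
\]
Because $\theta$ need not be injective and its domain is a proper subset of $M^r$, the translation must be ``relativized'' to $X$: I would conjoin $\bigwedge_i \delta_X(\bar x_i)$ (where $\delta_X$ is an $\Lcal$-formula defining $X$, which exists by (Int2)) whenever I introduce a new block of variables, and I must be careful that equality of $\Kcal$-elements is translated not as equality of tuples but as the $\Lcal$-formula defining $\theta^*(=^\Nfrak)=\{(\mathbf a,\mathbf b)\in X^2: \theta(\mathbf a)=\theta(\mathbf b)\}$, which is $\Lcal$-definable by (Int3) applied to the symbol ``$=$'' (using the equality hypothesis, ``$=$'' is always in $\Kcal$).

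The induction itself proceeds as follows. For atomic formulas: a $\Kcal$-term built from variables, constants $c$ and function symbols $f$ must first be reduced to ``flat'' atomic formulas by introducing existentially quantified auxiliary variables for subterms, so it suffices to handle atoms of the form $R(y_{i_1},\dots,y_{i_n})$, $f(y_{i_1},\dots,y_{i_n})=y_{i_0}$, $c=y_{i_0}$, and $y_i=y_j$; each of these translates to the $\Lcal$-formula provided by (Int3) (resp. (Int2), resp. the equality clause above), defining $\theta^*$ of the corresponding interpreted set. For the connectives $\wedge$ and $\vee$ one takes $\wedge$ and $\vee$ of the translations; for $\neg$ one takes $\neg$ of the translation (valid because the equivalence is stated only over tuples from $X$, so relativization makes negation behave correctly); for the quantifier, $(\exists y\,\psi)^\sharp$ is $\exists \bar x\,(\delta_X(\bar x)\wedge \psi^\sharp)$ — here surjectivity (Int1) is exactly what makes $\exists$ over $N$ match $\exists$ over $X$. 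Having built $\psi^\sharp$, for a $\Kcal$-definable set $S=\psi^{\Nfrak}\subseteq N^m$ one checks directly from the displayed equivalence that $\theta^*(S)=\{(\mathbf a_1,\dots,\mathbf a_m)\in M^{rm}: \Mfrak\models \bigwedge_i\delta_X(\bar x_i)\wedge\psi^\sharp\}$, which is $\Lcal$-definable.

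For the ``furthermore'' clause, one observes that when the interpretation is p.e., all of $\delta_X$, the equality clause, and the atoms from (Int3) can be taken p.e.; and the inductive steps for $\wedge$, $\vee$, and $\exists$ preserve being p.e. Since a p.e. formula uses no negation, the negation case never arises, so the translation of a p.e. $\Kcal$-formula is a p.e. $\Lcal$-formula, giving the second assertion. \textbf{The main obstacle} I anticipate is purely bookkeeping rather than conceptual: getting the relativization to $X$ right so that the induction hypothesis is a clean iff-statement quantified over tuples in $X$ (not in $M^r$), and correctly threading the fact that $\theta$ is many-to-one — i.e., consistently using $\theta^*(=^\Nfrak)$ in place of literal tuple-equality at every atomic occurrence of an equated pair of $\Kcal$-variables. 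A secondary technical point is the preliminary reduction of arbitrary $\Kcal$-terms to flat atoms via auxiliary existential variables, which is where one uses that function symbols are interpreted as graphs; this is routine but must be done before the main induction so that the atomic case only ever sees function and relation symbols applied directly to variables.
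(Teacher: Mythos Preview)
Your proposal is correct and is precisely the standard argument: build a syntactic translation $\psi\mapsto\psi^\sharp$ by induction on formulas, relativize quantifiers to $\dom(\theta)$, and use (Int3) together with the equality hypothesis for the atomic base case. The paper itself states this lemma without proof, treating it as a well-known fact, so there is no alternative argument in the paper to compare against; your write-up is exactly what one would supply if asked to fill in the details.
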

A useful standard fact is that interpretations can be composed. 
\begin{lemma}[Composition of interpretations]\label{LemmaCompInt} For $i=1,2,3$, let $\Lcal_i$ be a language and $\Mfrak_i$ an $\Lcal_i$-structure with domain $M_i$. Let $\theta_1:\Mfrak_1\dasharrow \Mfrak_2$ and $\theta_2:\Mfrak_2\dasharrow \Mfrak_3$ be interpretations. There is an interpretation $\zeta:\Mfrak_1\dasharrow \Mfrak_3$ with the following properties:
\begin{itemize}
\item[(i)]  $\rk (\zeta)=\rk (\theta_1)\cdot \rk(\theta_2)$

\item[(ii)] $\dom(\zeta)=\theta_1^*(\dom(\theta_2))\subseteq M_1^{\rk(\zeta)}$.

\item[(iii)] $\zeta: \dom(\zeta)\to M_3$ is defined by $\zeta=\theta_2\circ \theta_1^{(\rk(\theta_2))}$. 
\item[(iv)] If $\theta_1$ and $\theta_2$ are p.e., then $\zeta$ is p.e.
\end{itemize}
\end{lemma}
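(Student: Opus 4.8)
The plan is to construct $\zeta$ explicitly by the prescription dictated by items (ii) and (iii), and then verify that this construction satisfies (Int1)--(Int3), plus the p.e.\ refinement (iv). Write $r_1=\rk(\theta_1)$, $r_2=\rk(\theta_2)$, so that $\theta_1:X_1\to M_2$ with $X_1=\dom(\theta_1)\subseteq M_1^{r_1}$, and $\theta_2:X_2\to M_3$ with $X_2=\dom(\theta_2)\subseteq M_2^{r_2}$. Recall that $\theta_1^{(r_2)}:M_1^{r_1 r_2}\to M_2^{r_2}$ is the coordinatewise application of $\theta_1$ to $r_2$ blocks of $r_1$ variables, with natural domain $X_1^{r_2}$. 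I would set $\dom(\zeta)=(\theta_1^{(r_2)})^{-1}(X_2)=\theta_1^*(X_2)\subseteq M_1^{r_1 r_2}$ and define $\zeta=\theta_2\circ\theta_1^{(r_2)}$ on this domain. This immediately gives (i), (ii), (iii) by construction, so the content is entirely in checking the three interpretation axioms.

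First I would check (Int1): surjectivity of $\zeta$ onto $M_3$. Given $c\in M_3$, since $\theta_2$ is surjective there is $\mathbf{b}\in X_2\subseteq M_2^{r_2}$ with $\theta_2(\mathbf{b})=c$; writing $\mathbf{b}=(b_1,\dots,b_{r_2})$ and using surjectivity of $\theta_1$, choose $\mathbf{a}_j\in X_1$ with $\theta_1(\mathbf{a}_j)=b_j$; then $\mathbf{a}=(\mathbf{a}_1,\dots,\mathbf{a}_{r_2})\in X_1^{r_2}$ lies in $\theta_1^*(X_2)=\dom(\zeta)$ and $\zeta(\mathbf{a})=c$. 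Next, (Int2) and (Int3) are handled together by the observation that the crucial tool is the already-established ``pull-back of definable sets under interpretations'' lemma, applied to $\theta_1$. Indeed, $\dom(\zeta)=\theta_1^*(X_2)$ where $X_2=\dom(\theta_2)$ is $\Lcal_2$-definable over $\Mfrak_2$ (by (Int2) for $\theta_1$, this is $\Lcal_2$-definable, wait --- $X_2\subseteq M_2^{r_2}$ is definable by (Int2) for $\theta_2$), so the pull-back lemma gives that $\theta_1^*(X_2)$ is $\Lcal_1$-definable over $\Mfrak_1$, which is (Int2) for $\zeta$. For (Int3), fix $s\in\Lcal_3$, say of arity $m$ (constant, relation, or function, using the uniform convention that $s^{\Mfrak_3}\subseteq M_3^{m}$ or $M_3^{m+1}$). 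Then $\zeta^*(s^{\Mfrak_3})=(\theta_1^{(r_2)})^{-1}\big(\theta_2^*(s^{\Mfrak_3})\big)=\theta_1^*\big(\theta_2^*(s^{\Mfrak_3})\big)$, and $\theta_2^*(s^{\Mfrak_3})$ is $\Lcal_2$-definable over $\Mfrak_2$ by (Int3) for $\theta_2$, so again the pull-back lemma (applied to $\theta_1$ and the set $S=\theta_2^*(s^{\Mfrak_3})\subseteq M_2^{r_2 \cdot(\text{something})}$) yields that $\theta_1^*(\theta_2^*(s^{\Mfrak_3}))$ is $\Lcal_1$-definable over $\Mfrak_1$. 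Here one must keep the bookkeeping of arities straight: if $S\subseteq M_2^{\ell}$ then $\theta_1^*(S)\subseteq M_1^{r_1 \ell}$, and these indices must match what (iii) predicts for $\zeta$ acting on tuples, which is a routine but slightly fiddly index chase. Finally (iv) is obtained by running exactly the same argument but invoking the p.e.\ half of the pull-back lemma: if $\theta_1,\theta_2$ are p.e., then $X_2=\dom(\theta_2)$ and each $\theta_2^*(s^{\Mfrak_3})$ are p.e.\ $\Lcal_2$-definable, hence their $\theta_1$-pullbacks are p.e.\ $\Lcal_1$-definable, so $\dom(\zeta)$ and each $\zeta^*(s^{\Mfrak_3})$ are p.e.\ $\Lcal_1$-definable, i.e.\ $\zeta$ is p.e.

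I expect the only real subtlety --- not a deep obstacle, but the place where care is needed --- to be the arity bookkeeping in (Int3): one has to be precise that an $\ell$-ary relation on $M_3$ pulls back under $\zeta$ (rank $r_1 r_2$) to a subset of $M_1^{r_1 r_2 \ell}$, and that this equals the iterated pullback $\theta_1^*\circ\theta_2^*$ with the correct intermediate ambient dimension $M_2^{r_2\ell}$; the identity $(\theta_1^{(r_2)})^{(\ell)} = \theta_1^{(r_2\ell)}$ up to the obvious reindexing of coordinate blocks is what makes $\zeta^*=\theta_1^*\circ\theta_2^*$ literally true. Everything else is formal: the composition of definable (resp.\ p.e.\ definable) pullbacks is handled entirely by the preceding lemma, and surjectivity is the elementary argument above. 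I would present the construction, state the identity $\zeta^* = \theta_1^* \circ \theta_2^*$ on each relevant power, and then invoke the pull-back lemma twice (once for (Int2)/(iv)-domain, once for (Int3)/(iv)-symbols), leaving the coordinate-permutation identification to the reader as routine.
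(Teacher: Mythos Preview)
Your proof is correct and follows exactly the natural approach one would expect. The paper itself does not give a proof of this lemma; it is stated there as a ``useful standard fact'' and left without argument, so there is nothing to compare against beyond noting that your use of the pull-back lemma and the identity $\zeta^*=\theta_1^*\circ\theta_2^*$ is precisely the intended mechanism.
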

In the situation of the previous lemma, the interpretation $\zeta$ is said to be the \emph{composition of the interpretations $\theta_1$ and $\theta_2$}, which we denote by $\zeta=\theta_2\bullet \theta_1$. One directly checks

\begin{lemma} \label{LemmaCat} Structures over languages together with p.e. interpretations form a category.
\end{lemma}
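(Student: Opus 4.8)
The plan is to verify the three category axioms directly from Lemma~\ref{LemmaCompInt}, taking the objects to be $\Lcal$-structures (for all first order languages $\Lcal$ satisfying the equality hypothesis) and the morphisms from $\Mfrak$ to $\Nfrak$ to be the p.e.\ interpretations $\theta\colon\Mfrak\dasharrow\Nfrak$, with composition given by the $\bullet$ operation. Since Lemma~\ref{LemmaCompInt}(iv) already guarantees that $\theta_2\bullet\theta_1$ is p.e.\ whenever $\theta_1,\theta_2$ are, composition is well-defined on morphisms; what remains is (a) associativity of $\bullet$ and (b) the existence of a two-sided identity morphism on each structure.

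For the identity, I would take $\mathrm{id}_\Mfrak\colon\Mfrak\dasharrow\Mfrak$ to be the identity map on $M$, viewed as an interpretation of rank $1$: its domain is $M=M^1$, which is p.e.\ $\Lcal$-definable by the formula $x=x$, and for each $s\in\Lcal$ the pullback $(\mathrm{id}_\Mfrak)^*(s^\Mfrak)=s^\Mfrak$ is p.e.\ $\Lcal$-definable by the atomic formula built from $s$. Then for any p.e.\ interpretation $\theta\colon\Mfrak\dasharrow\Nfrak$ one computes from Lemma~\ref{LemmaCompInt}(i)--(iii) that $\theta\bullet\mathrm{id}_\Mfrak$ has rank $\rk(\theta)\cdot 1=\rk(\theta)$, domain $(\mathrm{id}_\Mfrak)^*(\dom(\theta))=\dom(\theta)$, and underlying map $\theta\circ(\mathrm{id}_\Mfrak)^{(\rk(\theta))}=\theta$; similarly $\mathrm{id}_\Nfrak\bullet\theta$ has rank $1\cdot\rk(\theta)$, domain $\theta^*(\dom(\mathrm{id}_\Nfrak))=\theta^*(N)=\dom(\theta)$, and underlying map $\mathrm{id}_\Nfrak\circ\theta^{(1)}=\theta$. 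So $\mathrm{id}_\Mfrak$ is a two-sided identity. (One should note that two interpretations are considered equal precisely when they have the same rank, domain, and underlying map, so these computations genuinely establish the identity axioms.)

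The main point is associativity: given $\theta_i\colon\Mfrak_i\dasharrow\Mfrak_{i+1}$ for $i=1,2,3$, I must show $\theta_3\bullet(\theta_2\bullet\theta_1)=(\theta_3\bullet\theta_2)\bullet\theta_1$. On ranks both sides equal $\rk(\theta_1)\rk(\theta_2)\rk(\theta_3)$ by repeated application of Lemma~\ref{LemmaCompInt}(i). For the underlying maps, the content is the ``functoriality'' identity $(g\circ f)^{(n)}=g^{(n)}\circ f^{(n)}$ for the coordinatewise-power operation, together with $(f^{(a)})^{(b)}$ agreeing with $f^{(ab)}$ after the canonical identification $M^{ab}\cong(M^a)^b$; granting these, both composites unwind to $\theta_3\circ\theta_2^{(\rk\theta_3)}\circ\theta_1^{(\rk\theta_2\rk\theta_3)}$ as maps on the appropriate subset of $M_1^{\rk\theta_1\rk\theta_2\rk\theta_3}$. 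The domains then match because $\dom$ is determined by Lemma~\ref{LemmaCompInt}(ii) as the iterated pullback $\theta_1^*\theta_2^*(\dom\theta_3)$ under both bracketings, using that pullback along a composite is the composite of pullbacks. I expect the only mildly delicate step is keeping the bookkeeping of coordinate groupings straight---i.e.\ that the various identifications $M_1^{abc}\cong(M_1^a)^{bc}\cong((M_1^a)^b)^c$ are compatible with how $\theta_i^{(\cdot)}$ acts block-by-block---but this is a purely formal unwinding with no real obstacle, and I would dispatch it by writing a tuple $\mathbf{x}\in M_1^{\rk\theta_1\rk\theta_2\rk\theta_3}$ as a $\rk\theta_3$-tuple of $\rk\theta_2$-tuples of $\rk\theta_1$-tuples and chasing it through both sides.
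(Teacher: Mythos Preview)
Your verification is correct and is precisely the direct check the paper has in mind; the paper itself offers no proof beyond the phrase ``One directly checks,'' so your detailed unwinding of identities and associativity via Lemma~\ref{LemmaCompInt} is exactly the intended argument, just made explicit.
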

%%
%%

%%%%%%%%
%%%%%%%%
%%%%%%%%
%%%%%%%%
\subsection{Homotopy of interpretations}\label{SecHomotopy} For $i=1,2$ let $\Mfrak_i$ be an $\Lcal_i$-structure with domain $M_i$. Let $\theta,\theta':\Mfrak_1\dasharrow \Mfrak_2$ be interpretations of ranks $r$ and $r'$ respectively. We define
$$
K(\theta,\theta')=\{({\bf u},{\bf v})\in M^{r+r'} : {\bf u}\in \dom(\theta),{\bf v}\in \dom(\theta'), \mbox{ and }\theta({\bf u})=\theta'({\bf v})\}\subseteq M^{r+r'}.
$$
Following \cite{AhlbrandtZiegel}, the interpretations $\theta$ and $\theta'$ are said to be \emph{homotopic} if $K(\theta,\theta')$ is $\Lcal_1$-definable over $\Mfrak_1$. See \cite{AhlbrandtZiegel, AKNS} for more details on homotopy of interpretations.

In the special case when $\theta$ and $\theta'$ are p.e. we will need to introduce a refined notion of homotopy. In this case, we say that $\theta$ and $\theta'$ are \emph{positive existential homotopic} if $K(\theta,\theta')$ is p.e. $\Lcal_1$-definable. This will be denoted by $\theta\asymp \theta'$. The results we present below for $\asymp$ are analogous to some of the results given in \cite{AKNS} for homotopy of interpretations (not in the p.e. setting).

\begin{lemma}[P.e. homotopy is an equivalence relation]\label{LemmaHomotopyEquiv} For $i=1,2$, let $\Mfrak_i$ be and $\Lcal_i$-structure. Then $\asymp$ is an equivalence relation in the set of p.e. interpretations of $\Mfrak_2$ in $\Mfrak_1$.
\end{lemma}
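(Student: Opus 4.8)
The plan is to verify the three axioms of an equivalence relation directly from the definition, exploiting symmetry in the roles of $\theta$ and $\theta'$ together with closure properties of p.e.\ definable sets. Throughout, write $M = M_1$ for the domain of $\Mfrak_1$, and recall that for a p.e.\ interpretation $\theta$ the set $\dom(\theta)$ is p.e.\ $\Lcal_1$-definable, so whenever we intersect with ``${\bf u}\in\dom(\theta)$'' we stay in the p.e.\ class.

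First I would treat \emph{reflexivity}: for a p.e.\ interpretation $\theta$ of rank $r$, the set $K(\theta,\theta) = \{({\bf u},{\bf v})\in M^{2r} : {\bf u},{\bf v}\in\dom(\theta),\ \theta({\bf u})=\theta({\bf v})\}$ is exactly $\theta^*(\Delta_N)$ where $\Delta_N\subseteq N^2$ is the diagonal of $N$. By the equality hypothesis, $\Delta_N = (=)^\Nfrak$ is the interpretation of a symbol of $\Lcal_2$ (or in any case p.e.\ $\Kcal$-definable), so by the pull-back lemma for interpretations (the lemma on pull-back of definable sets, in its p.e.\ form) $\theta^*(\Delta_N)$ is p.e.\ $\Lcal_1$-definable. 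Hence $\theta\asymp\theta$.

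Next, \emph{symmetry}: from the definition, $K(\theta',\theta)$ is obtained from $K(\theta,\theta')$ by the coordinate permutation $({\bf u},{\bf v})\mapsto({\bf v},{\bf u})$ of $M^{r+r'}$, since the condition $\theta({\bf u})=\theta'({\bf v})$ is symmetric in the two membership constraints and in the equality. A permutation of coordinates preserves p.e.\ definability, so $\theta\asymp\theta'$ implies $\theta'\asymp\theta$. For \emph{transitivity}, suppose $\theta\asymp\theta'$ and $\theta'\asymp\theta''$ with ranks $r,r',r''$. The idea is that $K(\theta,\theta'')$ is the projection onto the $({\bf u},{\bf w})$ coordinates of the set
\[
\{({\bf u},{\bf v},{\bf w})\in M^{r+r'+r''} : ({\bf u},{\bf v})\in K(\theta,\theta'),\ ({\bf v},{\bf w})\in K(\theta',\theta'')\},
\]
because an intermediate element $n'\in N'$ (the domain of $\Mfrak'$) realizing $\theta({\bf u})=n'=\theta''({\bf w})$ exists if and only if it can be witnessed by some ${\bf v}\in\dom(\theta')$ with $\theta'({\bf v})=n'$, using surjectivity of $\theta'$ onto $N'$ (axiom (Int1)). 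The displayed set is p.e.\ definable as the intersection of two p.e.\ definable sets (suitably padded with dummy coordinates), and a p.e.\ formula is closed under existential quantification, so the projection is again p.e.\ $\Lcal_1$-definable; thus $\theta\asymp\theta''$.

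I expect the only subtlety — hardly an obstacle — to be the transitivity step, specifically the bookkeeping needed to express ``existence of a common value'' as an honest coordinate projection: one must check that the witnessing ${\bf v}$ ranges over $\dom(\theta')\subseteq M^{r'}$ and that the equality $\theta({\bf u}) = \theta'({\bf v})$, resp.\ $\theta'({\bf v})=\theta''({\bf w})$, are precisely the defining conditions of $K(\theta,\theta')$ and $K(\theta',\theta'')$, so that no extra non-p.e.\ condition sneaks in. Everything else is a routine appeal to Lemma~\ref{LemmaBooleanN}-type closure properties (in the $\Lcal_1$-definable setting over $\Mfrak_1$) together with the observation that p.e.\ formulas are closed under $\wedge$, $\vee$, and $\exists$, which is exactly what these three verifications require.
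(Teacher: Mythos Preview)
Your proof is correct and follows essentially the same route as the paper's: reflexivity via $K(\theta,\theta)=\theta^*(=)$, symmetry by coordinate permutation, and transitivity by expressing $K(\theta,\theta'')$ as a coordinate projection of the p.e.\ set $\bigl(K(\theta,\theta')\times M^{r''}\bigr)\cap\bigl(M^{r}\times K(\theta',\theta'')\bigr)$, using surjectivity of $\theta'$. One small cosmetic point: the closure properties you invoke are those of p.e.\ $\Lcal_1$-definable sets over $\Mfrak_1$ (closure under $\wedge$, coordinate permutation, and $\exists$), not Lemma~\ref{LemmaBooleanN} itself, which concerns listable subsets of $\N$; and your ``$N'$'' should just be $N=M_2$.
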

\begin{proof} Symmetry is clear. Reflexivity follows from the fact that for any p.e. interpretation $\theta:\Mfrak_1\dasharrow \Mfrak_2$ the set $\theta^*(=)=K(\theta,\theta)$ is p.e. $\Lcal_1$-definable. Let us check transitivity. For $i=1,2,3$ let $\theta_i:\Mfrak_1\dasharrow \Mfrak_2$ be a p.e. interpretation of rank $r_i$, such that $\theta_1\asymp\theta_2$ and $\theta_2\asymp \theta_3$. The set
$
\Omega = \left(K(\theta_1,\theta_2)\times M^{r_3}\right)\cap \left(M^{r_1}\times K(\theta_2,\theta_3)\right)\subseteq M^{r_1+r_2+r_3}
$
is p.e. $\Lcal$-definable, and it equals
$$
\{({\bf x},{\bf y},{\bf z})\in M^{r_1+r_2+r_3} : {\bf x}\in \dom(\theta_1), {\bf y}\in \dom(\theta_2), {\bf z}\in \dom(\theta_3)\mbox{ and }\theta_1({\bf x})=\theta_2({\bf y})=\theta_3({\bf z})\}.
$$
By surjectivity of $\theta_2$ onto the domain of $\Mfrak_2$, we get that $K(\theta_1,\theta_3)$ is the projection of $\Omega$ onto its first $r_1$ and last $r_3$ coordinates. Hence, $K(\theta_1,\theta_3)$ is p.e. $\Lcal_1$-definable.
\end{proof}
\begin{lemma}[Composition respects p.e. homotopy]\label{LemmaHomotopyComp} For $i=1,2,3$ let $\Mfrak_i$ be an $\Lcal_i$-strucutre. Let $\theta_1,\kappa_1:\Mfrak_1\dasharrow \Mfrak_2$  and $\theta_2,\kappa_2:\Mfrak_2\dasharrow \Mfrak_3$ be p.e. interpretations with $\theta_1\asymp\kappa_1$ and $\theta_2\asymp\kappa_2$. Then $\theta_2\bullet\theta_1\asymp\kappa_2\bullet\kappa_1$.
\end{lemma}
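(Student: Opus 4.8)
The plan is to verify directly that the set $K(\theta_2\bullet\theta_1,\kappa_2\bullet\kappa_1)$ is p.e.\ $\Lcal_1$-definable, using that $K(\theta_1,\kappa_1)$ and $K(\theta_2,\kappa_2)$ are p.e.\ definable over $\Mfrak_1$ and $\Mfrak_2$ respectively, together with the formula for composed interpretations from Lemma \ref{LemmaCompInt}. Write $r_i=\rk(\theta_i)$, $s_i=\rk(\kappa_i)$, so that $\zeta:=\theta_2\bullet\theta_1$ has rank $r_1 r_2$ and $\eta:=\kappa_2\bullet\kappa_1$ has rank $s_1 s_2$, with $\zeta=\theta_2\circ\theta_1^{(r_2)}$ and $\eta=\kappa_2\circ\kappa_1^{(s_2)}$. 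A pair $(\uu,\vv)\in M_1^{r_1r_2+s_1s_2}$ lies in $K(\zeta,\eta)$ exactly when $\uu=(\uu_1,\dots,\uu_{r_2})$ with each $\uu_j\in\dom(\theta_1)$ and $(\theta_1(\uu_1),\dots,\theta_1(\uu_{r_2}))\in\dom(\theta_2)$, and symmetrically for $\vv$ with $\kappa_1,\kappa_2$, and finally $\theta_2(\theta_1^{(r_2)}(\uu))=\kappa_2(\kappa_1^{(s_2)}(\vv))$.

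The key step is to break the last equality through the intermediate structure $\Mfrak_2$. I would introduce auxiliary tuples $\xx=(\xx_1,\dots,\xx_{r_2})\in M_1^{r_1}$-blocks and $\yy=(\yy_1,\dots,\yy_{s_2})\in M_1^{s_1}$-blocks and assert, for each index $j$, that $(\uu_j,\xx_j)\in K(\theta_1,\kappa_1)$ and, for each index $\ell$, that $(\vv_\ell,\yy_\ell)\in K(\kappa_1,\kappa_1)=\kappa_1^*(=)$ — more precisely one wants $\theta_1(\uu_j)=\kappa_1(\xx_j)$ and the tuple $(\kappa_1(\xx_1),\dots,\kappa_1(\xx_{r_2}))$ to be compared, via $K(\theta_2,\kappa_2)$, with $(\kappa_1(\yy_1),\dots,\kappa_1(\yy_{s_2}))$ after identifying it with $\kappa_1^{(s_2)}(\vv)$. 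Concretely, $(\uu,\vv)\in K(\zeta,\eta)$ iff there exist $\xx\in(\dom(\theta_1))^{r_2}$ and $\ww\in(\dom(\kappa_1))^{r_2}$ such that: $(\uu_j,\ww_j)\in K(\theta_1,\kappa_1)$ for all $j$ (so $\theta_1(\uu_j)=\kappa_1(\ww_j)$); the tuple $(\kappa_1(\ww_1),\dots,\kappa_1(\ww_{r_2}))\in\dom(\theta_2)$; the tuple $(\kappa_1(v\text{-blocks}))\in\dom(\kappa_2)$; and $\bigl((\kappa_1(\ww_1),\dots,\kappa_1(\ww_{r_2})),\,\kappa_1^{(s_2)}(\vv)\bigr)\in K(\theta_2,\kappa_2)$. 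Each of these conditions is the pull-back, under the p.e.\ interpretation $\kappa_1^{(\text{appropriate power})}$, of a p.e.\ $\Lcal_2$-definable set (namely $\dom(\theta_2)$, $\dom(\kappa_2)$, $K(\theta_2,\kappa_2)$), hence is p.e.\ $\Lcal_1$-definable by the pull-back lemma; the conditions $(\uu_j,\ww_j)\in K(\theta_1,\kappa_1)$ are p.e.\ $\Lcal_1$-definable by hypothesis; a finite conjunction of these together with an existential quantification over $\ww$ (and over the $v$-side auxiliary tuples, if needed) is again p.e.\ $\Lcal_1$-definable, using surjectivity of $\theta_1$ onto $M_2$ to replace values $\theta_1(\uu_j)$ by $\kappa_1(\ww_j)$ inside the $\Mfrak_2$-level conditions.

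The main obstacle is purely bookkeeping: matching up the rank arithmetic and making sure that when we pull back $\Mfrak_2$-level sets we consistently use the \emph{same} representative (the $\kappa_1$-preimages $\ww_j$) in the conditions "lies in $\dom(\theta_2)$" and "compared via $K(\theta_2,\kappa_2)$", so that the existential quantifiers glue correctly; the role of $\theta_1\asymp\kappa_1$ is precisely to let us switch from $\theta_1$-representatives of $\uu$ to $\kappa_1$-representatives without leaving the p.e.\ world. Once the formula is assembled, closure of the class of p.e.\ definable sets under finite conjunction, permutation of coordinates, and projection (existential quantification) finishes the argument; no genuinely new idea beyond the proof of Lemma \ref{LemmaHomotopyEquiv} is required, and in fact one can phrase the whole thing as: $K(\zeta,\eta)$ is obtained from $K(\theta_1,\kappa_1)^{r_2}$, the pull-back $(\kappa_1)^*$-version of $K(\theta_2,\kappa_2)$, and suitable domain conditions, by intersections and a projection. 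I would also remark that a cleaner, more conceptual proof is available once p.e.\ bi-interpretation has been set up — homotopy classes of p.e.\ interpretations form the morphisms of a quotient category of the one in Lemma \ref{LemmaCat} — but for this lemma the direct formula-chasing proof is the most self-contained.
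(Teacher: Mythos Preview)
Your argument is correct: the direct description of $K(\zeta,\eta)$ via an existentially quantified witness tuple $\ww\in\dom(\kappa_1)^{r_2}$ with $(\uu_j,\ww_j)\in K(\theta_1,\kappa_1)$ and $(\ww,\vv)\in\kappa_1^*K(\theta_2,\kappa_2)$ works, and the closure of p.e.\ definable sets under conjunction and projection finishes it. (Two cosmetic slips: the auxiliary tuple $\xx$ in your ``Concretely'' clause is never used and should be deleted, and it is surjectivity of $\kappa_1$, not $\theta_1$, that guarantees a witness $\ww_j$ with $\kappa_1(\ww_j)=\theta_1(\uu_j)$.)

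The paper takes a different, shorter route. Rather than handling both substitutions at once, it invokes transitivity of $\asymp$ (Lemma~\ref{LemmaHomotopyEquiv}) to reduce to the two one-sided statements $\theta_2\bullet\theta_1\asymp\kappa_2\bullet\theta_1$ and $\kappa_2\bullet\theta_1\asymp\kappa_2\bullet\kappa_1$, and for each of these simply observes that the corresponding argument in Lemma~2.1 of \cite{AKNS} (written for ordinary homotopy) only introduces existential quantifiers and hence carries over to the p.e.\ setting unchanged. Your single-step computation is self-contained and avoids the external citation, at the price of the heavier index bookkeeping you note; the paper's decomposition isolates the two hypotheses cleanly and keeps each half trivial, but defers the actual formula to the reference. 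Substantively the two arguments unwind to the same formula.
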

\begin{proof} By Lemma \ref{LemmaHomotopyEquiv}, it suffices to prove $\theta_2\bullet\theta_1\asymp\kappa_2\bullet\theta_1$ and $\kappa_2\bullet\theta_1\asymp\kappa_2\bullet\kappa_1$. This can be done as in Lemma 2.1 of \cite{AKNS} since that argument only introduces existential quantifiers (i.e. coordinate projections of definable sets). 
\end{proof}

The (transposed) graph of an interpretation $\theta: \Mfrak\dasharrow\Nfrak$ is 
$$
\Gamma(\theta)=\{(x_0,{\bf x})\in N\times M^{\rk(\theta)}: {\bf x}\in \dom(\theta)\mbox{ and } x_0=\theta({\bf x})\}\subseteq N\times M^{\rk(\theta)}.
$$

For $i=1,2$ let $\Mfrak_i$ be an $\Lcal_i$-structure with domain $M_i$. A \emph{bi-interpretation} is a pair of interpretations $\theta_1:\Mfrak_1\dasharrow \Mfrak_2$ and $\theta_2:\Mfrak_2\dasharrow \Mfrak_1$ such that $\Gamma(\theta_1\bullet \theta_2)$ is $\Lcal_2$-definable over $\Mfrak_2$ and $\Gamma(\theta_2\bullet \theta_1)$ is $\Lcal_1$-definable over $\Mfrak_1$. We say that the bi-interpretation is \emph{positive existential} if $\theta_1$ and $\theta_2$ are p.e. interpretations and both $\Gamma(\theta_1\bullet \theta_2)$ and $\Gamma(\theta_2\bullet \theta_1)$ are p.e. definable. 

\begin{lemma}[Bi-interpretations in terms of homotopy in the p.e. case]\label{LemmaBiIntHomotopy} For $i=1,2$ let $\Mfrak_i$ be an $\Lcal_i$-structure. Let $\theta_1:\Mfrak_1\dasharrow \Mfrak_2$ and $\theta_2:\Mfrak_2\dasharrow \Mfrak_1$ be p.e. interpretations. Then $(\theta_1,\theta_2)$ is a p.e. bi-interpretation if and only if $\theta_2\bullet \theta_1\asymp \Id_{\Mfrak_1}$ and $\theta_1\bullet \theta_2\asymp \Id_{\Mfrak_2}$.
\end{lemma}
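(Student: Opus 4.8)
The plan is to reduce the statement to the essentially tautological observation that, for a self-interpretation $\sigma:\Mfrak_1\dasharrow\Mfrak_1$, the set $K(\sigma,\Id_{\Mfrak_1})$ coincides with the graph $\Gamma(\sigma)$ up to a harmless permutation of coordinates. First I would record the preliminaries: by Lemma \ref{LemmaCompInt}(iv) the compositions $\theta_2\bullet\theta_1:\Mfrak_1\dasharrow\Mfrak_1$ and $\theta_1\bullet\theta_2:\Mfrak_2\dasharrow\Mfrak_2$ are again p.e.\ interpretations, so both conditions occurring in the statement make sense; and $\Id_{\Mfrak_i}$ is a p.e.\ interpretation of rank $1$ whose domain is all of $M_i$ (it is cut out by $x=x$, and for each symbol $s\in\Lcal_i$ one has $\Id_{\Mfrak_i}^*(s^{\Mfrak_i})=s^{\Mfrak_i}$, which is p.e.\ $\Lcal_i$-definable via the corresponding atomic formula), as is already implicit in Lemma \ref{LemmaCat}.

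Writing $\sigma=\theta_2\bullet\theta_1$, of rank $r=\rk(\theta_1)\cdot\rk(\theta_2)$, I would then unwind the two definitions. Since $\dom(\Id_{\Mfrak_1})=M_1$ and $\Id_{\Mfrak_1}$ acts as the identity, the condition defining $K(\sigma,\Id_{\Mfrak_1})\subseteq M_1^{r}\times M_1$ simplifies to: $\mathbf{u}\in\dom(\sigma)$ and $\sigma(\mathbf{u})=v$. Comparing with $\Gamma(\sigma)=\{(x_0,\mathbf{x})\in M_1\times M_1^{r}:\mathbf{x}\in\dom(\sigma),\ x_0=\sigma(\mathbf{x})\}$, we see that $(\mathbf{u},v)\in K(\sigma,\Id_{\Mfrak_1})$ if and only if $(v,\mathbf{u})\in\Gamma(\sigma)$; that is, $K(\sigma,\Id_{\Mfrak_1})$ is obtained from $\Gamma(\sigma)$ by moving the first coordinate to the last position. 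As p.e.\ $\Lcal_1$-definability is clearly invariant under permutation of coordinates (rename the free variables of the defining formula), $\Gamma(\sigma)$ is p.e.\ $\Lcal_1$-definable if and only if $K(\sigma,\Id_{\Mfrak_1})$ is, i.e.\ if and only if $\sigma\asymp\Id_{\Mfrak_1}$. The identical argument carried out over $\Mfrak_2$ gives: $\Gamma(\theta_1\bullet\theta_2)$ is p.e.\ $\Lcal_2$-definable if and only if $\theta_1\bullet\theta_2\asymp\Id_{\Mfrak_2}$.

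Finally I would assemble these two equivalences. By definition, $(\theta_1,\theta_2)$ is a p.e.\ bi-interpretation exactly when $\theta_1,\theta_2$ are p.e.\ interpretations (true by hypothesis) and both $\Gamma(\theta_2\bullet\theta_1)$ and $\Gamma(\theta_1\bullet\theta_2)$ are p.e.\ definable over $\Mfrak_1$ and $\Mfrak_2$ respectively; by the preceding paragraph this holds if and only if $\theta_2\bullet\theta_1\asymp\Id_{\Mfrak_1}$ and $\theta_1\bullet\theta_2\asymp\Id_{\Mfrak_2}$, which is the claim. I do not expect a genuine obstacle here; the only points demanding a little care are the bookkeeping of ranks and ambient cartesian powers when identifying $K(\sigma,\Id_{\Mfrak_1})$ with a coordinate-permutation of $\Gamma(\sigma)$, and the verification that $\Id_{\Mfrak_i}$ has full domain, which is what guarantees that $K(\sigma,\Id_{\Mfrak_i})$ equals all of $\Gamma(\sigma)$ rather than a proper subset.
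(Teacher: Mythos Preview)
Your proposal is correct and follows essentially the same approach as the paper: both reduce to the identification of $K$ with $\Gamma$ for a self-interpretation against the identity. The only cosmetic difference is that the paper observes $K(\Id_{\Mfrak},\zeta)=\Gamma(\zeta)$ directly (with the identity in the first slot), thereby avoiding the coordinate permutation you track when working with $K(\sigma,\Id_{\Mfrak_1})$.
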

\begin{proof} This is because if $\zeta:\Mfrak\dasharrow \Mfrak$ is a p.e. interpretation, then $K(\Id_\Mfrak,\zeta)=\Gamma(\zeta)$.
\end{proof}

When a p.e. bi-interpretation between $\Mfrak_1$ and $\Mfrak_2$ exists, we say that $\Mfrak_1$ and $\Mfrak_2$ are \emph{p.e. bi-interpretable}. One has the following basic property:

\begin{lemma}[Transitivity of p.e. bi-interpretability] \label{LemmaBiIntTransitive} For $i=1,2,3$ let $\Mfrak_i$ be an $\Lcal_i$-structure. Suppose that $\Mfrak_1$ is p.e. bi-interpretable with $\Mfrak_2$ and that $\Mfrak_2$ is p.e. bi-interpretable with $\Mfrak_3$. Then $\Mfrak_1$ and $\Mfrak_3$ are p.e. bi-interpretable. 
\end{lemma}
\begin{proof} Let $\theta_1:\Mfrak_1\dasharrow \Mfrak_2$, $\theta_2:\Mfrak_2\dasharrow \Mfrak_3$, $\kappa_1:\Mfrak_2\dasharrow \Mfrak_1$, and $\kappa_2:\Mfrak_3\dasharrow \Mfrak_2$  be p.e. interpretations such that $(\theta_1,\kappa_1)$ and $(\theta_2,\kappa_2)$ are p.e. bi-interpretations. Using Lemmas \ref{LemmaCat}, \ref{LemmaHomotopyComp}, and \ref{LemmaBiIntHomotopy} we see that $(\kappa_1\bullet\kappa_2)\bullet(\theta_2\bullet\theta_1)=\kappa_1\bullet(\kappa_2\bullet\theta_2)\bullet\theta_1\asymp \kappa_1\bullet\Id_{\Mfrak_2} \bullet\theta_1=\kappa_1 \bullet\theta_1\asymp \Id_{\Mfrak_1}$ and similarly $(\theta_2\bullet\theta_1)\bullet(\kappa_1\bullet\kappa_2)\asymp \Id_{\Mfrak_3}$.  We conclude by Lemma \ref{LemmaBiIntHomotopy}.
\end{proof}
%%
%%

%%%%%%%% 
%%%%%%%%
%%%%%%%%
%%%%%%%%
\subsection{Examples}

The language of arithmetic is $\Lcal_a=\{0,1,+,\times,=\}$. By interpreting its symbols in the obvious way, every semiring (such as $\N$) becomes an $\Lcal_a$-structure. The following two simple lemmas are included with just to serve as examples of the previous notions. The proofs are direct applications of Lagrange's $4$-squares theorem.

\begin{lemma}\label{LemmaIntNZ} The $\Lcal_a$-structures $\N$ and $\Z$ are p.e. bi-interpretable.
\end{lemma}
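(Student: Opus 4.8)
The claim is that the $\Lcal_a$-structures $\N$ and $\Z$ are p.e.\ bi-interpretable. By Lemma~\ref{LemmaBiIntHomotopy}, it suffices to produce p.e.\ interpretations $\theta_1:\N\dasharrow\Z$ and $\theta_2:\Z\dasharrow\N$ and to check that $\theta_2\bullet\theta_1\asymp\Id_\N$ and $\theta_1\bullet\theta_2\asymp\Id_\Z$. The standard choices are the following. For $\theta_1:\N\dasharrow\Z$ of rank $2$, send $(a,b)\in\N^2$ to $a-b\in\Z$; its domain is all of $\N^2$, which is trivially p.e.-definable, and for each symbol $s\in\Lcal_a$ one checks that $\theta_1^*(s^\Z)$ is p.e.\ $\Lcal_a$-definable over $\N$ (e.g.\ $\theta_1^*(+^\Z)=\{(a,b,c,d,e,f): a+c+f = b+d+e\}$, and similarly for $\times$, using that multiplication of differences expands to a polynomial identity with all terms moved to make coefficients non-negative; the constants $0,1$ pull back to $\{(a,b):a=b\}$ and $\{(a,b):a=b+1\}$). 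For $\theta_2:\Z\dasharrow\N$ of rank $4$, use Lagrange's four-squares theorem: send $(x_1,x_2,x_3,x_4)\in\Z^4$ to $x_1^2+x_2^2+x_3^2+x_4^2\in\N$. This is surjective onto $\N$ by Lagrange, its domain $\Z^4$ is p.e.-definable, and the pull-backs of $0,1,+,\times$ over $\Z$ are p.e.\ (the relation "$\sum x_i^2 + \sum z_i^2 = \sum (\text{product terms})^2$" and so on), noting that over $\Z$ we may freely use subtraction since negation is p.e.-definable there via $y=-x\iff x+y=0$ — actually p.e.\ definability over $\Z$ is more generous precisely because additive inverses exist, so the pull-back conditions are easy polynomial equations.

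Having fixed $\theta_1,\theta_2$, the first composite $\zeta=\theta_2\bullet\theta_1:\N\dasharrow\N$ has rank $8$, with $\zeta(a_1,b_1,\dots,a_4,b_4)=\sum_{i=1}^4(a_i-b_i)^2$. To show $\zeta\asymp\Id_\N$ I must check that $K(\Id_\N,\zeta)=\Gamma(\zeta)=\{(n,a_1,b_1,\dots,a_4,b_4): n=\sum_i(a_i-b_i)^2\}$ is p.e.\ $\Lcal_a$-definable over $\N$. Expanding, $n=\sum_i(a_i^2+b_i^2)-2\sum_i a_ib_i$, so the condition is $n+2\sum_i a_ib_i=\sum_i(a_i^2+b_i^2)$, a single polynomial equation with non-negative coefficients once rearranged — hence p.e.\ over $\N$. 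Symmetrically, the composite $\theta_1\bullet\theta_2:\Z\dasharrow\Z$ has rank $8$ and sends $(x_1,\dots,x_4,y_1,\dots,y_4)$ to $\sum x_i^2-\sum y_i^2$; its graph $\{(z,\mathbf x,\mathbf y): z=\sum x_i^2-\sum y_i^2\}$ over $\Z$ is cut out by $z+\sum y_i^2=\sum x_i^2$, manifestly p.e.\ over $\Z$. So both homotopy-to-identity conditions hold, and Lemma~\ref{LemmaBiIntHomotopy} concludes.

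**Where the work is.** None of the steps is deep; the one place to be careful is the bookkeeping of p.e.-definability, i.e.\ making sure that every pull-back set is defined by formulas using only $\exists$, $\wedge$, $\vee$ over $\Lcal_a$ — in particular moving all "negative" terms to the other side of each polynomial equation so that the atomic formulas are genuine equalities between terms built from $0,1,+,\times$. Over $\N$ this is the only subtlety (no subtraction available), and it is exactly why the $a-b$ representation and the sum-of-two-terms rearrangements above are written the way they are; over $\Z$ it is automatic. I would present $\theta_1,\theta_2$ explicitly, list the handful of pull-back sets with their p.e.\ formulas, and then do the two one-line expansions showing the graphs of the composites are p.e., citing Lemma~\ref{LemmaBiIntHomotopy}. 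I expect the whole proof to be under half a page.
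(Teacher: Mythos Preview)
Your proof is correct. Both you and the paper choose the same $\theta_1:\N\dasharrow\Z$ of rank $2$ via $(a,b)\mapsto a-b$, and both rely on Lagrange's four-squares theorem; the only difference is in $\theta_2:\Z\dasharrow\N$. You take the rank-$4$ map $(x_1,\dots,x_4)\mapsto\sum x_i^2$ with domain all of $\Z^4$, whereas the paper uses Lagrange more economically: it first observes that $\ge$ (hence the subset $\N=\{n\in\Z:n\ge 0\}$) is p.e.\ $\Lcal_a$-definable in $\Z$, and then takes $\theta_2$ to be the rank-$1$ identity map on that domain. The paper's choice keeps the composites at rank $2$ and $2$ rather than $8$ and $8$, so the graph checks are even shorter, but your version is equally valid and the bookkeeping you outline (rearranging terms to avoid subtraction over $\N$) is exactly right.
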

\begin{proof} By Lagrange's $4$-squares theorem the binary relation $\ge$ is p.e. $\Lcal_a$-definable in $\Z$. Consider the maps $\theta_1: \N^2\to \Z$ and $\theta_2:\N=\{n\in \Z: n\ge 0\}\to \N$ given by $\theta_1(a,b)=a-b$ and $\theta_2(n)=n$. One readily checks that they define p.e. interpretations $\theta_1: \N\dasharrow \Z$ and $\theta_2:\Z\to \N$, and that these interpretations give the desired p.e. bi-interpretation.
\end{proof}
\begin{lemma}\label{LemmaQinNZ} Consider $\N$, $\Z$ and $\Q$ as $\Lcal_a$-structures. Then $\Q$ is p.e. interpretable in $\N$ and in  $\Z$. Furthermore, the map $\kappa:\Z\times (\Z-\{0\})\to \Q$ given by $\kappa(a,b)=a/b$ defines a p.e. interpretation of $\Q$ in $\Z$.
\end{lemma}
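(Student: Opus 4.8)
The plan is to prove the ``Furthermore'' clause first: that the map $\kappa:\Z\times(\Z-\{0\})\to\Q$, $(a,b)\mapsto a/b$, is a p.e. interpretation (of rank $2$) of $\Q$ in $\Z$; the p.e. interpretability of $\Q$ in $\N$ will then follow formally by composing $\kappa$ with the p.e. interpretation of $\Z$ in $\N$ furnished by Lemma \ref{LemmaIntNZ} and invoking Lemma \ref{LemmaCompInt}(iv). Condition (Int1) is immediate: every rational number is $a/b$ for suitable $a\in\Z$ and $b\in\Z-\{0\}$, so $\kappa$ is surjective onto $\Q$.

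For (Int2) the point to verify is that $\dom(\kappa)=\Z\times(\Z-\{0\})$ is p.e. $\Lcal_a$-definable over $\Z$, which reduces to the p.e. definability of $\{b\in\Z:b\ne 0\}$. This follows from Lagrange's four-squares theorem as in Lemma \ref{LemmaIntNZ}: an integer is $\ge 0$ if and only if it is a sum of four squares, so $\ge$ is p.e. definable over $\Z$, and hence so is $b\ne 0$, written as $(b-1\ge 0)\vee(-b-1\ge 0)$. This is exactly the point that is not known to have an analogue over $\Q$, which is why the explicit rank-$2$ formula for the interpretation is only claimed for $\Z$.

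For (Int3) I would treat each symbol of $\Lcal_a$ separately, clearing denominators. Writing $\mathbf{u}_i=(a_i,b_i)$ for points of $\dom(\kappa)$, one has $\kappa^*(0^\Q)=\{\mathbf{u}_1: a_1=0\wedge b_1\ne 0\}$ and $\kappa^*(1^\Q)=\{\mathbf{u}_1: a_1=b_1\wedge b_1\ne 0\}$; and since all $b_i$ are nonzero, the conditions $a_1/b_1=a_2/b_2$, $a_1/b_1+a_2/b_2=a_3/b_3$, and $(a_1/b_1)(a_2/b_2)=a_3/b_3$ are equivalent to the polynomial identities $a_1b_2=a_2b_1$, $(a_1b_2+a_2b_1)b_3=a_3b_1b_2$, and $a_1a_2b_3=a_3b_1b_2$, respectively. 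Conjoining these with the p.e. domain conditions shows that $\kappa^*(=^\Q)$, $\kappa^*(+^\Q)$, and $\kappa^*(\times^\Q)$ are p.e. $\Lcal_a$-definable over $\Z$, so $\kappa$ is a p.e. interpretation. Composing $\kappa$ with the interpretation from Lemma \ref{LemmaIntNZ} then gives a p.e. interpretation of $\Q$ in $\N$ by Lemma \ref{LemmaCompInt}(iv). I do not expect any genuine obstacle here; the only points needing attention are the p.e. definability of $\ne$ over $\Z$ (which is special to $\Z$) and the observation that the denominator-clearing equivalences are valid precisely because the $b_i$ are required to be nonzero.
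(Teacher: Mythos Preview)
Your proposal is correct and matches the paper's intended approach: the paper does not give a detailed proof but simply remarks that the lemma is a ``direct application of Lagrange's $4$-squares theorem,'' which is precisely how you handle the only nontrivial point (p.e.\ definability of $b\ne 0$ in $\Z$, hence of $\dom(\kappa)$). Your verification of (Int1)--(Int3) by clearing denominators and your deduction of the $\N$ case via composition with Lemma~\ref{LemmaIntNZ} and Lemma~\ref{LemmaCompInt}(iv) are exactly the expected argument.
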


Bi-interpretation and p.e. bi-interpretation are in fact two very different conditions. For instance, it is worth pointing out that by a celebrated theorem of J. Robinson \cite{JRobinsonQ} $\Z$ is $\Lcal_a$-definable in $\Q$, and it easily follows that $\Z$ and $\Q$ are bi-interpretable as $\Lcal_a$-structures. However, it is not known whether they are p.e. bi-interpretable ---see Section \ref{SecExamplesDPRM} below.

%%%%%%%%%%%%%%%%%%%%%%%%%%%%%%%%%%%%%%
%%%%%%%%%%%%%%%%%%%%%%%%%%%%%%%%%%%%%%
%%%%%%%%%%%%%%%%%%%%%%%%%%%%%%%%%%%%%%
%%%%%%%%%%%%%%%%%%%%%%%%%%%%%%%%%%%%%%
%%%%%%%%%%%%%%%%%%%%%%%%%%%%%%%%%%%%%%
%%%%%%%%%%%%%%%%%%%%%%%%%%%%%%%%%%%%%%

\section{Listable structures}\label{SecListable}
%%%%%%%%
%%%%%%%%
%%%%%%%%
%%%%%%%%
\subsection{Listable presentations} Let $\Mfrak$ be an $\Lcal$-structure with domain $M$. We recall from the introduction that a  \emph{listable presentation} of $\Mfrak$ is a surjective set-theoretical function $\rho:\N\to M$ such that for every $s\in \Lcal$ we have that $\rho^*(s^\Mfrak)$ is a listable set. In this setting, we say that $\Mfrak$ is a \emph{listable $\Lcal$-structure}. Of course, if $\Mfrak$ is a listable $\Lcal$-structure, then $M$ is  countable (possibly finite). We have the basic example:

\begin{lemma}\label{LemmaListN} The $\Lcal_a$-structure $(\N;0,1,+,\times,=)$ is listable, and the identity map $\N\to \N$ is a listable presentation.
\end{lemma}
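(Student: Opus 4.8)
The plan is to verify the two assertions directly. First, $\rho := \mathrm{id}_\N$ is trivially a surjection $\N\to\N$. Second, for each symbol $s\in\Lcal_a$ one must check that the pull-back $\rho^*(s^\N)$ — which, since $\rho$ is the identity, is just $s^\N$ itself, sitting inside the appropriate $\N^n$ — is a listable set.

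I would then run through the five symbols. Treating the constant symbols as nullary functions, $0^\N$ and $1^\N$ are the singletons $\{0\},\{1\}\subseteq\N^1$; each is the image of a constant total recursive function $\N\to\N$, hence listable (by Lemma~\ref{LemmaBooleanN}, or directly from the characterization of listable sets as images of total recursive functions). The relation set $=^\N=\{(a,b)\in\N^2: a=b\}$ is the image of the total recursive map $n\mapsto(n,n)$, a tuple of coordinate projections, so it is listable by Lemma~\ref{LemmaBooleanN}. The graph $+^\N=\{(a,b,c)\in\N^3: a+b=c\}$ is the image of the total recursive map $(a,b)\mapsto(a,b,a+b)$ — recursive because addition is recursive and the first two coordinates are projections — and similarly $\times^\N=\{(a,b,c): ab=c\}$ is the image of $(a,b)\mapsto(a,b,ab)$; applying Lemma~\ref{LemmaBooleanN} once more, both graphs are listable. (Alternatively, all five of these sets are plainly decidable, hence listable.) This exhausts $\Lcal_a$, so $\rho$ is a listable presentation and $(\N;0,1,+,\times,=)$ is a listable structure.

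There is no genuine obstacle here; the statement is essentially a sanity check that the definitions have been set up correctly. The only point requiring a moment's care is the bookkeeping convention by which a constant symbol $c$ contributes the set $\rho^*(c^\Mfrak)\subseteq\N^1$ — viewing $c$ as the graph of a nullary function — so that the notion "listable" applies to it unambiguously.
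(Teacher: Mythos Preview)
Your proof is correct. The paper itself gives no proof of this lemma, presenting it simply as a ``basic example'' immediately after the definition of listable presentation; your direct verification of each symbol is exactly what is implicitly intended, and your remark on the constant-symbol bookkeeping is the only subtlety worth noting.
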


Let $\rho:\N\to M$ be a listable presentation for $\Mfrak$ and let $X\subseteq M^r$. We say that $X$ is \emph{$\rho$-listable} (resp. \emph{$\rho$-decidable}) if $\rho^*(X)\subseteq \N^r$ is listable (resp. decidable). In particular,  the equality hypothesis implies that the set
$
E_\rho=\{(m,n)\in \N^2 : \rho(m)=\rho(n)\}=\rho^*(=)\subseteq \N^2
$
is listable for every listable presentation $\rho:\N\to M$. We observe that

\begin{lemma}\label{LemmaBijDec} Let $\Mfrak$ be a listable $\Lcal$-structure and let $\rho$ be a listable presentation for it. If $\rho$ is bijective, then $E_\rho$ is the diagonal in $\N^2$ and, in particular, $E_\rho$ is decidable.
\end{lemma}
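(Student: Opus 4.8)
The plan is to unwind the definitions. Suppose $\rho:\N\to M$ is a bijective listable presentation of $\Mfrak$. By the equality hypothesis we have $=^\Mfrak$ interpreted as the genuine equality relation on $M$, i.e. $=^\Mfrak=\{(m,m):m\in M\}$. First I would compute
$$
E_\rho=\rho^*(=^\Mfrak)=\{(m,n)\in\N^2:(\rho(m),\rho(n))\in{=^\Mfrak}\}=\{(m,n)\in\N^2:\rho(m)=\rho(n)\}.
$$
Now since $\rho$ is injective, $\rho(m)=\rho(n)$ holds if and only if $m=n$; hence $E_\rho=\{(m,n)\in\N^2:m=n\}$, which is exactly the diagonal $\Delta_{\N}\subseteq\N^2$.

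It then remains to observe that the diagonal is decidable. Its characteristic function $\chi_{\Delta_{\N}}:\N^2\to\{0,1\}$ sends $(m,n)$ to $1$ if $m=n$ and to $0$ otherwise; this is a total recursive function (equality of natural numbers is primitive recursive, e.g. $\chi_{\Delta_\N}(m,n)=\mathrm{sg}(|m-n|)$-type constructions, or simply note it is built from the basic recursive functions in the usual way). Therefore $\Delta_{\N}$ is decidable in the sense of the paper, and so $E_\rho$ is decidable.

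There is essentially no obstacle here; the only thing to be careful about is to invoke the equality hypothesis explicitly so that $=^\Mfrak$ really is the diagonal of $M$ (the statement would be false if $=$ were allowed an arbitrary interpretation), and to note that bijectivity of $\rho$, not merely surjectivity, is what forces $E_\rho$ down to the diagonal of $\N$. The decidability of the diagonal of $\N$ is a standard fact about recursive functions recalled implicitly in the preliminaries, so it may simply be cited.
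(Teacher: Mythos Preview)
Your proof is correct and matches the paper's treatment: the paper states this lemma as a bare observation with no written proof, so your unwinding of the definitions is exactly what is implicitly intended.
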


We say that a set $X\subseteq M^r$ is \emph{totally listable} if for every listable presentation $\rho$ of $\Mfrak$ the set $X$ is $\rho$-listable. Let us remark the following:

\begin{lemma}\label{LemmaNE} Let $\Mfrak$ be a listable $\Lcal$-structure and let $\rho$ be a listable presentation for it. The binary relation $\ne$ on $\Mfrak$ is $\rho$-listable if and only if $E_\rho\subseteq \N^2$ is decidable. In particular, $\ne$ on $\Mfrak$ is totally listable if and only if for every listable presentation $\gamma$ of $\Mfrak$ we have that $E_\gamma$ is decidable.
\end{lemma}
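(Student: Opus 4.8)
The plan is to prove Lemma \ref{LemmaNE} by directly unwinding the definitions of $\rho$-listable and $\rho$-decidable, and then invoking the standard characterization that a set $S\subseteq\N^r$ is decidable if and only if both $S$ and its complement $S^c$ are listable.

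First I would record the relevant pull-backs. Since $\ne$ on $\Mfrak$ is the relation $\{(x,y)\in M^2: x\ne y\}$, which is exactly the complement in $M^2$ of the interpretation of $=$, we have $\rho^*(\ne)=\rho^*((=)^c)$. Because $\rho^{(2)}:\N^2\to M^2$ is a function, pull-back commutes with complementation: $\rho^*((=)^c)=(\rho^*(=))^c=E_\rho^c\subseteq\N^2$, where the complement is taken in $\N^2$. Thus "$\ne$ on $\Mfrak$ is $\rho$-listable" means precisely "$E_\rho^c$ is listable". Now by the equality hypothesis $E_\rho=\rho^*(=)$ is always listable (as noted right before the statement of Lemma \ref{LemmaBijDec}). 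Hence $E_\rho^c$ is listable if and only if both $E_\rho$ and $E_\rho^c$ are listable, which by the cited standard characterization of decidable sets holds if and only if $E_\rho$ is decidable. This establishes the first assertion.

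For the "in particular" clause, I would simply unwind the definition of totally listable: $\ne$ on $\Mfrak$ is totally listable means that for every listable presentation $\gamma$ of $\Mfrak$, the relation $\ne$ on $\Mfrak$ is $\gamma$-listable. By the first part, for a fixed $\gamma$ this is equivalent to $E_\gamma$ being decidable. Quantifying over all listable presentations $\gamma$ gives the stated equivalence: $\ne$ on $\Mfrak$ is totally listable iff $E_\gamma$ is decidable for every listable presentation $\gamma$ of $\Mfrak$.

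There is essentially no obstacle here; the only point requiring a small amount of care is the commutation of pull-back with complementation, which holds because $\rho^{(2)}$ is a genuine (total) function $\N^2\to M^2$, so the preimage of a complement is the complement of the preimage. Everything else is a direct appeal to the standard facts about recursive sets recalled earlier (decidable $\iff$ the set and its complement are both listable) together with the equality hypothesis guaranteeing $E_\rho$ is always listable. I would write the argument in two short paragraphs mirroring the two assertions of the lemma.
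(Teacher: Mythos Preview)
Your proposal is correct and follows essentially the same approach as the paper's proof: identify $\rho^*(\ne)=E_\rho^c$, use that $E_\rho$ is always listable by the equality hypothesis, and invoke the standard characterization of decidable sets as those with listable complement. The paper's proof is more compressed but the logic is identical, and your treatment of the ``in particular'' clause by quantifying over all listable presentations is exactly what is intended.
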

\begin{proof} The set $E_\rho=\rho^*(=)\subseteq \N^2$ is listable. Thus, $E_\rho$ is decidable if and only if $E_\rho^c=\rho^*(\ne)\subseteq \N^2$ is listable. The latter precisely means that $\ne$ on $\Mfrak$ is $\rho$-listable.
\end{proof}
However, it can very well happen that $E_\rho$ is undecidable for a listable presentation. For instance, one can take $R\subseteq \N^2$ a listable equivalence relation which is undecidable (see \cite{Ershov}) and consider the structure $(\N/R, =)$. The quotient map $\pi:\N\to \N/R$ is a listable presentation and $E_\pi =R$ is undecidable. We have an alternative characterization of $\rho$-listable sets over a listable structure.

\begin{lemma}[Characterization of $\rho$-listable sets] \label{LemmaCharList}  Let $\Mfrak$ be a listable $\Lcal$-structure with domain $M$ and let $\rho$ be a listable presentation for $\Mfrak$. Let $X\subseteq M^r$ be a subset. The following are equivalent:
\begin{itemize}
\item[(i)] $X$ is $\rho$-listable; that is, $\rho^*(X)\subseteq \N^r$ is listable.
\item[(ii)]  There is a listable set $Z\subseteq \N^r$ with $\rho^{(r)}(Z)=X$.
\item[(iii)] Either $X$ is empty or there is $f:\N\to \N^r$ total recursive with $\rho^{(r)} (f(\N))=X$.
\end{itemize}
\end{lemma}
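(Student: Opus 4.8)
The plan is to prove the cycle (i) $\Rightarrow$ (iii) $\Rightarrow$ (ii) $\Rightarrow$ (i), using only standard facts about listable subsets of $\N^r$ recalled earlier in the excerpt (the characterization that a nonempty listable set is the image of a total recursive function, together with Lemma \ref{LemmaBooleanN} on closure properties).

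First I would do (i) $\Rightarrow$ (iii). Assume $\rho^*(X)\subseteq\N^r$ is listable and $X\neq\emptyset$; then $\rho^*(X)\neq\emptyset$ (since $\rho$ is surjective, pick any preimage of a point of $X$). By the standard characterization, there is a total recursive $f:\N\to\N^r$ with $f(\N)=\rho^*(X)$. I claim $\rho^{(r)}(f(\N))=X$. Indeed $\rho^{(r)}(\rho^*(X))=X$: the inclusion $\subseteq$ is immediate from the definition of $\rho^*(X)=(\rho^{(r)})^{-1}(X)$, and $\supseteq$ holds because $\rho^{(r)}$ is surjective onto $M^r$ (as $\rho$ is surjective onto $M$), so every point of $X$ has a preimage lying in $\rho^*(X)$. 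This gives (iii).

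Next, (iii) $\Rightarrow$ (ii) is essentially trivial: if $X=\emptyset$ take $Z=\emptyset$, which is listable; otherwise take $Z=f(\N)$, which is listable as the image of a total recursive function (Lemma \ref{LemmaBooleanN}), and $\rho^{(r)}(Z)=\rho^{(r)}(f(\N))=X$ by hypothesis. Finally, (ii) $\Rightarrow$ (i): given a listable $Z\subseteq\N^r$ with $\rho^{(r)}(Z)=X$, I want $\rho^*(X)=(\rho^{(r)})^{-1}(X)$ listable. The key observation is the identity
$$
\rho^*(X)=\{\,{\bf n}\in\N^r:\ \exists\,{\bf m}\in Z,\ (n_1,m_1)\in E_\rho,\ \ldots,\ (n_r,m_r)\in E_\rho\,\},
$$
because $\rho(n_i)=\rho(m_i)$ for all $i$ exactly says $\rho^{(r)}({\bf n})=\rho^{(r)}({\bf m})\in X$. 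Since $E_\rho=\rho^*(=)$ is listable by the equality hypothesis, the set on the right is obtained from $Z$ and $E_\rho$ by finite intersections (after suitable coordinate permutations / products with $\N$) followed by a coordinate projection, all of which preserve listability by Lemma \ref{LemmaBooleanN}. Hence $\rho^*(X)$ is listable.

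I do not expect a genuine obstacle here; the only point requiring a little care is (ii) $\Rightarrow$ (i), where one must not naively expect $\rho^{(r)}(Z)=X$ to force $Z=\rho^*(X)$ (it need not, since $Z$ can be a proper subset of $\rho^*(X)$) — the fix is precisely to saturate $Z$ under the listable equivalence relation $E_\rho$ via the displayed existential formula, which is why the equality hypothesis (guaranteeing $E_\rho$ listable) is used. The direction (i) $\Rightarrow$ (iii) also quietly uses that $\rho^*(X)$ nonempty follows from $X$ nonempty, which is where surjectivity of $\rho$ enters.
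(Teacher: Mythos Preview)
Your proof is correct. The paper organizes the cycle differently --- it shows (ii)$\Leftrightarrow$(iii) by standard theory, (i)$\Rightarrow$(ii) trivially with $Z=\rho^*(X)$, and then proves (iii)$\Rightarrow$(i) by explicitly building a partial recursive function via the $\mu$-operator whose domain is $\rho^*(X)$ --- whereas you run (i)$\Rightarrow$(iii)$\Rightarrow$(ii)$\Rightarrow$(i) and handle the nontrivial step (ii)$\Rightarrow$(i) by expressing $\rho^*(X)$ as a projection of an intersection built from $Z$ and copies of $E_\rho$, then invoking the closure properties of Lemma~\ref{LemmaBooleanN}. The substance is the same: both arguments hinge on the listability of $E_\rho$ (from the equality hypothesis) to saturate a witnessing set into the full preimage. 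Your packaging via Lemma~\ref{LemmaBooleanN} is slightly cleaner; the paper's explicit $\mu$-construction is more hands-on but amounts to unwinding exactly that closure argument.
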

\begin{proof} (ii) and (iii) are equivalent by the theory of listable sets over $\N$. 

If (i) holds, then (ii) holds with $Z=\rho^*(X)$.

Assume that (iii) holds with $X$ non-empty. Write $f=(f_1,...,f_r)$. Let $\epsilon=(\epsilon_1,\epsilon_2):\N\to \N^2$ be a total recursive function with image $E_\rho$ (this set is listable). Let $g=(g_0,g_1,...,g_r):\N\to \N^{r+1}$ be a total recursive bijection. The function
$$
\phi(x_1,...,x_r) :=\mu y [f_j(g_0(y))=\epsilon_1(g_j(y))\mbox{ and }\epsilon_2(g_j(y))=x_j\mbox{ for each }j=1,...,r]
$$
is partial recursive. By (iii), the domain of $\phi$ is $$\{(x_1,...,x_r)\in \N^r :  \mbox{there exists $n$ such that } \rho (f_j(n))=\rho(x_j)\}=\rho^*(X).$$ Hence, $\rho^*(X)$ is listable.
\end{proof}
\begin{corollary}[Finite sets are totally listable]\label{CoroFinSets} Let $\Mfrak$ be a listable $\Lcal$-structure with domain $M$ and let $X\subseteq M^r$ be finite. Then $X$ is totally listable. 
\end{corollary}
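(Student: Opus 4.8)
The plan is to reduce the statement to the case of a singleton, since a finite set is a finite union of singletons and Lemma \ref{LemmaBooleanN} (together with Lemma \ref{LemmaCharList}) tells us that the class of $\rho$-listable sets is closed under finite unions for any fixed listable presentation $\rho$. So it suffices to show that for every $\mathbf{a} = (a_1,\dots,a_r) \in M^r$, the singleton $\{\mathbf{a}\}$ is totally listable; i.e., $\{\mathbf{a}\}$ is $\rho$-listable for every listable presentation $\rho$ of $\Mfrak$.

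Fix a listable presentation $\rho:\N\to M$. Since $\rho$ is surjective, for each $j$ choose $n_j \in \N$ with $\rho(n_j) = a_j$, and set $\mathbf{n} = (n_1,\dots,n_r) \in \N^r$. The constant function $f:\N\to \N^r$ with value $\mathbf{n}$ is total recursive, and $\rho^{(r)}(f(\N)) = \{(\rho(n_1),\dots,\rho(n_r))\} = \{\mathbf{a}\}$. Hence condition (iii) of Lemma \ref{LemmaCharList} holds for $X = \{\mathbf{a}\}$, and therefore $\{\mathbf{a}\}$ is $\rho$-listable. As $\rho$ was arbitrary, $\{\mathbf{a}\}$ is totally listable, and the general finite case follows by taking finite unions.

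I do not expect any genuine obstacle here; the only mild subtlety is that the finitely many preimages $n_j$ exist purely by surjectivity of $\rho$ (no effectivity is needed, since a constant function is recursive regardless of how its value is "found"), so one should phrase the argument in terms of Lemma \ref{LemmaCharList}(iii) rather than appealing to any informal search procedure. An equally valid alternative is to invoke Lemma \ref{LemmaCharList}(ii) with the finite — hence listable — set $Z = \{\mathbf{n}^{(1)},\dots,\mathbf{n}^{(k)}\} \subseteq \N^r$ consisting of one chosen preimage-tuple for each of the $k$ points of $X$, observing $\rho^{(r)}(Z) = X$. Either route gives the result in a couple of lines.
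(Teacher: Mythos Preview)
Your argument is correct and is essentially the paper's approach: the paper also invokes surjectivity of $\rho$ together with item (iii) of Lemma~\ref{LemmaCharList}, though it does so directly for the whole finite set $X$ (via a total recursive $f:\N\to\N^r$ with finite image consisting of chosen preimage-tuples), rather than first reducing to singletons and then taking unions. Your alternative route through Lemma~\ref{LemmaCharList}(ii) is in the same spirit and equally clean.
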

\begin{proof} Let $\rho$ be a listable presentation for $\Mfrak$. Then $X$ is $\rho$-listable by surjectivity of $\rho:\N\to M$ and item (iii) in Lemma \ref{LemmaCharList}.
\end{proof}
%%%
%%
In particular, we get the following consequence which implies that the study of listable sets in structures is more interesting for structures with an infinite domain.
\begin{corollary}\label{CoroFin} Let $\Mfrak$ be a listable $\Lcal$-structure whose domain is finite. Then for every $r$, all subsets of $M^r$ are totally listable.
\end{corollary}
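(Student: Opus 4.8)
The plan is to derive Corollary \ref{CoroFin} as a quick consequence of Corollary \ref{CoroFinSets} together with the observation that a finite domain forces every subset of every power to be finite. First I would note that since $\Mfrak$ is a listable structure with finite domain $M$, say $|M| = d < \infty$, the set $M^r$ has exactly $d^r$ elements and hence is finite for every $r \ge 1$. Consequently any subset $X \subseteq M^r$ is a finite set.

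Having established that $X$ is finite, I would then invoke Corollary \ref{CoroFinSets} directly: finite subsets of $M^r$ are totally listable. Since $X \subseteq M^r$ is an arbitrary subset and we have just argued it must be finite, it is totally listable. As this holds for every $r$ and every $X \subseteq M^r$, the conclusion follows. In short, the proof is the single line: ``Every subset of $M^r$ is finite because $M$ is finite, so the claim follows from Corollary \ref{CoroFinSets}.''

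I do not expect any genuine obstacle here; the statement is essentially a reformulation of the preceding corollary once one notes the finiteness of $M^r$. The only point requiring the slightest care is making sure the empty case and the case $r \ge 1$ are both covered, but both are immediate since the empty set is finite and Corollary \ref{CoroFinSets} already handles all finite subsets uniformly (its proof uses surjectivity of $\rho$ and item (iii) of Lemma \ref{LemmaCharList}, which covers the empty set as a degenerate instance). So the write-up will be a two-sentence proof with no calculation.
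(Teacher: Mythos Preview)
Your proposal is correct and matches the paper's approach exactly: the paper presents this corollary as an immediate consequence of Corollary~\ref{CoroFinSets} (it does not even write out a separate proof), and your two-sentence argument is precisely the intended one.
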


In the special case when a listable structure has infinite domain and it admits a bijective listable presentation, the structure inherits universal listable  sets from $\N$.
\begin{lemma}[Universal $\rho$-listable sets]\label{LemmaUnivList} Let $\Mfrak$ be an $\Lcal$-structure with infinite domain $M$ and let $\rho:\N\to M$ be bijective listable presentation for $\Mfrak$.  Given $r\ge 1$, there is a $\rho$-listable set $U_r(\rho)\subseteq M^{r+1}$ with the following property: For every $\rho$-listable set $X\subseteq M^r$ there is an element $m_X\in M$ such that $X=\{{\bf x}\in M^r : (m_X,{\bf x})\in U_r(\rho)\}$.
\end{lemma}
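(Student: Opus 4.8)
The plan is to transport the universal listable set $U_r\subseteq\N^{r+1}$ from Corollary \ref{CoroUr} across the bijection $\rho$. Since $\rho:\N\to M$ is a bijective listable presentation, it has a well-defined inverse $\sigma=\rho^{-1}:M\to\N$ (a set-theoretic function, not necessarily recursive in any sense, but that does not matter). First I would set
$$
U_r(\rho):=\rho^{(r+1)}(U_r)=\{(\rho(n),\rho(a_1),\dots,\rho(a_r))\in M^{r+1} : (n,a_1,\dots,a_r)\in U_r\}.
$$
Because $U_r\subseteq\N^{r+1}$ is listable (Corollary \ref{CoroUr}) and $\rho$ is a listable presentation, Lemma \ref{LemmaCharList} (the implication (ii)$\Rightarrow$(i), with $Z=U_r$) shows immediately that $U_r(\rho)$ is $\rho$-listable, since $U_r$ is non-empty (if it were empty then no $X$ could be listable, and the statement would be about an empty domain; in any case one can just note $M$ is infinite so $N$ is nontrivial — alternatively handle the trivial degenerate case directly).

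Next I would verify the universality property. Let $X\subseteq M^r$ be $\rho$-listable, so $\rho^*(X)\subseteq\N^r$ is listable. By Corollary \ref{CoroUr} there is $n_X\in\N$ with $\rho^*(X)=\{\mathbf a\in\N^r : (n_X,\mathbf a)\in U_r\}$. Put $m_X:=\rho(n_X)\in M$. I claim $X=\{\mathbf x\in M^r : (m_X,\mathbf x)\in U_r(\rho)\}$. For the inclusion ``$\subseteq$'': if $\mathbf x\in X$, pick $\mathbf a\in\N^r$ with $\rho^{(r)}(\mathbf a)=\mathbf x$ (surjectivity); then $\mathbf a\in\rho^*(X)$, so $(n_X,\mathbf a)\in U_r$, hence $(\rho(n_X),\rho^{(r)}(\mathbf a))=(m_X,\mathbf x)\in U_r(\rho)$. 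For ``$\supseteq$'': suppose $(m_X,\mathbf x)\in U_r(\rho)$, so there is $(n,\mathbf a)\in U_r$ with $\rho(n)=m_X=\rho(n_X)$ and $\rho^{(r)}(\mathbf a)=\mathbf x$. Since $\rho$ is \emph{injective}, $n=n_X$, so $(n_X,\mathbf a)\in U_r$, whence $\mathbf a\in\rho^*(X)$, and therefore $\mathbf x=\rho^{(r)}(\mathbf a)\in X$.

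The only place the hypotheses really get used is in this last paragraph: injectivity of $\rho$ is exactly what makes the fibre of $U_r(\rho)$ over $m_X$ pull back to the single fibre of $U_r$ over $n_X$ rather than a union of fibres (which need not be a fibre of $U_r$). So I expect the ``main obstacle'', such as it is, to be nothing more than remembering to invoke injectivity at that step; the $\rho$-listability of $U_r(\rho)$ is a completely routine application of Lemma \ref{LemmaCharList}. I would also remark in passing that the statement for non-bijective $\rho$ genuinely can fail, which is why the bijectivity assumption appears — but that remark is optional and not needed for the proof itself.
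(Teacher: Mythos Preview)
Your proof is correct and follows essentially the same route as the paper's: define $U_r(\rho)=\rho^{(r+1)}(U_r)$, set $m_X=\rho(n_X)$ for the index $n_X$ of $\rho^*(X)$, and use injectivity of $\rho$ to identify the fibre over $m_X$ with the fibre over $n_X$. Your write-up is slightly more explicit than the paper's (you spell out both inclusions and the $\rho$-listability of $U_r(\rho)$ via Lemma~\ref{LemmaCharList}), but the argument is the same; the aside about $\sigma=\rho^{-1}$ and the non-emptiness of $U_r$ can be dropped as they are never used.
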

\begin{proof} Let $U_r\subseteq \N^{r+1}$ be the universal listable set provided by Corollary \ref{CoroUr} and let $U_r(\rho)=\rho^{(r+1)}(U_r)\subseteq M^{r+1}$.
Let $X\subseteq M^r$ be a $\rho$-listable set. Then $\rho^*(X)\subseteq \N^r$ is listable and there is an integer $n_{\rho^*(X)}\in \N$ such that $\rho^*(X)=\{{\bf a}\in \N^r : (n_{\rho^*(X)},{\bf a})\in U_r\}$. Let $m_X=\rho(n_{\rho^*(X)})$. Since $\rho$ is injective, we have $(n_{\rho^*(X)},{\bf a})\in U_r$ if and only if $(m_X,\rho^{(r)}({\bf a}))\in U_r(\rho)$. By surjectivity of $\rho$ we conclude  $X=\rho^{(r)}(\rho^*(X)) = \{{\bf x}\in M^r : (m_X,{\bf x})\in U_r(\rho) \}$.
\end{proof}

The condition that a listable presentation $\rho:\N\to M$ be bijective, will be completely characterized in Corollary \ref{CoroBij} below. 

%%%%%%%%
%%%%%%%%
%%%%%%%%
%%%%%%%%
\subsection{Listability and p.e. definability}

The next lemma is straightforward.

\begin{lemma}\label{LemmaBoolean} Let $\Mfrak$ be a listable $\Lcal$-structure and let $\rho$ be a listable presentation for $\Mfrak$. The class of  $\rho$-listable subsets over $\Mfrak$ is closed under cartesian products, coordinate projections, permutation of coordinates, finite unions, and finite intersections.
\end{lemma}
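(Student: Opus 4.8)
The plan is to prove each closure property of the class of $\rho$-listable subsets of $\Mfrak$ by transferring it to the already-established closure properties of listable subsets of $\N$ (Lemma \ref{LemmaBooleanN}), using the characterization of $\rho$-listable sets in Lemma \ref{LemmaCharList}. The key point throughout is that for a subset $X \subseteq M^r$, being $\rho$-listable means $\rho^*(X) = (\rho^{(r)})^{-1}(X) \subseteq \N^r$ is listable, and the operations on subsets of $M^r$ interact well with the preimage operation $\rho^*$.

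First I would handle the operations that commute strictly with $\rho^*$. For permutation of coordinates: if $\sigma$ is a permutation of $\{1,\dots,r\}$ and $X^\sigma$ denotes the corresponding permuted set, then $\rho^*(X^\sigma) = (\rho^*(X))^\sigma$, which is listable by Lemma \ref{LemmaBooleanN} since $\rho^*(X)$ is. For finite intersections and finite unions: since $\rho^{(r)}$ is a genuine function, $(\rho^{(r)})^{-1}$ commutes with both $\cap$ and $\cup$, so $\rho^*(X_1 \cap X_2) = \rho^*(X_1) \cap \rho^*(X_2)$ and $\rho^*(X_1 \cup X_2) = \rho^*(X_1) \cup \rho^*(X_2)$, again listable by Lemma \ref{LemmaBooleanN}. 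For cartesian products: if $X \subseteq M^r$ and $Y \subseteq M^s$ are $\rho$-listable, then $\rho^*(X \times Y) = \rho^*(X) \times \rho^*(Y) \subseteq \N^{r+s}$ up to the obvious identification $\N^{r+s} \cong \N^r \times \N^s$, and this product of listable sets is listable (it is the intersection of two preimages of listable sets under coordinate projections, or one invokes Lemma \ref{LemmaBooleanN} directly).

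The one operation requiring slightly more care is coordinate projection, because preimage does not commute with projection in general. Here I would use part (iii) of Lemma \ref{LemmaCharList} instead: if $X \subseteq M^r$ is $\rho$-listable and nonempty, pick a total recursive $f = (f_1, \dots, f_r) : \N \to \N^r$ with $\rho^{(r)}(f(\N)) = X$; then the projection $\pi(X) \subseteq M^{r-1}$ (say, dropping the last coordinate) satisfies $\rho^{(r-1)}((f_1, \dots, f_{r-1})(\N)) = \pi(X)$, and $(f_1, \dots, f_{r-1})$ is total recursive, so $\pi(X)$ is $\rho$-listable by the converse direction of Lemma \ref{LemmaCharList}; the case $X = \emptyset$ is trivial. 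Alternatively one can note $\rho^*(\pi(X)) = \pi(\rho^*(X))$ when $\pi$ drops a coordinate — this identity does hold for coordinate projections because $\rho^{(r)}$ acts coordinatewise — and then apply the projection-closure clause of Lemma \ref{LemmaBooleanN}. I expect the main (very mild) obstacle to be simply being careful about which set-theoretic identities between $\rho^*$ and the given operations actually hold on the nose; since $\rho$ is a function each of these is routine, and the lemma is indeed ``straightforward'' as the paper indicates.
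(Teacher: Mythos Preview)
Your proposal is correct and matches the paper's intended approach: the paper simply declares the lemma ``straightforward'' and omits the proof, relying implicitly on exactly the reduction to Lemma \ref{LemmaBooleanN} via $\rho^*$ that you spell out. One minor sharpening: in your alternative projection argument the identity $\rho^*(\pi(X)) = \pi(\rho^*(X))$ uses not only that $\rho^{(r)}$ acts coordinatewise but also that $\rho$ is surjective (which is part of the definition of a listable presentation), though your first argument via Lemma \ref{LemmaCharList}(iii) already handles this cleanly.
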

\begin{corollary}[All p.e. definable sets are totally listable] \label{CoroTotList} Let $\Mfrak$ be a listable $\Lcal$-structure. Every p.e. $\Lcal$-definable set over $\Mfrak$ is totally listable.
\end{corollary}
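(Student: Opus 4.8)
The plan is to prove Corollary \ref{CoroTotList} by structural induction on positive existential formulas, reducing everything to the closure properties of $\rho$-listable sets already established in Lemma \ref{LemmaBoolean}, together with the defining hypothesis on $\rho$. Fix a listable $\Lcal$-structure $\Mfrak$ with domain $M$, and fix an arbitrary listable presentation $\rho$ of $\Mfrak$; it suffices to show that every p.e.\ $\Lcal$-definable $X \subseteq M^r$ is $\rho$-listable, since $\rho$ was arbitrary. I would first handle atomic formulas $\phi[x_1,\dots,x_n]$. These are of the form $t_1 = t_2$ or $R(t_1,\dots,t_m)$ where the $t_i$ are $\Lcal$-terms and $R$ is a relation symbol (including the case $R$ is ``$=$'', by the equality hypothesis). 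The key sub-step is that for any $\Lcal$-term $t[x_1,\dots,x_n]$, the graph $\{(\mathbf a, b)\in M^{n+1} : b = t^\Mfrak(\mathbf a)\}$ is $\rho$-listable: this follows by an inner induction on the construction of $t$, using that the graph of each function symbol $f \in \Lcal$ is $\rho$-listable by hypothesis on $\rho$, that the graph of a constant symbol is a singleton hence $\rho$-listable by Corollary \ref{CoroFinSets}, that projections $x_i$ have $\rho$-listable (in fact $\rho$-decidable) graphs, and that composing graphs amounts to a cartesian product followed by a coordinate projection and an intersection with a diagonal-type set — all operations preserving $\rho$-listability by Lemma \ref{LemmaBoolean}. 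Once terms are under control, an atomic relation $R(t_1,\dots,t_m)$ defines the set obtained from $R^\Mfrak$ (which is $\rho$-listable by hypothesis) by pulling back along the tuple of term-graphs, again a finite sequence of products, projections, and intersections.

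Next I would do the induction step. A p.e.\ formula is built from atomic formulas using only $\wedge$, $\vee$, and $\exists$. If $\phi = \psi_1 \wedge \psi_2$ then $\phi^\Mfrak = \psi_1^\Mfrak \cap \psi_2^\Mfrak$ after padding both to the same variable list (which only introduces cartesian products with $M^k$, i.e.\ with all of $\N^k$ at the level of pull-backs, so $\rho$-listability is preserved); similarly $\phi = \psi_1 \vee \psi_2$ gives a finite union; and $\phi = \exists y\, \psi$ gives a coordinate projection. All three are covered by Lemma \ref{LemmaBoolean}. Thus by induction every p.e.\ $\Lcal$-definable set is $\rho$-listable, and since $\rho$ was an arbitrary listable presentation, $X$ is totally listable.

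I do not expect any genuinely hard obstacle here; the content is entirely bookkeeping. The one point that needs a little care is the handling of free variables and variable substitution: when I say $\phi^\Mfrak \subseteq M^r$ I must be careful that the variable tuple $(x_1,\dots,x_r)$ bounding the free variables is fixed throughout, so that conjunction, disjunction, and the treatment of terms all refer to pull-backs living in the \emph{same} $\N^r$; this is why I invoke the padding-by-$M^k$ remark. The only other subtlety is that the graph of a function \emph{symbol} is given as a subset of $M^{n+1}$ by our conventions on structures, so ``$\rho^*$ of the interpretation is listable'' literally says the graph is $\rho$-listable, which is exactly what the term induction consumes — so the set-up in the section on structures and the definition of listable presentation dovetail with no friction. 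I would therefore present the proof compactly: state the term lemma, prove it by induction in two or three lines, then run the formula induction in a short paragraph.
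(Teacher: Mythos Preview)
Your proposal is correct and follows essentially the same approach as the paper: fix an arbitrary listable presentation $\rho$, handle atomic formulas using that each $s^{\Mfrak}$ is $\rho$-listable together with the closure properties of Lemma~\ref{LemmaBoolean}, and then induct on p.e.\ formulas using those same closure properties. The paper's proof is simply the two-sentence compressed version of what you spell out; your explicit term induction and the padding remark are the bookkeeping the paper leaves implicit.
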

\begin{proof} Fix a listable presentation $\rho$. Using Lemma \ref{LemmaBoolean} and the fact that $s^\Mfrak$ is $\rho$-listable for each $s\in \Lcal$, one gets that for each atomic formula $\phi$, the set $\phi^\Mfrak$ is $\rho$-listable. Applying Lemma \ref{LemmaBoolean} again we get the result.
\end{proof}
\begin{corollary}\label{CoroNEpeEdec} Let $\Mfrak$ be a listable $\Lcal$-structure. If the binary relation $\ne$ on $\Mfrak$ is p.e. $\Lcal$-definable, then for every listable presentation $\rho$ we have that $E_\rho$ is decidable.
\end{corollary}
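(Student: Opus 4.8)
The plan is to derive this as an immediate consequence of the two previous results, Corollary \ref{CoroTotList} and Lemma \ref{LemmaNE}. Indeed, suppose the binary relation $\ne$ on $\Mfrak$ is p.e. $\Lcal$-definable. By Corollary \ref{CoroTotList}, every p.e. $\Lcal$-definable set over $\Mfrak$ is totally listable; applying this to the set $\ne\subseteq M^2$, we get that $\ne$ on $\Mfrak$ is totally listable. In particular, for any given listable presentation $\rho$ of $\Mfrak$, the relation $\ne$ is $\rho$-listable. Now invoke Lemma \ref{LemmaNE}, which asserts that $\ne$ on $\Mfrak$ is $\rho$-listable if and only if $E_\rho\subseteq \N^2$ is decidable. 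Hence $E_\rho$ is decidable, as desired.

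The argument above is essentially a two-line chaining of already-established lemmas, so there is no substantive obstacle to overcome; the only thing to be careful about is to state the chain of implications cleanly and to point to the right clauses. Concretely, I would first cite Corollary \ref{CoroTotList} to pass from ``$\ne$ is p.e. $\Lcal$-definable'' to ``$\ne$ is totally listable'', then unfold the definition of ``totally listable'' to get ``$\ne$ is $\rho$-listable for every listable presentation $\rho$'', and finally apply the characterization in Lemma \ref{LemmaNE} to conclude decidability of $E_\rho$ for each such $\rho$. Since the claimed conclusion is exactly ``for every listable presentation $\rho$, $E_\rho$ is decidable'', this completes the proof.

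One could alternatively phrase it slightly more directly via the last sentence of Lemma \ref{LemmaNE}: that sentence already packages the equivalence ``$\ne$ on $\Mfrak$ is totally listable iff $E_\gamma$ is decidable for every listable presentation $\gamma$''. So after obtaining total listability of $\ne$ from Corollary \ref{CoroTotList}, a single citation of (the second part of) Lemma \ref{LemmaNE} suffices. I expect the final written proof to be no more than two or three sentences, and the ``main obstacle'' is merely bookkeeping: making sure the hypothesis of Corollary \ref{CoroTotList} (that $\Mfrak$ is a listable structure) and of Lemma \ref{LemmaNE} are both in force, which they are since $\Mfrak$ is assumed to be a listable $\Lcal$-structure throughout.
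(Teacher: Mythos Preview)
Your proposal is correct and follows exactly the paper's approach: the paper's proof is the single line ``By Lemma \ref{LemmaNE} and Corollary \ref{CoroTotList}.'' Your expanded version simply unpacks this citation chain, and the alternative phrasing via the second sentence of Lemma \ref{LemmaNE} is equally valid.
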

\begin{proof} By Lemma \ref{LemmaNE} and Corollary \ref{CoroTotList}.
\end{proof}
Quite often it happens that the inequality $\ne$ is p.e. definable over interesting structures (see for instance \cite{MoretBailly}), so in practice it is not a restrictive assumption to require that $E_\rho$ be decidable for every listable presentation $\rho$ of a structure $\Mfrak$. We will not make this assumption in general.

%%%%%%%%
%%%%%%%%
%%%%%%%%
%%%%%%%%
\subsection{Listability and p.e. interpretations}

\begin{proposition}[Transference of listability] \label{PropIntimplList} For $i=1,2$ let $\Lcal_i$ be a language and $\Mfrak_i$ an $\Lcal_i$-structure with domain $M_i$. Let $\theta:\Mfrak_1\dasharrow \Mfrak_2$ be a p.e. interpretation of rank $r$ and let $\rho$ be a listable presentation for $\Mfrak_1$. Let $X=\rho^*(\dom(\theta))\subseteq \N^r$. There exists a total recursive function $f:\N\to \N^r$  with image $X$, and for any $f$ of this type the function $\gamma=\theta\circ \rho^{(r)}\circ f:\N\to M_2$ is a listable presentation for $\Mfrak_2$ having the following property:  For every $S\subseteq M_2^k$ we have that $S$ is $\gamma$-listable if and only if $\theta^*(S)$ is $\rho$-listable. 
\end{proposition}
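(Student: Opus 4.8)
The plan is to verify in turn: (a) that $X = \rho^*(\dom(\theta)) \subseteq \N^r$ is listable, so that a total recursive $f\colon \N\to\N^r$ with image $X$ exists; (b) that $\gamma = \theta\circ\rho^{(r)}\circ f$ is well-defined and surjective onto $M_2$; (c) that $\gamma$ is a listable presentation, i.e. $\gamma^*(s^{\Mfrak_2})$ is listable for each $s\in\Lcal_2$; and (d) the main claim relating $\gamma$-listable sets of $\Mfrak_2$ to $\rho$-listable sets of $\Mfrak_1$ via $\theta^*$.

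For (a): since $\theta$ is a p.e.\ interpretation, $\dom(\theta)$ is p.e.\ $\Lcal_1$-definable over $\Mfrak_1$, hence (by Corollary \ref{CoroTotList}) totally listable, so $X=\rho^*(\dom(\theta))$ is listable; as $X\neq\emptyset$ (because $\theta$ is surjective onto the nonempty $M_2$), the theory of listable sets over $\N$ gives a total recursive $f\colon\N\to\N^r$ with $f(\N)=X$. For (b): $\rho^{(r)}\circ f$ maps $\N$ into $\rho^{(r)}(X)$; one checks $\rho^{(r)}(X)\subseteq\dom(\theta)$ actually equals (the relevant part of) $\dom(\theta)$ after applying $\theta$, so $\gamma$ is defined on all of $\N$, and $\gamma(\N)=\theta(\rho^{(r)}(X))=\theta(\dom(\theta))=M_2$ by surjectivity of $f$ onto $X$, surjectivity of $\rho$, and (Int1). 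For (c) and (d), the unifying observation is: for any $S\subseteq M_2^k$,
$$
\gamma^*(S) = (\theta\circ\rho^{(r)}\circ f)^*(S) = (f^{(k)})^{-1}\bigl((\rho^{(rk)})^{-1}(\theta^*(S))\bigr) = (f^{(k)})^{-1}\bigl(\rho^*(\theta^*(S))\bigr),
$$
where $f^{(k)}\colon\N^k\to\N^{rk}$ is the coordinatewise $k$-fold copy of $f$ (total recursive). Granting this identity, (c) follows: for $s\in\Lcal_2$, $\theta^*(s^{\Mfrak_2})$ is p.e.\ $\Lcal_1$-definable hence totally listable, so $\rho^*(\theta^*(s^{\Mfrak_2}))$ is listable, and its preimage under the total recursive $f^{(k)}$ is listable by Lemma \ref{LemmaBooleanN}.

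For (d): if $\theta^*(S)$ is $\rho$-listable then $\rho^*(\theta^*(S))$ is listable and, by the displayed identity plus Lemma \ref{LemmaBooleanN} (preimage under a recursive function), $\gamma^*(S)$ is listable, i.e.\ $S$ is $\gamma$-listable. Conversely, suppose $S$ is $\gamma$-listable, so $\gamma^*(S)=(f^{(k)})^{-1}(\rho^*(\theta^*(S)))$ is listable. Here I would invoke Lemma \ref{LemmaListImage}: since $f^{(k)}\colon\N^k\to\N^{rk}$ is total recursive and $\rho^*(\theta^*(S))\subseteq\im(f^{(k)}) = X^k$ — this containment holds because $\theta^*(S)\subseteq\dom(\theta)=\rho^{(r)}(X)$ forces $\rho^*(\theta^*(S))\subseteq \rho^*(\rho^{(r)}(X))$, and one checks $\rho^*(\theta^*(S))$ lands in $X^k=\im(f^{(k)})$ — listability of the preimage $(f^{(k)})^{-1}(\rho^*(\theta^*(S)))$ forces listability of $\rho^*(\theta^*(S))$ itself, i.e.\ $\theta^*(S)$ is $\rho$-listable.

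The main obstacle is the careful bookkeeping in the converse direction of (d): I must verify the hypothesis $\rho^*(\theta^*(S))\subseteq\im(f^{(k)})$ of Lemma \ref{LemmaListImage}, which amounts to confirming that $\rho^*(\theta^*(S))$ is contained in $X^k$ where $X=\rho^*(\dom\theta)=\im(f)$ — this uses $S\subseteq M_2^k$ together with $\theta^{(k)}$ being defined exactly on $(\dom\theta)^k = \dom(\theta^{(k)})$, so that $\theta^*(S)\subseteq(\dom\theta)^{k}$ and hence $\rho^{*}$ of it sits inside $(\rho^{*}(\dom\theta))^{k}=X^k$. Once this inclusion is in hand, Lemmas \ref{LemmaListImage} and \ref{LemmaBooleanN} close both directions with no further effort, and (c) is just the special case $S=s^{\Mfrak_2}$ of one direction.
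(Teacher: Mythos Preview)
Your proof is correct and follows essentially the same route as the paper's: both establish the identity $\gamma^*(S)=(f^{(k)})^{-1}\bigl(\rho^*(\theta^*(S))\bigr)$ and then invoke Lemma~\ref{LemmaListImage} for the biconditional, using the containment $\rho^*(\theta^*(S))\subseteq X^k=\im(f^{(k)})$ (which follows from $\theta^*(S)\subseteq(\dom\theta)^k$). The paper is slightly terser and does not pause to note $X\neq\emptyset$, but the argument is the same.
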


\begin{proof} The set $\dom(\theta)$ is p.e. $\Lcal_1$-definable, hence, it is $\rho$-listable by Corollary \ref{CoroTotList}. Thus $X$ is listable and $f$ exists. Take any total recursive $f:\N\to \N^r$ with image $X$. By construction, $\gamma:\N\to M_2$ is surjective. Let $s\in \Lcal$ and let $n$ be such that $s^\Nfrak\subseteq M_2^n$. Then $\theta^*(s^{\Mfrak_2})\subseteq \dom(\theta)^n$ is  p.e. $\Lcal_1$-definable, hence, it is $\rho$-listable by Corollary \ref{CoroTotList}. This implies that $\rho^*\theta^*(s^\Nfrak)\subseteq \N^{rn}$ is listable. Hence, $\gamma^*(s^{\Mfrak_2})=f^*\rho^*\theta^*(s^{\Mfrak_2})\subseteq \N^n$ is listable (pre-image of a listable set under a total recursive function). Finally, let $S\subseteq M_2^k$.  Since $f$ surjects onto $X\subseteq \N^r$ and $\rho^*\theta^*(S)\subseteq X^k\subseteq \N^{rk}$, we have that $\rho^*\theta^*(S)\subseteq \im(f^{(k)})$. We conclude by Lemma \ref{LemmaListImage}: $S$ is $\gamma$-listable if and only if $f^*\rho^*\theta^*(S)=\gamma^*(S)\subseteq \N^k$ is listable, which happens if and only if $\rho^*\theta^*(S)\subseteq \N^{rk}$ is listable. Thus, $S$ is $\gamma$-listable if and only if $\theta^*(S)$ is $\rho$-listable.
\end{proof}
\begin{lemma}\label{LemmaListtoInt} Let $\Mfrak$ be a listable $\Lcal$-structure with domain $M$ and let $\rho$ be a listable presentation for it. Then $\rho:\N\to M$ defines a p.e. interpretation of $\Mfrak$ in the $\Lcal_a$-structure $\N$.
\end{lemma}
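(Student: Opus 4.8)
The plan is to verify directly that the map $\rho\colon\N\to M$, regarded as a map of rank $1$ with domain $\dom(\rho)=\N=\N^1$, satisfies the three defining conditions (Int1)--(Int3) of an interpretation $\N\dasharrow\Mfrak$, and moreover that all of the sets involved are p.e. $\Lcal_a$-definable over $\N$.

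Condition (Int1) is immediate, since $\rho$ is surjective onto $M$ by the definition of a listable presentation. Condition (Int2) is equally clear: $\dom(\rho)=\N$ is the whole domain of the $\Lcal_a$-structure $\N$ and is cut out by the p.e. formula $x=x$. So the real content is condition (Int3) and its p.e. refinement.

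For (Int3), fix $s\in\Lcal$ and consider the pull-back $\rho^*(s^\Mfrak)\subseteq\N^n$, where $n=1$ when $s$ is a constant symbol, $n$ is the arity when $s$ is a relation symbol, and $n$ is the arity plus one when $s$ is a function symbol (so that $s^\Mfrak$ is the graph). By the very definition of a listable presentation, each of these sets $\rho^*(s^\Mfrak)$ is listable; in particular, taking $s$ to be the equality symbol, $\rho^*({=})=E_\rho$ is listable by the equality hypothesis. Now I would invoke the DPRM theorem: every listable subset of $\N^n$ is Diophantine, hence p.e. $\Lcal_a$-definable over $\N$. Applying this to $\dom(\rho)$ (trivially) and to each $\rho^*(s^\Mfrak)$ shows that $\rho$ is a p.e. interpretation of $\Mfrak$ in $\N$.

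There is essentially no obstacle here beyond unwinding the definitions; the one genuinely nontrivial ingredient is the DPRM theorem itself, which is exactly what upgrades the listability of the sets $\rho^*(s^\Mfrak)$ to p.e. $\Lcal_a$-definability. I would also remark that no hypothesis on the cardinality of $M$ is needed: the argument is identical whether $M$ is finite or infinite, because DPRM applies to all listable subsets of the powers of $\N$.
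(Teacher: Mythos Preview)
Your proof is correct and follows essentially the same approach as the paper's: verify surjectivity for (Int1), note that $\dom(\rho)=\N$ is trivially p.e.\ definable for (Int2), and invoke the DPRM theorem to upgrade listability of each $\rho^*(s^{\Mfrak})$ to p.e.\ $\Lcal_a$-definability for (Int3). The paper's proof is simply a terser version of what you wrote.
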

\begin{proof}  The map $\rho:\N \to M$ is surjective. For each $s\in \Lcal$ we have that $\rho^*(s^\Mfrak)$ is listable, and by the DPRM theorem $\rho^*(s^\Mfrak)$ is p.e. $\Lcal_a$-definable over $\N$.
\end{proof}

\begin{theorem}\label{ThmListableN} Let $\Mfrak$ be an $\Lcal$-structure. Then $\Mfrak$ is p.e. interpretable in the $\Lcal_a$-structure $\N$ if and only if $\Mfrak$ is listable. 
\end{theorem}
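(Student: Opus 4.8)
The plan is to read off the theorem from the two facts immediately preceding it: Lemma \ref{LemmaListtoInt} handles one implication and Proposition \ref{PropIntimplList} handles the other, so essentially no new argument is required.

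\emph{Listable $\Rightarrow$ p.e. interpretable in $\N$.} Suppose $\Mfrak$ is listable and let $\rho:\N\to M$ be a listable presentation. By Lemma \ref{LemmaListtoInt}, the map $\rho$ is a p.e. interpretation of $\Mfrak$ in the $\Lcal_a$-structure $\N$ (of rank $1$): its domain $\N$ is p.e. $\Lcal_a$-definable, and for each $s\in\Lcal$ the set $\rho^*(s^\Mfrak)$ is listable, hence p.e. $\Lcal_a$-definable over $\N$ by the DPRM theorem. Thus $\Mfrak$ is p.e. interpretable in $\N$.

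\emph{P.e. interpretable in $\N$ $\Rightarrow$ listable.} Suppose $\theta:\N\dasharrow\Mfrak$ is a p.e. interpretation of rank $r$. I would invoke Proposition \ref{PropIntimplList} with $\Mfrak_1$ the $\Lcal_a$-structure $\N$ together with its listable presentation $\mathrm{id}_\N$ (Lemma \ref{LemmaListN}), with $\Mfrak_2=\Mfrak$, and with the interpretation $\theta$. In the notation of that proposition, $X=\mathrm{id}_\N^*(\dom(\theta))=\dom(\theta)\subseteq\N^r$; since $\theta$ is surjective onto the nonempty domain $M$, the set $X$ is a nonempty listable set, so a total recursive $f:\N\to\N^r$ with image $X$ exists. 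The proposition then produces the listable presentation $\gamma=\theta\circ\mathrm{id}_\N^{(r)}\circ f=\theta\circ f:\N\to M$ of $\Mfrak$, so $\Mfrak$ is listable. The two directions together give the stated equivalence.

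The only point one has to be slightly careful about is that $\dom(\theta)$ is nonempty so that the enumerating function $f$ exists; this is automatic from surjectivity of $\theta$ onto the (nonempty) domain. Beyond that there is no real obstacle — this is precisely why the statement is placed right after Proposition \ref{PropIntimplList} and Lemma \ref{LemmaListtoInt}, which already carry all the technical weight.
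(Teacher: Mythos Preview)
Your proof is correct and follows exactly the same route as the paper: the listable $\Rightarrow$ p.e.\ interpretable direction via Lemma~\ref{LemmaListtoInt}, and the converse via Proposition~\ref{PropIntimplList} applied with $\Mfrak_1=\N$ and the identity presentation from Lemma~\ref{LemmaListN}. Your remark about nonemptiness of $\dom(\theta)$ is a harmless clarification already implicit in the surjectivity condition (Int1).
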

\begin{proof} The forward implication follows from Lemma \ref{LemmaListN} and Proposition \ref{PropIntimplList}. The converse follows from Lemma \ref{LemmaListtoInt}
\end{proof}
\begin{corollary}\label{CoroListableZQ} The $\Lcal_a$-structures $\Z$ and $\Q$ are listable.
\end{corollary}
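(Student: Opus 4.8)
The plan is to deduce both claims immediately from Theorem \ref{ThmListableN}, which reduces listability of an $\Lcal_a$-structure to the existence of a p.e. interpretation in the $\Lcal_a$-structure $\N$. So the entire task becomes: exhibit p.e. interpretations of $\Z$ and of $\Q$ in $\N$, and both of these have already been recorded earlier in the excerpt.

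For $\Z$, I would invoke Lemma \ref{LemmaIntNZ}, which asserts that $\N$ and $\Z$ are p.e. bi-interpretable; in particular the map $\theta_1\colon\N^2\to\Z$, $\theta_1(a,b)=a-b$, is a p.e. interpretation $\N\dasharrow\Z$. By Theorem \ref{ThmListableN}, the existence of this p.e. interpretation forces $\Z$ to be listable. For $\Q$, I would invoke Lemma \ref{LemmaQinNZ}, which explicitly states that $\Q$ is p.e. interpretable in $\N$ (one can compose the interpretation $\N\dasharrow\Z$ with the map $\kappa\colon\Z\times(\Z\setminus\{0\})\to\Q$, $\kappa(a,b)=a/b$, using Lemma \ref{LemmaCompInt} to see that the composite is again p.e.). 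Again Theorem \ref{ThmListableN} then yields that $\Q$ is listable.

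There is essentially no obstacle here: the corollary is a direct consequence of the characterization theorem together with the two worked examples of p.e. interpretations into $\N$. The only point requiring a moment's care is that Theorem \ref{ThmListableN} is stated for interpretations \emph{in} $\N$ (i.e.\ $\N$ as the source structure of the interpretation $\theta\colon\N\dasharrow\Mfrak$), which is exactly the direction supplied by Lemmas \ref{LemmaIntNZ} and \ref{LemmaQinNZ}; one should not confuse this with interpretations of $\N$ inside $\Z$ or $\Q$. Once this is noted, the proof is a one-line appeal to each cited result.

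\begin{proof}[Proof of Corollary \ref{CoroListableZQ}] By Lemma \ref{LemmaIntNZ}, the $\Lcal_a$-structure $\Z$ is p.e. interpretable in the $\Lcal_a$-structure $\N$; by Lemma \ref{LemmaQinNZ}, so is $\Q$. In both cases Theorem \ref{ThmListableN} applies and shows that the structure in question is listable.
\end{proof}
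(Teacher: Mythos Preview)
Your proof is correct and takes essentially the same approach as the paper: both invoke Lemmas \ref{LemmaIntNZ} and \ref{LemmaQinNZ} to obtain p.e. interpretations of $\Z$ and $\Q$ in $\N$, and then apply Theorem \ref{ThmListableN}. Your added remark about checking the direction of interpretation is a nice clarification, but otherwise there is no substantive difference.
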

\begin{proof} By Lemmas \ref{LemmaIntNZ} and \ref{LemmaQinNZ}, and by Theorem \ref{ThmListableN}
\end{proof}
The previous corollary admits, of course, a direct proof.
%%%%%%%%
%%%%%%%%
%%%%%%%%
%%%%%%%%
\subsection{Equivalence of listable presentations} \label{SecEquivListStr}
\begin{lemma} Let $\Mfrak$ be a listable $\Lcal$-structure and let $\rho$ and $\gamma$ be listable presentations for it. If there  is a total recursive function $\phi:\N\to \N$ with $\gamma=\rho\circ \phi$, then there is a total recursive function $\psi:\N\to \N$ with $\rho=\gamma\circ \psi$. 
\end{lemma}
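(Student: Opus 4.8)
The statement is a symmetry assertion: if one listable presentation factors through another via a total recursive map, then the reverse factorization also exists. The plan is to build $\psi$ by a search (minimalization) argument. Intuitively, given $n\in\N$, we want to find some $m$ with $\gamma(m)=\rho(n)$; then set $\psi(n)=m$, so that $\gamma(\psi(n))=\rho(n)$ as desired. The existence of such an $m$ for every $n$ is exactly surjectivity of $\gamma$, which holds since $\gamma$ is a listable presentation. The subtlety is that we must \emph{recursively} locate such an $m$, and we cannot test $\gamma(m)=\rho(n)$ directly because equality in $M$ is only semi-decidable along a presentation. This is where the hypothesis $\gamma=\rho\circ\phi$ enters.

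**Key steps.** First I would record that $E_\rho=\rho^*(=)=\{(a,b)\in\N^2:\rho(a)=\rho(b)\}$ is listable, by the equality hypothesis applied to the listable presentation $\rho$ (as noted in Section~\ref{SecListable}). Fix a total recursive function $\epsilon=(\epsilon_1,\epsilon_2):\N\to\N^2$ whose image is $E_\rho$, which exists since $E_\rho$ is nonempty (it contains the diagonal) and listable. Second, using $\gamma=\rho\circ\phi$, observe that for $m,n\in\N$ we have $\gamma(m)=\rho(n)$ if and only if $\rho(\phi(m))=\rho(n)$, i.e.\ if and only if $(\phi(m),n)\in E_\rho$. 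Third, define
$$
\psi(n):=\mu y\,[\,\phi(\epsilon_1(y))\ \text{halts and equals}\ \epsilon_1(y)\ \text{is realized}\,],
$$
or more cleanly: since $\phi$ is total recursive and $\epsilon$ is total recursive, the predicate ``$\phi(\epsilon_1(y))=\epsilon_1(y)$ and $\epsilon_2(y)=n$'' — wait, I must be careful about which coordinate plays which role. Let me set it up as: search for $y$ with $\phi(\epsilon_1(y))=\epsilon_2(y)$ is not what I want either. The correct formulation: I want $y$ encoding a pair $(\epsilon_1(y),\epsilon_2(y))\in E_\rho$ with $\epsilon_2(y)=n$ and then output $\epsilon_1(y)$ reinterpreted — but $\epsilon_1(y)$ lives in the $\rho$-index world, and I need a $\gamma$-index, i.e.\ a $\phi$-preimage. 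So instead I search over $m$ directly: the predicate $P(n,m)\equiv (\phi(m),n)\in E_\rho$ is decidable? No — $E_\rho$ is only listable. Hence I use the enumeration $\epsilon$ of $E_\rho$ and search over $y$:
$$
\psi(n):=\phi\text{-free:}\quad \epsilon_1\bigl(\mu y\,[\,\epsilon_2(y)=n\ \wedge\ \exists m\le y,\ \phi(m)=\epsilon_1(y)\,]\bigr),
$$
which is still not right because I need the output to be a $\gamma$-index $m$, not the $\rho$-index $\epsilon_1(y)$. The clean fix is to pair up: let $g=(g_1,g_2):\N\to\N^2$ be a total recursive bijection and define
$$
\psi(n):=g_1\Bigl(\mu y\,[\,\phi(g_1(y))=\epsilon_1(g_2(y))\ \wedge\ \epsilon_2(g_2(y))=n\,]\Bigr).
$$
Here $g_1(y)$ is the candidate $\gamma$-index $m$, and $g_2(y)$ indexes an element $(\epsilon_1(g_2(y)),\epsilon_2(g_2(y)))$ of $E_\rho$; the conditions say $\rho(\phi(m))=\rho(\epsilon_1(g_2(y)))$ and $\rho(\epsilon_2(g_2(y)))=\rho(n)$, whence $\gamma(m)=\rho(\phi(m))=\rho(\epsilon_1(g_2(y)))=\rho(\epsilon_2(g_2(y)))=\rho(n)$. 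Since all of $\phi,g_1,g_2,\epsilon_1,\epsilon_2$ are total recursive and the matched conditions are equalities of values of total recursive functions, the body of the $\mu$-operator is a decidable predicate, so $\psi$ is a partial recursive function.

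**Totality and conclusion.** It remains to check $\psi$ is total, i.e.\ the search always succeeds. Given $n$, by surjectivity of $\gamma$ there is an $m$ with $\gamma(m)=\rho(n)$, i.e.\ $\rho(\phi(m))=\rho(n)$, i.e.\ $(\phi(m),n)\in E_\rho$. Since $\epsilon$ enumerates $E_\rho$, there is some $k$ with $(\epsilon_1(k),\epsilon_2(k))=(\phi(m),n)$; taking $y=g^{-1}(m,k)$ satisfies $\phi(g_1(y))=\phi(m)=\epsilon_1(k)=\epsilon_1(g_2(y))$ and $\epsilon_2(g_2(y))=\epsilon_2(k)=n$, so the search halts. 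Thus $\psi:\N\to\N$ is total recursive, and by the computation above $\gamma(\psi(n))=\rho(n)$ for all $n$, i.e.\ $\rho=\gamma\circ\psi$, as required. The main obstacle — and the only real content — is recognizing that equality in $M$ is not directly testable, so one cannot naively search over $\gamma$-indices; one must route the search through an explicit enumeration of the listable set $E_\rho$ and use the hypothesis $\gamma=\rho\circ\phi$ to convert the $M$-level equation $\gamma(m)=\rho(n)$ into membership in $E_\rho$. This is the same device used in the proof of Lemma~\ref{LemmaCharList}.
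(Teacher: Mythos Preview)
Your proof is correct and follows essentially the same approach as the paper: both arguments perform an unbounded search over pairs consisting of a candidate $\gamma$-index $m$ and an element of $E_\rho$, using the hypothesis $\gamma=\rho\circ\phi$ to convert the equation $\gamma(m)=\rho(n)$ into the listable condition $(\phi(m),n)\in E_\rho$. The only cosmetic difference is that the paper enumerates $\N\times E_\rho$ directly via a single total recursive $g=(g_0,g_1,g_2):\N\to\N^3$, whereas you separate this into a pairing bijection $g:\N\to\N^2$ composed with an enumeration $\epsilon:\N\to E_\rho$; the resulting $\psi$ and its verification are the same.
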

\begin{proof} Let $g=(g_0,g_1,g_2):\N\to \N^3$ be a total recursive function with image $\N\times E_\rho$, which exists since $E_\rho$ is listable. Let us define the function $\psi:\N\to \N$ by
$$
\psi(x)= g_0\left(\mu y[\phi(g_0(y))=g_1(y)\mbox{ and }x=g_2(y)]\right).
$$
Then $\psi:\N\to\N$ is total recursive. Given $x$, the integer $n=\psi(x)$ satisfies $(\phi(n),x)\in E_\rho$, that is, $\gamma(n)=\rho(\phi(n))=\rho(x)$. This proves $\gamma\circ \psi = \rho$.
\end{proof}
%%
%%

%%%%%%%%%%%%%%

If $\rho$ and $\gamma$ are listable presentations of an $\Lcal$-structure $\Mfrak$,  we write $\rho\approx \gamma$ if there is a total recursive function $\phi:\N\to \N$ with $\gamma=\rho\circ \phi$. From the previous lemma it follows that

\begin{proposition} Given a listable $\Lcal$-structure $\Mfrak$, the relation $\approx$ is an equivalence relation on the set of listable presentations of $\Mfrak$.
\end{proposition}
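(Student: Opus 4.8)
The statement to prove is that $\approx$ is an equivalence relation on the set of listable presentations of a listable $\Lcal$-structure $\Mfrak$.

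The plan is to verify the three defining properties directly, using the lemma that immediately precedes the statement (which supplies symmetry) and basic closure facts about total recursive functions.

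\medskip

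\emph{Reflexivity.} For any listable presentation $\rho:\N\to M$, the identity function $\mathrm{id}_\N:\N\to\N$ is total recursive, and $\rho = \rho\circ \mathrm{id}_\N$, so $\rho\approx\rho$.

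\emph{Symmetry.} Suppose $\rho\approx\gamma$, i.e.\ there is a total recursive $\phi:\N\to\N$ with $\gamma=\rho\circ\phi$. The preceding lemma produces a total recursive $\psi:\N\to\N$ with $\rho=\gamma\circ\psi$, which is precisely the statement $\gamma\approx\rho$.

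\emph{Transitivity.} Suppose $\rho\approx\gamma$ and $\gamma\approx\delta$, witnessed by total recursive functions $\phi,\psi:\N\to\N$ with $\gamma=\rho\circ\phi$ and $\delta=\gamma\circ\psi$. Then $\delta=\rho\circ(\phi\circ\psi)$, and $\phi\circ\psi:\N\to\N$ is total recursive since $\Rcal$ is closed under composition and the composition of total functions is total. Hence $\rho\approx\delta$.

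\medskip

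There is essentially no obstacle here: reflexivity and transitivity are immediate from the fact that total recursive functions contain the identity and are closed under composition, and symmetry is exactly the content of the lemma just proved (whose nontrivial point was the use of the listability of $E_\rho$ and the minimalization operator to build the ``inverse up to $\approx$'' function $\psi$). If anything deserves a word of care, it is only the remark that composing two total recursive functions again yields a \emph{total} recursive function, which is what lets transitivity go through without the domain subtleties that would arise for partial recursive functions.
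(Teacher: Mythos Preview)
Your proof is correct and matches the paper's approach: the paper simply states that the proposition follows from the preceding lemma (which supplies symmetry), leaving reflexivity and transitivity as the trivial checks you have spelled out.
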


Hence, if $\rho\approx \gamma$ we say that $\rho$ and $\gamma$ are \emph{equivalent}. In the special case that all listable presentations of a structure are equivalent to each other, we say that the structure is \emph{uniquely listable}. 

\begin{proposition}\label{PropFinUL} Let $\Mfrak$ be a listable $\Lcal$-structure. If the domain of $\Mfrak$ is finite, then $\Mfrak$ is uniquely listable.
\end{proposition}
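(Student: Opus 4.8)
The plan is to use the finiteness of the domain to make all the relevant fibres decidable, and then to build the transition function by an honest search. Write $M=\{m_1,\dots,m_k\}$ and let $\rho,\gamma\colon\N\to M$ be two arbitrary listable presentations of $\Mfrak$. To conclude that $\Mfrak$ is uniquely listable it suffices to produce a total recursive function $\phi\colon\N\to\N$ with $\gamma=\rho\circ\phi$, which gives $\rho\approx\gamma$; since $\rho,\gamma$ are arbitrary, this shows all listable presentations are pairwise equivalent.

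The first step is to observe that every fibre of $\rho$ and of $\gamma$ is decidable. Each $\{m_i\}$ is a finite subset of $M$, hence totally listable by Corollary~\ref{CoroFinSets}, so $\rho^{-1}(m_i)=\rho^*(\{m_i\})$ and $\gamma^{-1}(m_i)=\gamma^*(\{m_i\})$ are listable subsets of $\N$. Since $\N=\rho^{-1}(m_1)\sqcup\cdots\sqcup\rho^{-1}(m_k)$, the complement of each $\rho^{-1}(m_i)$ equals the finite union $\bigcup_{j\ne i}\rho^{-1}(m_j)$, which is listable; thus each $\rho^{-1}(m_i)$, having listable complement, is decidable, and likewise each $\gamma^{-1}(m_i)$.

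The second step assembles $\phi$. Put $D=\bigcup_{i=1}^{k}\bigl(\rho^{-1}(m_i)\times\gamma^{-1}(m_i)\bigr)\subseteq\N^2$; this is a finite union of products of decidable sets, hence decidable, and it equals $\{(y,n)\in\N^2:\rho(y)=\gamma(n)\}$. Because $\rho$ is surjective, for every $n$ the fibre $\rho^{-1}(\gamma(n))$ is non-empty, so the minimalisation has a witness and $\phi(n):=\mu y\,[(y,n)\in D]$ defines a total recursive function with $\rho(\phi(n))=\gamma(n)$ for all $n$. Hence $\gamma=\rho\circ\phi$, i.e. $\rho\approx\gamma$, and since $\rho,\gamma$ were arbitrary, $\Mfrak$ is uniquely listable.

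The argument is essentially routine; the only point where the hypothesis is genuinely used is the upgrade from ``listable'' to ``decidable'' for the fibres, which rests on the elementary observation that a set whose complement is a finite union of listable sets is itself decidable. I do not expect any obstacle beyond this bookkeeping.
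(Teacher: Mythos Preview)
Your proof is correct and follows essentially the same approach as the paper: both use the finiteness to upgrade the listable fibres to decidable ones via the partition argument, then build a total recursive $\phi$ with $\gamma=\rho\circ\phi$. The only cosmetic difference is that the paper fixes one witness $y_m\in\rho^{-1}(m)$ per element and defines $\phi$ by decidable cases, whereas you assemble the decidable set $D$ and extract $\phi$ by minimalisation; these are interchangeable.
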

\begin{proof} Let $\rho$ and $\gamma$ be listable presentations of $\Mfrak$. Let $M$ be the domain of $\Mfrak$ and for each $m\in M$ let $A_m=\gamma^{-1}(m)$. Then $\{A_m : m\in M\}$ is a partition of $\N$ and each $A_m$ is listable, hence, decidable (it has listable complement). For each $m\in M$ choose $y_m\in \rho^{-1}(m)$. Define $\phi:\N\to \N$ by $\phi(x)=y_m$ if $x\in A_m$. Then $\phi$ is total recursive (defined by decidable cases) and $\gamma=\rho\circ \phi$. 
\end{proof}

From Lemma \ref{LemmaBooleanN} we deduce
\begin{lemma}[Equivalent listable presentations have the same listable sets] \label{LemmaComparisonListSets} Let $\rho$ and $\gamma$ be listable presentations of an $\Lcal$-structure $\Mfrak$. If $\rho\approx \gamma$, then the class of $\rho$-listable sets is the same as the class of $\gamma$-listable sets over $\Mfrak$. 
\end{lemma}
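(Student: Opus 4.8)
The plan is to unwind the definition of $\approx$ and then push everything through the closure properties in Lemma \ref{LemmaBooleanN}. Since $\rho\approx\gamma$, fix a total recursive function $\phi:\N\to\N$ with $\gamma=\rho\circ\phi$; by the lemma immediately preceding the definition of $\approx$ (which provides the symmetry of that relation) there is also a total recursive function $\psi:\N\to\N$ with $\rho=\gamma\circ\psi$. For a fixed arity $r$, applying these maps coordinatewise gives total recursive functions $\phi^{(r)},\psi^{(r)}:\N^r\to\N^r$ satisfying $\gamma^{(r)}=\rho^{(r)}\circ\phi^{(r)}$ and $\rho^{(r)}=\gamma^{(r)}\circ\psi^{(r)}$.

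Next I would translate this into a relation between pull-backs. For any $X\subseteq M^r$ we have $\gamma^*(X)=(\gamma^{(r)})^{-1}(X)=(\phi^{(r)})^{-1}\bigl((\rho^{(r)})^{-1}(X)\bigr)=(\phi^{(r)})^{-1}(\rho^*(X))$, and symmetrically $\rho^*(X)=(\psi^{(r)})^{-1}(\gamma^*(X))$. Hence, if $X$ is $\rho$-listable, i.e. $\rho^*(X)\subseteq\N^r$ is listable, then $\gamma^*(X)$ is the preimage of a listable set under the total recursive function $\phi^{(r)}$, so it is listable by Lemma \ref{LemmaBooleanN}; thus $X$ is $\gamma$-listable. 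The same argument with the roles of $\rho$ and $\gamma$ exchanged (using $\psi$ in place of $\phi$) gives the reverse inclusion, so the two classes of listable sets coincide.

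There is essentially no obstacle here; the proof is purely formal. The only point that needs a word of care is that the functions $\phi$ and $\psi$ coming out of $\rho\approx\gamma$ are \emph{total} recursive, which is what licenses the use of ``preimage under a recursive function'' in Lemma \ref{LemmaBooleanN} for all of $\N^r$ at once.
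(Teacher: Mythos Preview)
Your proof is correct and is exactly the argument the paper has in mind: the paper simply writes ``From Lemma~\ref{LemmaBooleanN} we deduce'' and states the lemma without further proof, and your write-up is the natural unpacking of that one-line justification via the total recursive functions $\phi,\psi$ witnessing $\rho\approx\gamma$.
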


We also have a partial converse to the previous result. First we need:

\begin{lemma}\label{LemmaBij} Let $\Mfrak$ be a listable $\Lcal$-structure with infinite domain $M$. Let $\rho$ be a listable presentation for $\Mfrak$ and assume that $E_\rho$ is decidable. There is an injective total recursive function $\iota_\rho: \N \to \N$ such that $\rho\circ \iota_\rho : \N \to M$ is bijective.
\end{lemma}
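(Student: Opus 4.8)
The goal is to "thin out" the given presentation $\rho$ so that it visits each element of $M$ exactly once, while keeping everything recursive. The key point is that decidability of $E_\rho$ lets us recognize, as we scan $\N$ in increasing order, whether the current integer represents a genuinely new element of $M$ or an element already seen.

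First I would construct $\iota_\rho$ by primitive recursion on $\N$. Set $\iota_\rho(0)=0$. Having defined $\iota_\rho(0),\dots,\iota_\rho(n)$, let $\iota_\rho(n+1)$ be the least integer $m$ such that $(m,\iota_\rho(j))\notin E_\rho$ for all $j\le n$; that is, $m$ is the least integer with $\rho(m)\ne\rho(\iota_\rho(j))$ for every $j\le n$. This search is expressed by a bounded (in the sense of only needing the already-computed finite list) minimalization applied to the characteristic function of $E_\rho$, so since $E_\rho$ is decidable the resulting function $\iota_\rho:\N\to\N$ is total recursive; one can make this precise by encoding the finite sequence $(\iota_\rho(0),\dots,\iota_\rho(n))$ via a standard recursive pairing and checking that the defining scheme falls under composition, recursion and $\mu$ applied to total recursive functions. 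By construction $\rho(\iota_\rho(n+1))$ differs from each of $\rho(\iota_\rho(0)),\dots,\rho(\iota_\rho(n))$, so $\rho\circ\iota_\rho$ is injective, and hence $\iota_\rho$ is injective as well.

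Next I would verify surjectivity of $\rho\circ\iota_\rho$. Since $M$ is infinite, the search in the recursive step always succeeds: at stage $n$ only finitely many $\rho$-classes have been used, and $\rho$ is surjective onto the infinite set $M$, so some integer $m$ has $\rho(m)$ outside that finite set; thus $\iota_\rho$ is genuinely total (no stage gets stuck). To see every element of $M$ is hit, fix $a\in M$ and pick any $m_0\in\rho^{-1}(a)$. I claim $a=\rho(\iota_\rho(n))$ for some $n\le m_0$. Indeed, by induction on stages one checks that $\{\iota_\rho(0),\dots,\iota_\rho(n)\}$ always contains an integer $\le m$ whose $\rho$-value equals $\rho(m)$, for every $m\le$ (the value searched at stage $n+1$); more cleanly, since $\iota_\rho$ is strictly increasing on the indices it skips only integers whose $\rho$-class already appeared with a smaller representative, so by the time the running maximum of $\iota_\rho$ exceeds $m_0$ the class of $a$ has been enumerated. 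Formalizing this monotonicity argument — that $\rho\circ\iota_\rho$ enumerates the $\rho$-classes "in order of least representative" — is the one place that needs a careful (though elementary) induction.

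The main obstacle, such as it is, is purely bookkeeping: making the primitive-recursive definition of $\iota_\rho$ rigorous without appealing to an informal algorithm, i.e. exhibiting it explicitly as built from $S$, projections, the constant $0$, the characteristic function $\chi_{E_\rho}$, composition, recursion, and $\mu$, using a recursive coding of finite sequences to carry the history $(\iota_\rho(0),\dots,\iota_\rho(n))$. Once the recursion is set up this way, totality follows from infiniteness of $M$ and surjectivity of $\rho$, and the injectivity and surjectivity of $\rho\circ\iota_\rho$ follow from the "least representative" description just sketched.
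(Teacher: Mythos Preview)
Your proposal is correct and is essentially the same argument as the paper's: define $\iota_\rho(0)=0$ and for $n>0$ take $\iota_\rho(n)$ to be the least $y$ with $\sum_{j<n}\chi_{E_\rho}(\iota_\rho(j),y)=0$, using decidability of $E_\rho$ for recursiveness, infiniteness of $M$ for totality, and the ``least new representative'' description for bijectivity of $\rho\circ\iota_\rho$. The paper is more terse (it simply asserts bijectivity once the definition is in place), while you spell out the course-of-values encoding and the surjectivity induction more explicitly, but the underlying construction is identical.
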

\begin{proof}  Let $\chi_{E_\rho}$ be the characteristic function of $E_\rho$, which is total recursive by assumption. We define a function $h:\N\to \N$ as follows: We set $h(0)=0$ and for $x>0$ we let
$$
h(x)=\mu y \left[\sum_{j<x}\chi_{E_\rho}(h(j),y)=0\right].
$$
The function $h$ is total because $M$ is infinite, and it is recursive because it is defined by minimalization and course-of-values recursion. Given $x>0$, we note that $h(x)$ is the least value of $y$ for which $\rho(y)\ne \rho(h(j))$ for each $j<x$, so $\rho\circ h$ is bijective and we can take $\iota_\rho=h$.
\end{proof}

\begin{corollary}[Making a listable presentation bijective]\label{CoroBij} Let $\Mfrak$ be an infinite $\Lcal$-structure with a listable presentation $\rho$. The following are equivalent:
\begin{itemize}
\item[(i)] $\ne$ is $\rho$-listable over $\Mfrak$.
\item[(ii)]  $E_\rho$ is decidable.
\item[(iii)] There is a bijective listable presentation $\tilde{\rho}$ for $\Mfrak$ which satisfies $\tilde{\rho}\approx \rho$. 
\end{itemize}
\end{corollary}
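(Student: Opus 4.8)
The plan is to prove the cyclic chain $(\mathrm{i})\Leftrightarrow(\mathrm{ii})\Rightarrow(\mathrm{iii})\Rightarrow(\mathrm{ii})$, quoting at each step a result already established. First, $(\mathrm{i})\Leftrightarrow(\mathrm{ii})$ is nothing but Lemma \ref{LemmaNE} applied to $\rho$ itself: since $E_\rho=\rho^*(=)$ is always listable, $\ne$ being $\rho$-listable means precisely that $E_\rho$ has listable complement, i.e. is decidable.

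For $(\mathrm{ii})\Rightarrow(\mathrm{iii})$ I would feed the hypothesis straight into Lemma \ref{LemmaBij}: since $M$ is infinite and $E_\rho$ is decidable, that lemma produces an injective total recursive $\iota_\rho\colon\N\to\N$ for which $\tilde\rho:=\rho\circ\iota_\rho\colon\N\to M$ is bijective. Two small verifications remain. That $\tilde\rho$ is again a listable presentation: it is surjective, and for each $n$-ary symbol $s\in\Lcal$ one has $\tilde\rho^*(s^{\Mfrak})=\iota_\rho^*(\rho^*(s^{\Mfrak}))$, the preimage of a listable set under a total recursive map, hence listable by Lemma \ref{LemmaBooleanN}. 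That $\tilde\rho\approx\rho$: by the very definition of $\approx$, the identity $\tilde\rho=\rho\circ\iota_\rho$ with $\iota_\rho$ total recursive exhibits $\rho\approx\tilde\rho$, and $\approx$ is symmetric, being an equivalence relation. This gives (iii).

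Finally, for $(\mathrm{iii})\Rightarrow(\mathrm{ii})$, suppose $\tilde\rho$ is a bijective listable presentation with $\tilde\rho\approx\rho$. By Lemma \ref{LemmaBijDec}, $E_{\tilde\rho}$ is the diagonal of $\N^2$, hence decidable. Since $\tilde\rho\approx\rho$, there is a total recursive $\phi\colon\N\to\N$ with $\rho=\tilde\rho\circ\phi$, and then $E_\rho=\phi^*(E_{\tilde\rho})$ is the preimage of a decidable set under a total recursive map, hence decidable. (One could also route this through Lemma \ref{LemmaComparisonListSets}: equivalent presentations have the same listable sets, so $\ne$ is $\rho$-listable iff it is $\tilde\rho$-listable, the latter holding by Lemma \ref{LemmaNE} since $E_{\tilde\rho}$ is decidable; applying Lemma \ref{LemmaNE} to $\rho$ then yields that $E_\rho$ is decidable.)

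I do not expect a genuine obstacle here: the corollary is an assembly of Lemmas \ref{LemmaNE}, \ref{LemmaBijDec}, \ref{LemmaBij}, \ref{LemmaComparisonListSets} together with the formal properties of $\approx$. If one insists on naming the step carrying the real content, it is the construction inside Lemma \ref{LemmaBij} --- a course-of-values recursion selecting a system of representatives for the fibres of $\rho$ --- while everything contributed by the present statement is bookkeeping with images and preimages under total recursive functions.
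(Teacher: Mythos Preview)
Your proof is correct and follows essentially the same route as the paper: the equivalence $(\mathrm{i})\Leftrightarrow(\mathrm{ii})$ via Lemma~\ref{LemmaNE}, $(\mathrm{ii})\Rightarrow(\mathrm{iii})$ via Lemma~\ref{LemmaBij} with $\tilde\rho=\rho\circ\iota_\rho$, and $(\mathrm{iii})\Rightarrow(\mathrm{ii})$ starting from Lemma~\ref{LemmaBijDec}. Your primary argument for $(\mathrm{iii})\Rightarrow(\mathrm{ii})$ pulls back $E_{\tilde\rho}$ directly along $\phi$, whereas the paper routes through Lemma~\ref{LemmaComparisonListSets} applied to the diagonal $\Delta\subseteq M^2$ (which you mention as your alternative); both are immediate.
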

\begin{proof} Items (i) and (ii) are equivalent by Lemma \ref{LemmaNE}.

Assuming (ii), Lemma \ref{LemmaBij} allows us to  take $\tilde{\rho}=\rho\circ \iota_\rho$. This proves (iii).

Conversely, if (iii) holds, then $E_{\tilde{\rho}}$ is decidable by Lemma \ref{LemmaBijDec}. Thus, the diagonal $\Delta\subseteq M^2$ is $\tilde{\rho}$-decidable, which implies that $\Delta$ is $\rho$-decidable by Lemma \ref{LemmaComparisonListSets}. Hence, $E_\rho=\rho^*(\Delta)$ is decidable.
\end{proof}
With Corollary \ref{CoroBij} we can give a refinement of Lemma \ref{LemmaComparisonListSets}.
\begin{theorem}[Equivalence and comparison of listable sets]\label{ThmEquivCompare} Let $\Mfrak$ be a listable $\Lcal$-structure and let $\rho,\gamma$ be listable presentations. In the following (i) implies (ii), and (ii) implies (iii):
\begin{itemize}
\item[(i)] $\rho\approx \gamma$
\item[(ii)] The class of $\rho$-listable sets is the same as the class of $\gamma$-listable sets.
\item[(iii)] The class of $\rho$-listable sets is contained in  the class of $\gamma$-listable sets.
\end{itemize}
Furthermore, if $E_\rho$ is decidable, then the three properties are equivalent.
\end{theorem}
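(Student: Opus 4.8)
The implication $(i)\Rightarrow(ii)$ is exactly Lemma~\ref{LemmaComparisonListSets}, and $(ii)\Rightarrow(iii)$ is trivial, so the whole content of the statement is to show $(iii)\Rightarrow(i)$ \emph{under the extra assumption that $E_\rho$ is decidable}. If $M$ is finite then $\Mfrak$ is uniquely listable by Proposition~\ref{PropFinUL} and all three conditions hold automatically, so I would assume $M$ infinite from here on.

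\textbf{Step 1: reduce to bijective presentations.} Since $E_\rho$ is decidable, $\ne$ on $\Mfrak$ is $\rho$-listable (its $\rho$-pullback is the decidable set $E_\rho^c$), hence $\gamma$-listable by $(iii)$, and therefore $E_\gamma$ is decidable by Lemma~\ref{LemmaNE}. Corollary~\ref{CoroBij} then provides bijective listable presentations $\tilde\rho\approx\rho$ and $\tilde\gamma\approx\gamma$. By Lemma~\ref{LemmaComparisonListSets} the class of $\tilde\rho$-listable sets equals that of $\rho$-listable sets, and likewise for $\tilde\gamma$, so $(iii)$ still holds for the pair $(\tilde\rho,\tilde\gamma)$; and since $\approx$ is an equivalence relation, $\rho\approx\gamma$ holds iff $\tilde\rho\approx\tilde\gamma$ does. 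Hence I may assume $\rho,\gamma:\N\to M$ are both bijective.

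\textbf{Step 2: build the comparison map.} I would set $\beta=\rho^{-1}\circ\gamma:\N\to\N$, a total bijection; since $\gamma=\rho\circ\beta$, it suffices to prove that $\beta$ is recursive. Because $\rho$ and $\gamma$ are injective, for any listable $Z\subseteq\N^r$ one checks that $\gamma^*\bigl(\rho^{(r)}(Z)\bigr)=(\beta^{(r)})^{-1}(Z)$, and the left-hand side is listable by $(iii)$ (as $\rho^{(r)}(Z)$ is $\rho$-listable); thus $(\beta^{(r)})^{-1}(Z)$ is listable for every listable $Z\subseteq\N^r$. Taking $Z=\{(a,b)\in\N^2:a=b+1\}$, the listable set $\{(n_1,n_2):\beta(n_1)=\beta(n_2)+1\}$ is — because $\beta$ is a bijection — the graph of the \emph{total} function $\sigma(n)=\beta^{-1}(\beta(n)+1)$, which is therefore recursive (a total function with listable graph is recursive). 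Taking $Z=\{0\}$, the singleton $(\beta^{(1)})^{-1}(\{0\})=\{\beta^{-1}(0)\}$ is a nonempty listable set, so one can compute its element $c_0=\beta^{-1}(0)$. Finally, defining $\delta$ by primitive recursion through $\delta(0)=c_0$ and $\delta(m+1)=\sigma(\delta(m))$, an immediate induction gives $\delta=\beta^{-1}$; hence $\beta^{-1}$, and therefore $\beta$, is recursive, which yields $(i)$.

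\textbf{Main obstacle.} The routine parts are Step~1 and the translation of $(iii)$ into the statement that $(\beta^{(r)})^{-1}$ preserves listability. The point requiring care is the end of Step~2: recognizing that pulling back the \emph{decidable} successor relation along $\beta^{(2)}$ produces the graph of a total recursive shift function $\sigma$ — this is precisely where bijectivity of $\beta$, secured in Step~1 by forcing $E_\gamma$ to be decidable, is used — and then reconstructing $\beta^{-1}$ from $\sigma$ together with the computable value $\beta^{-1}(0)$ by primitive recursion.
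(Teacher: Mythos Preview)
Your proof is correct and follows essentially the same strategy as the paper: reduce to the infinite case, use Corollary~\ref{CoroBij} to pass to a bijective presentation, transfer the successor relation via hypothesis~(iii), and then reconstruct the comparison map by recursion starting from a chosen preimage of the base point. The only difference is cosmetic: the paper makes just $\rho$ bijective and works with the set $T_\rho=\{(\rho(n),\rho(n+1)):n\in\N\}\subseteq M^2$ directly, building $\psi$ with $\gamma\circ\psi=\rho$ by minimalization; you additionally make $\gamma$ bijective (a harmless extra step, justified exactly as you do) so that everything can be phrased in terms of the single bijection $\beta=\rho^{-1}\circ\gamma:\N\to\N$, which lets you write the recursion more cleanly as primitive recursion on $\N$.
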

\begin{proof} (i) implies (ii) by Lemma \ref{LemmaComparisonListSets}, and it is clear that (ii) implies (iii). It only remains to show that  (iii) implies (i) assuming that $E_\rho$ is decidable. By Proposition \ref{PropFinUL} it suffices to consider the case when $M$ is infinite.

Assume (iii) and that $E_\rho$ is decidable. By Corollary \ref{CoroBij} we may  assume that $\rho:\N\to M$ is bijective after replacing it by an equivalent listable presentation ---the class of $\rho$-listable sets in (iii) remains the same by Lemma \ref{LemmaComparisonListSets}. 

Let $T_\rho=\{(\rho(n), \rho(n+1)) : n\in \N\}\subseteq M^2$. Since $T_\rho$ is $\rho$-listable, we get that $T_\rho$ is $\gamma$-listable by (iii). Let $g=(g_1,g_2):\N\to \N^2$ be a total recursive function with image $\gamma^*(T_\rho)$. Define a function $\psi:\N\to \N$ by choosing any $\psi(0)\in \gamma^{-1}(\rho(0))$ and for $x>0$ we define
$$
\psi(x)=g_2\left(\mu y [ g_1(y)=\psi(x-1) ] \right).
$$
Then $\psi:\N\to\N$ is total recursive. Note that for each $n\ge 0$ we have $(\gamma(\psi(n)),\gamma(\psi(n+1)))\in T_\rho$. Since $\gamma(\psi(0))=\rho(0)$ and $\rho$ is bijective, we deduce  $\gamma\circ \psi=\rho$. This proves $\rho\approx \gamma$.
\end{proof}
\begin{corollary} Let $\Mfrak$ be a listable $\Lcal$-structure for which the binary relation $\ne$ is totally listable. Given $\rho$ and $\gamma$ listable presentations for $\Mfrak$, the following are equivalent:
\begin{itemize} 
\item[(i)] $\rho\approx \gamma$
\item[(ii)] The class of $\rho$-listable sets is the same as the class of $\gamma$-listable sets.
\item[(iii)] The class of $\rho$-listable sets is contained in the class of $\gamma$-listable sets.
\end{itemize}
\end{corollary}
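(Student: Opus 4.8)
The statement is a direct specialization of Theorem \ref{ThmEquivCompare}, so the plan is simply to verify that the hypothesis ``$E_\rho$ is decidable'' is automatically satisfied here, for \emph{every} listable presentation. The key observation is that we are assuming that $\ne$ is \emph{totally} listable on $\Mfrak$, not merely $\rho$-listable for one presentation. First I would recall from Lemma \ref{LemmaNE} that if $\ne$ on $\Mfrak$ is totally listable, then for every listable presentation $\gamma$ of $\Mfrak$ the set $E_\gamma$ is decidable. In particular $E_\rho$ is decidable.

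Once that is in place, the three implications follow immediately. The implication (i) $\Rightarrow$ (ii) is Lemma \ref{LemmaComparisonListSets} (equivalent listable presentations have the same listable sets), and (ii) $\Rightarrow$ (iii) is trivial. For (iii) $\Rightarrow$ (i), I would invoke Theorem \ref{ThmEquivCompare}: its ``furthermore'' clause states precisely that when $E_\rho$ is decidable, (iii) implies (i). Since we have just checked $E_\rho$ is decidable, the cycle of implications closes and all three properties are equivalent.

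There is essentially no obstacle here: the corollary is a clean packaging of Theorem \ref{ThmEquivCompare} under the hypothesis that $\ne$ is totally listable, which via Lemma \ref{LemmaNE} supplies the decidability of $E_\rho$ needed to run the nontrivial direction of that theorem. The only point worth stating carefully is that ``totally listable'' is what licenses applying Lemma \ref{LemmaNE} to conclude decidability of $E_\rho$ for the \emph{particular} presentation $\rho$ appearing in the statement — but this is immediate since $\rho$ is among the listable presentations of $\Mfrak$.

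\begin{proof}
Since the binary relation $\ne$ on $\Mfrak$ is totally listable, in particular it is $\rho$-listable, so $E_\rho$ is decidable by Lemma \ref{LemmaNE}. The three equivalences now follow from Theorem \ref{ThmEquivCompare}: indeed, (i) implies (ii) and (ii) implies (iii) always hold, while the ``furthermore'' part of Theorem \ref{ThmEquivCompare} gives that (iii) implies (i) because $E_\rho$ is decidable.
\end{proof}
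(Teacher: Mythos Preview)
Your proof is correct and takes essentially the same approach as the paper, which simply cites Lemma \ref{LemmaNE} and Theorem \ref{ThmEquivCompare}. You have spelled out the logic a bit more explicitly, but the argument is identical.
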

\begin{proof} By Lemma \ref{LemmaNE} and Theorem \ref{ThmEquivCompare}.
\end{proof}

Given $\rho,\gamma:\N\to M$ listable presentations of an $\Lcal$-structure $\Mfrak$, we define
$$
\Delta(\gamma,\rho):=\{(m,n)\in \N^2 : \gamma(m)=\rho(n)\}\subseteq \N^2.
$$
In particular, note that $E_\rho=\Delta(\rho,\rho)$.
\begin{lemma}[Diagonal test for equivalence]\label{LemmaDTE} Let $\gamma, \rho:\N\to M$ be listable presentations for $\Mfrak$. We have that $\gamma\approx\rho$ if and only if $\Delta(\gamma,\rho)\subseteq \N^2$ is listable.
\end{lemma}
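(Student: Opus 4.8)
The plan is to unwind the definition of $\approx$ on both sides, using only the basic closure properties of listable sets over $\N$. Recall that $\gamma\approx\rho$ means there is a total recursive $\phi:\N\to\N$ with $\gamma=\rho\circ\phi$, and that by the equality hypothesis the set $E_\rho=\rho^*(=)\subseteq\N^2$ is listable for every listable presentation $\rho$.

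For the forward implication, I would fix a total recursive $\phi$ with $\gamma=\rho\circ\phi$ and observe that for all $m,n\in\N$ one has $\gamma(m)=\rho(n)$ if and only if $\rho(\phi(m))=\rho(n)$, i.e. if and only if $(\phi(m),n)\in E_\rho$. Hence $\Delta(\gamma,\rho)$ is the preimage of the listable set $E_\rho$ under the total recursive map $(m,n)\mapsto(\phi(m),n)$, so it is listable by Lemma \ref{LemmaBooleanN}.

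For the converse, suppose $\Delta(\gamma,\rho)$ is listable. Since $\rho:\N\to M$ is surjective, for every $m\in\N$ there is $n\in\N$ with $\rho(n)=\gamma(m)$, i.e. with $(m,n)\in\Delta(\gamma,\rho)$; in particular $\Delta(\gamma,\rho)$ is nonempty. Pick a total recursive $d=(d_1,d_2):\N\to\N^2$ with image $\Delta(\gamma,\rho)$, and set
$$
\phi(m)=d_2\left(\mu y[d_1(y)=m]\right).
$$
By the previous observation the minimalization always halts, so $\phi:\N\to\N$ is total recursive; and writing $y=\mu y[d_1(y)=m]$ we have $(m,d_2(y))=(d_1(y),d_2(y))\in\Delta(\gamma,\rho)$, hence $\gamma(m)=\rho(d_2(y))=\rho(\phi(m))$. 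Thus $\gamma=\rho\circ\phi$, so $\gamma\approx\rho$.

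The argument is entirely routine; the only point that requires attention is that the minimalization in the converse terminates, which is precisely where surjectivity of the listable presentations is used. I do not expect any genuine obstacle here.
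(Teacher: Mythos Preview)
Your proof is correct and essentially matches the paper's argument; the converse direction is identical, and your forward direction is a minor (slightly cleaner) variant: you pull back $E_\rho$ along $(m,n)\mapsto(\phi(m),n)$ directly, whereas the paper instead uses the symmetric witness $\psi$ with $\gamma\circ\psi=\rho$ and exhibits $\Delta(\gamma,\rho)$ as the domain of a partial recursive function built from $E_\gamma$ via minimalization.
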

\begin{proof} Assume $\rho\approx \gamma$ and let $\psi:\N\to \N$ be a total recursive function with $\gamma\circ \psi=\rho$. Let $\epsilon^\gamma=(\epsilon^\gamma_1,\epsilon^\gamma_2):\N\to \N^2$ be a total recursive map with image $E_\gamma$. Define the partial function
$$
\delta(m,n)=\mu y[\epsilon^\gamma_2(y)=m \mbox{ and } \epsilon^\gamma_1(y)=\psi(n)  ].
$$
The function $\delta$ is partial recursive and its domain is $\Delta(\gamma,\rho)$. Thus, $\Delta(\gamma,\rho)$ is listable.

Conversely, assume that $\Delta(\gamma,\rho)$ is listable. It is non-empty, so, there is a total recursive $f=(f_1,f_2):\N\to \N^2$ with image $\Delta(\gamma,\rho)$. Observe that both $f_1,f_2:\N\to \N$ are total recursive and surjective. Define
$$
\phi(n)=f_2(\mu y [f_1(y)=n])
$$
Then $\phi:\N\to \N$ is total recursive and it satisfies $\rho(\phi(n))=\gamma(n)$ for all $n\ge 0$. Hence $\rho\approx \gamma$.
\end{proof}
In view of Lemma \ref{LemmaListtoInt} we get
\begin{corollary} If $\rho,\gamma:\N\to M$ are listable presentations for an $\Lcal$-structure $\Mfrak$, we have $\rho\approx\gamma$ if and only if $\rho\asymp\gamma$ as p.e. interpretations. 
\end{corollary}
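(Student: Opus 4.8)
The plan is to unwind the definition of $\asymp$ for the p.e. interpretations $\rho,\gamma:\N\dasharrow\Mfrak$ furnished by Lemma \ref{LemmaListtoInt}, recognize the resulting overlap set as a coordinate permutation of $\Delta(\gamma,\rho)$, and then invoke Lemma \ref{LemmaDTE}. The whole argument is a short chain of equivalences; the only point requiring care is to pass between ``p.e. $\Lcal_a$-definable over $\N$'' and ``listable'' via the DPRM theorem rather than through a naive computability argument.

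First I would observe that since $\rho$ and $\gamma$ are total functions $\N\to M$, their domains as interpretations are all of $\N$, so that
$$
K(\rho,\gamma)=\{(u,v)\in\N^2 : \rho(u)=\gamma(v)\}.
$$
By definition, $\rho\asymp\gamma$ means precisely that $K(\rho,\gamma)$ is p.e. $\Lcal_a$-definable over the $\Lcal_a$-structure $\N$. By the DPRM theorem, a subset of $\N^2$ is p.e. $\Lcal_a$-definable over $\N$ if and only if it is listable; hence $\rho\asymp\gamma$ holds if and only if $K(\rho,\gamma)$ is listable.

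Next I would note that the coordinate swap $(u,v)\mapsto(v,u)$ carries $K(\rho,\gamma)$ bijectively onto $\Delta(\gamma,\rho)=\{(m,n)\in\N^2 : \gamma(m)=\rho(n)\}$; since permutation of coordinates preserves listability by Lemma \ref{LemmaBooleanN}, the set $K(\rho,\gamma)$ is listable if and only if $\Delta(\gamma,\rho)$ is listable. Finally, Lemma \ref{LemmaDTE} gives that $\Delta(\gamma,\rho)$ is listable if and only if $\gamma\approx\rho$, and since $\approx$ is symmetric we have $\gamma\approx\rho$ if and only if $\rho\approx\gamma$. Chaining these equivalences yields $\rho\asymp\gamma \iff \rho\approx\gamma$, as claimed. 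I do not expect any genuine obstacle here; the only thing to watch is the bookkeeping of the order of arguments in $K(\cdot,\cdot)$ versus $\Delta(\cdot,\cdot)$ and the fact that the interpretations have rank $1$, so that $K(\rho,\gamma)\subseteq\N^{1+1}$ matches up with $\Delta(\gamma,\rho)\subseteq\N^2$.
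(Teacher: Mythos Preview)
Your argument is correct and follows essentially the same route as the paper: identify $K(\cdot,\cdot)$ with $\Delta(\cdot,\cdot)$, use the DPRM theorem to translate between p.e. $\Lcal_a$-definability over $\N$ and listability, and conclude by Lemma \ref{LemmaDTE}. The only cosmetic difference is that the paper works with $K(\gamma,\rho)$, which is literally equal to $\Delta(\gamma,\rho)$, so no coordinate swap is needed; you chose $K(\rho,\gamma)$ and then corrected via a swap and the symmetry of $\approx$, which is harmless.
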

\begin{proof} Note that in this case $K(\gamma, \rho)=\Delta(\gamma,\rho)$. By the DPRM theorem, we see that $K(\gamma, \rho)$ is p.e. $\Lcal_a$-definable over $\N$ if and only if $\Delta(\gamma,\rho)$ is listable. We conclude by Lemma \ref{LemmaDTE}.
\end{proof}
%%
%%

%%%%%%%%
%%%%%%%%
%%%%%%%%
%%%%%%%%
\subsection{Uniquely listable structures} We have seen that listable structures with finite domain are uniquely listable (cf. Proposition \ref{PropFinUL}). However, not all listable structures are uniquely listable. For instance, let $H\subseteq \N$ be a listable undecidable set and consider the structure $(\N;H,=)$. The identity map $\rho:\N\to \N$ is a listable presentation. Another listable presentation is given by the set-theoretical bijection $\gamma:\N\to \N$ mapping $2n$ to the $n$-th element of $H$ and $2n+1$ to the $n$-th element of $H^c$. Since $H^c$ is $\gamma$-listable but it is not $\rho$-listable, we conclude $\rho\not\approx \gamma$.

Let us discuss the problem of determining whether a listable structure is uniquely listable. First, we have the following basic transference property.

\begin{lemma}[Transference of unique listability] \label{LemmaUnique} Let $\Lcal$ and $\Kcal$ be languages. Let $\Mfrak$ be a listable $\Lcal$-structure with domain $M$, and let $\Nfrak$ be a uniquely listable $\Kcal$-structure with domain $N$. Suppose that there is a bijective function $\theta: M\to N$ defining a p.e. interpretation $\theta:\Mfrak\dasharrow \Nfrak$. Then $\Mfrak$ is uniquely listable.
\end{lemma}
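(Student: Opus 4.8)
The plan is to transfer a listable presentation of $\Mfrak$ to one of $\Nfrak$, use unique listability of $\Nfrak$, and then pull back the resulting equivalence along the bijection $\theta$. First I would recall that since $\theta:\Mfrak\dasharrow\Nfrak$ is a p.e. interpretation which happens to be a bijection $M\to N$ (in particular $\rk(\theta)=1$ and $\dom(\theta)=M$), the inverse map $\theta^{-1}:N\to M$ should also define a p.e. interpretation $\theta^{-1}:\Nfrak\dasharrow\Mfrak$: indeed $\dom(\theta^{-1})=N$ is trivially p.e. $\Kcal$-definable, and for $s\in\Lcal$ with $s^\Mfrak\subseteq M^n$ we have $(\theta^{-1})^*(s^\Mfrak)=\theta^{(n)}(s^\Mfrak)=(\theta^{-1})^{(n)-1}(s^\Mfrak)$, which is p.e. $\Kcal$-definable precisely because $\theta^*(s^\Mfrak)=\theta^{*}$ applied appropriately\dots{} — more cleanly, since $\theta$ is a bijection, $\theta^*(S)=\theta^{-1}$-image in a way that makes p.e. definability symmetric; I would phrase this as: a bijective p.e. interpretation has a p.e. inverse interpretation (this is the analogue of an isomorphism).

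The main step: let $\rho$ and $\gamma$ be two listable presentations of $\Mfrak$; I want $\rho\approx\gamma$. Apply Proposition \ref{PropIntimplList} to the p.e. interpretation $\theta:\Mfrak\dasharrow\Mfrak_2=\Nfrak$ with the presentation $\rho$ (here $\dom(\theta)=M$, so $X=\rho^*(M)=\N$ and one may take $f=\Id_\N$), obtaining a listable presentation $\rho':=\theta\circ\rho$ of $\Nfrak$; likewise $\gamma':=\theta\circ\gamma$ is a listable presentation of $\Nfrak$. Since $\Nfrak$ is uniquely listable, $\rho'\approx\gamma'$, so there is a total recursive $\phi:\N\to\N$ with $\gamma'=\rho'\circ\phi$, i.e. $\theta\circ\gamma=\theta\circ\rho\circ\phi$. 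Because $\theta:M\to N$ is injective, we may cancel it to get $\gamma=\rho\circ\phi$, which is exactly $\rho\approx\gamma$. Hence all listable presentations of $\Mfrak$ are equivalent, so $\Mfrak$ is uniquely listable.

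The one point that needs care — and is the only real obstacle — is verifying that $\theta$ being a \emph{bijective} p.e. interpretation of rank $1$ genuinely lets one apply Proposition \ref{PropIntimplList} cleanly (the proposition is stated for general rank, so this is just the special case $r=1$, $\dom(\theta)=M$, and $f=\Id_\N$) and, separately, that the cancellation of the injective $\theta$ in $\theta\circ\gamma=\theta\circ\rho\circ\phi$ is legitimate at the level of set-theoretic functions — which it is, since $\theta$ injective implies $\theta\circ a=\theta\circ b\Rightarrow a=b$ for any maps $a,b$ into $M$. No recourse to $\theta^{-1}$ being an interpretation is actually needed for this argument; I would include the inverse-interpretation remark only if it streamlines the exposition, otherwise drop it. I expect the proof to be short: one invocation of Proposition \ref{PropIntimplList} (twice), one of unique listability of $\Nfrak$, and an elementary cancellation.
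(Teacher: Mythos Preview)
Your proposal is correct and follows essentially the same argument as the paper: push $\rho$ and $\gamma$ forward along $\theta$ via Proposition~\ref{PropIntimplList} (with $r=1$, $\dom(\theta)=M$, $f=\Id_\N$), invoke unique listability of $\Nfrak$ to get a total recursive $\phi$ with $\theta\circ\rho\circ\phi=\theta\circ\gamma$, and cancel the injective $\theta$. Your instinct to drop the digression about $\theta^{-1}$ being a p.e.\ interpretation is right---that claim is neither needed nor obviously true as stated (it would require $(\theta^{-1})^*(s^\Mfrak)$ to be p.e.\ $\Kcal$-definable over $\Nfrak$ for each $s\in\Lcal$, which is not part of the hypotheses).
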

\begin{proof} Given $\rho,\gamma:\N\to M$ listable presentations for $\Mfrak$ we have that $\theta\circ\rho$ and $\theta\circ\gamma$ are listable presentations of $\Nfrak$ (by Proposition \ref{PropIntimplList} with $f=\Id_\N$) and therefore they are equivalent. Let $\phi:\N\to\N$ be a total recursive function with $\theta\circ \rho\circ \phi=\theta\circ\gamma$, then $\rho\circ \phi=\gamma$ because $\theta$ is injective, and we get $\rho\approx \gamma$. 
\end{proof}

Unfortunately, this transference property is rather restrictive and a more flexible criterion for unique listability is necessary.

Let $\Mfrak$ be a listable $\Lcal$-structure with domain $M$. A \emph{universal listing} for $\Mfrak$ is a surjective set-theoretical function $\tau:\N\to M$ satisfying that for every listable presentation $\rho:\N\to M$ there is a total recursive function $a_{\rho}^{\tau}:\N\to \N$ such that $\rho\circ a_{\rho}^\tau = \tau$.

Universal listings are relevant for us due to the following relation with unique listability.
\begin{lemma}[Universal listings and unique listability]\label{LemmaUL} Let $\Mfrak$ be a listable $\Lcal$-structure which admits a universal listing $\tau$. Then $\Mfrak$ is uniquely listable and $\tau$ is a listable presentation for it.
\end{lemma}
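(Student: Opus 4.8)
The plan is to show two things: first, that $\tau$ is itself a listable presentation for $\Mfrak$, and second, that any two listable presentations are equivalent via $\approx$, using $\tau$ as an intermediary.

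\smallskip

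First I would check that $\tau$ is a listable presentation. By hypothesis $\tau:\N\to M$ is surjective, so it remains to verify that $\tau^*(s^\Mfrak)$ is listable for each $s\in\Lcal$. Pick any listable presentation $\rho$ of $\Mfrak$ (one exists since $\Mfrak$ is listable), and let $a=a^\tau_\rho:\N\to\N$ be the total recursive function with $\rho\circ a=\tau$. Suppose $s$ is interpreted in $M^n$; then for each coordinate the map $\tau^{(n)}=\rho^{(n)}\circ a^{(n)}$, so $\tau^*(s^\Mfrak)=(a^{(n)})^{-1}(\rho^*(s^\Mfrak))$. Since $\rho^*(s^\Mfrak)$ is listable and $a^{(n)}$ is total recursive, the preimage is listable by Lemma \ref{LemmaBooleanN} (closure of listable sets under preimage by recursive functions). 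Hence $\tau$ is a listable presentation.

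\smallskip

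Next I would prove unique listability. Let $\rho,\gamma$ be any two listable presentations of $\Mfrak$. Applying the definition of universal listing twice, there are total recursive functions $a^\tau_\rho$ and $a^\tau_\gamma$ with $\rho\circ a^\tau_\rho=\tau$ and $\gamma\circ a^\tau_\gamma=\tau$. So $\rho\approx\tau$ (witnessed by $a^\tau_\rho$) and likewise $\gamma\approx\tau$. Since $\tau$ is a listable presentation and $\approx$ is an equivalence relation on the listable presentations of $\Mfrak$ (by the Proposition following Lemma in Section~\ref{SecEquivListStr}), symmetry and transitivity give $\rho\approx\gamma$. As $\rho,\gamma$ were arbitrary, $\Mfrak$ is uniquely listable.

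\smallskip

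I do not anticipate a serious obstacle here; the only mildly delicate point is making sure that the relation $\rho\circ a^\tau_\rho=\tau$ really does witness $\rho\approx\tau$ in the sense of the definition of $\approx$ — that is, that $\tau$ is the thing being ``composed onto'', not $\rho$ — and that the symmetry of $\approx$ (established via the earlier Lemma producing an inverse recursive reparametrization) is legitimately available once we know $\tau$ is a bona fide listable presentation. Both are immediate once the first paragraph is in place.
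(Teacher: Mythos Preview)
Your proposal is correct and follows essentially the same argument as the paper: first establish that $\tau$ is a listable presentation by writing $\tau^*(s^\Mfrak)=(a^\tau_\rho)^*(\rho^*(s^\Mfrak))$ for some fixed listable presentation $\rho$, and then observe that the relation $\tau=\rho\circ a^\tau_\rho$ witnesses $\rho\approx\tau$ for every listable presentation $\rho$. The paper compresses your second step by noting directly that every $\rho$ is equivalent to $\tau$, leaving the passage through symmetry and transitivity implicit, but the content is identical.
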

\begin{proof}  Let $\rho$ be any listable presentation for $\Mfrak$. By assumption, $\tau$ is surjective. Since $\tau=\rho\circ a^\tau_\rho$ and $a^\tau_\rho:\N\to \N$ is total recursive, we see that for every $s\in \Lcal$ the set $\tau^*(s^\Mfrak)=(a^\tau_\rho)^*(\rho^*(s^{\Mfrak}))$ is listable. Hence, $\tau$ is a listable presentation for $\Mfrak$ and the relation $\tau=\rho\circ a^\tau_\rho$ implies $\rho\approx \tau$.
\end{proof}

Naturally, there is the problem of showing that a given structure actually has some universal listing. For this we have:

\begin{theorem}[Criterion for unique listability]\label{ThmUL} Let $\Mfrak$ be a listable $\Lcal$-structure. Let $r,k$ be positive integers and let $c\in \N$. Let us choose the following:
\begin{itemize}
\item  a total recursive function $h=(h_1,...,h_r):\N\to \N^r$ such  that $h_j(n)<n$ for each $j=1,...,r$ and all $n>c$, 
\item a partition $A_1,...,A_k$ of $\N_{>c}$ with each $A_i$ decidable,
\item for each $i=1,2,...,k$, a p.e. $\Lcal$-definable function $F_i: V_i\to M$ with domain $V_i\subseteq M^r$. 
\end{itemize}
Let $\tau:\N\to M$ be a set-theoretical function satisfying the following conditions:
\begin{itemize}
\item[(i)] $\tau$ is surjective.
\item[(ii)] If $n\in A_i$ and $n>c$, then $h(n)\in \tau^* (V_i)$.
\item[(iii)] For each $n>c$, we have
$$
\tau(n)=\begin{cases}
F_1(\tau(h_1(n)),...,\tau(h_r(n)))\mbox{ if }n\in A_1\\
\vdots\\
F_k(\tau(h_1(n)),...,\tau(h_r(n)))\mbox{ if }n\in A_k. 
\end{cases}
$$
\end{itemize}
Then $\tau$ is a universal listing. In particular, $\Mfrak$ is uniquely listable and $\tau$ is a listable presentation.
\end{theorem}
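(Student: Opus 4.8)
The plan is to verify directly that the given $\tau$ is a universal listing, by unwinding the definition: we must show that $\tau$ is surjective (this is hypothesis (i)) and that for every listable presentation $\rho:\N\to M$ there is a total recursive $a^\tau_\rho:\N\to\N$ with $\rho\circ a^\tau_\rho=\tau$. The last claim of the statement (unique listability, and $\tau$ being a listable presentation) is then immediate from Lemma \ref{LemmaUL}, so the entire content of the proof is the construction of $a^\tau_\rho$. The key idea is that $\tau$ is defined by a course-of-values recursion over $\N$ built out of \emph{p.e.\ definable} functions $F_i$, and the graph of a p.e.\ definable function is $\rho$-listable by Corollary \ref{CoroTotList}; so we should be able to ``lift'' the recursion defining $\tau$ through $\rho$.

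First I would fix a listable presentation $\rho$. For each $i$, the function $F_i:V_i\to M$ is p.e.\ $\Lcal$-definable, so its graph $\Gamma(F_i)\subseteq M^{r+1}$ is p.e.\ $\Lcal$-definable, hence $\rho$-listable by Corollary \ref{CoroTotList}; thus $\rho^*(\Gamma(F_i))\subseteq\N^{r+1}$ is listable, and (being non-empty, since $\tau$ surjects and $V_i$ may be assumed non-empty where used) it is the image of a total recursive map. Equivalently, there is a listable set $R_i\subseteq\N^{r+1}$ with the property that $(m_1,\dots,m_r,m_0)\in R_i$ iff $\rho(m_0)=F_i(\rho(m_1),\dots,\rho(m_r))$ whenever $(\rho(m_1),\dots,\rho(m_r))\in V_i$. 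Now define $a=a^\tau_\rho:\N\to\N$ by course-of-values recursion: on $\{0,1,\dots,c\}$ pick, for each $n$, any $a(n)\in\rho^{-1}(\tau(n))$ (a finite table, hence recursive); and for $n>c$, using the decidable partition $A_1,\dots,A_k$ to decide which index $i$ applies, set
$$
a(n)=\mu y\bigl[\,(a(h_1(n)),\dots,a(h_r(n)),y)\in R_i\,\bigr],
$$
where $R_i$ is searched via a total recursive enumeration of it. This is legitimate: $h_j(n)<n$ for $n>c$ so the recursion is well-founded (course-of-values), each branch is selected by a decidable test, and the minimalization halts because hypothesis (ii) guarantees $(\tau(h_1(n)),\dots,\tau(h_r(n)))\in V_i$, so by (iii) $\tau(n)=F_i(\tau(h_1(n)),\dots,\tau(h_r(n)))$, and $y=a(n)$ with $\rho(y)=\tau(n)$ exists as a witness once we know $\rho(a(h_j(n)))=\tau(h_j(n))$ for each $j$. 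Hence $a$ is total recursive.

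It then remains to prove $\rho\circ a=\tau$ by strong induction on $n$. For $n\le c$ this holds by the choice of $a(n)$. For $n>c$, say $n\in A_i$: by the inductive hypothesis $\rho(a(h_j(n)))=\tau(h_j(n))$ for each $j=1,\dots,r$, and by (ii) these lie in $V_i$; by the defining property of $R_i$, the witness $y=a(n)$ produced by the minimalization satisfies $\rho(y)=F_i(\rho(a(h_1(n))),\dots,\rho(a(h_r(n))))=F_i(\tau(h_1(n)),\dots,\tau(h_r(n)))$, which equals $\tau(n)$ by (iii). This closes the induction, so $\rho\circ a^\tau_\rho=\tau$, and since $\rho$ was arbitrary, $\tau$ is a universal listing; Lemma \ref{LemmaUL} finishes the proof. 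The main obstacle — and the only subtle point — is making sure the recursion for $a$ is genuinely well-defined as a recursive function and that the minimalization always terminates; both hinge on combining $h_j(n)<n$ (well-foundedness), the decidability of the partition (recursive branching), and hypothesis (ii) together with the inductive control $\rho(a(h_j(n)))=\tau(h_j(n))$ (existence of a witness), so these ingredients must be threaded together carefully, but no deep computation is involved.
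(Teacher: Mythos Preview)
Your proposal is correct and follows essentially the same approach as the paper: pull back the p.e.\ definable graphs $\Gamma(F_i)$ through $\rho$ to get listable sets in $\N^{r+1}$, then define $a^\tau_\rho$ by course-of-values recursion using a minimalization that searches these sets, and verify $\rho\circ a^\tau_\rho=\tau$ by strong induction. The paper makes the enumeration of $\rho^*(\Gamma_i)$ explicit via a total recursive $f_i:\N\to\N^{r+1}$ and searches for an enumeration index rather than for the output value directly, but this is exactly what your parenthetical ``$R_i$ is searched via a total recursive enumeration of it'' amounts to, so the arguments are the same in substance.
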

%%%
%%%
\begin{proof} Let $\rho:\N\to M$ be a listable presentation. Let 
$$
\Gamma_i=\{(x_0,...,x_r):  x_0=F_i(x_1,...,x_r)\}\subseteq M^{r+1}
$$ 
and note that $\Gamma_j$ is p.e. $\Lcal$-definable. By Corollary \ref{CoroTotList} the set $\rho^*(\Gamma_i)\subseteq \N^{r+1}$ is listable. For each $i=1,...,k$ let $f_i=(f_{i0},...,f_{ir}):\N\to \N^{r+1}$ be a total recursive map with image $\rho^*(\Gamma_i)$. We note that $(f_{i1},...,f_{ir}):\N\to \N^r$ has image $\rho^*(V_i)$ because $V_i$ is the domain of $F_i$.

We claim that there is a total recursive function $\alpha:\N\to \N$ satisfying the following:
\begin{itemize}
\item[(a)]  For each $n\in \N$ we have $\rho (\alpha(n)) = \tau(n)$.
\item[(b)] For each $n\in A_i$ with $n>c$ we have $(\alpha(h_1(n)),...,\alpha(h_r(n)))\in \rho^*(V_i)$.
\item[(c)] For each $n>c$,
\begin{equation}\label{EqnDefalpha}
\alpha(n)=\begin{cases}
f_{10}(\mu y [ f_{1j}(y)=\alpha(h_j(n)) \mbox{ for each }j=1,...,r] )\mbox{ if }n\in A_1\\
\vdots\\
f_{k0}(\mu y [ f_{kj}(y)=\alpha(h_j(n)) \mbox{ for each }j=1,...,r] )\mbox{ if }n\in A_k.
\end{cases}
\end{equation}
\end{itemize}
Let us choose any values $\alpha(n)\in \rho^{-1}(\tau(n))$ for $n\le c$. Then $\rho(\alpha(m))=\tau(m)$ for $m=0,1,...,c$. We will recursively construct the values of the function $\alpha(n)$ for larger values of $n$ by using (c), and along the construction we will inductively prove that (a) and (b) hold.

Let us fix an $n>c$ and let us assume that for each $m<n$ we have that $\alpha(m)\in \N$ is already defined and that $\rho(\alpha(m))=\tau(m)$ holds. Let $i$ be the index with $n\in A_i$. Note that $h_j(n)<n$ for each $j=1,...,r$, hence $(\alpha(h_1(n)),...,\alpha(h_r(n)))\in \N^r$ is already defined and 
\begin{equation}\label{Eqnu1}
(\rho(\alpha(h_1(n))),...,\rho(\alpha(h_r(n)))) = (\tau(h_1(n)),...,\tau(h_r(n)))\in V_i
\end{equation}
by condition (ii). That is,
$$
(\alpha(h_1(n)), ..., \alpha(h_r(n)))\in \rho^*(V_i)\subseteq \N^r
$$
as required by (b).

Since $\rho^*(V_i)$ is the image of $(f_{i1},...,f_{ir}):\N\to \N^r$, there is some $y\in \N$ such that $f_{ij}(y)=\alpha(h_j(n))$ for each $j=1,...,r$. Therefore, \eqref{EqnDefalpha} uniquely defines $\alpha(n)\in \N$.  Furthermore, if $y_0$ is the minimal such $y$ for our chosen $n$, then 
$$
\begin{aligned}
\rho(\alpha(n))& =\rho(f_{i0}(y_0)) = F_i(\rho(f_{i1}(y_0)),...,\rho(f_{ir}(y_0)))\quad &\mbox{ by definition of }f_i:\N\to \N^{r+1}\\
& = F_i(\rho(\alpha(h_1(n))),...,\rho(\alpha(h_r(n)))) \quad &\mbox{ by choice of }y_0\\
&= F_i(\tau(h_1(n)),...,\tau(h_r(n))) \quad &\mbox{ by \eqref{Eqnu1}}\\
&= \tau(n) \quad &\mbox{ by condition (iii).}
\end{aligned}
$$
This proves that $\rho(\alpha(n))=\tau(n)$ holds, as required by (a). 

Finally, it only remains to observe that the function  $\alpha:\N\to \N$ that we have constructed is total recursive. In fact, we already proved that $\alpha$ is a total function, and it is defined by the first chosen values $\alpha(1),...,\alpha(c)$ together with the condition \eqref{EqnDefalpha} for $n>c$. The condition \eqref{EqnDefalpha} shows that $\alpha$ is recursive because it only involves the total recursive functions $f_{ij}$ and $h_j$, the schema of definition by decidable cases, the minimalization operator, and the schema of course-of-values recursion. 

In particular, we have constructed a total recursive function $\alpha:\N\to \N$ satisfying condition (a). The function $\tau:\N\to M$ is surjective by assumption (i), and the choice $a^\tau_\rho=\alpha$ shows that $\tau$ is a universal listing. We conclude by Lemma \ref{LemmaUL}.
\end{proof}
%%
%%

%%%%%%%%
%%%%%%%%
%%%%%%%%
%%%%%%%%
\subsection{Examples}\label{SecULexamples}

A first example to explain how to use the previous results on unique listability:

\begin{proposition}\label{PropExN} Let $\Nfrak$ be an $\Lcal$-structure with domain $\N$ such that
\begin{itemize}
\item[(i)] For each $s\in \Lcal$, the set $s^{\Nfrak}$ is listable.
\item[(ii)] $0\in \N$ and the successor function $S:\N\to \N$, $S(x)=x+1$ are p.e. $\Lcal$-definable in $\Nfrak$.
\end{itemize}
Then $\Nfrak$ is uniquely listable. In particular, this holds for  $(\N;0,S,=)$ and $(\N;0,1,+,\times,=)$.
\end{proposition}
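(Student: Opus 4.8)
The plan is to apply Theorem \ref{ThmUL} directly, which is precisely the tool designed for this situation. First I would set up the parameters for the criterion. Take $\rk = 1$, $c = 0$, and $k = 1$, so that the partition of $\N_{>0}$ is trivial ($A_1 = \N_{>0}$), which is vacuously decidable. For the recursion function I would take $h = h_1 \colon \N \to \N$ to be $h_1(n) = n-1$ (or any total recursive function with $h_1(n) < n$ for $n > 0$; $h_1(0)$ can be set to $0$). For the single p.e.\ definable function I would take $F_1 = S \colon \N \to \N$, $F_1(x) = x+1$, which is p.e.\ $\Lcal$-definable on $\Nfrak$ by hypothesis (ii), with domain $V_1 = \N$.

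Next I would exhibit the candidate universal listing $\tau$. The natural choice is $\tau = \Id_\N \colon \N \to \N$, which is obviously surjective, so condition (i) of Theorem \ref{ThmUL} holds. Condition (ii) of the theorem reads: if $n \in A_1$ and $n > 0$ then $h(n) \in \tau^*(V_1) = \N$, which is automatic since $V_1 = \N$. Condition (iii) requires $\tau(n) = F_1(\tau(h_1(n)))$ for $n > 0$, i.e.\ $n = S(n-1) = (n-1)+1$, which is the defining property of the successor. Thus all hypotheses of Theorem \ref{ThmUL} are satisfied once we check that $\Nfrak$ is a listable $\Lcal$-structure in the first place: this is exactly hypothesis (i), since the identity map $\N \to \N$ pulls back each $s^\Nfrak$ to the listable set $s^\Nfrak$ itself. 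Theorem \ref{ThmUL} then yields that $\tau = \Id_\N$ is a universal listing, hence $\Nfrak$ is uniquely listable.

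For the two concrete instances: for $(\N; 0, S, =)$ the successor $S$ is in the signature so it is trivially p.e.\ definable and $0$ is a constant symbol; for $(\N; 0, 1, +, \times, =)$ one needs $S(x) = x+1$ to be p.e.\ $\Lcal_a$-definable, which is clear from the formula $y = x + 1$, and $0$ is again a constant symbol. In both cases the relations/functions of the signature have listable (indeed decidable) graphs under the identity presentation, so hypothesis (i) holds as well.

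I do not expect any genuine obstacle here: the statement is essentially an illustration of how to invoke Theorem \ref{ThmUL}, and the only mild point to be careful about is the bookkeeping of the parameters in the criterion (choosing $k=1$, the trivial partition, and matching $F_1$ with the successor) and noting that hypothesis (ii) of the Proposition is used precisely to guarantee the p.e.\ definability of $F_1 = S$ and to make condition (iii) of the theorem match the successor recursion. The verification that $\tau = \Id_\N$ satisfies conditions (i)--(iii) is then immediate.
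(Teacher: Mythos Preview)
Your proposal is correct and follows essentially the same approach as the paper: the paper also applies Theorem~\ref{ThmUL} with $c=0$, $r=k=1$, $A_1=\N_{>0}$, $h(n)=\max\{0,n-1\}$, $F_1=S$, $V_1=\N$, and $\tau=\Id_\N$. Your write-up is in fact more detailed than the paper's, which simply lists these parameter choices and declares the result.
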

\begin{proof} The identity function gives a listable presentation by (i). It remains to prove uniqueness up to equivalence. We apply Theorem \ref{ThmUL} with $c=0$, $r=k=1$, $A_1=\N$, $h(n)=\max\{0,n-1\}$, $F_1=S$, $V_1=\N$, and $\tau=\Id_\N$. The result follows.
\end{proof}
Next we consider the case of $\Q$ in detail. We begin with a folklore fact.
\begin{lemma}\label{LemmaQnum} Let $q: \Z_{>0}\to \Q_{>0}$ be the function defined by  $q(1)=1$ and for $n\ge 2$:
$$
q(n)=\begin{cases}
q(n/2) + 1&\mbox{ if $n\ge 2$ is even}\\
1/q(n-1) &\mbox{ if $n\ge 2$ is odd.}
\end{cases}
$$
Then $q: \Z_{>0}\to \Q_{>0}$ is bijective. 
\end{lemma}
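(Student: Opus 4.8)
The plan is to prove that the recursively defined map $q\colon \Z_{>0}\to \Q_{>0}$ is a bijection by exhibiting it as (essentially) the classical Stern--Brocot / Calkin--Wilf enumeration of the positive rationals. Concretely, I would first observe that the recursion is well-posed: for $n\ge 2$ the right-hand side refers only to $q$ evaluated at $n/2$ or $n-1$, both strictly smaller than $n$, so $q$ is a genuine total function on $\Z_{>0}$ with values in $\Q_{>0}$ (an easy induction shows each $q(n)$ is a positive rational, since $\Q_{>0}$ is closed under $x\mapsto x+1$ and $x\mapsto 1/x$).

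The key step is to connect $q$ to the Calkin--Wilf tree. I would write each value in lowest terms as $q(n)=a_n/b_n$ with $a_n,b_n\in\Z_{>0}$ coprime, and show by strong induction that the pair $(a_n,b_n)$ behaves like the node labels of the Calkin--Wilf tree indexed in breadth-first order: namely $q(1)=1/1$, and the two ``children'' of $q(m)=a/b$ are $a/(a+b)$ and $(a+b)/b$. The two cases of the recursion realize exactly these: if $n$ is even, $n=2m$, then $q(n)=q(m)+1=(a_m+b_m)/b_m$; if $n$ is odd, $n=2m+1$ (note $n-1=2m$ is even here, with $(n-1)/2=m$), one needs to see that $q(n)=1/q(n-1)=1/((a_m+b_m)/b_m)=b_m/(a_m+b_m)$, which is the other child of $q(m)$. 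So the odd step at $2m+1$ produces the left child of the parent $q(m)$ and the even step at $2m$ produces the right child; together, as $n$ ranges over $\{2m,2m+1\}$ for a fixed parent index $m$, we get both children exactly once. I would also need the standard fact that consecutive Calkin--Wilf fractions are automatically in lowest terms, i.e. $\gcd(a_n,b_n)=1$ is preserved by the child operations (if $\gcd(a,b)=1$ then $\gcd(a,a+b)=\gcd(a+b,b)=1$), which the induction carries along.

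Given this identification, bijectivity follows from the classical properties of the Calkin--Wilf enumeration, which I would reprove rather than cite. \emph{Injectivity:} suppose $q(n)=q(n')$ with $n\le n'$; using that $q(n)>1\iff n$ even (since $q(2m)=q(m)+1>1$) and $q(n)<1\iff n$ odd and $>1$ (since $q(2m+1)=1/q(2m)<1$), and $q(n)=1\iff n=1$, one reduces the value $q(n)$ by repeatedly subtracting $1$ or inverting, which on the index side replaces an even $n$ by $n/2$ and an odd $n>1$ by $(n-1)/2$; this strictly decreases the index and preserves equality of values, so by descent $n=n'=1$ — more cleanly, one shows the map $n\mapsto \lfloor n/2\rfloor$ on indices mirrors the ``parent'' map on fractions, and distinct $n$ with the same value would force distinct children of a common parent to coincide, impossible. \emph{Surjectivity:} given $a/b\in\Q_{>0}$ in lowest terms, run the Euclidean-type descent $a/b\mapsto$ (if $a>b$) $(a-b)/b$ with index halving, (if $a<b$) $a/(b-a)$; since $\gcd(a,b)=1$ this terminates at $1/1=q(1)$, and retracing the steps builds an explicit $n$ with $q(n)=a/b$. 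The main obstacle is bookkeeping the parity/index correspondence carefully — in particular being careful that the odd case at $n=2m+1$ really inverts the value at $2m$ and hence yields the \emph{left} child of $q(m)$, not of $q(2m)$ — and making the descent arguments rigorous rather than hand-wavy; once the Calkin--Wilf structure is set up correctly, injectivity and surjectivity are short inductions/descents.
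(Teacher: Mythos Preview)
Your Calkin--Wilf identification contains a genuine slip. You correctly compute $q(2m+1)=1/q(2m)=b_m/(a_m+b_m)$, but this is \emph{not} the left Calkin--Wilf child $a_m/(a_m+b_m)$ of $q(m)=a_m/b_m$; the two agree only when $a_m=b_m$, i.e.\ only at the root. So the binary tree underlying $q$ has children $(a+b)/b$ and $b/(a+b)$ (the latter being the reciprocal of the former), not the Calkin--Wilf pair $(a+b)/b$ and $a/(a+b)$. This error propagates to your surjectivity descent: when $a<b$ the parent of $a/b$ in the $q$-tree is $(b-a)/a$ (solve $a/b=b_m/(a_m+b_m)$), not the Calkin--Wilf parent $a/(b-a)$ you wrote. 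For instance, your descent from $1/3$ goes $1/3\to 1/2\to 1/1$ and retracing via two odd steps gives $n=7$, but $q(7)=2/3$; the correct descent is $1/3\to 2/1\to 1/1$, yielding $n=5$.

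The fix is painless: your trichotomy $q(n)>1\Leftrightarrow n$ even, $q(n)<1\Leftrightarrow n>1$ odd, $q(n)=1\Leftrightarrow n=1$ is correct and gives injectivity exactly as you outline (equal values force equal parities, hence equal parents by induction). For surjectivity, the corrected parent map $a/b\mapsto (a-b)/b$ if $a>b$ and $a/b\mapsto (b-a)/a$ if $a<b$ still strictly decreases $a+b$, so the descent to $1/1$ terminates and can be retraced. Alternatively, just note that the image of $q$ contains $1$ and is closed under $x\mapsto x+1$ and $x\mapsto 1/x$, which already forces it to be all of $\Q_{>0}$.

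The paper takes a different route: to each $r>0$ it attaches the continued fraction expansion $[a_0;a_1,\dots,a_{d-1},1]$ (normalized so the last partial quotient is $1$) and checks that $q(n)=r$ for
\[
n=2^{a_0}\bigl(2^{a_1}(\cdots(2^{a_{d-1}}+1)\cdots)+1\bigr),
\]
which is simply the binary expansion of $n$ with run-lengths $a_0,a_1,\dots$. Bijectivity then follows from uniqueness of continued fractions and of binary expansions. This is the same Euclidean combinatorics you are using, packaged as an explicit formula rather than a tree descent.
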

\begin{proof}  Take any rational number $r>0$ and recall that it admits a unique continued fraction expansion $[a_0;a_1,...,a_d]$ for some $d\ge 1$ under the requirements $a_0\ge 0$, $a_j\ge 1$ for $j\ge 1$, and $a_d=1$. We note that choosing the positive integer 
\begin{equation}\label{Eqnbinary}
n=2^{a_0}(2^{a_1}(...(2^{a_{d-2}}(2^{a_{d-1}}+1)+1)...)+1)
\end{equation} 
we get $q(n)=r$, so $q(\Z_{>0})=\Q_{> 0}$. Furthermore, $q$ is injective because the expression \eqref{Eqnbinary} always exists and is unique for a given $n\ge 1$ under the requirements $a_0\ge 0$ and $a_j\ge 1$ for $j\ge 1$ ---expanding the product in \eqref{Eqnbinary} we get the binary expansion of $n$.
\end{proof}
\begin{corollary}\label{CoroQtau} Let $\tau:\N\to \Q$ be defined by $\tau(0)=0$, $\tau(1)=1$, $\tau(2)=-1$ and for $n\ge 3$
$$
\tau(n)=\begin{cases}
\tau((n-1)/2)+1 	&\mbox{ if }n\equiv 3\bmod 4\\
\tau(n/2)-1  	&\mbox{ if }n\equiv 0\bmod 4\\
1/\tau(n-2)		&\mbox{ if }n\equiv 1,2\bmod 4.
\end{cases}
$$
Then $\tau:\N\to \Q$ is bijective.
\end{corollary}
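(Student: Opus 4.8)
The plan is to reduce the statement to Lemma~\ref{LemmaQnumQ}... actually to Lemma~\ref{LemmaQnum}, which already provides that $q:\Z_{>0}\to\Q_{>0}$ is a bijection. The key claim I would isolate is that $\tau$ interleaves two sign-opposite copies of $q$: for every integer $m\ge 1$,
\[
\tau(2m-1)=q(m)\quad\text{and}\quad \tau(2m)=-q(m),
\]
while $\tau(0)=0$. Granting this, bijectivity of $\tau$ is immediate, as explained below; so the real content is the displayed identity.

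I would prove the claim by strong induction on $m$. The base case $m=1$ holds since $\tau(1)=1=q(1)$ and $\tau(2)=-1=-q(1)$. For the inductive step with $m\ge 2$, I split into $m$ even and $m$ odd and, in each case, insert the definition of $\tau$ on the appropriate residue class modulo $4$ and match it with the recursion defining $q$ on residue classes modulo $2$. If $m=2\ell$ with $\ell\ge 1$, then $2m-1\equiv 3$ and $2m\equiv 0\pmod 4$, so $\tau(2m-1)=\tau(2\ell-1)+1$ and $\tau(2m)=\tau(2\ell)-1$; since $\ell<m$, the induction hypothesis converts these into $q(\ell)+1=q(2\ell)=q(m)$ and $-q(\ell)-1=-q(m)$, as wanted. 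If $m=2\ell+1$ with $\ell\ge 1$ (the only odd $m\ge 2$ possibilities), then $2m-1\equiv 1$ and $2m\equiv 2\pmod 4$, so $\tau(2m-1)=1/\tau(4\ell-1)$ and $\tau(2m)=1/\tau(4\ell)$; since $4\ell-1=2(2\ell)-1$ and $4\ell=2(2\ell)$ with $2\ell<m$, the induction hypothesis gives $\tau(4\ell-1)=q(2\ell)$ and $\tau(4\ell)=-q(2\ell)$, whence $\tau(2m-1)=1/q(2\ell)=q(2\ell+1)=q(m)$ and $\tau(2m)=-1/q(2\ell)=-q(m)$. In each sub-case I would also note explicitly that the indices $\ell$ or $2\ell$ involved are strictly less than $m$, so that the strong induction hypothesis genuinely applies, and that they are $\ge 1$ (resp. $\ge 2$ where needed), so that the relevant clauses of the definitions of $\tau$ and $q$ are the ones being used.

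With the claim established, I would conclude as follows. Surjectivity: $0=\tau(0)$; every positive rational equals $q(m)=\tau(2m-1)$ for a suitable $m\ge 1$ by Lemma~\ref{LemmaQnum}, and every negative rational equals $-q(m)=\tau(2m)$. Injectivity: since $q$ takes only positive values, $\tau(2m-1)>0>\tau(2m)$ for all $m\ge 1$, so the sign of $\tau(n)$ decides whether $n=0$, $n$ is a positive odd integer, or $n$ is a positive even integer; within the positive odd integers the value is $q(m)$ and within the positive even integers it is $-q(m)$, each of which determines $m$ by injectivity of $q$. Hence $\tau$ is a bijection $\N\to\Q$.

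I expect the only delicate point to be the bookkeeping: correctly translating each residue class modulo $4$ of $n=2m-1$ and $n=2m$ into the right clause of the recursion for $\tau$, and confirming that the resulting arguments are both strictly smaller (for the induction) and in the admissible range (for the recursions). There is no conceptual obstacle beyond Lemma~\ref{LemmaQnum} itself.
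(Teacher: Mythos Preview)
Your proof is correct and follows exactly the approach of the paper: both reduce to Lemma~\ref{LemmaQnum} via the observation that the sequence $\tau(0),\tau(1),\tau(2),\ldots$ is $0,q(1),-q(1),q(2),-q(2),\ldots$. The paper simply asserts this interleaving without verification, whereas you supply the strong-induction argument that confirms it; your bookkeeping on the residue classes and index ranges is accurate.
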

\begin{proof} This follows from Lemma \ref{LemmaQnum}, since the sequence of values $\tau(n)$ for $n=0,1,2,...$ is precisely $0$, $q(1)$, $-q(1)$, $q(2)$, $-q(2)$, ...
\end{proof}
\begin{proposition} \label{PropULQ} Let $\Qfrak$ be an $\Lcal$-structure with domain $\Q$ such that
\begin{itemize}
\item[(i)] $\Qfrak$ admits some listable presentation.
\item[(ii)] The constant $0\in \Q$, the successor function $S:\Q\to \Q$, $S(x)=x+1$, and the multiplicative inverse function $R:\Q^\times \to \Q$, $R(x)=1/x$ are p.e. $\Lcal$-definable in $\Qfrak$.
\end{itemize}
Then $\Qfrak$ is uniquely listable and the function $\tau:\N\to \Q$ of Corollary \ref{CoroQtau} is a listable presentation.
\end{proposition}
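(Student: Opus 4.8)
The plan is to apply Theorem~\ref{ThmUL} with data chosen to mirror the recursion defining $\tau$ in Corollary~\ref{CoroQtau}, in the same spirit as the treatment of $\N$ in Proposition~\ref{PropExN} but now with a three-way case split instead of a single case. By hypothesis~(i) the structure $\Qfrak$ is listable, so Theorem~\ref{ThmUL} is applicable once the required ingredients are supplied; its conclusion that $\tau$ is a universal listing then gives, via Lemma~\ref{LemmaUL}, that $\Qfrak$ is uniquely listable and that $\tau$ is a listable presentation.

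First I would fix the data for the theorem: $c=2$, $r=1$, and $k=3$, with the decidable partition of $\N_{>2}$ given by $A_1=\{n>2:n\equiv 3\bmod 4\}$, $A_2=\{n>2:n\equiv 0\bmod 4\}$, $A_3=\{n>2:n\equiv 1,2\bmod 4\}$. Define the total recursive function $h_1:\N\to\N$ by $h_1(n)=(n-1)/2$ on $A_1$, $h_1(n)=n/2$ on $A_2$, $h_1(n)=n-2$ on $A_3$, and (say) $h_1(n)=0$ for $n\le 2$; one checks immediately that $h_1(n)<n$ for every $n>2$. For the p.e.\ $\Lcal$-definable functions take $F_1=S$ with domain $V_1=\Q$, $F_2$ the predecessor map $x\mapsto x-1$ with domain $V_2=\Q$, and $F_3=R$ with domain $V_3=\Q^\times$. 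Here $F_1$ and $F_3$ are p.e.\ $\Lcal$-definable by hypothesis~(ii), while $F_2$ is p.e.\ $\Lcal$-definable because its graph is the transpose of the graph of $S$ (i.e.\ $x_0=x_1-1$ iff $x_1=S(x_0)$) and the class of p.e.\ $\Lcal$-definable sets is closed under permutation of coordinates.

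It then remains to verify conditions (i)--(iii) of Theorem~\ref{ThmUL} for this $\tau$. Condition~(i) is exactly the surjectivity (indeed bijectivity) of $\tau$ from Corollary~\ref{CoroQtau}. Condition~(iii) is obtained by unwinding the definitions: on $A_1$ one has $F_1(\tau(h_1(n)))=\tau((n-1)/2)+1$, on $A_2$ one has $F_2(\tau(h_1(n)))=\tau(n/2)-1$, and on $A_3$ one has $F_3(\tau(h_1(n)))=1/\tau(n-2)$, which is precisely the recursion displayed in Corollary~\ref{CoroQtau} for $n\ge 3$. For condition~(ii), what must be shown is $h_1(n)\in\tau^*(V_i)$ whenever $n\in A_i$ with $n>2$: for $i=1,2$ this is automatic because $V_i=\Q$, and for $i=3$ it amounts to $\tau(n-2)\ne 0$, which holds since $n>2$ forces $n-2\ge 1$ and $\tau$ is injective with $\tau(0)=0$.

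I do not expect any serious obstacle here: the argument is essentially bookkeeping, the only steps needing a word of justification being the p.e.\ definability of the predecessor $F_2$ (handled by the coordinate-swap remark above) and the domain membership in condition~(ii) (handled by injectivity of $\tau$). Once these are in place, Theorem~\ref{ThmUL} shows that $\tau$ is a universal listing, and Lemma~\ref{LemmaUL} concludes that $\Qfrak$ is uniquely listable with $\tau$ a listable presentation.
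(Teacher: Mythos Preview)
Your proof is correct and follows essentially the same route as the paper's own argument: the same choice of $c=2$, $r=1$, $k=3$, the same residue-class partition $A_1,A_2,A_3$, the same $h$ and the same three functions $F_1=S$, $F_2=$ predecessor, $F_3=R$, with the predecessor handled via the transposed graph of $S$. If anything, your verification is slightly more explicit than the paper's, since you spell out condition~(ii) in the case $i=3$ using the injectivity of $\tau$ and $\tau(0)=0$, a point the paper leaves implicit.
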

\begin{proof}  First we note that the function $P:\Q\to\Q$ given by $y=P(x)=x-1$ is p.e. $\Lcal$-defined by the formula $x=S(y)$.

 Let us  apply Theorem \ref{ThmUL} to show that the function $\tau:\N\to \Q$ from Corollary \ref{CoroQtau} is a universal listing for $\Qfrak$. We choose $c=2$, $r=1$, $k=3$, $A_1=\{n\ge 3 : n\equiv 3\bmod 4\}$, $A_2=\{n\ge 3 : n\equiv 0\bmod 4\}$, $A_3=\{n\ge 3 : n\equiv 1,2\bmod 4\}$, and the total recursive function $h:\N\to \N$ given by
$$
h(n)=\begin{cases}
(n-1)/2 &\mbox{ if }n\equiv 3\bmod 4\\
n/2 &\mbox{ if } n\equiv 0\bmod 4\\
\max\{0, n-2\} &\mbox{ if }n\equiv 1,2\bmod 4.
\end{cases}
$$
With these choices, the function $\tau:\N\to \Q$ from Corollary \ref{CoroQtau} satisfies $\tau(0)= 0$, $\tau(1)= S(0)$, $\tau(2)=P(0)$ and for $n\ge 3$ we have
$$
\tau(n)=\begin{cases}
S(h(n))\mbox{ if } n\in A_1\\
P(h(n))\mbox{ if } n\in A_2\\
R(h(n))\mbox{ if } n\in A_3.
\end{cases}
$$
Theorem \ref{ThmUL} implies that $\tau$ is a universal listing. Hence the result.
\end{proof}

\begin{corollary}\label{CoroQulist} The $\Lcal_a$-structure $\Q$ is uniquely listable.
\end{corollary}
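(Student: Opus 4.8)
The plan is to deduce Corollary \ref{CoroQulist} directly from Proposition \ref{PropULQ} by verifying its two hypotheses for the $\Lcal_a$-structure $\Q$. First I would note that hypothesis (i), the existence of a listable presentation, is already available: by Corollary \ref{CoroListableZQ} the $\Lcal_a$-structure $\Q$ is listable, which by definition means it admits a listable presentation. So the only real work is checking hypothesis (ii), namely that the constant $0$, the successor function $S(x)=x+1$, and the multiplicative inverse function $R(x)=1/x$ on $\Q^\times$ are all positive existentially $\Lcal_a$-definable in $\Q$.

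For the constant $0$ there is nothing to do, since $0$ is a symbol of $\Lcal_a$ and is interpreted as the rational $0$. For the successor function, $S$ has graph $\{(y,x)\in\Q^2 : y = x+1\}$, and this is p.e. $\Lcal_a$-defined by the atomic formula $y = x + 1$ (recall $1\in\Lcal_a$), so $S$ is p.e. $\Lcal_a$-definable. For the multiplicative inverse, $R:\Q^\times\to\Q$ has graph $\{(y,x)\in\Q^2 : x\ne 0 \text{ and } xy = 1\}$; here the condition $xy=1$ is p.e. atomic, and it automatically forces $x\ne 0$, so in fact the graph of $R$ is p.e. $\Lcal_a$-defined by $x\times y = 1$ alone. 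Hence all three items of hypothesis (ii) hold.

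With both hypotheses verified, Proposition \ref{PropULQ} applies and gives that the $\Lcal_a$-structure $\Q$ is uniquely listable (and moreover that the explicit function $\tau:\N\to\Q$ of Corollary \ref{CoroQtau} is a listable presentation for it). I do not anticipate any genuine obstacle here: the entire content of the corollary is the observation that $\Q$, viewed over $\Lcal_a$, satisfies the mild definability requirements of Proposition \ref{PropULQ}, all of which are witnessed by single atomic formulas. The proof is therefore just a short bookkeeping argument citing Corollary \ref{CoroListableZQ} and Proposition \ref{PropULQ}.

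\begin{proof} By Corollary \ref{CoroListableZQ} the $\Lcal_a$-structure $\Q$ is listable, so it admits a listable presentation; this verifies (i) in Proposition \ref{PropULQ}. For (ii): the constant $0\in\Q$ is the interpretation of the symbol $0\in\Lcal_a$; the successor function $S(x)=x+1$ has graph $\{(y,x)\in\Q^2 : y=x+1\}$, which is p.e. $\Lcal_a$-defined by the formula $y=x+1$; and the multiplicative inverse function $R(x)=1/x$ on $\Q^\times$ has graph $\{(y,x)\in\Q^2 : xy=1\}$, which is p.e. $\Lcal_a$-defined by the formula $x\times y=1$ (this formula already forces $x\ne 0$). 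Thus both hypotheses of Proposition \ref{PropULQ} hold, and we conclude that the $\Lcal_a$-structure $\Q$ is uniquely listable.
\end{proof}
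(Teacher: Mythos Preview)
Your proof is correct and follows exactly the same approach as the paper: the paper's proof is the single line ``By Corollary \ref{CoroListableZQ} and Proposition \ref{PropULQ},'' and you have simply spelled out the straightforward verification of the two hypotheses of Proposition \ref{PropULQ} that this citation leaves implicit.
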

\begin{proof} By Corollary \ref{CoroListableZQ} and Proposition \ref{PropULQ}.
\end{proof}

In a similar fashion, one can get several other results. We include here the case of a certain class of finitely generated structures, which generalizes the examples we have presented in detail so far in this section. The proofs goes along the same lines.
\begin{proposition}\label{PropFG} Let $\Mfrak$ be a listable $\Lcal$-structure with domain $M$. Suppose that there is a finite list of elements $g_0, g_1, ...,g_c\in M$ and functions $F_1,...,F_k:M^r\to M$ such that
\begin{itemize}
\item  each $g_j$ and each $F_i$ is p.e. $\Lcal$-definable over $\Mfrak$, and
\item $M$ is generated by the elements $g_j$ and the functions $F_i$, in the sense that each element of $M$ is obtained by applying a suitable composition of the $F_i$ to the elements $g_j$. 
\end{itemize}
Then $\Mfrak$ is uniquely listable.
\end{proposition}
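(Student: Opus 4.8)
The plan is to apply the criterion of Theorem \ref{ThmUL} directly, using the given functions $F_1,\dots,F_k$ and an enumeration $\tau$ of the ``formal terms'' built from the generators $g_0,\dots,g_c$. First I would fix a total recursive bijection $\langle\,\cdot\,\rangle\colon\N^r\to\N$ (an iterated Cantor pairing) with the standard property that each coordinate of its inverse is at most the coded value. Using it, for $n>c$ I decompose $n-c-1=km+(i-1)$ with $1\le i\le k$ and $m\ge 0$, set $A_i=\{n>c : n-c-1\equiv i-1\bmod k\}$, and define the total recursive function $h=(h_1,\dots,h_r)\colon\N\to\N^r$ by letting $(h_1(n),\dots,h_r(n))$ be the $\langle\,\cdot\,\rangle$-preimage of $m$ when $n>c$, and $h(n)=(0,\dots,0)$ otherwise. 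Since $m\le (n-c-1)/k<n$ and each $h_j(n)\le m$, we get $h_j(n)<n$ for all $n>c$; the sets $A_1,\dots,A_k$ are decidable and partition $\N_{>c}$. The functions $F_i$ are p.e.\ $\Lcal$-definable with total domain $V_i=M^r$ by hypothesis, so $\tau^*(V_i)=\N^r$ and condition (ii) of Theorem \ref{ThmUL} holds trivially.

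Next I would define $\tau\colon\N\to M$ by the well-founded recursion $\tau(j)=g_j$ for $0\le j\le c$, and $\tau(n)=F_i(\tau(h_1(n)),\dots,\tau(h_r(n)))$ for $n>c$ with $n\in A_i$; this is a legitimate set-theoretical definition precisely because $h_j(n)<n$. Condition (iii) of Theorem \ref{ThmUL} then holds by construction. The remaining point is surjectivity (condition (i)), which I would prove by induction on the complexity of the generating expressions: $g_0,\dots,g_c$ are $\tau(0),\dots,\tau(c)$, and if $t_1,\dots,t_r$ are values of such expressions with $t_\ell=\tau(n_\ell)$, then for each $i$ the integer $n:=c+1+k\langle n_1,\dots,n_r\rangle+(i-1)$ lies in $A_i$ and satisfies $h_\ell(n)=n_\ell$, whence $\tau(n)=F_i(\tau(n_1),\dots,\tau(n_r))=F_i(t_1,\dots,t_r)$. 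Since by hypothesis every element of $M$ is obtained by applying a composition of the $F_i$ to the $g_j$, this gives $\tau(\N)=M$.

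With $\tau$, $h$, the partition $A_1,\dots,A_k$, and the functions $F_1,\dots,F_k$ as above, all hypotheses of Theorem \ref{ThmUL} are satisfied, so $\tau$ is a universal listing; in particular $\tau$ is a listable presentation and $\Mfrak$ is uniquely listable. (Note the p.e.\ definability of the $g_j$ is not even needed for this argument, only that of the $F_i$, together with the standing hypothesis that $\Mfrak$ admits some listable presentation.)

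The only genuinely technical ingredient is the bookkeeping for the term-encoding function $h$: choosing the decomposition $n-c-1=km+(i-1)$ and the pairing $\langle\,\cdot\,\rangle$ so that each $h_j(n)$ is \emph{strictly} below $n$ while the map still realizes every tuple of codes of subterms. Once that is in place, both the surjectivity induction and the verification of conditions (i)--(iii) are routine, and the statement reduces to the already-established Theorem \ref{ThmUL}.
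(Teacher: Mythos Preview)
Your proposal is correct and follows exactly the approach the paper intends: the paper states that the proof ``goes along the same lines'' as the worked examples for $\N$ and $\Q$, meaning a direct application of Theorem~\ref{ThmUL} with $\tau$ encoding terms built from the $g_j$ via the $F_i$. Your explicit encoding via the residue of $n-c-1$ modulo $k$ together with an $r$-ary Cantor pairing to recover the subterm indices is a clean way to carry this out, and your observation that p.e.\ definability of the generators $g_j$ is not actually used (only that of the $F_i$) is a valid sharpening.
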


One immediately gets the following generalization of Corollary \ref{CoroQulist}:
\begin{corollary}\label{CoroULfg}  Let $M$ be a finitely generated ring. Let $\Lcal=\Lcal_a\cup\{g_0,g_1,...,g_c\}$ where $g_i$ are symbols of constant interpreted in $M$ as a list of ring  generators. Then $M$ seen as an $\Lcal$-structure is uniquely listable. The same result holds if $M$ is a finitely generated field, in which case the $g_i$ should be taken to correspond to a list of field generators.
\end{corollary}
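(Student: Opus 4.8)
The plan is to deduce Corollary \ref{CoroULfg} directly from Proposition \ref{PropFG}, since the corollary is precisely the special case of that proposition where the generating functions are ring (resp. field) operations. First I would treat the case of a finitely generated ring $M$. Fix ring generators $a_1,\dots,a_c\in M$, so that $\Lcal = \Lcal_a\cup\{g_0,g_1,\dots,g_c\}$ with $g_i^M = a_i$ (and, say, $g_0^M = 0$ or $1$, whichever is convenient as a base element; either is a constant symbol hence p.e.\ $\Lcal$-definable as a singleton). Every element of $M$ is a $\Z$-polynomial expression in the $a_i$, that is, it is obtained from the constants $0,1,a_1,\dots,a_c$ by finitely many applications of the binary operations $+$ and $\times$. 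I would then verify the two bullet hypotheses of Proposition \ref{PropFG}: each $g_j$ is a constant symbol of $\Lcal$, hence trivially p.e.\ $\Lcal$-definable over $\Mfrak$; and the binary functions $+,\times\colon M^2\to M$ are function symbols of $\Lcal_a\subseteq\Lcal$, hence their graphs are atomic $\Lcal$-formulas and in particular p.e.\ $\Lcal$-definable. The second bullet — that $M$ is generated by the $g_j$ and the $F_i$ in the stated sense — is exactly the statement that $\{a_1,\dots,a_c\}$ generates $M$ as a ring together with $0,1$, which holds by the choice of generators. Since $M$ carries a listable presentation by hypothesis (it is a listable $\Lcal$-structure — or one invokes that finitely generated rings admit listable presentations, e.g.\ via a surjection from $\N$ through polynomial representatives), Proposition \ref{PropFG} applies and gives unique listability.

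For the field case the argument is the same except that one also needs the multiplicative inverse. Here I would take $\Lcal = \Lcal_a\cup\{g_0,\dots,g_c\}$ with the $g_i$ a list of field generators, so that every element of $M$ is obtained from $0,1,a_1,\dots,a_c$ by finitely many applications of $+$, $\times$, and the partial inversion map $x\mapsto x^{-1}$ on $M^\times$. The only subtlety is that Proposition \ref{PropFG} as stated refers to functions $F_i\colon M^r\to M$, i.e.\ total functions, whereas inversion is only defined on $M^\times$. There are two clean ways around this, and I would pick one: either extend inversion to a total function by setting $0^{-1}=0$ (this extended map is still p.e.\ $\Lcal_a$-definable, being the graph $\{(x,y): xy=1\}\cup\{(0,0)\}$, which is a finite union of p.e.\ sets hence p.e.), or observe that the proof of Theorem \ref{ThmUL} — on which Proposition \ref{PropFG} rests — already accommodates p.e.\ \emph{partial} functions $F_i\colon V_i\to M$ with p.e.\ domains $V_i\subseteq M^r$, so inversion with domain $V = M^\times = \{x : \exists y\, xy=1\}$ fits directly. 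Either way, the generation hypothesis holds by the choice of field generators, and Proposition \ref{PropFG} yields unique listability.

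The main thing to be careful about — the only place this is not purely formal — is the bookkeeping needed to cast "$M$ is generated by ring (resp. field) operations from finitely many elements" into the precise shape "each element of $M$ is obtained by applying a suitable composition of the $F_i$ to the elements $g_j$" demanded by Proposition \ref{PropFG}, and, in the field case, reconciling the partiality of inversion with the statement of that proposition. Both are routine once one fixes the convention (total inversion with $0^{-1}=0$, or partial $F_i$ as in Theorem \ref{ThmUL}); there is no genuine obstacle, and the listable-presentation hypothesis is either assumed or supplied by a standard polynomial-representative construction, so no new combinatorics are required.
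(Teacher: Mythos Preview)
Your approach is essentially the paper's: both reduce to Proposition \ref{PropFG} by taking the $g_j$ to be the constant symbols and the $F_i$ to be the ring (resp.\ field) operations, and your handling of the partiality of inversion (either extend by $0^{-1}=0$ or appeal to the partial-function form in Theorem \ref{ThmUL}) is fine. One point to tighten: listability of $\Mfrak$ is \emph{not} a hypothesis of the corollary, so your parenthetical ``by hypothesis'' is misplaced; the paper discharges this by noting that finitely generated rings and fields are p.e.\ interpretable in the $\Lcal_a$-structure $\N$ (via $\Z[x_1,\dots,x_n]$, then quotients and localizations) and then invoking Theorem \ref{ThmListableN}, which is exactly what your ``polynomial-representative construction'' amounts to --- just make that step explicit rather than optional.
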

\begin{proof} It is a standard fact that finitely generated rings and fields are p.e. interpretable in the $\Lcal_a$-structure $\N$; see for instance Corollary 2.14 in \cite{AKNS} and note that the constructed interpretation is p.e. (idea: it suffices to interpret rings of the form $\Z[x_1,...,x_n]$ and then take quotients and localizations, which leads to p.e. interpretations). The result follows from Theorem \ref{ThmListableN} and Proposition \ref{PropFG}.
\end{proof}
%%
%%

%%%%%%%%
%%%%%%%%
%%%%%%%%
%%%%%%%%

%%%%%%%%%%%%%%%%%%%%%%%%%%%%%%%%%%%%%%
%%%%%%%%%%%%%%%%%%%%%%%%%%%%%%%%%%%%%%
%%%%%%%%%%%%%%%%%%%%%%%%%%%%%%%%%%%%%%
%%%%%%%%%%%%%%%%%%%%%%%%%%%%%%%%%%%%%%
%%%%%%%%%%%%%%%%%%%%%%%%%%%%%%%%%%%%%%
%%%%%%%%%%%%%%%%%%%%%%%%%%%%%%%%%%%%%%

\section{The DPRM property for listable structures}\label{SecDPRM}

%%%%%%%%
%%%%%%%%
%%%%%%%%
%%%%%%%%
\subsection{The DPRM property}\label{SecDPRM1}  Let $\Mfrak$ be a listable $\Lcal$-structure. Recall that a set $X\subseteq M^r$ is totally listable if for every listable presentation $\rho:\N\to M$ we have that $X$ is $\rho$-listable. 

In the special case that $\Mfrak$ is uniquely listable, the class of totally listable sets over $\Mfrak$ is the same as the class of $\rho$-listable sets for any chosen listable presentation $\rho:\N\to M$ (cf. Lemma \ref{LemmaComparisonListSets}). So, if $\Mfrak$ is uniquely listable, there is a well-defined notion of listable sets over $\Mfrak$.

By Corollary \ref{CoroTotList}, if $X\subseteq M^r$ is p.e. $\Lcal$-definable, then it is totally listable. We are interested in listable structures $\Mfrak$ for which the converse holds.

We say that a listable $\Lcal$-structure $\Mfrak$ has the \emph{DPRM property} if for each $r\ge 1$, every totally listable set $X\subseteq M^r$  is p.e. $\Lcal$-definable. Thus, a listable structure $\Mfrak$ has the DPRM property if and only if the class of totally listable sets over $\Mfrak$ is the same as p.e. $\Lcal$-definable sets over $\Mfrak$.

Of course, $\N$ as an $\Lcal_a$-structure has the DPRM property precisely by the DPRM theorem. It easily follows that $\Z$ as an $\Lcal_a$-structure also has the DPRM property (see Lemma \ref{LemmaZDPRM} for details).

A basic feature of listable structures having the DPRM property is the following:

\begin{lemma} If $\Mfrak$ is a listable $\Lcal$-structure with the DPRM property, then all finite sets in $M^r$ are p.e. $\Lcal$-definable. In particular, each element of $M$ is p.e. $\Lcal$-definable.
\end{lemma}
\begin{proof} By Corollary \ref{CoroFinSets}.
\end{proof}
In particular, the DPRM property has a simple characterization for finite structures.
\begin{corollary} Let $\Mfrak$ be an $\Lcal$-structure with finite domain. Then $\Mfrak$ has the DPRM property if and only if each element of $M$ is p.e. $\Lcal$-definable. In this case, every subset of $M^r$ is p.e. $\Lcal$-definable.
\end{corollary}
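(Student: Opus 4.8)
The plan is to prove both directions of the equivalence for a finite $\Lcal$-structure $\Mfrak$ with domain $M$, and then note that the ``in this case'' clause follows from Corollary \ref{CoroFin}.

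\textbf{The forward direction.} Suppose $\Mfrak$ has the DPRM property. By Proposition \ref{PropFinUL}, $\Mfrak$ is uniquely listable, and by Corollary \ref{CoroFin}, every subset of $M^r$ is totally listable. In particular, for each $m\in M$ the singleton $\{m\}\subseteq M$ is totally listable, hence p.e. $\Lcal$-definable by the DPRM property. So each element of $M$ is p.e. $\Lcal$-definable. (Alternatively, this is the content of the preceding Lemma, which I would simply cite: finite sets --- in particular singletons --- are p.e. $\Lcal$-definable in a structure with the DPRM property.)

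\textbf{The converse.} Suppose each element of $M$ is p.e. $\Lcal$-definable; say for each $m\in M$ we have a p.e. $\Lcal$-formula $\delta_m[x]$ with $\delta_m^\Mfrak=\{m\}$. Let $X\subseteq M^r$ be any totally listable set; I want to show $X$ is p.e. $\Lcal$-definable. Since $M$ is finite, $X$ is a finite set, say $X=\{{\bf a}^{(1)},\dots,{\bf a}^{(N)}\}$ with ${\bf a}^{(\ell)}=(a^{(\ell)}_1,\dots,a^{(\ell)}_r)$. Then
$$
X = \bigcup_{\ell=1}^{N}\ \bigwedge_{j=1}^{r}\ \delta_{a^{(\ell)}_j}[x_j]
$$
is defined by a p.e. $\Lcal$-formula (a finite disjunction of finite conjunctions of p.e. formulas is p.e.), so $X$ is p.e. $\Lcal$-definable. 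Actually I need not even invoke ``totally listable'' here, since $M^r$ is finite so \emph{every} subset of $M^r$ is a finite set and the same argument applies verbatim; this simultaneously establishes the final clause ``every subset of $M^r$ is p.e. $\Lcal$-definable.''

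There is no real obstacle: the only point requiring a moment's care is that a finite disjunction/conjunction of p.e. formulas is again p.e., which is immediate from the definition of p.e. formula (only $\exists$, $\vee$, $\wedge$ allowed, no negations). The whole argument rests on finiteness of $M$ together with Corollary \ref{CoroFin}, which guarantees that in a finite structure the class of totally listable sets is simply the class of \emph{all} subsets, collapsing the DPRM property to the condition that every finite set be p.e. $\Lcal$-definable, equivalently (by the above) that every singleton be.
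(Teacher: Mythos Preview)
Your proof is correct and follows essentially the same approach as the paper, which leaves this corollary without an explicit proof since it is immediate from the preceding lemma (giving the forward direction) together with the elementary observation that if every singleton is p.e. $\Lcal$-definable then so is every finite set, via the disjunction-of-conjunctions construction you wrote out.
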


%%%%%%%%
%%%%%%%%
%%%%%%%%
%%%%%%%%
\subsection{Known results}\label{SecDPRMknown}

Other authors have considered a slightly different variant of the DPRM property where only \emph{recursive presentations} are allowed. That is, surjective maps $\rho:\N\to M$ such that for every $s\in \Lcal$ we have that $\rho^*(s^\Mfrak)$ is decidable. Let us momentarily call this variant the \emph{recursive DPRM property}. Note that if a set $X\subseteq M^r$ is $\rho$-listable for every listable presentation $\rho$, then the same holds for every recursive presentation. Therefore, the recursive DPRM property implies the DPRM property, and all available results in the literature establishing the recursive DPRM property for a certain structure are still valid if the DPRM property is considered.

Let us briefly survey the known cases of the DPRM property beyond $\N$ and $\Z$. Denef \cite{DenefZT} proved the DPRM property for $\Z[t]$. Zahidi \cite{ZahidiOKt} extended Denef's result to the polynomial  ring $O_K[t]$ when $K$ is a totally real number field and $O_K$ is its ring of integers. Demeyer proved several other cases: $k[t]$ for $k$ a finite field or a recursive infinite algebraic extension of a finite field \cite{DemeyerInv}, and $A[t]$ where $A$ is a recursive subring of a number field \cite{DemeyerPoly}.  Degroote and Demeyer \cite{DeDe} proved the case of $L[t]$ where $L$ is an automorphism-recursive algebraic extension of $\Q$ having an embedding into $\R$ or into a finite extension of $\Q_p$ for some prime $p$ (see \cite{DeDe} for the definition of automorphism-recursive extensions). 

Other than this, if $B$ is a uniquely listable ring and $A\subseteq B$ is a uniquely listable subring such that $A$ has the DPRM property and $A$ is Diophantine in $B$ (that is, p.e. $\Lcal_a$-definable with parameters), then there are a number of cases where the DPRM property can be transferred from $A$ to $B$ by applying some version of Theorem \ref{ThmTransferDPRM} below, especially in the context of finite algebraic extensions of fields or Dedekind domains. See for instance Proposition \ref{PropOK} below.

%%%%%%%%
%%%%%%%%
%%%%%%%%
%%%%%%%%
\subsection{The number of existential quantifiers}\label{SecEquant} Let $\Mfrak$ be an $\Lcal$-structure.  Given $r\ge 1$, a \emph{p.e. $r$-catalogue} for $\Mfrak$ is a p.e. $\Lcal$-formula $\Upsilon_r[x_0,x_1,...,x_r]$   with the following property: For every p.e. $\Lcal$-definable set $X\subseteq M^r$ there is an element $m_X\in M$ such that $X=\{{\bf a}\in M^r : \Mfrak\models \Upsilon_r[m_X,{\bf a}]\}$.
(One may argue that ``universal p.e. formula'' would be a better terminology, but such an oxymoron might lead to confusion.) We record the following remark.
\begin{lemma} Let $\Mfrak$ be an $\Lcal$-structure. If $\Mfrak$ has a p.e. $r$-catalogue for certain $r\ge 1$, then it has  a p.e. $n$-catalogue for every $1\le n\le r$.
\end{lemma}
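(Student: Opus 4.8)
The plan is to reduce the general case $1 \le n \le r$ to padding an $n$-catalogue out of the $r$-catalogue by fixing the extra variables. Suppose $\Upsilon_r[x_0, x_1, \dots, x_r]$ is a p.e. $r$-catalogue for $\Mfrak$. I want to produce a p.e. $\Lcal$-formula $\Upsilon_n[x_0, x_1, \dots, x_n]$ that catalogues all p.e. $\Lcal$-definable subsets of $M^n$. The idea is that any $X \subseteq M^n$ which is p.e. $\Lcal$-definable gives rise, by appending $r-n$ dummy coordinates, to a p.e. $\Lcal$-definable set $X' \subseteq M^r$; applying the $r$-catalogue to $X'$ and then discarding the dummy coordinates recovers $X$ with the appropriate parameter.

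First I would fix an element $a \in M$ (recall that $M$ is nonempty as a structure domain; if one wants to be careful one can note that a p.e. $r$-catalogue forces $M$ to be nonempty since the formula $\Upsilon_r$ has $r+1 \ge 2$ free variables). Given a p.e. $\Lcal$-definable set $X \subseteq M^n$, define
$$
X' = X \times \{a\}^{r-n} \subseteq M^r.
$$
This set $X'$ is p.e. $\Lcal$-definable: if $\psi[x_1,\dots,x_n]$ is a p.e. definition of $X$, then $\psi[x_1,\dots,x_n] \wedge x_{n+1} = a \wedge \dots \wedge x_r = a$ is a p.e. definition of $X'$ (here one uses that $a$ is allowed as a parameter, or—if one prefers parameter-free formulas—the hypothesis that each element of $M$ is p.e. definable, which in this setting one may or may not have; since the catalogue itself is stated with parameters $m_X \in M$, using the parameter $a$ is harmless and consistent with the definition of p.e. $r$-catalogue). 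By the defining property of $\Upsilon_r$ there is $m_{X'} \in M$ with $X' = \{\mathbf{b} \in M^r : \Mfrak \models \Upsilon_r[m_{X'}, \mathbf{b}]\}$.

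Next I would set
$$
\Upsilon_n[x_0, x_1, \dots, x_n] \;:=\; \exists x_{n+1} \cdots \exists x_r \;\bigl(\Upsilon_r[x_0, x_1, \dots, x_n, x_{n+1}, \dots, x_r] \wedge x_{n+1} = a \wedge \dots \wedge x_r = a\bigr),
$$
which is again p.e. since it only adds existential quantifiers and conjunctions of atomic formulas to the p.e. formula $\Upsilon_r$. One then checks the catalogue property with parameter $m_X := m_{X'}$: for $\mathbf{a} \in M^n$, we have $\Mfrak \models \Upsilon_n[m_X, \mathbf{a}]$ iff $\Mfrak \models \Upsilon_r[m_X, \mathbf{a}, a, \dots, a]$ (the only witnesses for the existential quantifiers that satisfy the equality conjuncts are $x_j = a$), iff $(\mathbf{a}, a, \dots, a) \in X'$, iff $\mathbf{a} \in X$ by construction of $X'$. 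Hence $X = \{\mathbf{a} \in M^n : \Mfrak \models \Upsilon_n[m_X, \mathbf{a}]\}$, so $\Upsilon_n$ is a p.e. $n$-catalogue. Iterating (or just taking the single value $n$ directly as above) gives the claim for all $1 \le n \le r$.

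I do not anticipate a genuine obstacle here; this is a routine padding argument. The only point requiring a moment's care is the status of the extra constant $a$: whether it is supplied as a parameter (which is consistent with the fact that catalogues carry parameters $m_X$ anyway) or is p.e. definable without parameters. Since the definition of a p.e. $r$-catalogue already quantifies over parameters $m_X \in M$, allowing one more fixed parameter $a$ changes nothing, so the argument goes through cleanly in the generality stated.
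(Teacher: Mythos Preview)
The paper records this lemma as a remark and gives no proof, so there is nothing to compare against; your padding idea is the intended one. However, there is a genuine issue with your treatment of the auxiliary element $a$.

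By the paper's standing convention (stated in Section \ref{SecPrelim}), formulas are parameter-free unless explicitly stated otherwise, and the definition of a p.e. $r$-catalogue requires $\Upsilon_r[x_0,\dots,x_r]$ to be a p.e. $\Lcal$-formula in this sense. Your claim that ``the definition of a p.e. $r$-catalogue already quantifies over parameters $m_X\in M$'' conflates two different things: $m_X$ is not a parameter of the formula $\Upsilon_r$, it is an element substituted into the free variable $x_0$ after the fact. So your $\Upsilon_n$, which has the constant $a$ hard-wired into it, is not a p.e. $\Lcal$-formula in the paper's sense and hence not a p.e. $n$-catalogue as defined. The same problem bites one step earlier: your set $X' = X\times\{a\}^{r-n}$ need not be p.e. $\Lcal$-definable without parameters, so you cannot invoke the $r$-catalogue on it.

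The fix is immediate and actually simplifies the argument: take $X' = X\times M^{r-n}$, which is p.e. $\Lcal$-definable without parameters (just view a defining formula $\psi[x_1,\dots,x_n]$ for $X$ as a formula with free variables among $x_1,\dots,x_r$), and set
\[
\Upsilon_n[x_0,x_1,\dots,x_n] \;:=\; \exists x_{n+1}\cdots\exists x_r\,\Upsilon_r[x_0,x_1,\dots,x_r].
\]
Then with $m_X := m_{X'}$ one has $\Mfrak\models\Upsilon_n[m_X,\mathbf{a}]$ iff $(\mathbf{a},b_{n+1},\dots,b_r)\in X\times M^{r-n}$ for some $b_j$, iff $\mathbf{a}\in X$. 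No auxiliary constant is needed.
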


Conversely, one has:
\begin{lemma} Let $\Mfrak$ be an $\Lcal$-structure with domain $M$. If $\Mfrak$ has a p.e. $1$-catalogue and there is a p.e. $\Lcal$-definable injective function $M^2\to M$, then $\Mfrak$ has a p.e. $r$-catalogue for every $r\ge 1$.
\end{lemma}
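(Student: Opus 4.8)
The plan is to bootstrap a p.e.\ $r$-catalogue out of the p.e.\ $1$-catalogue by using the p.e.\ definable injection to encode tuples. Let $\Upsilon_1[x_0,x_1]$ be a p.e.\ $1$-catalogue for $\Mfrak$, and let $j\colon M^2\to M$ be a p.e.\ $\Lcal$-definable injective function, with graph $G_j\subseteq M^3$ p.e.\ $\Lcal$-definable. First I would iterate $j$ to obtain, for each $r\ge 1$, a p.e.\ $\Lcal$-definable injective function $J_r\colon M^r\to M$: set $J_1=\Id$ and $J_{r}(a_1,\dots,a_r)=j(a_1,J_{r-1}(a_2,\dots,a_r))$. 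Composition of p.e.\ definable functions is p.e.\ definable (one existentially quantifies over the intermediate value), so the graph $\Gamma(J_r)\subseteq M^{r+1}$ is p.e.\ $\Lcal$-definable, and $J_r$ is injective because $j$ is.

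Next, given a p.e.\ $\Lcal$-definable set $X\subseteq M^r$, consider its pushforward $J_r(X)=\{J_r(\mathbf a):\mathbf a\in X\}\subseteq M$. Since $X$ and $\Gamma(J_r)$ are p.e.\ definable, $J_r(X)$ is p.e.\ definable: $y\in J_r(X)$ iff $\exists x_1\cdots\exists x_r\,(\,(x_1,\dots,x_r)\in X \wedge y=J_r(x_1,\dots,x_r)\,)$. By the defining property of $\Upsilon_1$, there is $m_X\in M$ with $J_r(X)=\{y\in M:\Mfrak\models\Upsilon_1[m_X,y]\}$. Now I would define the candidate p.e.\ $r$-catalogue by
\[
\Upsilon_r[x_0,x_1,\dots,x_r]\;:\equiv\;\exists y\,\bigl(\,\Gamma(J_r)[y,x_1,\dots,x_r]\ \wedge\ \Upsilon_1[x_0,y]\,\bigr),
\]
which is p.e.\ since $\Gamma(J_r)$ and $\Upsilon_1$ are p.e.\ and we only add an existential quantifier and a conjunction. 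I must then check that $\Upsilon_r$ works for \emph{this} $X$ with parameter $m_X$: if $\mathbf a\in X$ then $y:=J_r(\mathbf a)\in J_r(X)$ witnesses $\Upsilon_r[m_X,\mathbf a]$; conversely if $\Mfrak\models\Upsilon_r[m_X,\mathbf a]$, the witness $y$ satisfies $y=J_r(\mathbf a)$ and $y\in J_r(X)$, hence $y=J_r(\mathbf b)$ for some $\mathbf b\in X$, and injectivity of $J_r$ forces $\mathbf a=\mathbf b\in X$.

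The only real subtlety — and what I'd flag as the main point to get right rather than a true obstacle — is the injectivity bookkeeping: the argument that $\Mfrak\models\Upsilon_r[m_X,\mathbf a]\Rightarrow\mathbf a\in X$ genuinely uses that $J_r$ is injective, not merely p.e.\ definable, so one should not be tempted to use an arbitrary p.e.\ definable surjection $M^r\to M$ or a mere pairing function without the injectivity hypothesis that the statement provides. Everything else is routine: closure of p.e.\ definability under composition, conjunction, and existential quantification, all of which is already available in the excerpt. I would present the proof by first constructing $J_r$ by induction on $r$ (using the given injection $M^2\to M$ at each step), then exhibiting $\Upsilon_r$ as displayed above, and finally verifying the catalogue property via the injectivity of $J_r$ and the catalogue property of $\Upsilon_1$.
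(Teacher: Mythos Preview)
Your proof is correct and is precisely the natural argument one would expect: iterate the pairing injection to encode $r$-tuples, push forward a p.e.\ definable $X\subseteq M^r$ to $J_r(X)\subseteq M$, apply the $1$-catalogue there, and pull back using injectivity of $J_r$. The paper states this lemma without proof, so there is nothing to compare against; your write-up would serve perfectly well as the omitted argument.
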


For an $\Lcal$-structure $\Mfrak$ and subset $X\subseteq M^r$ which is p.e. $\Lcal$-definable with parameters from $\Mfrak$,  we define the \emph{p.e. rank} of $X$ over $\Mfrak$ as the minimal number of  existential quantifiers required by a p.e. $\Lcal$-formula to define $X$ \emph{with parameters from $\Mfrak$}. The p.e. rank of $X$ over $\Mfrak$ is denoted by  $\rk^{p.e.}_{\Mfrak}(X)$.  We allow p.e. definitions with parameters because the p.e. rank is intended to be a rough measure of the complexity of a p.e. definable set, and it is desirable that all p.e. definable finite sets have the lowest possible complexity in this sense ---namely, $0$.

The following observation follows from the definition of p.e. $r$-catalogues.
\begin{lemma}[Boundedness of the p.e. rank]\label{LemmaCatBdd} Let $\Mfrak$ be an $\Lcal$-structure with domain $M$ and let $r\ge 1$.  If $\Mfrak$ has a p.e. $r$-catalogue, then there is a bound $B_\Mfrak(r)$ depending only on $\Mfrak$ and $r$, such that for every $n\le r$ and every p.e. $\Lcal$-definable $X\subseteq M^n$ we have $\rk^{p.e.}_{\Mfrak}(X)\le B_{\Mfrak}(r)$.
\end{lemma}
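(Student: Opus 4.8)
The plan is to reduce the desired bound, for every arity $n\le r$ at once, to the mere existence of the catalogue formulas, each of which has a fixed finite number of existential quantifiers. First I would invoke the preceding lemma: since $\Mfrak$ has a p.e. $r$-catalogue, it has a p.e. $n$-catalogue $\Upsilon_n[x_0,x_1,\dots,x_n]$ for every $1\le n\le r$. Fixing such formulas once and for all, let $q_n$ be the number of existential quantifiers occurring in $\Upsilon_n$, and set $B_\Mfrak(r):=\max\{q_1,\dots,q_r\}$. This quantity depends only on $\Mfrak$ (through the chosen catalogues) and on $r$, and in particular it is fixed before any set $X$ is considered.

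Next, given $n\le r$ and a p.e. $\Lcal$-definable set $X\subseteq M^n$, I would apply the defining property of the $n$-catalogue to get an element $m_X\in M$ with $X=\{\mathbf{a}\in M^n:\Mfrak\models\Upsilon_n[m_X,\mathbf{a}]\}$. Substituting the parameter $m_X$ for the free variable $x_0$ in $\Upsilon_n$ produces a p.e. $\Lcal$-formula in the free variables $x_1,\dots,x_n$, with $m_X$ as a parameter, that defines $X$; replacing a free variable by a constant does not alter the number of existential quantifiers, which stays equal to $q_n$. Hence $\rk^{p.e.}_\Mfrak(X)\le q_n\le B_\Mfrak(r)$, which is exactly the claimed uniform bound.

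I do not expect a real obstacle here; the argument is essentially bookkeeping. The only points that need a moment of care are: (i) $\rk^{p.e.}_\Mfrak$ is by definition the least number of existential quantifiers in a p.e. definition \emph{with parameters from $\Mfrak$}, so exhibiting the parametric definition coming from the catalogue is enough; (ii) substituting a parameter for a free variable does not increase the existential-quantifier count, so the catalogue formula transfers its quantifier count to $X$; and (iii) the bound $B_\Mfrak(r)$ must not depend on $X$ or even on $n$, which is why one takes the maximum of $q_1,\dots,q_r$ rather than settling for a bound in terms of $n$ alone.
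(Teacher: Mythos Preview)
Your argument is correct and is essentially what the paper intends: the paper simply records the lemma as an observation that ``follows from the definition of p.e. $r$-catalogues,'' and your proof is a faithful unpacking of that remark, using the preceding lemma to obtain catalogues in each arity $n\le r$ and then taking $B_\Mfrak(r)=\max\{q_1,\dots,q_r\}$.
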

\begin{theorem}[Existence of p.e. catalogues] \label{ThmCatDPRM} Let $\Mfrak$ be an $\Lcal$-structure with infinite domain $M$. Suppose that $\Mfrak$ is uniquely listable, that it has the DPRM property, and that the binary relation $\ne$ is p.e. $\Lcal$-definable over $\Mfrak$. Then $\Mfrak$ has a p.e. $r$-catalogue $\Upsilon_r[x_0,x_1,...,x_r]$ for every $r\ge 1$.

In particular, for every $r\ge 1$ there is a bound $B_\Mfrak(r)$ (depending only on $\Mfrak$ and $r$) such that for every p.e. $\Lcal$-definable set $X\subseteq M^r$ we have $\rk^{p.e.}_{\Mfrak}(X)\le B_\Mfrak(r)$.
\end{theorem}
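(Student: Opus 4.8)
The plan is to build the catalogue $\Upsilon_r$ by transporting the universal listable set of $\N$ through a listable presentation and then invoking the DPRM property. First I would use Corollary \ref{CoroNEpeEdec}: since $\ne$ is p.e. $\Lcal$-definable, $E_\rho$ is decidable for every listable presentation $\rho$, so by Corollary \ref{CoroBij} we may fix a \emph{bijective} listable presentation $\rho:\N\to M$. Because $\Mfrak$ is uniquely listable, the class of $\rho$-listable sets coincides with the class of totally listable sets, which by the DPRM property is exactly the class of p.e. $\Lcal$-definable sets. Now apply Lemma \ref{LemmaUnivList} to obtain, for each $r\ge 1$, a $\rho$-listable set $U_r(\rho)\subseteq M^{r+1}$ that is universal for $\rho$-listable subsets of $M^r$: for every $\rho$-listable $X\subseteq M^r$ there is $m_X\in M$ with $X=\{\mathbf{x}\in M^r:(m_X,\mathbf{x})\in U_r(\rho)\}$.

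The key step is to upgrade ``$\rho$-listable'' to ``p.e. definable'' at the level of the universal set $U_r(\rho)$ itself. Since $U_r(\rho)\subseteq M^{r+1}$ is $\rho$-listable, it is totally listable (as $\Mfrak$ is uniquely listable), hence by the DPRM property it is p.e. $\Lcal$-definable. Let $\Upsilon_r[x_0,x_1,\dots,x_r]$ be a p.e. $\Lcal$-formula defining it, so $\Upsilon_r^\Mfrak=U_r(\rho)$. Then for any p.e. $\Lcal$-definable $X\subseteq M^r$, Corollary \ref{CoroTotList} gives that $X$ is totally listable, in particular $\rho$-listable, so the universality of $U_r(\rho)$ produces $m_X\in M$ with $X=\{\mathbf{a}\in M^r:\Mfrak\models\Upsilon_r[m_X,\mathbf{a}]\}$. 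This is exactly the defining property of a p.e. $r$-catalogue. The final assertion about uniform boundedness of $\rk^{p.e.}_\Mfrak$ is then immediate from Lemma \ref{LemmaCatBdd}.

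The main obstacle is ensuring the chain of equivalences ``$\rho$-listable $\Leftrightarrow$ totally listable $\Leftrightarrow$ p.e. $\Lcal$-definable'' is legitimately available: this is where unique listability (to pass from a single $\rho$ to ``totally'') and the DPRM hypothesis (to pass from ``totally listable'' to ``p.e. definable'') both get used, and where the hypothesis that $\ne$ is p.e. definable is essential, since Lemma \ref{LemmaUnivList} requires a \emph{bijective} listable presentation and only Corollary \ref{CoroBij} (fed by Corollary \ref{CoroNEpeEdec}) guarantees one exists in the right equivalence class. Everything else is bookkeeping: one must check that replacing $\rho$ by the equivalent bijective presentation $\tilde\rho$ does not change the class of listable sets (Lemma \ref{LemmaComparisonListSets}), and that the infiniteness of $M$ is what makes Lemma \ref{LemmaUnivList} applicable. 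No delicate computation is needed beyond these invocations.
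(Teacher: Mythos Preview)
Your proposal is correct and follows essentially the same approach as the paper: obtain a bijective listable presentation via Corollary \ref{CoroBij} (fed by the p.e. definability of $\ne$), invoke Lemma \ref{LemmaUnivList} to get a universal $\rho$-listable set $U_r(\rho)\subseteq M^{r+1}$, and then use unique listability together with the DPRM property to conclude that $U_r(\rho)$ is p.e. $\Lcal$-definable, yielding the catalogue $\Upsilon_r$. Your write-up is in fact slightly more explicit than the paper's in spelling out the chain ``$\rho$-listable $\Leftrightarrow$ totally listable $\Leftrightarrow$ p.e. definable'' and in isolating where each hypothesis is used, but the argument is the same.
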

\begin{proof} Take any listable presentation $\rho:\N\to M$. Since $M$ is infinite and $\ne$ is p.e. $\Lcal$-definable, Corollary \ref{CoroBij} yields a bijective listable presentation $\gamma:\N\to M$. Lemma \ref{LemmaUnivList} then gives a $\gamma$-listable set $U_r(\gamma)\subseteq M^{r+1}$ such that for every $\gamma$-listable set $X\subseteq M^r$ there is $m_X\in M$ such that
$$
X = \{{\bf a}\in M^r : (m_X,{\bf a})\in U_r(\gamma)\}.
$$
Since $\Mfrak$ has the DPRM property and it is uniquely listable, $U_r(\gamma)$ is p.e. $\Lcal$-definable. We can take $\Upsilon_r[x_0,x_1,...,x_r]$ to be any p.e. $\Lcal$-formula defining $U_r(\gamma)$.
\end{proof}
%%
%%

%%%%%%%%
%%%%%%%%
%%%%%%%%
%%%%%%%%
\subsection{Transference of the DPRM property}  Our next goal is to characterize when it is possible to transfer the DPRM property from one structure to another by means of a p.e. interpretation. For simplicity, we will work with uniquely listable structures. First we need

\begin{lemma}\label{LemmaSelf} Let $\Mfrak$ be a uniquely listable $\Lcal$-structure with domain $M$ and suppose that it has the DPRM property. Then for every p.e. self-interpretation $\lambda:\Mfrak\dasharrow \Mfrak$ we have that 
$$
\Gamma(\lambda) =\{(x_0,x_1,...,x_r)\in M^{r+1} : (x_1,...,x_r)\in \dom(\lambda)\mbox{ and } x_0=\lambda(x_1,...,x_r)\}  
$$
is p.e. $\Lcal$-definable.
\end{lemma}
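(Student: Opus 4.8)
The plan is to exploit the characterization of $\rho$-listable sets (Lemma \ref{LemmaCharList}) together with the DPRM property, after reducing to a bijective listable presentation. Since $\Mfrak$ has a self-interpretation $\lambda:\Mfrak\dasharrow\Mfrak$, its domain is infinite unless $M$ is finite; if $M$ is finite the statement is trivial because every subset of $M^{r+1}$ is p.e.\ $\Lcal$-definable (as $\Mfrak$ has the DPRM property, every element of $M$ is p.e.\ definable). So assume $M$ is infinite. First I would observe that $\Gamma(\lambda)\subseteq M^{r+1}$ is p.e.\ $\Lcal$-definable \emph{as soon as we know it is totally listable}, precisely because $\Mfrak$ is uniquely listable and has the DPRM property. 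Hence the entire content of the lemma is to verify that $\Gamma(\lambda)$ is $\rho$-listable for some (equivalently every) listable presentation $\rho$ of $\Mfrak$.

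To do this, fix a listable presentation $\rho:\N\to M$. Since $\lambda$ is a p.e.\ interpretation of $\Mfrak$ in itself, Proposition \ref{PropIntimplList} applies: with $X=\rho^*(\dom(\lambda))\subseteq\N^r$ there is a total recursive $f:\N\to\N^r$ with image $X$, and $\gamma:=\lambda\circ\rho^{(r)}\circ f:\N\to M$ is again a listable presentation of $\Mfrak$. Now I would use that $\rho$ and $\gamma$ are both listable presentations of the \emph{uniquely listable} structure $\Mfrak$, so $\rho\approx\gamma$; by Lemma \ref{LemmaDTE} this means the set $\Delta(\rho,\gamma)=\{(m,n)\in\N^2:\rho(m)=\gamma(n)\}$ is listable. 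Unwinding the definition of $\gamma$, for $n\in\N$ we have $\gamma(n)=\lambda(\rho^{(r)}(f(n)))$, so $\rho(m)=\gamma(n)$ says exactly that $\rho(m)=\lambda(\rho(f_1(n)),\ldots,\rho(f_r(n)))$, i.e.\ $(\rho(m),\rho^{(r)}(f(n)))\in\Gamma(\lambda)$.

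It remains to pass from listability of $\Delta(\rho,\gamma)$ to $\rho$-listability of $\Gamma(\lambda)$, i.e.\ to listability of $\rho^*(\Gamma(\lambda))=\{(m,a_1,\ldots,a_r)\in\N^{r+1}:(\rho(m),\rho(a_1),\ldots,\rho(a_r))\in\Gamma(\lambda)\}$. The point is that $f:\N\to\N^r$ surjects onto $X=\rho^*(\dom(\lambda))$, and $\rho^*(\Gamma(\lambda))$ is contained in $\N\times X$; moreover, on the relevant set the value $\lambda(\rho^{(r)}(\mathbf a))$ depends on $\mathbf a$ only through $\rho^{(r)}(\mathbf a)$, so membership of $(m,\mathbf a)$ in $\rho^*(\Gamma(\lambda))$ can be tested by searching for some $n$ with $f(n)$ and $\mathbf a$ having the same $\rho$-image coordinatewise (a listable condition, since $E_\rho$ is listable) and with $(m,n)\in\Delta(\rho,\gamma)$. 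Concretely, letting $\epsilon=(\epsilon_1,\epsilon_2):\N\to\N^2$ be a total recursive enumeration of $E_\rho$ and $\delta=(\delta_1,\delta_2):\N\to\N^2$ a total recursive enumeration of $\Delta(\rho,\gamma)$, the partial recursive function
$$
\Phi(m,a_1,\ldots,a_r)=\mu\,w\Big[\delta_1(w_0)=m\ \text{and}\ f_j(\delta_2(w_0))=\epsilon_1(w_j)\ \text{and}\ \epsilon_2(w_j)=a_j\ \text{for }j=1,\ldots,r\Big],
$$
where $w\mapsto(w_0,w_1,\ldots,w_r)$ is a fixed recursive bijection $\N\to\N^{r+1}$, has domain exactly $\rho^*(\Gamma(\lambda))$; this is the same bookkeeping device used in the proof of Lemma \ref{LemmaCharList}. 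Hence $\rho^*(\Gamma(\lambda))$ is listable, so $\Gamma(\lambda)$ is $\rho$-listable, hence totally listable (by unique listability), hence p.e.\ $\Lcal$-definable by the DPRM property.

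**Main obstacle.** The conceptual steps are short; the only delicate point is the last paragraph, where one must be careful that the minimalization-based function $\Phi$ has domain \emph{exactly} $\rho^*(\Gamma(\lambda))$ and not something larger or smaller — in particular that the surjectivity of $f$ onto $\rho^*(\dom(\lambda))$ is used correctly so that every pair $(\rho(m),\rho^{(r)}(\mathbf a))\in\Gamma(\lambda)$ is actually witnessed by some $n$ with $f(n)$ $\rho$-equivalent to $\mathbf a$. This is exactly analogous to the argument already carried out in Lemma \ref{LemmaCharList} and in Proposition \ref{PropIntimplList}, so I would simply invoke that style of argument rather than re-derive it in full.
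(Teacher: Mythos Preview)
Your proof is correct and follows essentially the same approach as the paper: construct the auxiliary listable presentation $\gamma=\lambda\circ\rho^{(r)}\circ f$ via Proposition~\ref{PropIntimplList}, use unique listability to compare $\rho$ and $\gamma$, and deduce that $\Gamma(\lambda)$ is $\rho$-listable so that the DPRM property applies. The paper's endgame is slightly more direct: rather than invoking Lemma~\ref{LemmaDTE} and building a minimalization, it uses the total recursive $\phi$ with $\gamma=\rho\circ\phi$ coming straight from the definition of $\rho\approx\gamma$ to write $\Gamma(\lambda)=\rho^{(r+1)}\big((\phi,f_1,\ldots,f_r)(\N)\big)$ and then applies Lemma~\ref{LemmaCharList}(iii).
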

\begin{proof} Since $\dom(\lambda)$ is p.e. $\Lcal$-definable, it is totally listable (cf. Corollary \ref{CoroTotList}). Let $\rho$ be a listable presentation for $\Mfrak$ and let $f=(f_1,...,f_r):\N\to \N^r$ be a total recursive function with image $\rho^*(\dom(\lambda))$. The map $\gamma=\lambda\circ \rho^{(r)}\circ f:\N\to M$ is a listable presentation for $\Mfrak$, by Proposition \ref{PropIntimplList}. Since $\Mfrak$ is uniquely listable, $\gamma\approx \lambda$ and there is a total recursive function $\phi:\N\to \N$ with $\gamma=\rho\circ \phi$. Note that $\rho^{(r)}\circ f:\N\to M^r$ maps onto $\dom(\lambda)$, so, $\Gamma(\lambda)$ consists precisely of elements of the form 
$$
(\gamma(n), \rho^{(r)}(f(n))) = (\rho(\phi(n)) , \rho^{(r)}(f(n))) = \rho^{(r+1)}\circ(\phi,f_1,...,f_r)(n)
$$
for some $n\in \N$. We conclude by Lemma \ref{LemmaCharList} (iii) and the DPRM property.
\end{proof}

\begin{theorem}[Transference of DPRM]\label{ThmTransferDPRM} For $i=1,2$ let $\Lcal_i$  be a language and let $\Mfrak_i$ be a uniquely listable $\Lcal_i$-structure with domain $M_i$. Suppose that $\Mfrak_2$ has the DPRM property and  that there are p.e. interpretations $\theta_1:\Mfrak_1\dasharrow \Mfrak_2$ and $\theta_2:\Mfrak_2\dasharrow \Mfrak_1$ of ranks $r_1,r_2$ respectively. Let $\zeta=\theta_2\bullet \theta_1:\Mfrak_1\dasharrow\Mfrak_1$ and let $r=r_1r_2$. Then the following are equivalent:
\begin{itemize}
\item[(i)] The pair $(\theta_1,\theta_2)$ defines a p.e. bi-interpretation between $\Mfrak_1$ and $\Mfrak_2$.
\item[(ii)] $\Gamma(\zeta)\subseteq M_1^{r+1}$ is p.e. $\Lcal_1$-definable over $\Mfrak_1$.
\item[(iii)] $\Mfrak_1$ has the DPRM property.
\end{itemize}
\end{theorem}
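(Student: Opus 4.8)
The plan is to prove the cycle of implications (i) $\Rightarrow$ (iii) $\Rightarrow$ (ii) $\Rightarrow$ (i), using Lemma \ref{LemmaSelf} as the main engine for the step from (iii) to (ii), and the transference-of-listability machinery (Proposition \ref{PropIntimplList}) together with the DPRM property of $\Mfrak_2$ for the step from (ii) to (iii).

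\emph{Step 1: (iii) $\Rightarrow$ (ii).} This is immediate from Lemma \ref{LemmaSelf}: if $\Mfrak_1$ has the DPRM property (and is uniquely listable, which it is by hypothesis), then every p.e. self-interpretation of $\Mfrak_1$ has p.e.\ definable graph; apply this to $\zeta=\theta_2\bullet\theta_1$, which is a p.e.\ interpretation by Lemma \ref{LemmaCompInt}(iv).

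\emph{Step 2: (ii) $\Rightarrow$ (i).} Here I would use Lemma \ref{LemmaBiIntHomotopy}: $(\theta_1,\theta_2)$ is a p.e. bi-interpretation iff $\theta_2\bullet\theta_1\asymp\Id_{\Mfrak_1}$ and $\theta_1\bullet\theta_2\asymp\Id_{\Mfrak_2}$, i.e.\ iff $\Gamma(\zeta)=\Gamma(\theta_2\bullet\theta_1)$ is p.e.\ $\Lcal_1$-definable and $\Gamma(\theta_1\bullet\theta_2)$ is p.e.\ $\Lcal_2$-definable. Condition (ii) gives the first. For the second, I would invoke Lemma \ref{LemmaSelf} applied to the structure $\Mfrak_2$ (which has the DPRM property and is uniquely listable by hypothesis) and the p.e.\ self-interpretation $\theta_1\bullet\theta_2:\Mfrak_2\dasharrow\Mfrak_2$; this immediately yields that $\Gamma(\theta_1\bullet\theta_2)$ is p.e.\ $\Lcal_2$-definable, with no use of (ii) at all. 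Hence (i) follows (and in fact the second bi-interpretation condition holds unconditionally once $\Mfrak_2$ has the DPRM property).

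\emph{Step 3: (i) $\Rightarrow$ (iii), the main obstacle.} This is the substantive direction. Assume $(\theta_1,\theta_2)$ is a p.e.\ bi-interpretation. Let $X\subseteq M_1^k$ be totally listable; I must show it is p.e.\ $\Lcal_1$-definable. Fix a listable presentation $\rho_1$ of $\Mfrak_1$; by Proposition \ref{PropIntimplList} applied to $\theta_1:\Mfrak_1\dasharrow\Mfrak_2$ we obtain a listable presentation $\gamma$ of $\Mfrak_2$ of the form $\gamma=\theta_1\circ\rho_1^{(r_1)}\circ f$ with $f:\N\to\N^{r_1}$ total recursive onto $\rho_1^*(\dom\theta_1)$, having the property that a set $S\subseteq M_2^\bullet$ is $\gamma$-listable iff $\theta_1^*(S)$ is $\rho_1$-listable. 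Now consider $\theta_1^{(k)}$ applied appropriately: since $X$ is totally listable over $\Mfrak_1$, the set $\theta_2^*(X)\subseteq M_2^{kr_2}$ is totally listable over $\Mfrak_2$ — one checks this by pulling back through an arbitrary listable presentation of $\Mfrak_2$, composing with $\theta_2$ via Proposition \ref{PropIntimplList} to get a listable presentation of $\Mfrak_1$, and using that $X$ is listable there. Since $\Mfrak_2$ has the DPRM property and is uniquely listable, $\theta_2^*(X)$ is p.e.\ $\Lcal_2$-definable; therefore $\theta_1^*(\theta_2^*(X))=(\theta_2\bullet\theta_1)^*(X)=\zeta^*(X)\subseteq M_1^{kr}$ is p.e.\ $\Lcal_1$-definable by the pull-back lemma for p.e.\ interpretations. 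Finally I must descend from $\zeta^*(X)$ back to $X$: because $(\theta_1,\theta_2)$ is a bi-interpretation, $\zeta\asymp\Id_{\Mfrak_1}$ by Lemma \ref{LemmaBiIntHomotopy}, so $K(\Id_{\Mfrak_1},\zeta)=\Gamma(\zeta)$ is p.e.\ definable. Using $\Gamma(\zeta)$ one can express membership $\mathbf{x}\in X$ for $\mathbf{x}\in M_1^k$ as: there exist tuples $\mathbf{u}_1,\dots,\mathbf{u}_k\in M_1^r$ with each $\mathbf{u}_j\in\dom\zeta$, $x_j=\zeta(\mathbf{u}_j)$ (i.e.\ $(x_j,\mathbf{u}_j)\in\Gamma(\zeta)$), and $(\mathbf{u}_1,\dots,\mathbf{u}_k)\in\zeta^*(X)$; this is a p.e.\ $\Lcal_1$-definition of $X$, as all its ingredients are p.e.\ and only existential quantifiers and conjunctions are introduced. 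Hence $X$ is p.e.\ $\Lcal_1$-definable, proving (iii).

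I expect the bookkeeping in Step 3 — verifying that $\theta_2^*(X)$ is genuinely \emph{totally} listable over $\Mfrak_2$ (not merely $\gamma$-listable for one well-chosen $\gamma$), and that the descent formula through $\Gamma(\zeta)$ correctly recovers exactly $X$ using surjectivity of $\zeta$ onto $M_1$ — to be the only delicate point; everything else is a direct assembly of Lemmas \ref{LemmaSelf}, \ref{LemmaBiIntHomotopy}, \ref{PropIntimplList} and the pull-back lemma.
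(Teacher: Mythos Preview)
Your proof is correct and uses essentially the same ingredients as the paper's: Lemma~\ref{LemmaSelf} for the passage from the DPRM property to p.e.\ definability of $\Gamma(\zeta)$, and Proposition~\ref{PropIntimplList} together with the DPRM property of $\Mfrak_2$ and the p.e.\ definability of $\Gamma(\zeta)$ for the passage from listable to p.e.\ definable on $\Mfrak_1$. The only difference is organizational: the paper runs the cycle as (i)$\Rightarrow$(ii)$\Rightarrow$(iii)$\Rightarrow$(i), whereas you run the reverse cycle, and your Step~3 actually only uses the consequence $\Gamma(\zeta)$ p.e.\ definable from (i), so it is the paper's (ii)$\Rightarrow$(iii) argument verbatim. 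One minor point: in Step~3 the listable presentation $\gamma$ of $\Mfrak_2$ you build via $\theta_1$ is never used (the relevant transference goes through $\theta_2$, as you correctly do afterwards), so that paragraph can be trimmed.
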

\begin{proof} (i) implies (ii) by definition of p.e. bi-interpretation.

Let us show that (ii) implies (iii). Let $\rho$ be a listable presentation for $\Mfrak_2$. By Proposition \ref{PropIntimplList} applied to $\theta_2:\Mfrak_2\dasharrow\Mfrak_1$, there is a listable presentation $\gamma$ for $\Mfrak_1$ such that for any $S\subseteq M_1^k$ we have that $S$ is $\gamma$-listable if and only if $\theta_2^*(S)$ is $\rho$-listable.

Let $S\subseteq M_1^k$ be a $\gamma$-listable set. Then $\theta_2^*(S)$ is $\rho$-listable. Since $\Mfrak_2$ is uniquely listable and it has the DPRM property, $\theta_2^*(S)$ is p.e. $\Lcal_2$-definable over $\Mfrak_2$. Hence, the set $\zeta^*(S)=\theta_1^*(\theta_2^*(S))\subseteq M_1^{rk}$ is p.e. $\Lcal_1$-definable over $\Mfrak_1$.  Since $\zeta:\dom(\zeta)\to M_1$ is surjective, we have $S=\zeta^{(k)}(\zeta^*(S))\subseteq M_1^k$. By (ii), the function $\zeta^{(k)}:\dom(\zeta)^k\to M_1^k$ is p.e. $\Lcal_1$-definable. Therefore, the set
$$
S=\{{\bf y}\in M_1^k : \exists {\bf x}\in \zeta^*(S)\mbox{ such that } \zeta^{(k)}({\bf x})={\bf y}\}
$$
is p.e. $\Lcal_1$-definable over $\Mfrak_1$. This proves (iii). Finally, (iii) implies (i) by Lemma \ref{LemmaSelf}.
\end{proof}
\begin{corollary}\label{CoroDPRM} Let $\Mfrak$ be a uniquely listable $\Lcal$-structure with domain $M$ and let $\rho$ be a listable presentation for it. Suppose that we have an interpretation $\theta:\Mfrak\dasharrow \N$ with $\N$ is regarded as an $\Lcal_a$-structure. The following are equivalent:
\begin{itemize}
\item[(i)] $\Mfrak$ is p.e. bi-interpretable with the $\Lcal_a$-structure $\N$.
\item[(ii)] The function $\rho\circ \theta:\dom(\theta)\to M$ is p.e. $\Lcal$-definable.
\item[(iii)]   $\Mfrak$ has the DPRM property.
\end{itemize}
\end{corollary}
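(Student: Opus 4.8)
The plan is to obtain all three equivalences from Theorem~\ref{ThmTransferDPRM}, applied with $\Mfrak_1=\Mfrak$ and $\Mfrak_2$ the $\Lcal_a$-structure $\N$. First I would collect the inputs needed to invoke that theorem: $\N$ has the DPRM property by the DPRM theorem; $\N$ is uniquely listable by Proposition~\ref{PropExN}; $\Mfrak$ is uniquely listable by hypothesis; and by Lemma~\ref{LemmaListtoInt} the listable presentation $\rho\colon\N\to M$ is precisely a p.e.\ interpretation $\rho\colon\N\dasharrow\Mfrak$ of rank $1$. Taking $\theta_1=\theta$ and $\theta_2=\rho$ (here $\theta$ is the given p.e.\ interpretation $\theta\colon\Mfrak\dasharrow\N$), all hypotheses of Theorem~\ref{ThmTransferDPRM} are satisfied.

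The next step is to unravel the composition $\zeta=\theta_2\bullet\theta_1=\rho\bullet\theta\colon\Mfrak\dasharrow\Mfrak$. Since $\rk(\rho)=1$, Lemma~\ref{LemmaCompInt} gives $\rk(\zeta)=\rk(\theta)$, $\dom(\zeta)=\theta^*(\N)=\dom(\theta)$, and $\zeta=\rho\circ\theta$ on $\dom(\theta)$. Hence the graph $\Gamma(\zeta)$ is p.e.\ $\Lcal$-definable over $\Mfrak$ exactly when the function $\rho\circ\theta\colon\dom(\theta)\to M$ is p.e.\ $\Lcal$-definable; that is, item (ii) of Theorem~\ref{ThmTransferDPRM} coincides with item (ii) of the corollary, while item (iii) of the theorem is item (iii) of the corollary verbatim. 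Thus Theorem~\ref{ThmTransferDPRM} immediately yields (ii) $\iff$ (iii), and moreover shows these are equivalent to the assertion that the \emph{specific} pair $(\theta,\rho)$ is a p.e.\ bi-interpretation between $\Mfrak$ and $\N$. In particular (iii) forces $(\theta,\rho)$ to be a p.e.\ bi-interpretation, so $\Mfrak$ is p.e.\ bi-interpretable with $\N$; that is, (iii) $\Rightarrow$ (i).

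The only implication not yet covered is (i) $\Rightarrow$ (iii), and this is the one point requiring some care, since (i) only provides \emph{some} p.e.\ bi-interpretation $(\sigma_1,\sigma_2)$ between $\Mfrak$ and $\N$, not the particular pair $(\theta,\rho)$. I would handle this by invoking Theorem~\ref{ThmTransferDPRM} a second time, now with $\theta_1,\theta_2$ taken to be the two p.e.\ interpretations constituting $(\sigma_1,\sigma_2)$: the hypotheses hold for the same reasons as before, item (i) of the theorem holds by assumption, so item (iii) gives that $\Mfrak$ has the DPRM property. Combining the two applications yields (i) $\iff$ (ii) $\iff$ (iii). I do not expect a substantial obstacle here: the real content sits in Theorem~\ref{ThmTransferDPRM} (and, through it, in Lemma~\ref{LemmaSelf} and Proposition~\ref{PropIntimplList}), and the corollary is essentially a matter of matching the pieces up correctly — the two things to keep straight being that $\rho$ is itself a p.e.\ interpretation of rank $1$ (so $\zeta=\rho\circ\theta$ and $\rk(\zeta)=\rk(\theta)$), and that one must re-invoke the transference theorem to pass from ``p.e.\ bi-interpretable with $\N$'' back to ``$\Mfrak$ has the DPRM property''.
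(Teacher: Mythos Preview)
Your proposal is correct and follows essentially the same approach as the paper: the paper likewise verifies that $\N$ is uniquely listable and has the DPRM property, uses Lemma~\ref{LemmaListtoInt} to view $\rho$ as a p.e.\ interpretation, and then applies Theorem~\ref{ThmTransferDPRM} twice (once with the specific pair $(\theta,\rho)$ to get (ii)$\iff$(iii)$\Rightarrow$(i), and once with an arbitrary p.e.\ bi-interpretation to get (i)$\Rightarrow$(iii)). Your write-up is slightly more explicit about unraveling $\zeta=\rho\bullet\theta$ via Lemma~\ref{LemmaCompInt} and about the distinction between \emph{some} bi-interpretation and the \emph{specific} pair $(\theta,\rho)$, but the argument is the same.
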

\begin{proof} Recall that the $\Lcal_a$-structure $\N$ is uniquely listable (cf. Proposition \ref{PropExN}) and it has the DPRM property. Theorem \ref{ThmTransferDPRM} shows that (i) for any bi-interpretation implies (iii).

 By Lemma \ref{LemmaListtoInt}, $\rho$ defines a p.e. interpretation of $\Mfrak$ in the $\Lcal_a$-structure $\N$.  Applying Theorem \ref{ThmTransferDPRM}  with $\Lcal_1=\Lcal$, $\Lcal_2=\Lcal_a$, $\Mfrak_1=\Mfrak$, $\Mfrak_2=\N$,  $\theta_1=\theta$, and $\theta_2=\rho$, we see  that  (ii) is equivalent to (iii) and they imply (i).
\end{proof}
The equivalence between (ii) and (iii) in Corollary \ref{CoroDPRM} has been previously used in the literature to transfer the DPRM property from the semi-ring $\N$ to recursive rings and fields; see for instance \cite{DenefZT,ZahidiOKt} and, more generally, the references in Section \ref{SecDPRMknown}. We also refer the reader to Demeyer's thesis \cite{DemeyerThesis} where the equivalence between (ii) and (iii) in Corollary \ref{CoroDPRM} is discussed in the context of recursively presented rings.  

In this special form, the strategy originated in Denef's work \cite{DenefZT} where he transferred the DPRM property from $\N$ to $\Z[T]$ by implicitly using the fact that (ii) implies (iii) in Corollary \ref{CoroDPRM}. A recursive presentation for a ring satisfying (ii) of Corollary \ref{CoroDPRM} is often referred to as a \emph{Diophantine enumeration}, but they are difficult to obtain in general. We will use the more flexible criterion given by Theorem \ref{ThmTransferDPRM} in the examples of Section \ref{SecExamplesDPRM}. Nevertheless, Corollary \ref{CoroDPRM} allows us to give a characterization of uniquely listable structures with infinite domain that have the DPRM property, under the mild assumption that $\ne$ is totally listable.

\begin{theorem}[Characterization of the DPRM property for infinite uniquely listable  structures] \label{ThmCharDPRM} Let $\Mfrak$ be a uniquely listable $\Lcal$-structure with infinite domain, and with the property that $\ne$ is totally listable over $\Mfrak$ (this is the case, for instance, if $\ne$ is p.e. $\Lcal$-definable over $\Mfrak$). Then $\Mfrak$ has the DPRM property if and only if it is p.e. bi-interpretable with the $\Lcal_a$-structure $\N$.
\end{theorem}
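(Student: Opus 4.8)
The plan is to deduce the statement from Corollary \ref{CoroDPRM}. That corollary already asserts, for a uniquely listable $\Lcal$-structure $\Mfrak$ \emph{equipped with a p.e.\ interpretation} $\theta\colon\Mfrak\dasharrow\N$ into the $\Lcal_a$-structure $\N$, the equivalence of ``$\Mfrak$ has the DPRM property'' with ``$\Mfrak$ is p.e.\ bi-interpretable with $\N$''. Thus the only thing to supply, beyond Corollary \ref{CoroDPRM}, is the existence of such a p.e.\ interpretation $\theta$ in each of the two situations we care about.

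For the direction ``p.e.\ bi-interpretable with $\N$ $\Rightarrow$ DPRM property'' no construction is required: a p.e.\ bi-interpretation of $\Mfrak$ with $\N$ contains, by definition, a p.e.\ interpretation $\theta\colon\Mfrak\dasharrow\N$, and since $\N$ is uniquely listable (Proposition \ref{PropExN}) and has the DPRM property (the DPRM theorem), Theorem \ref{ThmTransferDPRM} applied with $\Mfrak_1=\Mfrak$, $\Mfrak_2=\N$ shows that $\Mfrak$ inherits the DPRM property. This implication uses neither infinitude of the domain nor total listability of $\ne$.

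The content is the converse. Assuming $\Mfrak$ has the DPRM property, I would construct a p.e.\ interpretation $\theta\colon\Mfrak\dasharrow\N$ of rank $1$ as follows. Since $M$ is infinite, $\Mfrak$ is uniquely listable, and $\ne$ is totally listable, Corollary \ref{CoroBij} (applied to any listable presentation of $\Mfrak$) yields a \emph{bijective} listable presentation $\gamma\colon\N\to M$. Set $\theta=\gamma^{-1}\colon M\to\N$; then $\dom(\theta)=M$ is p.e.\ $\Lcal$-definable (say by $x=x$) and $\theta$ is surjective onto $\N$, giving (Int1)--(Int2). For (Int3) one checks that, for each symbol $s\in\Lcal_a$, the set $\theta^*(s^{\N})$ is p.e.\ $\Lcal$-definable over $\Mfrak$: bijectivity of $\gamma$ identifies $\theta^*(s^{\N})$ with $\gamma^{(n)}(s^{\N})$ (a singleton of $M$ when $s$ is a constant symbol), whose $\gamma$-pullback is the decidable, hence listable, set $s^{\N}$, so $\theta^*(s^{\N})$ is $\gamma$-listable, hence totally listable by unique listability, hence p.e.\ $\Lcal$-definable by the DPRM property of $\Mfrak$ (for the constants $0,1$ one invokes Corollary \ref{CoroFinSets}; for $=$ one just notes $\theta^*(=^{\N})$ is the diagonal of $M^2$). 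Thus $\theta$ is a p.e.\ interpretation. Finally apply Corollary \ref{CoroDPRM} to $\Mfrak$ with this $\theta$ and with $\rho=\gamma$: since $\rho\circ\theta=\Id_M$ has p.e.\ $\Lcal$-definable graph (the diagonal of $M^2$), condition (ii) of that corollary holds (and condition (iii) holds by assumption), hence condition (i) holds, i.e.\ $\Mfrak$ is p.e.\ bi-interpretable with $\N$.

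The main obstacle is the construction of $\theta=\gamma^{-1}$ and the verification that it is a \emph{p.e.} interpretation. The key realizations are: a bijective listable presentation is exactly what turns $\gamma^{-1}$ into a well-defined map $M\to\N$; the hypotheses that $M$ is infinite, $\Mfrak$ is uniquely listable, and $\ne$ is totally listable are precisely what Corollary \ref{CoroBij} needs in order to produce such a $\gamma$; and transporting the graphs of $+,\times$ (and the points $0,1$) of $\N$ along $\gamma$ produces totally listable sets, so that the DPRM property of $\Mfrak$ --- used here exactly once, in its cleanest form --- makes them p.e.\ $\Lcal$-definable. With $\theta$ in hand, Corollary \ref{CoroDPRM} carries out the rest.
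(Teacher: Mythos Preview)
Your proposal is correct and follows essentially the same approach as the paper: for the converse you use Corollary \ref{CoroBij} to obtain a bijective listable presentation $\gamma$, set $\theta=\gamma^{-1}$, verify via the DPRM property that $\theta^*(s^{\N})$ is p.e.\ $\Lcal$-definable for each $s\in\Lcal_a$, and then invoke Corollary \ref{CoroDPRM}. Your explicit verification of condition (ii) in Corollary \ref{CoroDPRM} is a harmless redundancy, since the paper simply uses the implication (iii) $\Rightarrow$ (i) once $\theta$ exists.
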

\begin{proof} If there is a p.e. bi-interpretation, then in particular there is a p.e. interpretation $\theta:\Mfrak\dasharrow \N$ and we can apply Corollary \ref{CoroDPRM} to conclude that $\Mfrak$ has the DPRM property.

Conversely, assume that $\Mfrak$ has the DPRM property. By Corollary \ref{CoroDPRM}, it suffices to construct a p.e. interpretation $\theta:\Mfrak\dasharrow \N$ with $\N$ seen as an $\Lcal_a$-structure. Since $\Mfrak$ has infinite domain $M$ and $\ne$ is totally listable, Corollary \ref{CoroBij} implies that there is a bijective listable presentation $\rho:\N\to M$. Let $\theta=\rho^{-1}:M\to \N$. For $s\in \Lcal_a$ we have that $s^\N$ is listable, hence $\theta^*(s^\N)$ is $\rho$-listable because $\rho^*(\theta^*(s^\N))=s^\N$. By the DPRM property on $\Mfrak$ we get that $ \theta^*(s^\N)$ is p.e. $\Lcal$-definable over $\Mfrak$, which implies that $\theta:\Mfrak\dasharrow \N$ is the required p.e. interpretation.
\end{proof}

We remark that Theorem \ref{ThmCharDPRM} applies in the setting of Theorem \ref{ThmCatDPRM}.

%%%%%%%%
%%%%%%%%
%%%%%%%%
%%%%%%%%
\subsection{Examples}\label{SecExamplesDPRM} All the structures considered in this section are uniquely listable, thanks to the results in Section \ref{SecULexamples}.  If $\Mfrak$ is an $\Lcal$-structure with domain $M$ and $\Scal\subseteq M$, we let $\Lcal\cup\Scal$ be the language obtained by expanding $\Lcal$ with constant symbols corresponding to the elements of $\Scal$ and interpreted in $\Mfrak$ accordingly. As a warm-up, we have:
\begin{lemma}\label{LemmaZDPRM} The $\Lcal_a$-structure $\Z$ has the DPRM property.
\end{lemma}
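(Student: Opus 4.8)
The plan is to deduce this from the DPRM theorem for $\N$ via the p.e. bi-interpretability between $\N$ and $\Z$ established in Lemma \ref{LemmaIntNZ}, using the transference machinery of Theorem \ref{ThmTransferDPRM} (or, equivalently here, Corollary \ref{CoroDPRM}). Recall that the $\Lcal_a$-structure $\N$ is uniquely listable (Proposition \ref{PropExN}) and has the DPRM property, which is precisely the statement of the DPRM theorem. Also, by Corollary \ref{CoroQulist}'s analogue for $\Z$ --- more directly, by Lemma \ref{LemmaIntNZ} together with Lemma \ref{LemmaUnique} --- the $\Lcal_a$-structure $\Z$ is uniquely listable. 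So both hypotheses of Theorem \ref{ThmTransferDPRM} on unique listability are in place, with $\Mfrak_2=\N$ and $\Mfrak_1=\Z$.

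The key step is to invoke Lemma \ref{LemmaIntNZ}, which gives p.e. interpretations $\theta_1:\Z\dasharrow\N$ and $\theta_2:\N\dasharrow\Z$ that already form a p.e. bi-interpretation. Thus condition (i) of Theorem \ref{ThmTransferDPRM} holds for the pair $(\theta_1,\theta_2)$ (reading $\Mfrak_1=\Z$, $\Mfrak_2=\N$), and since $\Mfrak_2=\N$ has the DPRM property, the theorem yields that condition (iii) holds, i.e. $\Z$ has the DPRM property. Alternatively, one can argue through Corollary \ref{CoroDPRM}: take any listable presentation $\rho$ of $\Z$ (e.g. the standard bijection $\N\to\Z$), note that the interpretation $\theta_1:\Z\dasharrow\N$ from Lemma \ref{LemmaIntNZ} is p.e., and observe that $\rho\circ\theta_1$ is p.e. $\Lcal_a$-definable over $\Z$ since $\theta_1$ is essentially the map $a-b$ on a p.e. definable domain and $\rho$ is a simple recursive bijection whose graph pulls back to a p.e. set; then part (ii)$\Rightarrow$(iii) of Corollary \ref{CoroDPRM} gives the conclusion.

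I do not expect a genuine obstacle here, since this is the promised ``warm-up'' and all the heavy lifting is done by the general transference theorem together with the explicit bi-interpretation of Lemma \ref{LemmaIntNZ}; the only mild point to be careful about is confirming unique listability of $\Z$, which follows because $\Z$ is a finitely generated ring (Corollary \ref{CoroULfg}) or, more elementarily, because the identity interpretation data of Lemma \ref{LemmaIntNZ} lets us transfer unique listability from $\N$ via Lemma \ref{LemmaUnique} --- but note Lemma \ref{LemmaUnique} requires a \emph{bijective} p.e. interpretation, which the map $\theta_2:\N\to\Z$, $n\mapsto n$ on $\{n\ge 0\}$ is not onto $\Z$, so the cleanest route is Corollary \ref{CoroULfg}. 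With unique listability of $\Z$ secured, the cited transference result applies verbatim and the lemma follows in one line.
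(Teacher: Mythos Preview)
Your approach is correct and essentially matches the paper's: the paper's proof is the one-liner ``By Lemma~\ref{LemmaIntNZ} and the equivalence of (i) and (iii) in Corollary~\ref{CoroDPRM},'' with unique listability of $\Z$ taken for granted from Corollary~\ref{CoroULfg} (as stated at the start of Section~\ref{SecExamplesDPRM}). Your detour through the alternative route via (ii)$\Rightarrow$(iii) of Corollary~\ref{CoroDPRM} is unnecessary and the justification there (``$\rho$ is a simple recursive bijection whose graph pulls back to a p.e. set'') is too vague to stand on its own, but since your primary argument via (i)$\Rightarrow$(iii) is complete, this does not affect correctness.
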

\begin{proof} By Lemma \ref{LemmaIntNZ} and the equivalence of (i) and (iii) in Corollary \ref{CoroDPRM}.
\end{proof}

The next three examples are considered folklore results (except for the assertions about p.e. bi-interpretability), although the author is not aware of any reference for their proofs. Let us recall that for a ring $A$, a subset $S\subseteq A^r$ is \emph{Diophantine} if it is p.e. $\Lcal_a$-definable over $A$ with parameters. 

\begin{proposition}\label{PropZQ} The following are equivalent:
\begin{itemize}
\item[(i)] $\Z$ is Diophantine in $\Q$.
\item[(ii)]  The $\Lcal_a$-structure $\Q$ has the DPRM property.
\item[(iii)] The $\Lcal_a$-structure $\Q$ is p.e. bi-interpretable with the $\Lcal_a$-structure $\N$.
\end{itemize}
\end{proposition}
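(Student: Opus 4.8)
The plan is to establish the cyclic chain of implications $(ii)\Rightarrow(i)$, $(i)\Rightarrow(iii)$, and $(iii)\Rightarrow(ii)$, exploiting the machinery already in place: Corollary \ref{CoroQulist} (so $\Q$ is uniquely listable), Lemma \ref{LemmaQinNZ} (giving a p.e. interpretation $\kappa:\Z\dasharrow\Q$, and via Lemma \ref{LemmaIntNZ} a p.e. interpretation $\N\dasharrow\Q$), and Theorem \ref{ThmTransferDPRM} / Corollary \ref{CoroDPRM} as the main transference engine. The key point to exploit is that $\N$ and $\Z$ are p.e. bi-interpretable (Lemma \ref{LemmaIntNZ}), so bi-interpretability of $\Q$ with $\N$ is the same as bi-interpretability of $\Q$ with $\Z$ by transitivity (Lemma \ref{LemmaBiIntTransitive}); this lets me work with whichever of $\N,\Z$ is more convenient at each step.

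For $(iii)\Rightarrow(ii)$: if $\Q$ is p.e. bi-interpretable with $\N$, then in particular there is a p.e. interpretation $\theta:\Q\dasharrow\N$, and since $\N$ is uniquely listable with the DPRM property (Proposition \ref{PropExN} and the DPRM theorem), Corollary \ref{CoroDPRM} (the implication $(i)\Rightarrow(iii)$ there) immediately gives that $\Q$ has the DPRM property. For $(i)\Rightarrow(iii)$: assume $\Z$ is Diophantine in $\Q$, i.e.\ p.e.\ $\Lcal_a$-definable with parameters; I would first note that the parameters can be absorbed since each rational parameter is p.e.\ $\Lcal_a$-definable without parameters in $\Q$ (it is a quotient $a/b$ of integer constants, each of which is p.e.\ definable as a sum of $1$'s or its negative, using Lagrange to handle negatives — or more simply, singletons are p.e.\ definable once one has the ring operations and $\ne$ is not even needed here). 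Then $\Z$ being p.e.\ $\Lcal_a$-definable in $\Q$ upgrades the interpretation $\kappa:\Z\times(\Z-\{0\})\to\Q$, $\kappa(a,b)=a/b$, to a genuine p.e.\ interpretation $\Q\dasharrow\Z$: one interprets the universe of $\Z$ inside $\Q$ as the p.e.-definable copy of $\Z$, and the ring operations of $\Z$ are the restrictions of those of $\Q$. Composing $\kappa:\Z\dasharrow\Q$ with this interpretation $\mu:\Q\dasharrow\Z$ in both orders, one checks the composites are p.e.\ homotopic to the identity (the graph of $\mu\bullet\kappa$ on $\Z$ is p.e.\ definable because it is essentially the graph of $(a,b)\mapsto a/b$ intersected with $\Z$, pulled back; the graph of $\kappa\bullet\mu$ on $\Q$ is p.e.\ definable because it says ``$x = a/b$ where $(a,b)$ is the chosen integer pair representing $x$''). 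Hence $\Z$ and $\Q$ are p.e.\ bi-interpretable, and by transitivity with Lemma \ref{LemmaIntNZ}, so are $\N$ and $\Q$, giving $(iii)$.

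For $(ii)\Rightarrow(i)$: assume $\Q$ has the DPRM property. By Corollary \ref{CoroQulist} $\Q$ is uniquely listable, so by Corollary \ref{CoroListableZQ} fix a listable presentation, and observe that the image of the natural map $\Z\to\Q$ — equivalently, the copy of $\Z$ inside $\Q$ — is $\rho$-listable for every listable presentation $\rho$ (it is the image under $\rho$ of the listable pullback, using that $\Z$ sits inside $\Q$ via a p.e.-definable-over-$\N$ relation and Proposition \ref{PropIntimplList} / Lemma \ref{LemmaCharList}); more directly, $\Z\subseteq\Q$ is totally listable since it is the image of the recursive enumeration of $\Z$ pushed through any presentation. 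Then the DPRM property forces $\Z$ to be p.e.\ $\Lcal_a$-definable over $\Q$ (without parameters, a fortiori with parameters), which is exactly $(i)$. The implication $(iii)\Rightarrow(i)$ could also be read off directly from the bi-interpretation (the interpreted copy of $\Z$ being p.e.\ definable), but routing through $(ii)$ keeps the cycle clean.

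The main obstacle I anticipate is the bookkeeping in $(i)\Rightarrow(iii)$: one must verify carefully that ``$\Z$ Diophantine in $\Q$'' genuinely yields a p.e.\ \emph{bi}-interpretation and not merely two p.e.\ interpretations in opposite directions — that is, one must check the homotopy conditions $\mu\bullet\kappa\asymp\Id_\Z$ and $\kappa\bullet\mu\asymp\Id_\Q$ in the sense of Section \ref{SecHomotopy}. This amounts to showing that the relevant sets $K(\Id,\mu\bullet\kappa)$ and $K(\Id,\kappa\bullet\mu)$ — i.e.\ the graphs $\Gamma(\mu\bullet\kappa)$ and $\Gamma(\kappa\bullet\mu)$ by Lemma \ref{LemmaBiIntHomotopy} — are p.e.\ definable. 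The first is straightforward because it only involves integer arithmetic and the p.e.-definable-in-$\Q$ copy of $\Z$; the second requires that the map sending a rational $x$ to its chosen representative integer pair $(a,b)$ has p.e.-definable graph over $\Q$, which follows from $bx = a$ together with $\Z$ being p.e.\ definable in $\Q$ and a p.e.-definable choice of normalized representative (or one can bypass normalization by allowing the rank-$2$ interpretation and checking the graph relation directly). Once these are in hand, transitivity (Lemma \ref{LemmaBiIntTransitive}) with Lemma \ref{LemmaIntNZ} finishes; alternatively, one can invoke Theorem \ref{ThmTransferDPRM} with $\Mfrak_1=\Q$, $\Mfrak_2=\N$ to get all three equivalences at once, which is probably the most economical write-up.
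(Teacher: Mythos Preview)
Your proposal is correct and close in spirit to the paper's argument, though the organization differs slightly. The paper establishes $(ii)\Leftrightarrow(iii)$ in one stroke via Theorem~\ref{ThmCharDPRM} (using that $\Q$ is infinite, uniquely listable, and that $\ne$ is p.e.\ $\Lcal_a$-definable), rather than routing through Corollary~\ref{CoroDPRM} as you do for $(iii)\Rightarrow(ii)$. For $(i)\Rightarrow(ii)$, the paper applies Theorem~\ref{ThmTransferDPRM} with $\Mfrak_1=\Q$, $\Mfrak_2=\Z$, $\theta_1=\Id:\Z\to\Z$ (a p.e.\ interpretation $\Q\dasharrow\Z$ by (i)), and $\theta_2=\kappa$: this requires checking only the \emph{one} graph $\Gamma(\zeta)=\Gamma(\kappa\bullet\theta_1)=\{(x_0,x_1,x_2)\in\Q^3: x_1\in\Z,\ x_2\in\Z-\{0\},\ x_0x_2=x_1\}$ on the $\Q$ side, since the DPRM property of $\Z$ (via Lemma~\ref{LemmaSelf} inside Theorem~\ref{ThmTransferDPRM}) automatically handles the other graph. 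Your direct route to $(i)\Rightarrow(iii)$ checks both homotopy conditions by hand --- which is fine, and indeed your $\Gamma(\kappa\bullet\mu)$ is exactly the paper's $\Gamma(\zeta)$ --- but you yourself note at the end that invoking Theorem~\ref{ThmTransferDPRM} is more economical, and that is precisely what the paper does. For $(ii)\Rightarrow(i)$, both you and the paper argue that $\Z\subseteq\Q$ is listable (the paper gives two variants: computing $\tau^{-1}(\Z)$ explicitly, or using $\kappa^*(\Z)=\{(a,b)\in\Z^2:b\ne 0,\ b\mid a\}$ with Proposition~\ref{PropIntimplList}); your phrasing ``image of the recursive enumeration of $\Z$ pushed through any presentation'' is the right idea but would benefit from being anchored to Proposition~\ref{PropIntimplList} or Lemma~\ref{LemmaCharList} as the paper does.
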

\begin{proof} Let us consider $\Q$ as an $\Lcal$-structure. It is infinite and it is uniquely listable by Corollary \ref{CoroQulist}. The relation $\ne$ is p.e. $\Lcal_a$-definable over $\Q$, hence, totally listable (cf. Corollary \ref{CoroTotList}). Thus, (ii) and (iii) are equivalent by Theorem \ref{ThmCharDPRM}. 

Assume (i). Then the identity map $\theta_1:\Z\to \Z$ defines a p.e. interpretation $\theta_1:\Q\dasharrow \Z$ as $\Lcal_a$-structures. We take $\theta_2:\Z\times (\Z-\{0\})\to \Q$ given by $(a,b)\mapsto a/b$, which defines a p.e. interpretation $\theta_2 : \Z\dasharrow \Q$; in fact, this is the p.e. interpretation $\kappa:\Z\dasharrow \Q$ from Lemma \ref{LemmaQinNZ}. For  $\zeta=\theta_2\bullet\theta_1:\Q\dasharrow \Q$ we have $\dom(\zeta)=\Z\times (\Z-\{0\})\subseteq \Q^2$ and
$$
\Gamma(\zeta)=\{(x_0,x_1,x_2)\in \Q^3 : x_1\in \Z, x_2\in \Z-\{0\}, x_0x_2=x_1\}\subseteq \Q^3
$$
which is p.e. $\Lcal_a$-definable over $\Q$ by (i). Hence, by Theorem \ref{ThmTransferDPRM} and Lemma \ref{LemmaZDPRM} we get (ii).

Finally, assume (ii). To obtain (i) it suffices to show that $\Z\subseteq \Q$ is $\gamma$-listable for some listable presentation $\gamma$ of $\Q$, because $\Q$ is uniquely listable. Directly doing this in detail without invoking Church's thesis can be rather messy if the listable presentation is not chosen carefully, but choosing the listable presentation $\tau:\N\to \Q$ provided by Proposition \ref{PropULQ} one can check that $\tau^{-1}(\Z)=\{2^n: n\ge 0\}\cup \{2^n-1: n\ge 0\}$, which is listable in $\N$.

Alternatively, consider again the p.e. interpretation $\kappa : \Z\dasharrow \Q$ from Lemma \ref{LemmaQinNZ}. Then $\kappa^*(\Z)=\{(a,b)\in \Z^2 : b\ne 0\mbox{ and }b|a\}$ is p.e. $\Lcal_a$-definable over $\Z$, hence totally listable (cf. Corollary \ref{CoroTotList}). Then $\Z$ is $\gamma$-listable in $\Q$ for some listable presentation $\gamma$ by Proposition \ref{PropIntimplList}.
\end{proof}

We stress the fact that, although $\N$ and $\Q$ are known to be bi-interpretable as $\Lcal_a$-structures thanks to results of J. Robinson \cite{JRobinsonQ}, it is not known whether they are p.e. bi-interpretable.

\begin{proposition}\label{PropOK} Let $K$ be a number field, let $O_K$ be its ring of integers, and let $\Gcal\subseteq O_K$ be a finite set of ring generators for $O_K$. Let us consider $O_K$ as a structure over $\Lcal=\Lcal_a\cup\Gcal$. Then $O_K$ is uniquely listable and the following are equivalent:
\begin{itemize}
\item[(i)] $\Z$ is Diophantine in $O_K$.
\item[(ii)]  The $\Lcal$-structure $O_K$ has the DPRM property.
\item[(iii)] The $\Lcal$-structure $O_K$ is p.e. bi-interpretable with the $\Lcal_a$-structure $\N$.
\end{itemize}
\end{proposition}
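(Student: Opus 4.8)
The plan is to follow the pattern of the proof of Proposition~\ref{PropZQ}, using the $\Lcal_a$-structure $\Z$ as a bridge between $O_K$ and $\N$. First, the preliminaries: since $O_K$ is a finitely generated ring and $\Lcal=\Lcal_a\cup\Gcal$ carries constants for the generators, $O_K$ is uniquely listable by Corollary~\ref{CoroULfg}; it has infinite domain, and $\ne$ is p.e.\ $\Lcal_a$-definable over $O_K$ by a standard fact (see \cite{MoretBailly}), hence totally listable by Corollary~\ref{CoroTotList}. So Theorem~\ref{ThmCharDPRM} already yields the equivalence (ii)~$\Leftrightarrow$~(iii), and it remains to prove (i)~$\Leftrightarrow$~(ii). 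I also record a parameter-absorption remark used below: because $O_K=\Z[\Gcal]$, every element $p\in O_K$ is the unique solution in $O_K$ of an atomic $\Lcal$-formula (write $p=P(\Gcal)-Q(\Gcal)$ with $P,Q$ polynomials with coefficients in $\N$ and use $z+Q(\Gcal)=P(\Gcal)$); consequently, for every $r$, a subset of $O_K^r$ is Diophantine in $O_K$ if and only if it is p.e.\ $\Lcal$-definable over the $\Lcal$-structure $O_K$.

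Next I would set up two interpretations. Let $\theta_1\colon O_K\dasharrow\Z$ be the identity map on $\Z\subseteq O_K$, of rank $1$; its domain is $\Z\subseteq O_K$, so by the remark above it is a p.e.\ interpretation \emph{precisely when} (i) holds (the pull-backs of the $\Lcal_a$-symbols are then automatic, since $\Z$ is a subring of $O_K$). Unconditionally, fix a $\Z$-basis $1=\omega_1,\dots,\omega_d$ of $O_K$, $d=[K:\Q]$, with structure constants $\omega_i\omega_j=\sum_k c_{ijk}\omega_k$, $c_{ijk}\in\Z$, and let $\theta_2\colon\Z\dasharrow O_K$ be the bijection $(a_1,\dots,a_d)\mapsto\sum_i a_i\omega_i$, of rank $d$: its domain is $\Z^d$, and for each $s\in\Lcal$ the set $\theta_2^*(s^{O_K})$ is p.e.\ $\Lcal_a$-definable over $\Z$ — the constants, the generators and $+$ translate into affine conditions on coordinates, while $\times$ translates into the polynomial system $e_k=\sum_{i,j}c_{ijk}a_ib_j$ $(k=1,\dots,d)$, which is p.e.\ over $\Z$ because the $c_{ijk}$ are integers (negative coefficients being moved to the other side). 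By Lemma~\ref{LemmaCompInt}, $\zeta:=\theta_2\bullet\theta_1\colon O_K\dasharrow O_K$ has rank $d$, domain $\Z^d\subseteq O_K^d$, and $\zeta(\mathbf a)=\sum_i a_i\omega_i$, so $\Gamma(\zeta)=\{(x_0,a_1,\dots,a_d)\in O_K^{d+1}: a_1,\dots,a_d\in\Z \text{ and } x_0=\sum_i a_i\omega_i\}$.

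Now the two implications. If (i) holds, then $\Z\subseteq O_K$ is p.e.\ $\Lcal$-definable and, since the $\omega_i\in\Z[\Gcal]$, the set $\Gamma(\zeta)$ is p.e.\ $\Lcal$-definable over $O_K$; as the $\Lcal_a$-structure $\Z$ is uniquely listable (Proposition~\ref{PropFG}) and has the DPRM property (Lemma~\ref{LemmaZDPRM}), Theorem~\ref{ThmTransferDPRM} applied to $\theta_1,\theta_2$ yields that $O_K$ has the DPRM property, i.e.\ (ii). Conversely, if (ii) holds, then (using $\omega_1=1$) $\theta_2^*(\Z)=\{\mathbf a\in\Z^d: a_2=\cdots=a_d=0\}$ is listable over $\Z$, so Proposition~\ref{PropIntimplList} applied to $\theta_2$ and a listable presentation of $\Z$ produces a listable presentation $\gamma$ of $O_K$ for which $\Z\subseteq O_K$ is $\gamma$-listable; by unique listability $\Z$ is then totally listable, by the DPRM property it is p.e.\ $\Lcal$-definable over $O_K$, and by the parameter-absorption remark this means $\Z$ is Diophantine in $O_K$, which is (i).

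The step needing the most care is checking that $\theta_2$ is honestly a p.e.\ interpretation: the content there is that multiplication in $O_K$ is encoded by a \emph{polynomial} (not merely affine) system over $\Z$ with integer structure constants, so that negative constants cause no difficulty, together with the rank-and-domain bookkeeping for the composite $\zeta$ so that $\Gamma(\zeta)$ really is the graph displayed above. All of the conceptual work is carried by Theorems~\ref{ThmTransferDPRM} and~\ref{ThmCharDPRM}; the remaining verifications, including that $\ne$ and each element of $O_K$ are p.e.\ definable, are routine.
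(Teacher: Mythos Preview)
Your proof is correct and follows essentially the same approach as the paper: both use Theorem~\ref{ThmCharDPRM} for (ii)$\Leftrightarrow$(iii), both set up the identity $\theta_1:O_K\dasharrow\Z$ and the integral-basis map $\theta_2:\Z\dasharrow O_K$ (with first basis element $1$), and both invoke Theorem~\ref{ThmTransferDPRM} and Proposition~\ref{PropIntimplList} for the two directions of (i)$\Leftrightarrow$(ii). Your explicit parameter-absorption remark and the spelled-out verification that $\theta_2$ is p.e.\ (via the structure constants $c_{ijk}$) are welcome elaborations of points the paper leaves implicit.
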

\begin{proof} We consider $\Z$ as an $\Lcal_a$-structure and $O_K$ as an $\Lcal$-structure.

The ring $O_K$ is infinite and it is uniquely listable by Corollary \ref{CoroULfg}. The relation $\ne$ is p.e. $\Lcal$-definable in $O_K$ ---this is elementary; see for instance paragraph 1.2.1 in \cite{MoretBailly}. Hence, (ii) and (iii) are equivalent by Theorem \ref{ThmCharDPRM}.

Let $r=[K:\Q]$ and let $\beta_1, ...,\beta_r$ be an integral basis for $O_K$ with $\beta_1=1$. The elements $\beta_j$ are p.e. $\Lcal$-definable since $\Gcal$ is a set of ring generators for $O_K$. Let $\kappa: \Z^r\to O_K$ be the map $\kappa(x_1,...,x_r)=x_1\beta_1+...+x_r\beta_r$. Thus, $\kappa$ defines a p.e. interpretation $\kappa: \Z\dasharrow O_K$.

Assume (i). Then the identity map $\theta_1:\Z\to \Z$ defines a p.e. interpretation $\theta_1:O_K\dasharrow \Z$. Let us take $\theta_2=\kappa : \Z\dasharrow O_K$. The composed interpretation $\zeta=\theta_2\bullet \theta_1:O_K\dasharrow O_K$ has $\dom(\zeta)=\Z^r\subseteq O_K$ and $
\Gamma(\zeta)=\{(x_0,x_1,...,x_r)\in O_K^{r+1} : x_1,...,x_r\in \Z\mbox{ and }x_0=x_1\beta_1+...+x_r\beta_r\}
$ which is p.e. $\Lcal$-definable by (i). We obtain (ii) by Theorem \ref{ThmTransferDPRM} and Lemma \ref{LemmaZDPRM}.

Finally, let us assume (ii). To get (i) it suffices to show that $\Z\subseteq O_K$ is listable for some listable presentation $\gamma$ of $O_K$ (since $O_K$ is uniquely listable). The p.e. interpretation $\kappa: \Z\dasharrow O_K$ constructed above satisfies $\kappa^{-1}(\Z)=\{(n,0,...,0): n\in \Z\}\subseteq \Z^r$ because $\beta_1=1$ and the $\beta_j$ form an integral basis. Thus, $\kappa^{-1}(\Z)$ is totally listable over $\Z$ (it is p.e. $\Lcal_a$-definable) hence $\Z$ is $\gamma$-listable over $O_K$ for some listable presentation $\gamma$ afforded by Proposition \ref{PropIntimplList}. 
\end{proof}
At this point we recall that it is a conjecture of Denef and Lipshitz \cite{DL78} that $\Z$ is Diophantine in $O_K$ for every number field. The general case remains open, although it is known that this would follow from standard conjectures on elliptic curves \cite{MazurRubin, MurtyPasten}.  The available unconditional results are proved in \cite{DL78,Denef80, Pheidas88, Shlapentokh89, Videla89, ShaShl89} and most recently in \cite{MRnew} and \cite{GFP}. All recent progress on this problem was possible thanks to the elliptic curve criteria from \cite{PoonenEll, CorPheZah, ShlapentokhEll}.

Regarding function fields, we have:
\begin{proposition}\label{Propkt} Let $k$ be a finite field, let $t$ be a transcendental element, and let $\Gcal$ be a (finite) set of ring generators for $k$. Let us consider $k[t]$ and $k(t)$ as $\Lcal$-structures, where $\Lcal=\Lcal_a\cup \Gcal \cup \{t\}$. The following are equivalent:
\begin{itemize}
\item[(i)] $k[t]$ is Diophantine in $k(t)$.
\item[(ii)] The $\Lcal$-structure $k(t)$ has the DPRM property.
\item[(iii)] The $\Lcal$-structures $k[t]$ and $k(t)$ are p.e. bi-interpretable.
\item[(iv)] The $\Lcal$-structure $k(t)$ is p.e. bi-interpretable with the $\Lcal_a$-structure $\N$.
\end{itemize}
\end{proposition}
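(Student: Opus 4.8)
The plan is to mirror the structure of the proofs of Proposition \ref{PropZQ} and Proposition \ref{PropOK}, using the transference machinery of Theorem \ref{ThmTransferDPRM}, Corollary \ref{CoroDPRM}, and the characterization Theorem \ref{ThmCharDPRM}, together with the unique listability of $k[t]$ and $k(t)$ as $\Lcal$-structures guaranteed by Corollary \ref{CoroULfg}. First I would record the preliminary facts: since $k$ is finite, $k[t]$ and $k(t)$ are finitely generated over $\Z$ (indeed over the prime field), so both are p.e. interpretable in the $\Lcal_a$-structure $\N$ and hence listable (Theorem \ref{ThmListableN}), and they are uniquely listable as $\Lcal$-structures by Corollary \ref{CoroULfg}; moreover $k[t]$ itself has the DPRM property by the theorem of Demeyer \cite{DemeyerInv} cited in Section \ref{SecDPRMknown}. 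Also, the relation $\ne$ is p.e. $\Lcal$-definable in $k(t)$ (a standard fact for function fields, as in \cite{MoretBailly}), hence totally listable by Corollary \ref{CoroTotList}, so Theorem \ref{ThmCharDPRM} applies to $k(t)$: this immediately gives the equivalence (ii) $\Leftrightarrow$ (iv).

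Next I would set up the two p.e. interpretations witnessing that $k[t]$ and $k(t)$ interpret each other. The inclusion $k[t]\hookrightarrow k(t)$ is a p.e. interpretation $\iota:k(t)\dasharrow$ ... wait, I want an interpretation of $k[t]$ in $k(t)$: since $k[t]$ is a subring of $k(t)$ and (by (i)) $k[t]$ is Diophantine in $k(t)$, the identity map on $k[t]$ gives a p.e. interpretation $\theta_1:k(t)\dasharrow k[t]$ of rank $1$ once we assume (i). In the other direction, the map $\kappa:k[t]\times(k[t]\setminus\{0\})\to k(t)$, $(a,b)\mapsto a/b$, is always a p.e. interpretation $\kappa:k[t]\dasharrow k(t)$ of rank $2$ (the field operations pull back to polynomial relations, exactly as in Lemma \ref{LemmaQinNZ} for $\Z$ and $\Q$). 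Now assuming (i), the composite $\zeta=\kappa\bullet\theta_1:k(t)\dasharrow k(t)$ has domain $k[t]\times(k[t]\setminus\{0\})\subseteq k(t)^2$ and graph
\[
\Gamma(\zeta)=\{(x_0,x_1,x_2)\in k(t)^3: x_1\in k[t],\ x_2\in k[t]\setminus\{0\},\ x_0x_2=x_1\},
\]
which is p.e. $\Lcal$-definable over $k(t)$ precisely because $k[t]$ is Diophantine in $k(t)$. Applying Theorem \ref{ThmTransferDPRM} with $\Mfrak_1=k(t)$, $\Mfrak_2=k[t]$ (which has the DPRM property) yields that (i) implies (ii), and simultaneously that $(\theta_1,\kappa)$ is a p.e. bi-interpretation between $k(t)$ and $k[t]$, i.e.\ (i) implies (iii).

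Finally I would close the loop. For (ii) $\Rightarrow$ (i): if $k(t)$ has the DPRM property, it suffices to exhibit $k[t]\subseteq k(t)$ as $\gamma$-listable for one listable presentation $\gamma$ (by unique listability). Using the p.e. interpretation $\kappa:k[t]\dasharrow k(t)$ above, one computes $\kappa^{-1}(k[t])=\{(a,b)\in k[t]^2: b\ne 0,\ b\mid a\}$, which is p.e. $\Lcal_a\cup\Gcal\cup\{t\}$-definable over $k[t]$ and hence totally listable over $k[t]$ (Corollary \ref{CoroTotList}); Proposition \ref{PropIntimplList} then transfers this to a listable presentation $\gamma$ of $k(t)$ for which $k[t]$ is $\gamma$-listable, and the DPRM property of $k(t)$ gives that $k[t]$ is p.e. $\Lcal$-definable in $k(t)$, i.e.\ (i). This is essentially verbatim the argument used in Proposition \ref{PropZQ} and Proposition \ref{PropOK}. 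For (iii) $\Rightarrow$ (ii): a p.e. bi-interpretation between $k[t]$ and $k(t)$ in particular provides a p.e. interpretation of $k(t)$ in $k[t]$; composing with a p.e. interpretation of $k[t]$ in $\N$ and using that $k[t]$ has the DPRM property, Theorem \ref{ThmTransferDPRM} (applied to $\Mfrak_1=k(t)$, $\Mfrak_2=k[t]$) gives that $k(t)$ has the DPRM property. Combined with (ii) $\Leftrightarrow$ (iv) from the first paragraph and (i) $\Rightarrow$ (ii), (i) $\Rightarrow$ (iii) from above, all four statements are equivalent. The main obstacle, as in the number field case, is not in the transference machinery—which is now routine given Theorem \ref{ThmTransferDPRM}—but in verifying cleanly that $\ne$ is p.e. definable over $k(t)$ in the chosen language and that the self-interpretation graph $\Gamma(\zeta)$ really is p.e.\ (rather than merely first-order) definable from the Diophantineness of $k[t]$ in $k(t)$; both are standard but deserve an explicit sentence.
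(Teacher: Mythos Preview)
Your proposal is correct and follows essentially the same approach as the paper: both use Corollary \ref{CoroULfg} for unique listability, Demeyer's theorem for the DPRM property of $k[t]$, Theorem \ref{ThmCharDPRM} for (ii) $\Leftrightarrow$ (iv), and Theorem \ref{ThmTransferDPRM} together with the fraction map $\kappa$ for the equivalence (i) $\Leftrightarrow$ (ii). The only organizational difference is that the paper links (iii) and (iv) via transitivity of p.e.\ bi-interpretability (Lemma \ref{LemmaBiIntTransitive}), after first noting that $k[t]$ is p.e.\ bi-interpretable with $\N$ by Theorem \ref{ThmCharDPRM}, whereas you connect (iii) directly to (ii) by invoking Theorem \ref{ThmTransferDPRM}; in that step your phrase ``composing with a p.e.\ interpretation of $k[t]$ in $\N$'' is superfluous, since the theorem applies directly with $\Mfrak_2=k[t]$.
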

\begin{proof}  The ring $k[t]$ is infinite, uniquely listable by Corollary \ref{CoroULfg}, and the inequality $\ne$ is p.e. $\Lcal$-definable (standard fact using two primes of $k[t]$; see Lemme 3.2 in \cite{MoretBailly} for a generalization). Furthermore, $k[t]$ has the DPRM property by a result of Demeyer \cite{DemeyerInv}. Thus $k[t]$ is p.e. bi-interpretable with the $\Lcal_a$-structure $\N$ by Theorem \ref{ThmCharDPRM}. It follows that (iii) and (iv) are equivalent by Lemma \ref{LemmaBiIntTransitive}.

The field $k(t)$ is infinite, uniquely listable by Corollary \ref{CoroULfg}, and the relation $\ne$ is p.e. $\Lcal$-definable in $k(t)$. Hence, (ii) and (iv) are equivalent by Theorem \ref{ThmCharDPRM}.

The equivalence of (i) and (ii) is shown as in the case of $\Z$ and $\Q$ (cf. Proposition \ref{PropZQ}) using the p.e. interpretation $\kappa: k[t]\dasharrow k(t)$ given by $\kappa: k[t]\times (k[t]-\{0\})\to k(t)$ with $(f,g)\mapsto f/g$. 
Namely, assuming (i), the identity map $\theta_1: k[t]\to k[t]$ defines a p.e. interpretation $\theta_1:k(t)\dasharrow k[t]$ and we can take $\theta_2=\kappa$ in order to apply Theorem \ref{ThmTransferDPRM} together with Demeyer's theorem \cite{DemeyerInv}. This allows us to transfer the DPRM property from $k[t]$ to $k(t)$, obtaining (ii).

Conversely, assume (ii). Then $\kappa^*(k[t])=\{(f,g)\in k[t]^2 : g\ne 0\mbox{ and }g|f\}$ is p.e. $\Lcal$-definable over $k[t]$, hence totally listable. Proposition \ref{PropIntimplList} implies that $k[t]$ is $\gamma$-listable in $k(t)$ for some listable presentation $\gamma$, hence $k[t]$ is a totally listable subset of $k(t)$ because $k(t)$ is uniquely listable. Therefore, (ii) implies (i). 
\end{proof}

Similarly, these results can be extended to the case of $S$-integers and global  fields, not just $\Q$ and $k(t)$. We leave the details to the reader.

In a similar fashion, other transference results can be obtained. For instance, Demeyer \cite{DemeyerPoly} proved that $\Q[t]$ has the DPRM property, seen as a structure over $\Lcal_t=\Lcal_a\cup\{t\}$. Using this and the same methods as in the previous three results, one can show

\begin{proposition} Consider $\Q[t]$ and $\Q(t)$ as structures over $\Lcal_t$. The following are equivalent:
\begin{itemize}
\item[(i)] $\Q[t]$ is Diophantine in $\Q(t)$.
\item[(ii)] The $\Lcal$-structure $\Q(t)$ has the DPRM property.
\item[(iii)] The $\Lcal$-structures $\Q[t]$ and $\Q(t)$ are p.e. bi-interpretable.
\item[(iv)] The $\Lcal$-structure $\Q(t)$ is p.e. bi-interpretable with the $\Lcal_a$-structure $\N$.
\end{itemize}
\end{proposition}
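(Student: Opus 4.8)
The plan is to mirror the proof of Proposition~\ref{Propkt}, replacing Demeyer's theorem on $k[t]$ \cite{DemeyerInv} by his theorem that $\Q[t]$ has the DPRM property \cite{DemeyerPoly}. First I would collect the standing facts about the two structures, both taken over $\Lcal_t=\Lcal_a\cup\{t\}$, with infinite domains $\Q[t]$ and $\Q(t)$. Each is listable, hence p.e.\ interpretable in $\N$ by Theorem~\ref{ThmListableN}. Unique listability of $\Q(t)$ comes from Corollary~\ref{CoroULfg}, as $\Q(t)$ is a finitely generated field. For $\Q[t]$ one cannot quote Corollary~\ref{CoroULfg} literally, since $\Q[t]$ is not a finitely generated ring; instead I would apply Proposition~\ref{PropFG} (equivalently, Theorem~\ref{ThmUL}), generating $\Q[t]$ over $\Lcal_t$ from the p.e.\ definable elements $0,1,t$ using $x+y$, $x-y$, $x\cdot y$ and the partial, p.e.\ definable division map $(f,g)\mapsto f/g$ — note $\Z[t]$ is generated by $0,1,t$ under $+,-,\cdot$, and every element of $\Q[t]$ is obtained from an element of $\Z[t]$ by dividing by a positive integer (a unit of $\Q[t]$). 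Finally, $\ne$ is p.e.\ $\Lcal_t$-definable over $\Q(t)$ (as over any field, $x\ne 0\iff\exists y\,xy=1$) and over $\Q[t]$ (standard for polynomial rings over a field; cf.\ Lemme~3.2 of \cite{MoretBailly}), hence totally listable over both by Corollary~\ref{CoroTotList}. Since $\Q[t]$ is uniquely listable, infinite, has $\ne$ totally listable, and has the DPRM property by \cite{DemeyerPoly}, Theorem~\ref{ThmCharDPRM} yields that $\Q[t]$ is p.e.\ bi-interpretable with the $\Lcal_a$-structure $\N$.

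With these in place the four equivalences go through as in Proposition~\ref{Propkt}. The equivalence (ii)$\Leftrightarrow$(iv) is Theorem~\ref{ThmCharDPRM} applied to $\Q(t)$. The equivalence (iii)$\Leftrightarrow$(iv) follows from the previous paragraph together with transitivity of p.e.\ bi-interpretability (Lemma~\ref{LemmaBiIntTransitive}). For (i)$\Leftrightarrow$(ii) I would use the p.e.\ interpretation $\kappa:\Q[t]\dasharrow\Q(t)$ carried by $(f,g)\mapsto f/g$ on $\Q[t]\times(\Q[t]-\{0\})$. Assuming (i), the identity map on $\Q[t]$ gives a p.e.\ interpretation $\theta_1:\Q(t)\dasharrow\Q[t]$, and $\zeta=\kappa\bullet\theta_1:\Q(t)\dasharrow\Q(t)$ has graph $\{(x_0,x_1,x_2)\in\Q(t)^3 : x_1,x_2\in\Q[t],\ x_2\ne 0,\ x_0x_2=x_1\}$, which is p.e.\ $\Lcal_t$-definable over $\Q(t)$ by (i); Theorem~\ref{ThmTransferDPRM} together with \cite{DemeyerPoly} then transfers the DPRM property from $\Q[t]$ to $\Q(t)$, giving (ii). Conversely, assuming (ii), $\kappa^*(\Q[t])=\{(f,g)\in\Q[t]^2 : g\ne 0,\ g\mid f\}$ is p.e.\ $\Lcal_t$-definable over $\Q[t]$, hence totally listable; by Proposition~\ref{PropIntimplList} the set $\Q[t]$ is $\gamma$-listable in $\Q(t)$ for some listable presentation $\gamma$, hence a totally listable subset of $\Q(t)$ (which is uniquely listable), hence p.e.\ $\Lcal_t$-definable over $\Q(t)$ by (ii), which is (i).

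The only genuinely new point is the unique listability of $\Q[t]$, and that is the step I would be most careful about: Corollary~\ref{CoroULfg} does not apply on the nose because $\Q[t]$ is not finitely generated as a ring, so I would either spell out the relevant instance of Theorem~\ref{ThmUL} with the partial division operator, or remark that Proposition~\ref{PropFG} (being proved through Theorem~\ref{ThmUL}) already tolerates partial p.e.\ definable generating operations. Everything else is a verbatim adaptation of Proposition~\ref{Propkt}, the essential external input being Demeyer's result \cite{DemeyerPoly} that $\Q[t]$ has the DPRM property.
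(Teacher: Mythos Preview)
Your proposal is correct and follows exactly the approach the paper indicates (the paper omits the proof, writing only ``Using this and the same methods as in the previous three results, one can show'' before the statement). You have correctly identified the one genuine wrinkle not present in Proposition~\ref{Propkt}: $\Q[t]$ is neither a finitely generated ring nor a field, so Corollary~\ref{CoroULfg} does not apply verbatim, and your workaround via Theorem~\ref{ThmUL}/Proposition~\ref{PropFG} with the partial p.e.\ definable division (or, equivalently, the inverse map on units $\Q^\times\subset\Q[t]$, as in Proposition~\ref{PropULQ}) is the right fix.
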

%%
%%

%%%%%%%%%%%%%%%%%%%%%%%%%%%%%%%%%%%%%%
%%%%%%%%%%%%%%%%%%%%%%%%%%%%%%%%%%%%%%
%%%%%%%%%%%%%%%%%%%%%%%%%%%%%%%%%%%%%%
%%%%%%%%%%%%%%%%%%%%%%%%%%%%%%%%%%%%%%
%%%%%%%%%%%%%%%%%%%%%%%%%%%%%%%%%%%%%%
%%%%%%%%%%%%%%%%%%%%%%%%%%%%%%%%%%%%%%

\section{Diophantine sets of global fields and related problems}\label{SecConjectures}

%%%%%%%%
%%%%%%%%
%%%%%%%%
%%%%%%%%
\subsection{Varieties and Diophantine sets} In this article, a variety over a field $k$ is a reduced separated scheme of finite type over $k$. In particular, we do not require irreducibility.

Recall that a set $S\subseteq k^n$ is Diophantine over $k$ if it is p.e. $\Lcal_a$-definable over $k$ with parameters from $k$. It easily follows from the definitions that $S$ is Diophantine over $k$ if and only if there is an affine variety $X$ over $k$ and a morphism $f:X\to \A^n_k$ defined over $k$ such that $f(X(k))=S$. 

%%%%%%%%
%%%%%%%%
%%%%%%%%
%%%%%%%%
\subsection{Mazur's conjecture} \label{SecMazur}

For later reference, let us recall some conjectures on the topology of rational points formulated by Mazur \cite{MazurConj1, MazurConj2}, as well as a variation proposed by Colliot-Th\'el\`ene, Skorobogatov, and Swinnerton-Dyer \cite{CTSSD}. 

The following intriguing conjecture is due to Mazur \cite{MazurConj1, MazurConj2}.
\begin{conjecture}[Mazur's conjecture]\label{ConjMazur} Let $X$ be a variety over $\Q$. The topological closure of $X(\Q)$ in $X(\R)$ has finitely many connected components. 
\end{conjecture}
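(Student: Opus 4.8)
This is a well-known open problem, so what follows is a strategy indicating which cases are within reach and where the real obstruction lies, rather than a complete argument. The plan is to induct on $d=\dim X$. One first reduces to the case where $X$ is geometrically irreducible: a variety has finitely many irreducible components, $X(\Q)$ is the union of the sets of $\Q$-points of the components, and the topological closure of a finite union is the union of the closures, so $\pi_0$ of the closure is only multiplied by a constant; one may also replace $X$ by $X_{\red}$. For $d=0$ the set $X(\Q)$ is finite. For $d=1$, standard reductions (normalization, and adding or removing finitely many points, which changes $\pi_0$ of the closure only by a finite amount) let us assume $X=C$ is a smooth projective geometrically irreducible curve of genus $g$. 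If $g\ge 2$, Faltings' theorem makes $C(\Q)$, hence $X(\Q)$, finite. If $g=1$ and $C(\Q)\ne\emptyset$, then by Mordell--Weil $C(\Q)$ is a finitely generated abelian group, so its closure in the compact commutative real Lie group $C(\R)$ is a closed subgroup, hence a Lie subgroup, hence has finitely many connected components. If $g=0$, then either $C(\Q)=\emptyset$ or $C\cong\Pro^1_\Q$ and $C(\Q)$ is dense in the connected set $C(\R)=\Pro^1(\R)$. Thus the conjecture holds in dimensions $0$ and $1$.

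For $d\ge 2$ the natural approach is a fibration argument: choose a dominant rational map $\pi\colon X\dasharrow B$ with $0<\dim B<d$ and geometrically irreducible generic fibre, spread it out to a morphism over a dense open $U\subseteq X$, and try to bound $\pi_0$ of the closure of $X(\Q)$ by combining the inductive hypothesis on the base $B$ with the inductive hypothesis applied to each fibre $\pi^{-1}(b)$ for $b\in B(\Q)$. Making this rigorous requires (i) a \emph{uniform} fibrewise version of the statement, so that the fibrewise bounds on $\pi_0$ do not grow with $b$; (ii) control of which fibres carry $\Q$-rational points; and (iii) an understanding of how the fibres assemble in the real topology. Step (iii) is the crux: one must rule out that a sequence of fibres $\pi^{-1}(b_n)(\Q)$ accumulates in $X(\R)$ so as to produce infinitely many components, and more generally that the possibly sparse set $X(\Q)$ clusters in $X(\R)$ the way $\{1/n: n\ge 1\}$ clusters in $\R$. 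No unconditional technique is known for this in general.

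Consequently I would not expect a full unconditional proof. The realistic targets are: (a) dimensions $0$ and $1$ as above; (b) varieties dominated by products of curves and abelian varieties, where everything reduces to finitely generated abelian groups sitting inside compact real Lie groups; and (c) a conditional proof assuming the Bombieri--Lang conjecture, which together with the induction on $\dim X$ disposes of the general-type case, reducing the general case to the fibres of the maximal rationally connected fibration over a base of special type. The main obstacle --- and the reason the conjecture remains open --- is precisely this last part: for rationally connected varieties, and for the intermediate cases (K3 and Calabi--Yau varieties, and abelian varieties occurring as non-isotrivial fibres of a family), even when $X(\Q)$ is known or expected to be Zariski dense there is no known mechanism forcing the closure of $X(\Q)$ in $X(\R)$ to have only finitely many components.
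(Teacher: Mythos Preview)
The statement is labeled a \emph{Conjecture} in the paper, and indeed the paper does not attempt to prove it: it is Mazur's conjecture, an open problem, and the only argument the paper supplies is the remark immediately following the statement that the general case reduces to the smooth case by stratifying $X$ into successive smooth loci. You correctly identify the statement as open and do not claim a full proof.

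Your discussion goes well beyond what the paper contains. The paper neither proves the $d\le 1$ cases nor discusses the fibration strategy or the Bombieri--Lang reduction; it simply records the conjecture and later uses it as a hypothesis. Your treatment of dimensions $0$ and $1$ is correct (Faltings for $g\ge 2$, Mordell--Weil plus the closed-subgroup argument for $g=1$, density for $g=0$), and your analysis of why the inductive step fails in higher dimension is a fair summary of the state of the art. So there is no error to flag; rather, you have supplied substantially more context than the paper does, while the paper's only contribution on this point is the smooth-case reduction, which you also mention implicitly when passing to $X_{\red}$ and normalizing.
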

 This conjecture was initially stated for smooth varieties, but the previous version is easily reduced to the smooth case by taking $X_1$ as the smooth locus of $X$, and then $X_2$ as the smooth locus of $X-X_1$, etc. which is a finite process because $X$ is of finite type over $\Q$.

As remarked by Mazur, this conjecture implies at once that $\Z$ is not Diophantine in $\Q$. By Proposition \ref{PropZQ}, it would also follow that $\Q$ does not have the DPRM property and that $\Q$ and $\N$ are not p.e. bi-interpretable as $\Lcal_a$-structures. 

Actually, a first version of Mazur's conjecture proposed in \cite{MazurConj1} asserted that the topological closure of $X(\Q)$ in $X(\R)$ precisely consisted of some of the connected components of $X(\R)$, but this was disproved in \cite{CTSSD}. Nevertheless, the following version of Mazur's conjecture proposed as Conjecture 4 in \cite{CTSSD} still seems plausible.
\begin{conjecture}[Strong version of Mazur's conjecture]\label{ConjStrongMazur} Let $X$ be a smooth irreducible variety over $\Q$ and let $U$ be a connected component of $X(\R)$. Let $W$ be the topological closure of $X(\Q)\cap U$ in $U$. Then there is a Zariski closed set $Y\subseteq X$ defined over $\Q$ such that $W$ is a finite union of connected components of $Y(\R)$. 
\end{conjecture}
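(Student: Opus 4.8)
Since the statement is Conjecture~4 of \cite{CTSSD} and is open, what follows is a strategy together with the cases in which it can presently be reached, rather than a complete argument. The overall plan is a Noetherian induction on the Zariski closure of the rational points, reducing to the case in which $X(\Q)\cap U$ is Zariski dense in $X$, and then treating that case according to the birational geometry of $X$: curves and (partially) subvarieties of abelian varieties are accessible, while the general Zariski-dense case is the genuine obstacle.

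First I would carry out the reduction. Let $V\subseteq X$ be the Zariski closure of $X(\Q)\cap U$; it is a closed subscheme defined over $\Q$, since the Zariski closure of any set of $\Q$-rational points of $X$ is defined over $\Q$. If $\dim V<\dim X$, then $W\subseteq V(\R)$, and after passing to the finitely many irreducible components of $V$, to their smooth loci, and to the relevant real connected components, the inductive hypothesis applies and yields the desired $Y$; tracking singular strata and real components here requires some care but is routine. This reduces the conjecture to the case in which $X$ is smooth irreducible over $\Q$ and $X(\Q)\cap U$ is Zariski dense in $X$. I emphasize that even here the naive choice $Y=X$ is \emph{not} always correct --- this is exactly the point at which the original form of Mazur's conjecture fails, see \cite{CTSSD} --- so the base case is itself nontrivial: its content is that the boundary of $W$ inside $U$ is cut out by equations over $\Q$.

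For the base case I would first dispose of the tractable geometries. If $\dim X=1$ and the genus is at least $2$, then $X(\Q)$ is finite by Faltings, so $W$ is a finite set of rational points and one takes $Y=W$. If the genus is $1$ and $X(\Q)\ne\emptyset$, then either $X(\Q)$ is finite (again take $Y=W$) or $X$ has positive rank, and the classical real-analytic uniformization of the real locus shows that the rational points are dense in the connected component $U$, so $W=U$ and $Y=X$; the genus $0$ case with a rational point gives $X\cong\Pro^1$ and density again. More generally, when $X$ admits a non-constant morphism to a positive-dimensional abelian variety $A$, one is led to analyze the closure of the Mordell--Weil group $A(\Q)$ inside the compact real Lie group $A(\R)$: such a closure is a finite union of cosets of a closed connected subgroup, and the crux is to show that this subgroup is the real locus of an abelian subvariety of $A$ defined over $\Q$. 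Combined with Faltings' theorem on subvarieties of abelian varieties, this yields at least partial information when $X$ is a subvariety of an abelian, or with more effort a semiabelian, variety.

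The main obstacle is the general Zariski-dense case for varieties of negative Kodaira dimension --- rational and unirational varieties, conic bundles, del Pezzo surfaces --- where there is no group law to exploit and the example of \cite{CTSSD} shows that $W$ can genuinely be a proper semialgebraic subset of $U$ with nonempty interior. A natural line of attack is the fibration method: after shrinking $X$ and blowing up, present it as fibered over a lower-dimensional base $B$ with $B(\Q)$ Zariski dense and with generic fiber of a type for which the closure of rational points is understood (for instance conics or Severi--Brauer varieties), and then assemble the closure on the base, obtained by induction, with the closures along the fibers. Fitting the resulting ``horizontal'' and ``vertical'' pieces together into a single variety $Y$ over $\Q$ whose real connected components reconstruct $W$ is exactly the crux, and it is not known even for conic bundle surfaces over $\Q$ in general; this is where I expect the real difficulty to lie, and it is presumably the reason the statement is offered only as a conjecture.
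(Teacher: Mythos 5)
This statement is a \emph{conjecture} in the paper (it is Conjecture 4 of \cite{CTSSD}, restated), not a theorem, and the paper offers no proof of it --- so there is no paper argument to compare against. You correctly recognized this and were right to present a strategy rather than claim a proof.

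As for the strategy itself: the Noetherian induction reducing to the case where $X(\Q)\cap U$ is Zariski dense in $X$ is sound and standard, and your observation that even in this base case one cannot take $Y=X$ is exactly the lesson of the Colliot-Th\'el\`ene--Skorobogatov--Swinnerton-Dyer counterexample, so you have located the genuine difficulty correctly. The curve cases (Faltings for genus $\geq 2$, real uniformization for genus $0$ and $1$) and the abelian-variety heuristic (closure of $A(\Q)$ in $A(\R)$ as the real locus of an abelian subvariety over $\Q$, combined with Faltings on subvarieties) are the cases where one would indeed expect partial progress. One small caution: in the abelian case the closure of the Mordell--Weil group in $A(\R)$ need not a priori be algebraic over $\Q$ --- it is a real Lie subgroup, and identifying it with $B(\R)^{\circ}$ for an abelian subvariety $B$ over $\Q$ is itself a nontrivial statement that you should flag as a lemma rather than take for granted. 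Your identification of negative Kodaira dimension, conic bundles, and the fibration method as the real obstacle matches what is known; this is indeed why the statement remains a conjecture.
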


The following observation is implicit in \cite{CTSSD} and in fact it motivates the previous conjecture.
\begin{lemma}\label{LemmaConjMazur} The strong version of Mazur's conjecture (Conjecture \ref{ConjStrongMazur}) implies Mazur's conjecture (Conjecture \ref{ConjMazur}). 
\end{lemma}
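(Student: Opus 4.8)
The plan is to reduce Conjecture~\ref{ConjMazur} to the hypotheses of Conjecture~\ref{ConjStrongMazur} by a standard stratification, and then to reassemble the local conclusions using elementary properties of semialgebraic sets. The key point is that, while $\overline{X(\Q)}$ is a priori a rather opaque set, the strong conjecture forces it to be semialgebraic, and a semialgebraic set has only finitely many connected components.

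First I would carry out the reduction. Given a variety $X$ over $\Q$, let $X_1$ be its smooth locus, $X_2$ the smooth locus of $X\setminus X_1$, and so on; since $X$ is noetherian this terminates, giving a finite partition $X=\bigsqcup_i X_i$ into smooth locally closed subvarieties over $\Q$ (as already noted after Conjecture~\ref{ConjMazur}). Decomposing each $X_i$ into its finitely many connected components over $\Q$ --- which, $X_i$ being smooth, coincide with its irreducible components and are again smooth, irreducible, and locally closed in $X$ --- we obtain a finite partition $X=\bigsqcup_\alpha X_\alpha$ with each $X_\alpha$ a smooth irreducible variety over $\Q$, locally closed in $X$. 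Then $X(\Q)=\bigsqcup_\alpha X_\alpha(\Q)$, and since topological closure commutes with finite unions, $\overline{X(\Q)}=\bigcup_\alpha \overline{X_\alpha(\Q)}$, the closures being taken in $X(\R)$. As the number of connected components of a finite union is at most the sum of the numbers of connected components of the pieces, it suffices to bound the number of connected components of each $\overline{X_\alpha(\Q)}$.

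Now fix $\alpha$. The real locus $X_\alpha(\R)$ is a semialgebraic subset of $X(\R)$, hence has finitely many connected components $U_1,\dots,U_m$, each open and closed in $X_\alpha(\R)$. Applying Conjecture~\ref{ConjStrongMazur} to $X_\alpha$ and each $U_k$ produces a Zariski closed $Y_k\subseteq X_\alpha$ over $\Q$ such that the closure $W_k$ of $X_\alpha(\Q)\cap U_k$ in $U_k$ is a finite union of connected components of $Y_k(\R)$; in particular each $W_k$ is semialgebraic. Because $X_\alpha(\Q)=\bigsqcup_k\bigl(X_\alpha(\Q)\cap U_k\bigr)$ and the $U_k$ are open and closed in $X_\alpha(\R)$, one checks that the closure of $X_\alpha(\Q)$ in $X_\alpha(\R)$ equals $\bigcup_k W_k$, a finite union of semialgebraic sets, hence semialgebraic; since $X_\alpha(\Q)$ is dense in this set, $\overline{X_\alpha(\Q)}$ (the closure in $X(\R)$) is the closure in $X(\R)$ of the semialgebraic set $\bigcup_k W_k$, hence is again semialgebraic, and therefore has finitely many connected components. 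This yields the required bound and finishes the argument.

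The only point requiring care is purely topological bookkeeping with semialgebraic sets: one must check, by passing to finitely many affine charts, that $X_\alpha(\R)$ and the sets $W_k$ are semialgebraic subsets of $X(\R)$, and invoke the standard facts that finite unions and topological closures of semialgebraic sets are semialgebraic and that a semialgebraic set has finitely many connected components. There is no genuine obstacle beyond this; the substantive input is precisely Conjecture~\ref{ConjStrongMazur}, whose role is to replace the a priori uncontrolled set $\overline{X(\Q)}$ by a semialgebraic one piece by piece.
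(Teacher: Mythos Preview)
Your proposal is correct and follows essentially the same approach as the paper's proof: reduce to the smooth irreducible case via the stratification already mentioned after Conjecture~\ref{ConjMazur}, apply Conjecture~\ref{ConjStrongMazur} on each real connected component, and conclude using the standard fact that semi-algebraic sets have finitely many connected components. The paper's proof is a one-line version of exactly this argument, while you have carefully spelled out the topological bookkeeping with closures and semi-algebraic sets.
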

\begin{proof} The set of real points of an affine  variety defined over $\Q$ forms a semi-algebraic set, and it is a standard result that semi-algebraic sets over $\R$ have finitely many connected components.
\end{proof}

Let $L$ be a real-closed field and let $F\subseteq L$ be an ordered field. A semi-algebraic set $U\subseteq L^n$ is said to be \emph{defined over $F$} if there is a first order formula $\Phi[x_1,...,x_n]$ over the language $\Lcal_{\le}=\{0,1,+,\cdot, \le, =\}$ with parameters from $F$ such that $U$ is the interpretation of $\Phi$ over $L$. 

With this terminology, here is  yet another variant of Mazur's conjecture which is implicitly suggested in \cite{CTSSD}, and explicitly formulated at the end of Section 2 in \cite{CornelissenZahidi}. Here, for a variety $X$ over $\R$, a semi-algebraic set of $X(\R)$ is defined as a set which is semi-algebraic on each affine chart of an affine open cover of $X$.

\begin{conjecture}[Semi-algebraic version of Mazur's conjecture]\label{ConjSemiAlg} Let $X$ be a  variety over $\Q$. The topological closure of $X(\Q)$ in $X(\R)$ is a semi-algebraic set defined over $\Q$.
\end{conjecture}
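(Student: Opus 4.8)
The statement is a conjecture and, as such, is not something one can expect to establish unconditionally; the realistic goal is a conditional result, deducing Conjecture \ref{ConjSemiAlg} from the strong version of Mazur's conjecture (Conjecture \ref{ConjStrongMazur}), which is exactly the implication implicit in \cite{CTSSD}. The plan is therefore to assume Conjecture \ref{ConjStrongMazur} and run an argument parallel to that of Lemma \ref{LemmaConjMazur}, but carrying along the semi-algebraic structure and the field of definition at every step.

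First I would reduce to the affine case: by definition, being a semi-algebraic subset of $X(\R)$ is checked on the affine charts of an affine open cover, and the closure of $X(\Q)$ intersected with a chart is the closure, computed in that chart, of the $\Q$-points lying in it, so it suffices to treat affine $X$. Next I would stratify $X$: let $X_1$ be the smooth locus of $X$, $X_2$ the smooth locus of $X\setminus X_1$, and so on; this terminates since $X$ is Noetherian, yielding a finite partition $X=\bigsqcup_i X_i$ into smooth, locally closed $\Q$-subvarieties, and hence partitions $X(\Q)=\bigsqcup_i X_i(\Q)$ and $X(\R)=\bigsqcup_i X_i(\R)$. For each smooth $X_i$, the set $X_i(\R)$ is a (real-analytic) manifold with finitely many connected components, each of them semi-algebraic over $\Q$ (a cylindrical algebraic decomposition adapted to the defining equations of $X_i$ can be carried out with rational data, and connected components are unions of cells). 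Passing to the $\Q$-irreducible components of $X_i$ and applying Conjecture \ref{ConjStrongMazur} to each connected component $U$ of $X_i(\R)$, the closure of $X_i(\Q)\cap U$ inside $U$ is a finite union of connected components of $Y(\R)$ for some Zariski closed $Y\subseteq X_i$ defined over $\Q$; such a finite union is again semi-algebraic over $\Q$ by the same connected-component argument. Taking the union over all $i$ and all $U$ (finitely many), and then the topological closure inside $X(\R)$ --- an operation that preserves semi-algebraicity over $\Q$, since ``being in the closure'' is first-order expressible using only the parameters already present --- gives that $\overline{X(\Q)}\subseteq X(\R)$ is semi-algebraic over $\Q$, as desired.

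The principal obstacle is, unavoidably, that Conjecture \ref{ConjStrongMazur} is itself open; short of proving it I see no route to an unconditional proof of Conjecture \ref{ConjSemiAlg}, so only the conditional statement above is within reach. Within the reduction, the delicate technical point is the control of fields of definition: one must verify that connected components of a real algebraic set defined over $\Q$ are semi-algebraic over $\Q$ itself and not merely over the field of real algebraic numbers, and that re-closing inside the ambient $X(\R)$ rather than inside a single stratum introduces nothing new --- here one uses that a point of a lower stratum lying in $\overline{X(\Q)}$ already lies in the closure of $X_j(\Q)$ for some $j$, so it is accounted for by strong Mazur applied to the stratum $X_j$. The remaining steps --- replacing the single-component formulation of Conjecture \ref{ConjStrongMazur} by a statement over all of $X(\R)$, and commuting finite unions with closures --- are routine given the finiteness of the number of connected components.
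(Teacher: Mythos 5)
You correctly recognize that the statement is a conjecture and that the only realistic goal is the conditional implication from Conjecture \ref{ConjStrongMazur}, which is exactly what the paper establishes in the lemma immediately following the statement. Your overall strategy matches the paper's: reduce to the smooth affine case by a Noetherian stratification, invoke Conjecture \ref{ConjStrongMazur} on the strata, and use the key claim that connected components of a real algebraic set defined over $\Q$ are semi-algebraic over $\Q$. The one place where you diverge in substance is the justification of this key claim. The paper proves it via the model theory of real closed fields: it passes to the real closure $F$ of $\Q$ (the real algebraic numbers), uses Proposition 5.24 of Basu--Pollack--Roy to identify the connected component of $Y(\R)$ with the extension to $\R$ of a connected component of $Y(F)$, and then uses Proposition 2.82 there to descend the parameters from $F$ to $\Q$. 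You instead appeal to a cylindrical algebraic decomposition ``carried out with rational data''; this is in the right spirit, but a little cavalier --- the cells of a CAD for rational input polynomials naturally involve real algebraic thresholds, so one still has to argue that each cell is defined by an $\Lcal_{\le}$-formula with \emph{rational} parameters (e.g.\ by Thom encoding or, equivalently, by exactly the $F$-to-$\Q$ descent the paper invokes). So you are gesturing at the same fact by a different but less explicitly justified route. On the other hand, your treatment of the final closure step is more careful than the paper's: you correctly note that Conjecture \ref{ConjStrongMazur} controls the closure of $X_i(\Q)$ inside $X_i(\R)$, not in the ambient $X(\R)$, and that one must re-close the finite union of the resulting semi-algebraic sets in $X(\R)$ (which is harmless since closure is first-order with no new parameters); the paper elides this point.
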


It turns out that this last conjecture follows from Conjecture \ref{ConjStrongMazur} by the same argument as in Lemma \ref{LemmaConjMazur}, just keeping track of the field of definition.
\begin{lemma} The strong version of Mazur's conjecture (Conjecture \ref{ConjStrongMazur}) implies the semi-algebraic version of Mazur's conjecture (Conjecture \ref{ConjSemiAlg}).
\end{lemma}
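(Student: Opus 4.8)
The plan is to imitate the proof of Lemma \ref{LemmaConjMazur}, but to track the field of definition throughout the argument. First I would reduce to the affine case: by Conjecture \ref{ConjStrongMazur}'s smooth-irreducible hypothesis the statement is local, so cover $X$ by finitely many affine open sets $X_i$ defined over $\Q$; it suffices to show that the closure of $X(\Q)$ in each $X_i(\R)$ is semi-algebraic over $\Q$, since a finite union of such sets is again semi-algebraic over $\Q$ and being semi-algebraic over $\Q$ on each chart is exactly the definition used here. I would further decompose each affine piece into its irreducible components over $\Q$ (a finite process, and the components are again defined over $\Q$ since $\Q$ is perfect), and handle closures of $X'(\Q)$ for $X'$ affine irreducible over $\Q$. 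Passing to the smooth locus as in the reduction under Conjecture \ref{ConjMazur} lets me assume $X'$ is moreover smooth.

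Next, for such a smooth affine irreducible $X'/\Q$ embedded in some $\A^N_\Q$, the real points $X'(\R)$ form a semi-algebraic subset of $\R^N$ defined over $\Q$ (it is cut out by the vanishing of the defining polynomials, which have rational coefficients), hence it has finitely many connected components, each of which is again semi-algebraic over $\R$ — but a priori only over $\R$, not over $\Q$. Here I would invoke the standard fact that the connected components of a semi-algebraic set defined over an ordered field $F$ are themselves semi-algebraic over $F$; this is where keeping the base field $\Q$ honest costs a little care, but it is a classical statement in real algebraic geometry (the components can be described by a first-order $\Lcal_\le$-formula with parameters in $F$). Now apply Conjecture \ref{ConjStrongMazur} to each connected component $U$ of $X'(\R)$: the closure $W_U$ of $X'(\Q)\cap U$ in $U$ is a finite union of connected components of $Y_U(\R)$ for some Zariski closed $Y_U\subseteq X'$ defined over $\Q$. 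Since $Y_U(\R)$ is semi-algebraic over $\Q$ and its connected components are too, $W_U$ is semi-algebraic over $\Q$.

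Finally I would reassemble: the closure of $X'(\Q)$ in $X'(\R)$ is the finite union $\bigcup_U W_U$ over the (finitely many) connected components $U$ of $X'(\R)$, hence semi-algebraic over $\Q$; summing over the irreducible components and over the affine charts gives the conclusion for the original $X$. The main obstacle I anticipate is not conceptual but bookkeeping: making sure that at every decomposition step — affine cover, irreducible components, smooth locus, connected components of the real locus — the relevant objects remain defined over $\Q$ (perfectness of $\Q$ handles the algebraic decompositions, and the real-algebraic-geometry input handles the connected components), and that "semi-algebraic over $\Q$ on each chart" is preserved under finite unions and under the passage between charts. None of these steps is deep, which is presumably why the authors state it as following "by the same argument as in Lemma \ref{LemmaConjMazur}, just keeping track of the field of definition."
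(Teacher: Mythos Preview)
Your proposal is correct and follows essentially the same route as the paper: reduce to the smooth affine irreducible case by stratification and affine covers, then use that connected components of $Y(\R)$ for $Y$ defined over $\Q$ are semi-algebraic over $\Q$, and finally apply Conjecture \ref{ConjStrongMazur} on each connected component and reassemble. The only substantive difference is that where you invoke as a ``standard fact'' that the connected components of a $\Q$-defined real algebraic set are semi-algebraic over $\Q$, the paper spells this out via a two-step descent through the real closure $F$ of $\Q$ (citing Propositions 5.24 and 2.82 of \cite{AlgRA}): first match components of $Y(\R)$ with those of $Y(F)$, which are semi-algebraic over $F$, and then descend the defining formula from $F$ to $\Q$.
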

\begin{proof} Let $Y$ be an affine algebraic variety defined over $\Q$ in the affine space $\A^n_\Q$. Let $C$ be a connected component of $Y(\R)$. It is a standard result that $C$ is semi-algebraic over $\R$ (cf. Theorem 5.22 in \cite{AlgRA}). We claim that the semi-algebraic set $C$ is defined over $\Q$ (this is well-known but we were not able to find an explicit reference for this particular fact). 

Let $F\subseteq \R$ be the field of real algebraic numbers. Then $F$ is real-closed, and by Proposition 5.24 in p.170 of \cite{AlgRA} there is a semi-algebraic connected component $C'$ of $Y(F)$ defined over $F$ such that for any $\Lcal_{\le}$-formula $\Phi[x_1,...,x_n]$ with parameters form $F$ which defines $C'$ over $F$,  the interpretation of $\Phi$ over $\R$ is $C$. Since $F$ is the real closure of $\Q$, Proposition 2.82 in p.71 of \cite{AlgRA} shows that there is an $\Lcal_{\le}$-formula $\Psi$ with parameters from $\Q$ (thus, $\Psi$ can be taken without parameters) such that the interpretation of $\Psi$ over $F$ is $C'$. Hence, the interpretation of $\Psi$ over $\R$ is $C$. This proves that the connected components of $Y(\R)$ are semi-algebraic sets defined over $\Q$.

Let $X$ be a smooth affine algebraic variety over $\Q$ in affine space $\A^n_\Q$; this case of Conjecture \ref{ConjSemiAlg} implies the general case by taking a suitable stratification of the variety under consideration (cf. the discussion after Conjecture \ref{ConjMazur}) and then taking affine coverings. Assuming Conjecture \ref{ConjStrongMazur} we get that the topological closure of $X(\Q)$ in $X(\R)\subseteq \R^n$ is the union of finitely many connected components of real algebraic sets $Y(\R)$ for certain varieties $Y$ defined over $\Q$. Thus, by the previous claim, the closure of $X(\Q)$ in $X(\R)$ is semi-algebraic defined over $\Q$.
\end{proof}

Let us observe the following:
\begin{proposition}\label{PropEndpoint} If the semi-algebraic version of Mazur's conjecture (Conjecture \ref{ConjSemiAlg}) holds, then for every Diophantine set $S\subseteq \Q$, we have that the topological closure of $S$ in $\R$ is a finite union of closed intervals whose endpoints are real algebraic or infinite. (Here, the singleton $\{x\}\subseteq \R$ is taken as the closed interval $[x,x]$.)
\end{proposition}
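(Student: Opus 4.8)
The plan is to reduce the statement to the structure theory of semi-algebraic subsets of the real line. First I would fix a Diophantine presentation of $S$: by the description of Diophantine sets recalled above, there is an affine variety $X$ over $\Q$, which we may view as a closed subset of some affine space $\A^N_\Q$, together with a morphism $f : X \to \A^1_\Q$ defined over $\Q$, such that $f(X(\Q)) = S$. On $\R$-points, $f$ is the restriction to $X(\R) \subseteq \R^N$ of a polynomial with rational coefficients; in particular $f : X(\R) \to \R$ is continuous and semi-algebraic.

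Next I would pass to closures. Let $Z$ denote the topological closure of $X(\Q)$ in $X(\R)$; since $X(\R)$ is closed in $\R^N$, this is also the closure of $X(\Q)$ in $\R^N$. By the semi-algebraic version of Mazur's conjecture (Conjecture \ref{ConjSemiAlg}), $Z$ is a semi-algebraic subset of $\R^N$ defined over $\Q$. Put $T = f(Z) \subseteq \R$. By the Tarski--Seidenberg projection theorem, the image of a semi-algebraic set defined over $\Q$ under a polynomial map defined over $\Q$ is again semi-algebraic and defined over $\Q$; hence $T$ is a semi-algebraic subset of $\R$ defined over $\Q$. I claim that $\overline{S} = \overline{T}$, where closures are taken in $\R$. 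Indeed, $X(\Q) \subseteq Z$ gives $S = f(X(\Q)) \subseteq f(Z) = T$, so $\overline{S} \subseteq \overline{T}$; and since $f$ is continuous, $T = f(Z) = f(\overline{X(\Q)}) \subseteq \overline{f(X(\Q))} = \overline{S}$, so $\overline{T} \subseteq \overline{S}$. Thus $\overline{S} = \overline{T}$ is the topological closure of a semi-algebraic subset of $\R$ defined over $\Q$.

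Finally I would invoke the classification of semi-algebraic subsets of the line. By quantifier elimination for real closed fields, $T$ is a finite Boolean combination of sets of the form $\{x \in \R : p(x) > 0\}$ and $\{x \in \R : p(x) = 0\}$ with $p \in \Q[x]$; consequently $T$ is a finite union of singletons and open intervals, and every such singleton, together with every finite endpoint of these intervals, is a root of a nonzero polynomial in $\Q[x]$, hence a real algebraic number, while the remaining endpoints are $\pm\infty$. Taking closures, $\overline{T}$ is a finite union of closed intervals -- a singleton $\{x\}$ being regarded as the degenerate interval $[x,x]$ -- whose endpoints are real algebraic numbers or infinite. Since $\overline{S} = \overline{T}$, this is exactly the assertion of the proposition.

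The only step requiring genuine care is the last one: making explicit that ``defined over $\Q$'' forces the endpoints of the intervals to be real algebraic rather than arbitrary real numbers. This is standard real algebraic geometry (quantifier elimination, equivalently the structure theorem for semi-algebraic subsets of $\R$, as in \cite{AlgRA}), so I would cite it rather than reprove it; the rest is formal manipulation of images and closures under the continuous semi-algebraic map $f$.
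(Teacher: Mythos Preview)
Your proof is correct and follows essentially the same approach as the paper, which simply cites the Tarski--Seidenberg theorem with coefficients in $\Q$ (referring to \cite{AlgRA}); you have spelled out the details the paper leaves implicit, namely the passage from $X$ to $\A^1$ via the continuous semi-algebraic map $f$ and the verification that $\overline{S}=\overline{T}$.
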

\begin{proof} This is by the Tarski-Seidenberg theorem with coefficients in $\Q$. See Theorem 2.77 in p.69 of \cite{AlgRA} for a general version with coefficients in an ordered field contained in a real closed field.
\end{proof}

One may ask about extensions of Mazur's conjecture to other global fields and other places, not just the archimedean. In the case of number fields, Mazur (cf. Question I in \cite{MazurConj2}) asked the following (see also \cite{PoonenShlapentokh}):

\begin{question}[Mazur]\label{QuestionMazur} Let $K$ be a number field and let $v$ be a place of $K$. Let $X$ be an irreducible projective variety over $K$. For each local point $x\in X(K_v)$, let $Z_x\subseteq X$ be the intersection of all Zariski closed sets $Y\subseteq X$ that contain some $X(K)\cap U$, as $U$ ranges over all $v$-neighborhoods of $x$. Is the collection $\{Z_x : x\in X(K_v) \}$ finite?
\end{question}

Let us remark the following:

\begin{lemma}\label{LemmaApplyMazur} Let $K$ be a number field, let $v$ be a place of $K$, and let $S\subseteq K$ be Diophantine over $K$. Assume either of the following:
\begin{itemize}
\item[(i)] $K=\Q$, $v=\infty$ is the archimedean place, and Mazur's Conjecture \ref{ConjMazur} holds; or
\item[(ii)] Mazur's Question \ref{QuestionMazur} has a positive answer for the number field $K$ and the place $v$. 
\end{itemize}
Then $S$ can have at most finitely many $v$-adically isolated points.
\end{lemma}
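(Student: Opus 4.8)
The plan is to use the characterization recalled above: since $S\subseteq K$ is Diophantine over $K$, there are an affine variety $X$ over $K$ and a morphism $f\colon X\to\A^1_K$ with $f(X(K))=S$, and $f$ induces a continuous map on $v$-adic (resp.\ real) points. If $X(K)=\emptyset$ or $f$ is constant then $S$ is finite and there is nothing to prove, so we discard these cases; the remaining argument transports the relevant Mazur-type hypothesis along $f$.

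\textbf{Case (i): $K=\Q$, $v=\infty$.} By Conjecture \ref{ConjMazur} the closure $W:=\overline{X(\Q)}$ taken inside $X(\R)$ has finitely many connected components $C_1,\dots,C_m$. Each $C_j$ is connected, so $I_j:=f(C_j)\subseteq\R$ is a nonempty interval. Since $X(\Q)$ is dense in $W$ and $f$ is continuous, $\bigcup_j I_j=f(W)\subseteq\overline{f(X(\Q))}=\overline{S}$; on the other hand $S=f(X(\Q))\subseteq f(W)=\bigcup_j I_j$. Taking closures in these two inclusions yields $\overline{S}=\bigcup_{j=1}^m\overline{I_j}$, a finite union of closed intervals of $\R$. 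If $s\in S$ is isolated in $S$, then $s$ cannot lie in any non-degenerate $\overline{I_j}$: such an interval contains points $q\in I_j$ with $q\ne s$ arbitrarily close to $s$, and since $I_j\subseteq\overline{S}$ every such $q$ is a limit of points of $S\setminus\{s\}$, contradicting the isolatedness of $s$. Hence each isolated point of $S$ equals some singleton $\overline{I_j}$, so there are at most $m$ of them.

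\textbf{Case (ii).} Suppose, for contradiction, that $S$ has infinitely many $v$-adically isolated points. Writing $X=X^{(1)}\cup\dots\cup X^{(\ell)}$ for the irreducible components (each a variety over $K$) and $S_j:=f(X^{(j)}(K))$, we have $S=\bigcup_j S_j$, and any point isolated in $S$ and lying in $S_j$ is isolated in $S_j$; hence some $S_{j_0}$ has infinitely many isolated points. Replacing $X$ by $X^{(j_0)}$ and $f$ by its restriction, we may assume $X$ irreducible, and then $f$ is non-constant since $S$ is now infinite. Choose an embedding $X\hookrightarrow\A^N_K\subseteq\Pro^N_K$, let $\bar X$ be the closure of $X$ in $\Pro^N_K$, and let $Y$ be the closure of the graph $\Gamma_f$ inside $\bar X\times_K\Pro^1_K$. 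Then $Y$ is an irreducible projective variety over $K$; the second projection restricts to a morphism $\tilde f\colon Y\to\Pro^1_K$; and $\Gamma_f\cong X$ is a dense open subscheme of $Y$ on which $\tilde f$ restricts to $f$ (viewed as a map to $\Pro^1_K$ via $\A^1_K\subseteq\Pro^1_K$). Two routine topological facts will be used: $X(K_v)$ is open in $Y(K_v)$, and $Y(K)\cap U=X(K)\cap U$ whenever $U\subseteq X(K_v)$. Now fix an isolated point $s\in S$, pick $\epsilon>0$ with $\{t\in S:|t-s|_v<\epsilon\}=\{s\}$, and pick $y_s\in X(K)\subseteq Y(K)$ with $f(y_s)=s$. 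By continuity of $f$ there is a $v$-neighborhood $U$ of $y_s$, which we may take inside $X(K_v)$, with $f(U)\subseteq\{t\in K_v:|t-s|_v<\epsilon\}$; then $f(X(K)\cap U)\subseteq\{s\}$, so $X(K)\cap U$ lies in the fiber $\tilde f^{-1}(s)$, which is a Zariski closed subset of $Y$. Since $Y(K)\cap U=X(K)\cap U$, the closed set $\tilde f^{-1}(s)$ contains $Y(K)\cap U$, whence $Z_{y_s}\subseteq\tilde f^{-1}(s)$. The set $Z_{y_s}$ is nonempty (it contains $y_s$), and the fibers $\tilde f^{-1}(s)$ over distinct $K$-points $s$ of $\Pro^1$ are pairwise disjoint, so distinct isolated points $s$ of $S$ yield pairwise distinct sets $Z_{y_s}$. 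Therefore $\{Z_x:x\in Y(K_v)\}$ is infinite, contradicting the positive answer to Mazur's Question \ref{QuestionMazur} for $K$ and $v$.

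\textbf{Main obstacle.} The difficulty is organizational rather than conceptual: in case (ii) one must set up the compactification $Y$ together with the extended morphism $\tilde f$ and verify the fiddly (but standard) facts that $X(K_v)$ is open in $Y(K_v)$ and that $K$-rational points of $Y$ lying $v$-adically close to $y_s$ are $K$-points of $X$; in case (i) the point requiring care is that one needs \emph{both} inclusions $S\subseteq\bigcup_j I_j$ and $\bigcup_j I_j\subseteq\overline{S}$, since $S\subseteq\bigcup_j I_j$ alone would be too weak (for instance $\{1/n:n\ge1\}$ lies in a single interval yet has infinitely many isolated points).
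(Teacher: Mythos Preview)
Your proof is correct and follows essentially the same approach as the paper: in both arguments one writes $S=f(X(K))$, reduces to irreducible $X$, and for case (ii) uses that the fibers $\tilde f^{-1}(s)$ over distinct isolated points are disjoint Zariski closed sets each containing some $Y(K)\cap U$, forcing infinitely many distinct $Z_{y_s}$. Your treatment of case (ii) is in fact more careful than the paper's, since Question~\ref{QuestionMazur} is stated for \emph{projective} varieties and you explicitly pass to the projective closure $Y$ of the graph and verify the needed compatibilities, whereas the paper works directly with the affine $X$; for case (i) the paper argues dually via fibers (each isolated $z_j$ yields a component of $\overline{X(\Q)}$ inside $f^{-1}(z_j)(\R)$) rather than via images of components, but this is only a cosmetic difference.
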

\begin{proof} Suppose that $S$ has infinitely many $v$-adically isolated points.  Let $X$  be an affine  variety defined over $K$ and let $f:X\to \A^1_K$ be a morphism defined over $K$ such that $S=f(X(K))$. Passing to a Diophantine subset of $S$ with infinitely many $v$-adically isolated points, we may assume that $X$ is irreducible. Let $z_1,z_2,...$ be an infinite sequence of points in $S$ that are $v$-adically isolated. Then the fibres $Y_j=f^{-1}(z_j)$ are infinitely many pairwise disjoint Zariski-closed subsets of $X$ defined over $K$ with $Y_j(K)$ non-empty. 

If $K=\Q$ and $v=\infty$, then the sets $Y_j(\R)$ for $j=1,2,...$ contain infinitely many connected components of the real closure of $X(\Q)$ in $X(\R)$ because $f$ maps them to isolated points of $f(X(\Q))$. Hence (i) cannot hold. In the general case, choose $x_j\in Y_j(K)$, let $V_j$ be a $v$-adic neighborhood of $z_j$ in $K_v$ that separates $z_j$ from $S-\{z_j\}$, and let $U_j=f^{-1}(V_j)$ which is a $v$-adic neighborhood of $x_j$. Then $Y_j$ contains $X(K)\cap U_j$, so $Z_{x_j}$ is contained in $Y_j$ in the notation of Question \ref{QuestionMazur}. As the varieties $Y_j$ are disjoint, (ii) cannot hold.
\end{proof}

Question \ref{QuestionMazur} is specific for number fields, and the analogue for global function fields is known to be false. Namely,  the following result essentially due to   Pheidas \cite{PheidasInv}  produces $v$-adically discrete infinite sets that are Diophantine in the function field setting (the connection with Mazur's conjecture was pointed out by Cornelissen and Zahidi \cite{CornelissenZahidi}).

\begin{theorem}\label{ThmDiscrete} Let $p>2$ be a prime. The sets $S_1 = \{t^{p^n} : n\ge 0\}$ and 
$$
S_2=\{b+t+t^p+t^{p^2}+...+t^{p^n} : n\ge 0\mbox{ and } b\in \F_p\}
$$ 
are Diophantine in $K=\F_p(t)$.  More precisely, let $U\subseteq \A^3_K$ be the curve defined over $\F_p(t)$ by 
$$
\begin{cases}
x-t=y^p-y\\
x^{-1} -t^{-1} = z^p-z.
\end{cases}
$$
Projecting $U(K)$ onto the $x$-coordinate gives $S_1$, and projecting onto the $y$-coordinate gives $S_2$.
\end{theorem}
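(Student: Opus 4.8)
The statement to prove is Theorem~\ref{ThmDiscrete}: over $K=\F_p(t)$ with $p>2$, the curve $U\subseteq\A^3_K$ given by $x-t=y^p-y$ and $x^{-1}-t^{-1}=z^p-z$ has $x$-projection of $U(K)$ equal to $S_1=\{t^{p^n}:n\ge 0\}$ and $y$-projection equal to $S_2$. Since membership in $U(K)$ is visibly a positive existential condition on $x$ (namely $\exists y\exists z$ with the two polynomial equations, after clearing the denominator $x^{-1}-t^{-1}=(t-x)/(tx)$ to get $t-x = tx(z^p-z)$ together with $x\ne 0$, and $x\ne 0$ is p.e.\ over the field $K$), the Diophantine conclusion is immediate once the two projection identities are established. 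So the whole content is the \emph{set-theoretic} identification of these projections, which I would attribute to Pheidas \cite{PheidasInv} and reprove as follows.

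\textbf{Step 1: the Artin--Schreier / derivative characterization.} The key algebraic fact is that for $f\in K=\F_p(t)$, the equation $f = g^p-g$ has a solution $g\in K$ if and only if $f$ lies in the image of the additive (Artin--Schreier) operator $\wp(g)=g^p-g$ on $K$, and that this image is controlled by two local obstructions: at every place $v$ of $K$, $f$ must be a local $\wp$-value, which for the place at a monic irreducible $\pi$ amounts to a condition on the principal part of $f$, and at $\infty$ similarly. Concretely, I would use: $t-x=y^p-y$ is solvable in $K$ iff $x$ has the same poles as a $p$-th power structurally, i.e.\ iff $x-t\in\wp(K)$. The cleanest route is via the logarithmic derivative / $d$-operator: if $x-t=y^p-y$ then differentiating gives $(x-t)' = -y'$ (since $(y^p)'=0$ in characteristic $p$), so $x' = t' - y' = 1 - y'$; combined with the second equation one gets a matching relation for $x^{-1}$. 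I would set up the two necessary conditions $x-t\in\wp(K)$ and $x^{-1}-t^{-1}\in\wp(K)$ and analyze them place by place, showing that together they force $x$ to have a single zero and a single pole, each of $p$-power order, located so that $x=c\,t^{p^n}$ for some $c\in\F_p^\times$ and $n\ge0$, and then that the constant $c$ is forced to be $1$. This is the heart of the argument.

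\textbf{Step 2: from $x$ to $y$.} Once $x=t^{p^n}$ is known, solve $y^p-y = t^{p^n}-t$. Telescoping, $t^{p^n}-t = \sum_{j=0}^{n-1}(t^{p^{j+1}}-t^{p^j}) = \sum_{j=0}^{n-1}\bigl((t^{p^j})^p - t^{p^j}\bigr) = \wp\!\left(\sum_{j=0}^{n-1}t^{p^j}\right)$, so $y = t + t^p + \cdots + t^{p^{n-1}}$ is a solution, and all other solutions differ by an element of $\F_p$, giving exactly the shape $b + t + t^p + \cdots + t^{p^{n-1}}$ (reindex $n-1\mapsto n$) in $S_2$; one also checks conversely that every such $x$ does lift to a point of $U(K)$ (the second equation $t^{-1}-t^{-p^n} = z^p - z$ is solved the same way after noting $t^{-1}-t^{-p^n} = \wp(-(t^{-1}+t^{-p}+\cdots+t^{-p^{n-1}}))$, up to sign bookkeeping). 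So both projections are computed, and $U(K)$ is nonempty for each $n$, completing the identification $S_1$, $S_2$ as stated.

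\textbf{The main obstacle.} The delicate part is Step~1: showing that the \emph{conjunction} of the two Artin--Schreier conditions on $x-t$ and $x^{-1}-t^{-1}$ pins $x$ down to $t^{p^n}$ and nothing else — in particular ruling out extra poles/zeros at places other than $0$ and $\infty$, handling the constant field $\F_p$ (why the leading coefficient must be exactly $1$, which is where $p>2$ and the interplay of the two equations really enter), and doing the local analysis at $\infty$ and at $0$ carefully in terms of Laurent expansions. I would isolate this as a lemma on $\wp(\F_p(t))$ and its valuation-theoretic description, cite \cite{PheidasInv} for the original treatment, and then the passage to the Diophantine statement and to $S_2$ is routine telescoping plus the observation that the defining conditions are p.e.\ over the field $K$.
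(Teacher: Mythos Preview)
Your proposal is correct and follows essentially the same approach as the paper. The paper's proof simply cites Pheidas \cite{PheidasInv} for the identification of the $x$-projection with $S_1$ (your Step~1) and then supplies the telescoping computation (your Step~2) to pin down the $y$-projection exactly; you additionally sketch what Pheidas's argument looks like, but the overall architecture---defer the Artin--Schreier local analysis to \cite{PheidasInv}, then telescope $t^{p^n}-t=\wp(t+t^p+\cdots+t^{p^{n-1}})$ and use $\ker\wp=\F_p$---is the same.
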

\begin{proof} This is mostly contained in the proof of Lemma 1 of \cite{PheidasInv}. The only missing point is that from \emph{loc. cit.} one only gets $S_2\subseteq \pi(U(K))$ rather than equality, where $\pi:U\to \A^1_K$ is the projection onto the $y$-coordinate.

Let $A:\F_p(t)\to \F_p(t)$ be the map $A(f)=f^p-f$. Then $A$ is an additive group morphism with kernel $\F_p$. Let $(u,v,w)\in U(K)$ and note that $u=t^{p^n}$ for some $n\ge 0$ (cf. \cite{PheidasInv}). Then $A(v)=t^{p^n}-t$ and we easily check that $f=t+t^p+t^{p^2}+...+t^{p^{n-1}}$ satisfies
$$
A(f)=f^p-f=t^{p^n}-t.
$$
Hence, all the possibilities for $v$ are $A^{-1}(t^{p^n}-t)=\{b+f : b\in \F_p\}$.
\end{proof}

%%%%%%%%
%%%%%%%%
%%%%%%%%
%%%%%%%%
\subsection{Left-Diophantine numbers} \label{SecLeft}

For a real number $\alpha\in \R$, let $L(\alpha)=\{q\in \Q : q<\alpha\}$. We say that $\alpha\in \R$ is \emph{left-Diophantine} if $L(\alpha)$ is a Diophantine subset of $\Q$. (Naturally, there is a notion of right-Diophantine number but that leads to a similar analysis.)

By Lagrange's $4$-squares theorem, we have the elementary fact that the relation $\le$ is Diophantine over $\Q$. This will be frequently used in the discussion below. For instance, we deduce:
\begin{lemma}\label{LemmaSup} $\alpha\in \R$ is left-Diophantine if and only if there is a Diophantine subset $S\subseteq \Q$ such that $\alpha=\sup S$.
\end{lemma}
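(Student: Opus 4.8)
The plan is to prove both implications directly from the definitions, using only that $\le$ is Diophantine over $\Q$ (Lagrange's four-squares theorem) and that Diophantine sets are closed under coordinate projection and finite intersection.

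For the forward direction, suppose $\alpha$ is left-Diophantine, so that $L(\alpha)=\{q\in\Q : q<\alpha\}$ is a Diophantine subset of $\Q$. I would simply take $S=L(\alpha)$. Then $S$ is Diophantine by hypothesis, and $\sup S = \sup\{q\in\Q : q<\alpha\} = \alpha$ by density of $\Q$ in $\R$ (note $S$ is nonempty and bounded above by $\alpha$, so the supremum exists and equals $\alpha$). This direction is essentially immediate.

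For the converse, suppose $S\subseteq\Q$ is Diophantine with $\alpha=\sup S$. I claim $L(\alpha)=\{q\in\Q : q<\alpha\}$ is Diophantine. The key observation is that for $q\in\Q$, we have $q<\alpha$ if and only if $q < s$ for some $s\in S$: indeed if $q<\alpha=\sup S$ then by definition of supremum there is $s\in S$ with $q<s$; conversely if $q<s$ for some $s\in S$ then $q<s\le\sup S=\alpha$. Hence
$$
L(\alpha) = \{q\in\Q : \exists s\, (s\in S \wedge q<s)\},
$$
which is the projection onto the first coordinate of the set $\{(q,s)\in\Q^2 : s\in S \wedge q<s\}$. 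The latter is the intersection of the Diophantine set $\Q\times S$ (a Cartesian product of a Diophantine set with $\Q$, hence Diophantine) with the set $\{(q,s) : q<s\}$, which is Diophantine because $\le$, and therefore the strict inequality $<$ (for instance $q<s \iff q\le s \wedge q\ne s$, using that $\ne$ is Diophantine over $\Q$, or directly $q<s \iff \exists n\,(s-q = n+1+\square_1+\square_2+\square_3+\square_4$ with appropriate denominators cleared$)$), is Diophantine over $\Q$. Since Diophantine sets over $\Q$ are closed under finite intersections and coordinate projections, $L(\alpha)$ is Diophantine, so $\alpha$ is left-Diophantine.

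I do not anticipate a genuine obstacle here; the only point requiring minor care is confirming that the strict inequality $<$ (not just $\le$) is Diophantine over $\Q$, which follows either from the Diophantineness of $\ne$ over $\Q$ or by a direct Lagrange-type argument after clearing denominators. Everything else is a routine application of the closure properties of Diophantine sets recorded earlier.
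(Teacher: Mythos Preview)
Your proof is correct and follows essentially the same approach as the paper: take $S=L(\alpha)$ for the forward direction, and for the converse observe that $L(\alpha)=\{q\in\Q:\exists s\in S,\ q<s\}$ is Diophantine by closure properties. The paper's proof is simply a terser version of what you wrote.
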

\begin{proof} If $L(\alpha)$ is Diophantine, then note that $\alpha=\sup S$ with $S=L(\alpha)$.  For the converse, if $S$ is Diophantine and $\alpha=\sup S$, we observe that $L(\alpha)=\{q\in \Q : \exists a\in S, q<a\}$, so $L(\alpha)$ is Diophantine.
\end{proof}

Let $\Dcal\subseteq \R$ be the collection of all left-Diophantine numbers and let $\Acal\subseteq \R$ be the set of real algebraic numbers. We have the following lower bound for $\Dcal$.

\begin{lemma} $\Acal\subseteq \Dcal$.
\end{lemma}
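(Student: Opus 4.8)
The plan is to produce, for each real algebraic $\alpha$, a positive existential $\Lcal_a$-formula with parameters from $\Q$ that defines $L(\alpha)=\{q\in\Q:q<\alpha\}$. If $\alpha\in\Q$ this is immediate, since $L(\alpha)=\{q\in\Q:q<\alpha\}$ and the strict order $<$ is Diophantine over $\Q$ (combine the $4$-squares description of $\le$ with $x\ne y\iff\exists z\,(x-y)z=1$), using $\alpha$ itself as a parameter. So I may assume $\alpha$ is irrational.

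Let $P\in\Z[x]$ be a nonzero integer multiple of the minimal polynomial of $\alpha$ over $\Q$. Since the minimal polynomial is irreducible in $\Q[x]$ and $\cha\Q=0$, it is separable, so $\alpha$ is a \emph{simple} root of $P$. As $P$ has only finitely many real roots, I can choose $a,b\in\Q$ with $a<\alpha<b$ so close to $\alpha$ that $\alpha$ is the only root of $P$ in $[a,b]$; in particular $P(a),P(b)\ne 0$, and since $\alpha$ is a simple root the sign of $P$ changes across $\alpha$, so after replacing $P$ by $-P$ if necessary I may assume $P(a)<0<P(b)$.

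The key point is then the identity
$$
L(\alpha)=\bigl\{q\in\Q\ :\ q<b\ \wedge\ \bigl(q\le a\ \vee\ P(q)<0\bigr)\bigr\},
$$
which I would verify by the obvious case analysis on the position of a rational $q$: if $q\le a$ then $q<\alpha$ and the right-hand side holds; if $a<q<\alpha$ then $P$ has constant sign on $[a,\alpha)$, equal to the sign of $P(a)<0$, so $P(q)<0$; the case $q=\alpha$ does not occur since $\alpha\notin\Q$; if $\alpha<q<b$ then $P$ has constant sign on $(\alpha,b]$, equal to the sign of $P(b)>0$, so $P(q)>0$ and the right-hand side fails; and if $q\ge b$ then $q\not<b$, so again the right-hand side fails. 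The right-hand side is defined by a positive existential $\Lcal_a$-formula with parameters in $\Q$, being a conjunction and disjunction of the conditions $q<b$, $q\le a$, and $0<-P(q)$; the first two are Diophantine over $\Q$, and the last is a polynomial inequality in $q$ with rational coefficients, hence Diophantine over $\Q$ as well. Therefore $L(\alpha)$ is Diophantine, i.e.\ $\alpha\in\Dcal$.

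I do not expect a genuine obstacle here. The only subtlety to get right is that $P$ must be chosen so that $\alpha$ is a simple root, so that the sign of $P$ actually distinguishes the two sides of $\alpha$; this is why one passes to the minimal polynomial. The other point is that the interval endpoints $a$ and $b$ are exactly what allow one to turn the relation ``$q$ lies to the left of $\alpha$'' into a positive existential sign condition, with no need for a disequality of the form $P(q)\ne 0$.
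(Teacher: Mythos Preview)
Your proof is correct and follows essentially the same approach as the paper: exploit the sign change of the minimal polynomial at the simple root $\alpha$ between two nearby rational endpoints. The only packaging difference is that the paper constructs a Diophantine set whose supremum is $\alpha$ and then invokes Lemma~\ref{LemmaSup}, whereas you write down a Diophantine definition of $L(\alpha)$ directly.
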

\begin{proof} Let $\alpha\in \R$ be algebraic with minimal polynomial $p(x)\in \Q[x]$. The roots of $p(x)$ are simple, so the function $p:\R\to \R$ changes sign at $\alpha$. Up to multiplying $p(x)$ by $-1$, we may assume that there are $q_1,q_2\in \Q$ such that $q_1<\alpha<q_2$ and for all $q_1\le u\le q_2$ we have $p(u)>0$ if $u<\alpha$ and $p(u)<0$ if $u>\alpha$. Then the set 
$$
X=\{u\in \Q : p(u)>0 \mbox{ and } u<q_2\}
$$
is Diophantine over $\Q$ and $\alpha=\sup X$. We conclude by Lemma \ref{LemmaSup}.
\end{proof}

Recall from Corollary \ref{CoroQulist} that the $\Lcal_a$-structure $\Q$ is uniquely listable. Thus, there is a well-defined notion of listable subsets of $\Q^r$ for every $r\ge 1$. 

A real number $\alpha\in \R$ is called \emph{left-listable} if $L(\alpha)\subseteq \Q$ is listable. This notion is standard and it appears under various names in the literature, such as left-r.e., and left-c.e. (see for instance Chapter 5 in \cite{DowneyHirschfeld}), although  the discussion on unique listability is omitted and replaced by the assumption that a \emph{recursive} presentation or ``effective coding'' for $\Q$ is fixed (see for instance Assumption 5.1.2 in \cite{DowneyHirschfeld}). The class $\Lambda$ of left-listable numbers is vast; for instance, $\Lambda$ contains all computable real numbers (those for which a Turing machine outputs the decimal expansion to any required precision) as well as more exotic real numbers such as Chaitin's constant. 

From Corollary \ref{CoroTotList} applied to the sets $L(\alpha)\subseteq \Q$ we deduce

\begin{lemma} $\Dcal\subseteq \Lambda$.
\end{lemma}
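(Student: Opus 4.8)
The statement to prove is $\Dcal\subseteq \Lambda$, i.e. every left-Diophantine real number is left-listable.

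The plan is to unwind the definitions and invoke \Cref{CoroTotList} together with the fact that $\Q$ is uniquely listable (\Cref{CoroQulist}). First I would recall that if $\alpha\in \Dcal$, then by definition the set $L(\alpha)=\{q\in\Q : q<\alpha\}$ is a Diophantine subset of $\Q$; that is, it is p.e.\ $\Lcal_a$-definable over $\Q$ \emph{with parameters}. The only subtlety is the presence of parameters: a Diophantine definition of $L(\alpha)$ may use finitely many rational constants $c_1,\dots,c_m\in\Q$. But each singleton $\{c_i\}$ is p.e.\ $\Lcal_a$-definable over $\Q$ without parameters (indeed every element of $\Q$ is p.e.\ definable in $\Q$, since $\Q$ is finitely generated as a field over the prime field and multiplicative inverse, successor and $0$ are available; alternatively one can absorb the parameters by an existential quantifier after p.e.-defining the tuple $(c_1,\dots,c_m)$). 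Hence $L(\alpha)$ is in fact p.e.\ $\Lcal_a$-definable over $\Q$ without parameters.

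Next I would apply \Cref{CoroTotList} to the $\Lcal_a$-structure $\Q$: since $L(\alpha)$ is p.e.\ $\Lcal_a$-definable over $\Q$, it is totally listable, in particular $\rho$-listable for every listable presentation $\rho$ of $\Q$. Because $\Q$ is uniquely listable (\Cref{CoroQulist}), there is a well-defined notion of listable subset of $\Q$, and $L(\alpha)$ is listable in this sense. By the very definition of a left-listable real number, this says precisely that $\alpha\in\Lambda$. Since $\alpha\in\Dcal$ was arbitrary, $\Dcal\subseteq\Lambda$.

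I do not expect any real obstacle here; the only point requiring a word of care is handling the parameters allowed in the definition of ``Diophantine'', which is routine. Everything else is a direct chain of citations: definition of $\Dcal$, elimination of parameters, \Cref{CoroTotList}, \Cref{CoroQulist}, definition of $\Lambda$.
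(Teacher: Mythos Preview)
Your proposal is correct and takes essentially the same approach as the paper, which simply cites Corollary~\ref{CoroTotList} applied to $L(\alpha)\subseteq\Q$ (with unique listability of $\Q$ via Corollary~\ref{CoroQulist} noted in the surrounding text). Your additional care in absorbing the parameters---since ``Diophantine'' allows parameters while Corollary~\ref{CoroTotList} is stated for parameter-free p.e.\ definable sets---is a detail the paper glosses over, and your handling of it is correct.
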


Thus, we know that $\Acal\subseteq \Dcal\subseteq \Lambda$. The set $\Lambda$ is much larger than $\Acal$ and one can ask for a better description of $\Dcal$. We expect the following:

\begin{conjecture}[Algebraicity]\label{ConjDA} All left-Diophantine numbers are algebraic. Thus,  $\Dcal=\Acal$.
\end{conjecture}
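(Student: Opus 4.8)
The plan is to derive the conjecture from the semi-algebraic version of Mazur's conjecture (Conjecture \ref{ConjSemiAlg}), building on Proposition \ref{PropEndpoint}; as I explain at the end, an unconditional proof is not to be expected. Since the inclusion $\Acal \subseteq \Dcal$ is already in hand, it suffices to prove the reverse inclusion $\Dcal \subseteq \Acal$, i.e.\ that every left-Diophantine real number is algebraic.

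So let $\alpha \in \Dcal$, meaning that $S := L(\alpha) = \{q \in \Q : q < \alpha\}$ is a Diophantine subset of $\Q$. Granting Conjecture \ref{ConjSemiAlg}, Proposition \ref{PropEndpoint} asserts that the topological closure $\overline{S}$ of $S$ in $\R$ is a finite union of closed intervals, all of whose endpoints are real algebraic or infinite. On the other hand a direct computation gives $\overline{S} = (-\infty, \alpha]$, since every real number is a limit of rationals strictly below it. Now $\alpha = \sup \overline{S}$, so in any expression of $\overline{S}$ as a finite union of closed intervals the number $\alpha$ must appear as the finite right endpoint of at least one of them; Proposition \ref{PropEndpoint} then forces $\alpha$ to be real algebraic (it is certainly not infinite). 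Hence $\Dcal \subseteq \Acal$, and together with $\Acal \subseteq \Dcal$ this gives $\Dcal = \Acal$. By the implication Conjecture \ref{ConjStrongMazur} $\Rightarrow$ Conjecture \ref{ConjSemiAlg} established above, the strong version of Mazur's conjecture likewise yields $\Dcal = \Acal$.

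The main obstacle is intrinsic rather than technical: the argument is conditional on Mazur's conjecture, and there is strong reason to believe no unconditional proof is within reach. Indeed, the class $\Lambda$ of left-listable numbers strictly contains $\Acal$ --- for instance it contains transcendental numbers such as Chaitin's constant --- so an unconditional proof of $\Dcal = \Acal$ would produce a left-listable real number that is not left-Diophantine, and as recalled in Section \ref{SecLeft} the existence of even one such number would imply that $\Z$ is not Diophantine in $\Q$, a famous open problem. Consequently the conditional statement above is essentially the best one can hope for at present, and the plan is to record precisely that implication.
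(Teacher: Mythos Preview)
Your proposal is correct and matches the paper's treatment exactly: since the statement is a conjecture, the paper does not prove it unconditionally but instead records (in the proposition immediately following it) that it follows from the semi-algebraic version of Mazur's conjecture via Proposition~\ref{PropEndpoint}, which is precisely the conditional argument you spell out. Your discussion of why an unconditional proof is out of reach also mirrors the paper's surrounding remarks.
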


In particular, this would imply 

\begin{conjecture}\label{ConjDA2} $\Dcal$ is a field.
\end{conjecture}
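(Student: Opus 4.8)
The statement is presented as a consequence of the algebraicity Conjecture~\ref{ConjDA}, and under that hypothesis it is immediate: the set $\Acal$ of real algebraic numbers is a field, so if $\Dcal=\Acal$ then $\Dcal$ is a field. My plan is therefore to spell out exactly how much of ``$\Dcal$ is a field'' is unconditional and to isolate the single point where Conjecture~\ref{ConjDA} is genuinely needed.

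First I would record the unconditional part. By Lemma~\ref{LemmaSup}, $\alpha\in\Dcal$ if and only if $\alpha=\sup S$ for some nonempty Diophantine $S\subseteq\Q$ that is bounded above. Clearly $\Q\subseteq\Dcal$ (take $S=\{q\}$). Since $\sup(S+T)=\sup S+\sup T$ for nonempty sets bounded above, and $S+T=\{u\in\Q:\exists s\in S\,\exists t\in T,\ u=s+t\}$ is again Diophantine over $\Q$ (a coordinate projection of a product of Diophantine sets), the class $\Dcal$ is closed under addition. For products, if $\alpha,\beta>0$ one may replace $S$ and $T$ by $S\cap\Q_{>0}$ and $T\cap\Q_{>0}$, which remain Diophantine (using that $\ne$ and $\le$ are Diophantine over $\Q$) and still have suprema $\alpha$ and $\beta$ because positive rationals approach a positive real from below; then $\{st:s\in S\cap\Q_{>0},\ t\in T\cap\Q_{>0}\}$ is Diophantine with supremum $\alpha\beta$, so $\alpha\beta\in\Dcal$. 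Together with $0\in\Q\subseteq\Dcal$ this gives closure of $\Dcal$ under products of nonnegative elements. So, unconditionally, $\Dcal$ is a subset of $\R$ containing $\Q$, closed under addition, and closed under products of nonnegative elements.

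The step I expect to be the genuine obstacle is closure under additive inverses. For $\alpha\in\R$ one has $L(-\alpha)=\{q\in\Q:-q>\alpha\}=-\{r\in\Q:r>\alpha\}$, so $-\alpha\in\Dcal$ exactly when the right cut $R(\alpha)=\{r\in\Q:r>\alpha\}$ is Diophantine, i.e.\ when $\alpha$ is \emph{right-Diophantine}. Thus $\Dcal$ is closed under negation if and only if the left-Diophantine numbers coincide with the right-Diophantine numbers, a comparison that is wide open; similarly, for $\alpha>0$ the membership $1/\alpha\in\Dcal$ reduces to $R(\alpha)$ being Diophantine, so it too is unconditionally unclear. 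This is precisely where Conjecture~\ref{ConjDA} intervenes: the argument proving $\Acal\subseteq\Dcal$ applies verbatim to $-\alpha$, and for $\alpha\neq0$ to $1/\alpha$, so under Conjecture~\ref{ConjDA} every element of $\Dcal=\Acal$ has its additive and multiplicative inverses in $\Dcal$; combined with the unconditional closure under addition and under products of nonnegatives (and the identity $\alpha\beta=-((-\alpha)\beta)$ to handle mixed signs), this yields closure under subtraction and division and hence Conjecture~\ref{ConjDA2}. In short, the whole difficulty is the asymmetry between the two one-sided notions of Diophantine real number, and breaking it seems to require exactly the kind of Diophantine input furnished by Conjecture~\ref{ConjDA}.
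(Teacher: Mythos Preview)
Your proposal is correct. The paper's ``proof'' consists of exactly the one-line implication you state at the outset: Conjecture~\ref{ConjDA} asserts $\Dcal=\Acal$, and $\Acal$ is a field, so $\Dcal$ is a field. Everything else you write---the unconditional closure of $\Dcal$ under addition and under products of nonnegative elements, and the identification of closure under negation and inversion as the genuine obstruction (equivalently, the left-Diophantine versus right-Diophantine asymmetry)---is additional analysis not present in the paper. That extra content is correct and illuminating, since it pinpoints exactly where Conjecture~\ref{ConjDA} is doing work, but the paper itself treats the implication as immediate and says nothing further.
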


In the direction of Conjecture \ref{ConjDA}, we have:

\begin{proposition} The algebraicity conjecture (Conjecture \ref{ConjDA}) follows from the semi-algebraic version of Mazur's conjecture (Conjecture \ref{ConjSemiAlg}). In particular, it follows from the strong version of Mazur's conjecture (Conjecture \ref{ConjStrongMazur}).
\end{proposition}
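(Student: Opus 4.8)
The plan is to deduce the algebraicity conjecture directly from Proposition \ref{PropEndpoint}, which already packages the consequence of Conjecture \ref{ConjSemiAlg} that we need: the topological closure in $\R$ of any Diophantine subset of $\Q$ is a finite union of closed intervals whose endpoints are real algebraic or infinite.

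First I would fix a left-Diophantine number $\alpha\in\R$, so that by definition $L(\alpha)=\{q\in\Q : q<\alpha\}$ is a Diophantine subset of $\Q$. Since $\Q$ is dense in $\R$, the topological closure of $L(\alpha)$ in $\R$ is exactly the half-line $(-\infty,\alpha]$; in particular it is a single closed interval, with left endpoint $-\infty$ and right endpoint $\alpha$. This identification is the only genuinely ``hands-on'' point in the argument, and it is entirely elementary.

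Next, assuming Conjecture \ref{ConjSemiAlg}, I would apply Proposition \ref{PropEndpoint} to $S=L(\alpha)$: it says that the endpoints of the closed interval $(-\infty,\alpha]$ are real algebraic or infinite. The left endpoint is $-\infty$, which is permitted; the right endpoint is the real number $\alpha$, which is therefore forced to be real algebraic, i.e. $\alpha\in\Acal$. As $\alpha$ was an arbitrary left-Diophantine number, this gives $\Dcal\subseteq\Acal$, and combined with the inclusion $\Acal\subseteq\Dcal$ established earlier we obtain $\Dcal=\Acal$, which is the algebraicity conjecture.

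Finally, the ``in particular'' clause is immediate, since the strong version of Mazur's conjecture (Conjecture \ref{ConjStrongMazur}) implies the semi-algebraic version (Conjecture \ref{ConjSemiAlg}) by the lemma established just before Proposition \ref{PropEndpoint}. There is no real obstacle in this proof: the substantive work lies in Proposition \ref{PropEndpoint} (ultimately the Tarski--Seidenberg theorem with coefficients in $\Q$), and all that remains is to feed it the set $L(\alpha)$ and read off the conclusion.
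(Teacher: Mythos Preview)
Your proof is correct and follows exactly the approach of the paper, which simply cites Proposition~\ref{PropEndpoint}; you have merely spelled out the details of applying it to $S=L(\alpha)$ and identifying the closure as $(-\infty,\alpha]$.
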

\begin{proof} This is by Proposition \ref{PropEndpoint}.
\end{proof}

In any case, the following much weaker conjecture seems plausible.
\begin{conjecture}\label{ConjwDA} Not every left-listable number is left-Diophantine. That is, $\Dcal\ne\Lambda$.
\end{conjecture}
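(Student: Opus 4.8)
The plan has to be conditional. By Proposition \ref{PropZQ}, the inequality $\Dcal\ne\Lambda$ implies that $\Q$ does not have the DPRM property, hence that $\Z$ is not Diophantine in $\Q$ --- a well-known open problem. In fact Conjecture \ref{ConjwDA} seems \emph{strictly} stronger than the non-Diophantineness of $\Z$ in $\Q$, since the failure of the DPRM property for $\Q$ only yields \emph{some} non-Diophantine listable subset of some $\Q^r$, with no reason for it to be a downward-closed subset of $\Q$. So the realistic plan is to deduce $\Dcal\ne\Lambda$ from a more tractable hypothesis, and to take the unconditional statement only as far as pinning down the essential obstruction.

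One route is through Conjecture \ref{ConjDA2}: since $\Lambda$ is not a field \cite{Ambos}, it suffices to prove that $\Dcal$ is a field. Much of this comes for free. Using that $\le$ is Diophantine over $\Q$ together with Lemma \ref{LemmaSup}, one checks that $\Dcal$ is closed under addition (if $\alpha=\sup S$ and $\beta=\sup T$ with $S,T$ Diophantine, then $\alpha+\beta=\sup\{s+t:s\in S,\ t\in T\}$, and this last set is Diophantine), under translation by rationals, and under multiplication by positive rationals. The missing step is closure under negation, i.e.\ showing that the complement of a left-Diophantine cut $L(\alpha)$ is again Diophantine; by Corollary \ref{CoroTotList} this would in particular force every left-Diophantine cut to be decidable, and I expect this to be the crux of the route. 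A second, stronger route is through Conjecture \ref{ConjDA}: granting $\Dcal=\Acal$, one has $\Dcal\ne\Lambda$ at once, because $\Lambda$ contains transcendental reals --- any transcendental computable real, such as $e$, has a decidable and hence listable lower cut. Since Conjecture \ref{ConjDA} follows from the semi-algebraic version of Mazur's conjecture (Conjecture \ref{ConjSemiAlg}) via Proposition \ref{PropEndpoint}, this yields $\Dcal\ne\Lambda$ under Mazur's conjecture as well.

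For an unconditional argument I would instead try to exhibit a single witness: fix a computably enumerable, undecidable set $W\subseteq\N$ and put $\alpha=\sum_{n\in W}2^{-2n}$, which is left-listable since enumerating $W$ produces a computable increasing sequence of rationals with supremum $\alpha$. Assuming $L(\alpha)$ is Diophantine, one would try to decide membership in $W$ by recovering the dyadic digits of $\alpha$ from comparisons with rationals (the even positions carry exactly the characteristic function of $W$, with no interference from carries). The step I expect to be the real obstacle is exactly this reduction: a Diophantine definition of $L(\alpha)$ only lets one \emph{enumerate} $L(\alpha)$ (Diophantine $\Rightarrow$ listable, Corollary \ref{CoroTotList}) --- which we already knew --- so one can semi-decide $q<\alpha$ but never $q\ge\alpha$, and the digits of $\alpha$, and thus $W$, cannot be pinned down this way. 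Overcoming this seems to require genuinely new information about the Diophantine theory of $\Q$, which is why the conditional route through Mazur's conjecture (or through fieldness of $\Dcal$) is the realistic plan.
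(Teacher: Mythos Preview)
The statement you were asked to prove is a \emph{conjecture}: the paper does not prove it and explicitly leaves it open. There is therefore no ``paper's own proof'' to compare against. What the paper does offer, immediately following Conjecture~\ref{ConjwDA}, is exactly the kind of conditional discussion you give: it observes that Conjecture~\ref{ConjDA} implies Conjecture~\ref{ConjwDA}, proves that Conjecture~\ref{ConjDA2} implies Conjecture~\ref{ConjwDA} (because $\Lambda$ is not a ring, by \cite{Ambos}), and then records the converse implication that $\Dcal\ne\Lambda$ would force $\Z$ to be non-Diophantine in $\Q$.

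Your write-up is thus not a proof but an honest assessment of the landscape, and it matches the paper's own treatment. You correctly identify that the conjecture is at least as strong as the non-Diophantineness of $\Z$ in $\Q$, you reproduce both conditional routes the paper records (via $\Dcal$ being a field and via $\Dcal=\Acal$), and you correctly self-diagnose why your attempted unconditional witness $\alpha=\sum_{n\in W}2^{-2n}$ does not work: a Diophantine description of $L(\alpha)$ only certifies $q<\alpha$, never $q\ge\alpha$, so membership in $W$ cannot be decided from it. One small sharpening: your observation that closure of $\Dcal$ under negation would force every left-Diophantine cut to be decidable in fact shows more, namely that $\Dcal$ would then sit inside the computable reals --- already enough for $\Dcal\ne\Lambda$, without needing full fieldness. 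But this is a side remark; the core point stands that no unconditional argument is available with present techniques, and the paper agrees.
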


The previous conjecture follows from the algebraicity conjecture (Conjecture \ref{ConjDA}). Furthermore:
\begin{proposition} Conjecture \ref{ConjDA2} implies  Conjecture  \ref{ConjwDA}. That is, if $\Dcal$ is a field, then $\Dcal\ne \Lambda$.
\end{proposition}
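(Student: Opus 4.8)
The plan is to argue by contradiction, leveraging the classical fact recalled in the introduction (see \cite{Ambos}) that the class $\Lambda$ of left-listable real numbers is \emph{not} a field. Assume that $\Dcal$ is a field, and suppose towards a contradiction that $\Dcal = \Lambda$. Then $\Lambda$ would inherit the field structure of $\Dcal$; in particular $\Lambda$ would be closed under negation and subtraction. This contradicts the fact that $\Lambda$ is not a field, so $\Dcal \neq \Lambda$. (Combined with the inclusion $\Dcal \subseteq \Lambda$ already established, this even gives $\Dcal \subsetneq \Lambda$, but only the inequality is claimed.)

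\textbf{Key input, spelled out.} A real number $\alpha$ lies in $\Lambda$ precisely when $L(\alpha)\subseteq\Q$ is listable, equivalently when $\alpha$ is the supremum of a computable increasing sequence of rationals; symmetrically, $\alpha$ is \emph{right}-listable exactly when $-\alpha\in\Lambda$. If $\Lambda$ were a field it would be closed under negation, hence every left-listable real would also be right-listable. But it is classical that this fails: one can produce a left-c.e.\ (hence left-listable) real that is not right-c.e.\ --- for instance via a standard diagonal construction, or by taking a left-c.e.\ real that is not computable and using that a real is computable iff it is simultaneously left- and right-c.e. This is exactly the content of the statement in \cite{Ambos} that $\Lambda$ is not a field, which we invoke.

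\textbf{Main obstacle.} There is essentially no obstacle beyond citing the correct form of the non-field property of $\Lambda$ from \cite{Ambos}; once that is in hand the implication is a one-line deduction. In the write-up I would simply say: if $\Dcal = \Lambda$ then $\Lambda$ is a field, contradicting \cite{Ambos}, and therefore $\Dcal \neq \Lambda$.
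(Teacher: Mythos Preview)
Your proposal is correct and is essentially identical to the paper's own proof, which simply observes that $\Lambda$ is not even a ring (citing \cite{Ambos} and Section 5.5 of \cite{DowneyHirschfeld}), so if $\Dcal$ were a field equal to $\Lambda$ this would be a contradiction. Your elaboration on why $\Lambda$ fails to be closed under negation is a helpful gloss but not needed beyond the citation.
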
 
\begin{proof} This is because $\Lambda$ is not even a ring; see \cite{Ambos} or Section 5.5 in \cite{DowneyHirschfeld}.
\end{proof}

It turns out that Conjecture \ref{ConjwDA} would be enough to show that $\Z$ is not Diophantine in $\Q$.

\begin{proposition} If $\Dcal\ne \Lambda$, then we have the following:
\begin{itemize}
\item[(i)] $\Z$ is not Diophantine in $\Q$.
\item[(ii)] The field $\Q$ does not have the DPRM property.
\item[(iii)] $\Q$ and $\N$ are not p.e. bi-interpretable as $\Lcal_a$-structures.
\end{itemize}
\end{proposition}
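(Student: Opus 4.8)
The plan is to exploit Proposition \ref{PropZQ}, which asserts that the three displayed conditions (``$\Z$ is Diophantine in $\Q$'', ``$\Q$ has the DPRM property'', ``$\Q$ and $\N$ are p.e. bi-interpretable as $\Lcal_a$-structures'') are equivalent; hence their negations are equivalent as well, and it suffices to prove the contrapositive of item (i), namely that if $\Z$ is Diophantine in $\Q$ then $\Dcal=\Lambda$. Before doing so I would recall from Corollary \ref{CoroQulist} that $\Q$, viewed as an $\Lcal_a$-structure, is uniquely listable, so that there is a well-defined notion of listable subset of $\Q^r$: such a set is listable precisely when it is totally listable. In particular, $\alpha\in\R$ is left-listable if and only if $L(\alpha)\subseteq\Q$ is totally listable, which is exactly the kind of hypothesis that the DPRM property is designed to consume.

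For the contrapositive, suppose $\Z$ is Diophantine in $\Q$. By Proposition \ref{PropZQ} this is equivalent to $\Q$ having the DPRM property. Now take any $\alpha\in\Lambda$. Then $L(\alpha)$ is totally listable by the previous paragraph, so the DPRM property yields that $L(\alpha)$ is p.e. $\Lcal_a$-definable over $\Q$ without parameters, and \emph{a fortiori} it is Diophantine over $\Q$ (definability without parameters being a special case of definability with parameters). Hence $\alpha\in\Dcal$, so $\Lambda\subseteq\Dcal$; combined with the inclusion $\Dcal\subseteq\Lambda$ already established (the lemma preceding Conjecture \ref{ConjDA}), this gives $\Dcal=\Lambda$, as desired.

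With (i) in hand, parts (ii) and (iii) are immediate: by Proposition \ref{PropZQ} the failure of ``$\Z$ is Diophantine in $\Q$'' is equivalent to the failure of ``$\Q$ has the DPRM property'' and to the failure of ``$\Q$ and $\N$ are p.e. bi-interpretable as $\Lcal_a$-structures''. I do not expect any genuine obstacle here; the entire content is packaged in Proposition \ref{PropZQ} together with the observation that unique listability of $\Q$ turns the topological/analytic notion ``left-listable'' into the purely recursion-theoretic notion ``totally listable'', which is precisely the input to the DPRM property.
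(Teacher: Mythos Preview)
Your proof is correct and follows essentially the same route as the paper: both arguments hinge on the observation that an $\alpha\in\Lambda\setminus\Dcal$ produces a listable, non-Diophantine set $L(\alpha)\subseteq\Q$, contradicting the DPRM property for $\Q$, and then invoke Proposition~\ref{PropZQ} to obtain the remaining items. The only cosmetic difference is that the paper argues directly (deriving (ii) first and then pulling (i) and (iii) from Proposition~\ref{PropZQ}), while you set things up as the contrapositive of (i) and spell out the role of unique listability in making ``listable subset of $\Q$'' unambiguous.
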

\begin{proof} If $\Dcal \ne \Lambda$ then there is some $\alpha\in \R$ such that $L(\alpha)\subseteq \Q$ is listable but it is not Diophantine, which implies that $\Q$ does not have the DPRM property. We conclude by Proposition \ref{PropZQ}.
\end{proof}
%%
%%

%%%%%%%%
%%%%%%%%
%%%%%%%%
%%%%%%%%
\subsection{Unboundedness of the positive existential rank} \label{SecKollar}

Let $\Mfrak$ be an $\Lcal$-structure with domain $M$. We say that $\Mfrak$ has  \emph{bounded p.e. rank} if there is a constant $B$ depending only on $\Mfrak$ such that for every p.e. $\Lcal$-definable set $S\subseteq M$ we have $\rk^{p.e.}_\Mfrak(S)\le B$. Otherwise, we say that $\Mfrak$ has \emph{unbounded p.e. rank}. (One can extend this notion to subsets of $M^r$, but the case $r=1$ is enough for our purposes.) For instance, Lemma \ref{LemmaCatBdd} shows that if $\Mfrak$ has a p.e. $r$-catalogue for some $r\ge 1$, then $\Mfrak$ has bounded p.e. rank. A well-known example:

\begin{lemma} $\N$ as an $\Lcal_a$-structure has bounded p.e. rank. In fact, it has p.e. $r$-catalogues for every $r\ge 1$. The same holds for the $\Lcal_a$-structure $\Z$.
\end{lemma}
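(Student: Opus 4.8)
The plan is to obtain both assertions as immediate consequences of Theorem \ref{ThmCatDPRM} together with Lemma \ref{LemmaCatBdd}; essentially no new argument is needed beyond checking hypotheses. Recall that Theorem \ref{ThmCatDPRM} produces a p.e. $r$-catalogue for every $r\ge 1$ as soon as the structure is uniquely listable, has infinite domain, has the DPRM property, and has $\ne$ p.e. definable; and that Lemma \ref{LemmaCatBdd} then turns the existence of a p.e. $1$-catalogue into a uniform bound on the p.e. rank of subsets of the domain.

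First I would treat $\N$ as an $\Lcal_a$-structure. Its domain is infinite; it is uniquely listable by Proposition \ref{PropExN}; it has the DPRM property by the DPRM theorem (as already remarked right after the statement of that theorem); and $\ne$ is p.e. $\Lcal_a$-definable over $\N$, since $x\ne y$ is equivalent to $\exists z\,(x=y+z+1)\ \vee\ \exists z\,(y=x+z+1)$. Feeding this into Theorem \ref{ThmCatDPRM} gives p.e. $r$-catalogues for all $r\ge 1$, and Lemma \ref{LemmaCatBdd} with $r=1$ then yields a constant $B_{\N}(1)$ bounding $\rk^{p.e.}_{\N}(S)$ for every p.e. $\Lcal_a$-definable $S\subseteq\N$.

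The case of $\Z$ is formally identical. The domain is infinite; unique listability holds by Corollary \ref{CoroULfg} applied to the finitely generated ring $\Z$ over the language $\Lcal_a$; the DPRM property holds by Lemma \ref{LemmaZDPRM}; and $\ne$ is p.e. $\Lcal_a$-definable over $\Z$ via Lagrange's four-squares theorem, since $x\ne y$ is equivalent to $\exists z_1,z_2,z_3,z_4\,\big((x-y)^2=1+z_1^2+z_2^2+z_3^2+z_4^2\big)$ (equivalently, this is the p.e. definability of $\ge$ on $\Z$ already used in the proof of Lemma \ref{LemmaIntNZ}). Once more Theorem \ref{ThmCatDPRM} provides p.e. $r$-catalogues for all $r\ge 1$ and Lemma \ref{LemmaCatBdd} provides the uniform bound.

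I do not expect a genuine obstacle: the content is bookkeeping, the only points deserving a line of justification being the p.e. definability of $\ne$ (the standard Lagrange trick) and the availability of unique listability (Proposition \ref{PropExN} for $\N$, Corollary \ref{CoroULfg} for $\Z$). If one wanted a self-contained proof bypassing Theorem \ref{ThmCatDPRM}, one could instead build a universal p.e. formula by hand: start from the universal listable set $U_r\subseteq\N^{r+1}$ of Corollary \ref{CoroUr} and replace it with a p.e. $\Lcal_a$-definition using the DPRM theorem, which exhibits an $r$-catalogue $\Upsilon_r[x_0,x_1,\dots,x_r]$ explicitly; but routing through Theorem \ref{ThmCatDPRM} is the shortest path.
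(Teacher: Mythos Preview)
Your proposal is correct and follows essentially the same approach as the paper: invoke Theorem \ref{ThmCatDPRM} (with the DPRM theorem for $\N$ and Lemma \ref{LemmaZDPRM} for $\Z$) to obtain the catalogues, then Lemma \ref{LemmaCatBdd} for bounded p.e.\ rank. You have simply made explicit the hypothesis-checking (unique listability via Proposition \ref{PropExN} and Corollary \ref{CoroULfg}, and the p.e.\ definability of $\ne$) that the paper leaves implicit.
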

\begin{proof} In the case of $\N$ this follows from the DPRM theorem, Theorem \ref{ThmCatDPRM}, and Lemma \ref{LemmaCatBdd}. For $\Z$ it is the same argument, using Lemma \ref{LemmaZDPRM}.
\end{proof}
For global fields, the author expects the following:

\begin{conjecture}\label{ConjBdd} Let $K$ be a global field and let $\Gcal$ be a finite set of field generators. Consider $K$ as a structure over $\Lcal=\Lcal_a\cup\Gcal$. Then $K$ has unbounded p.e. rank. In particular, it does not have p.e. $r$-catalogues for any $r\ge 1$.
\end{conjecture}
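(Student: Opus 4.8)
The plan is to prove Conjecture \ref{ConjBdd} by producing, for the given global field $K$ regarded over $\Lcal=\Lcal_a\cup\Gcal$, a sequence of Diophantine subsets $S_1,S_2,\dots\subseteq K$ together with lower bounds $\rk^{p.e.}_K(S_n)\to\infty$; by the contrapositive of Lemma \ref{LemmaCatBdd} this simultaneously shows that $K$ has no p.e. $r$-catalogue for any $r\ge 1$. The template is Theorem \ref{ThmCt}: over $\C(t)$ one shows, building on Koll\'ar \cite{Kollar}, that the subsets cut out by a p.e. formula with at most $k$ quantifiers and parameters of bounded degree form an algebraic family of bounded dimension, which therefore cannot exhaust an explicit family of Diophantine sets of growing dimension; in essence the argument crucially exploits that $\C(t)$ is \emph{large} and \emph{uncountable}, so that enough witnessing sets exist at each dimension. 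A global field is countable and not large, so the core of the problem is to find an arithmetic replacement for this counting.

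The lower-bound mechanism one has to work with is geometric: if $S\subseteq K$ is defined by $\exists y_1\cdots y_k\,\phi$ with $\phi$ a positive quantifier-free formula and parameters in $K$, then $S$ is a finite union of projections to the first coordinate of $Z(K)$, where $Z\subseteq\A^{1+k}_K$ is Zariski closed and defined over $K$; thus the number of existential quantifiers in a definition of $S$ bounds the \emph{dimension} (though never the \emph{degree}) of the varieties that can occur. So I would choose the $S_n$ so that any honest definition forces a high-dimensional $Z$, and then derive a contradiction from control on the $K$-rational points of low-dimensional $K$-varieties. For $K=\F_p(t)$ the natural candidates are iterated Frobenius-sparse sets built from Pheidas's example (Theorem \ref{ThmDiscrete}), for which the required lower bound should come from functional transcendence, in the spirit of Section \ref{SecnD}. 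For a number field one would instead aim for a \emph{conditional} statement: that Conjecture \ref{ConjKey} --- or, classically, the conjectures of Vojta and Lang, or Mazur's Conjecture \ref{ConjStrongMazur} --- implies Conjecture \ref{ConjBdd}, by upgrading the arguments of Section \ref{SecnD}, which presently only yield that sets such as $\Z\subseteq\Q$ are \emph{not} Diophantine, into statements about families of Diophantine sets whose minimal number of quantifiers diverges. Here the techniques and complexity measures of \cite{DDF} are likely to be relevant.

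The main obstacle is twofold, and it is exactly why the statement is only a conjecture. First, by Proposition \ref{PropBddnonDioph}, Conjecture \ref{ConjBdd} already implies that $\Z$ is not Diophantine in $\Q$ (and $\F_p[t]$ not in $\F_p(t)$, etc.), so an unconditional proof is out of reach with present methods; any argument must be conditional, and even assembling a usable supply of Diophantine sets carrying provable lower bounds on their p.e. rank appears to be new ground. Second, and more technically, the number of quantifiers bounds only the dimension of the varieties $Z$ appearing in a definition, never their degree, whereas essentially every available tool for constraining rational points --- Northcott-type height estimates, dimension-growth bounds, uniform Mordell-type statements --- is sensitive to degree. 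Making the argument degree-robust, either by restricting to families $S_n$ for which unbounded degree can be ruled out independently, or by feeding in a degree-uniform form of the relevant conjecture, is the crux of the proposed proof.
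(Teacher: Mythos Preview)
This statement is a \emph{conjecture}, not a theorem: the paper does not prove it and does not claim to. There is therefore no paper proof to compare your proposal against. Your write-up is not a proof either, and you say as much --- you correctly observe that, via Proposition \ref{PropBddnonDioph}, an unconditional proof would in particular show that $\Z$ is not Diophantine in $\Q$, which is a famous open problem. So as a proof attempt the proposal has a fatal gap by design: it is a research outline, not an argument.

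That said, your outline is coherent and honest about its own limitations. The strategy of transporting the Koll\'ar-style dimension argument from $\C(t)$ to global fields is natural, and you correctly isolate the two real obstructions: (i) countability and non-largeness of global fields kill the uncountability/dimension-counting mechanism used in Theorem \ref{ThmCt}, and (ii) the number of existential quantifiers controls only the dimension of the witnessing variety, not its degree, while essentially all arithmetic tools for bounding rational points are degree-sensitive. One small correction of emphasis: even granting Conjecture \ref{ConjKey} or Mazur's conjectures, the arguments of Section \ref{SecnD} do not obviously upgrade to quantifier lower bounds --- they produce single sets that are not Diophantine, whereas Conjecture \ref{ConjBdd} requires an infinite family of sets that \emph{are} Diophantine but need more and more quantifiers. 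Bridging that distinction is a genuinely new step, not just a refinement, and your proposal does not yet indicate how to cross it.
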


In support of this conjecture, we have the following

\begin{theorem}\label{ThmCt} Let us consider the field of rational functions $\C(t)$ as a structure over a language $\Lcal$ expanding $\Lcal_a\cup\{t\}$ by some symbols of constant. Then $\C(t)$ has unbounded p.e. rank. In particular, it does not have p.e. $r$-catalogues for any $r\ge 1$.
\end{theorem}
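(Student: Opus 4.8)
The plan is to deduce the statement about $r$-catalogues from unboundedness of the positive existential rank: by Lemma~\ref{LemmaCatBdd}, a p.e. $r$-catalogue would force $\rk^{p.e.}_{\C(t)}(S)$ to be bounded over all p.e. $\Lcal$-definable $S\subseteq\C(t)$, so it suffices to show that no such bound exists. Argue by contradiction and fix $B\in\N$ with $\rk^{p.e.}_{\C(t)}(S)\le B$ for every Diophantine $S\subseteq\C(t)$. Unwinding a p.e. $\Lcal$-formula $\exists y_1\cdots\exists y_B\,\psi(x,\bar y)$ with parameters — where $\psi$ is a positive Boolean combination of atomic formulas, each of which cuts out a hypersurface in $\mathbb{A}^{B+1}_{\C(t)}$ once denominators are cleared — every such $S$ can be written as a finite union $S=\bigcup_{i=1}^{m}\pi\bigl(Z_i(\C(t))\bigr)$, with $\pi\colon\mathbb{A}^{B+1}\to\mathbb{A}^1$ the first coordinate projection and each $Z_i\subseteq\mathbb{A}^{B+1}_{\C(t)}$ an affine variety over $\C(t)$; in particular $\dim Z_i\le B$. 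The crucial feature is that here only $B$ is under control: the number $m$ of components and the degrees of the polynomials defining the $Z_i$ are unrestricted.

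The key geometric input is Kollár's study \cite{Kollar} of Diophantine subsets of function fields of curves over algebraically closed fields, specialised to $K=\C(t)$. For an affine variety $Z$ over $K$ of dimension $\le d$ and a $K$-morphism $\pi\colon Z\to\mathbb{A}^1$, Kollár describes $\pi(Z(K))$ in terms of algebraic families parametrised over $\C$, and his results bound — \emph{solely in terms of $d$} — a suitable measure of the geometric complexity of $S=\pi(Z(K))$ (roughly, the dimension of the $\C$-algebraic family needed to parametrise $S$, equivalently how far $S$ is from being a countable union of one–parameter families coming from rational curves over $\C$). Applying this to each $Z_i$ and combining, one gets an explicit function $\Phi$ such that any Diophantine subset of $\C(t)$ definable with at most $B$ existential quantifiers has this complexity invariant at most $\Phi(B)$.

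It then remains to produce, for every $N$, a Diophantine subset $T_N\subseteq\C(t)$ whose complexity invariant from the previous step exceeds $N$; for such witnesses one can use the families of Diophantine subsets of $\C(t)$ furnished by Kollár's constructions, which depend on an arbitrarily large number of independent parameters from $\C$ (for instance, images of suitably chosen generically finite $\C$-polynomial maps $\mathbb{A}^N_\C\to\C(t)$ that cannot be reparametrised by a $\C$-family of dimension less than $N$). Choosing $N>\Phi(B)$ then contradicts the standing hypothesis, proving that $\C(t)$ has unbounded p.e. rank, and hence — again by Lemma~\ref{LemmaCatBdd} — that it has no p.e. $r$-catalogue for any $r$.

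The routine parts are the passage to normal form and the concluding contradiction. The main obstacle is the middle step: isolating, inside Kollár's classification, precisely the invariant of a Diophantine set that is controlled by the \emph{number of existential quantifiers alone} and not by the (unbounded) degrees occurring in the formula, and then verifying that the explicit witnesses $T_N$ realise this invariant at the claimed rate. Running the same scheme with the corresponding structural facts for large fields in place of Kollár's input yields the analogous statement for any uncountable large field of characteristic zero, such as $\R$ or $\Q_p$.
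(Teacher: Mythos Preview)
Your overall strategy is correct and in fact matches the paper's: the key input is Koll\'ar's work on Diophantine subsets of $\C(t)$, and the relevant invariant is indeed a ``$\C$-dimension'' of the Diophantine set that is bounded by the ambient dimension $B+1$ of the variety one projects from. However, as you yourself flag, your proposal leaves the central step unresolved: you neither isolate the precise invariant nor exhibit concrete witnesses $T_N$, and you do not explain how to rule out the possibility that Koll\'ar's bound depends on more than the ambient dimension. As written, this is a plan with its main step missing rather than a proof.

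The paper fills this gap very concretely, and the argument is direct rather than by contradiction. The invariant is simply the largest $d$ such that $S$ contains a $d$-dimensional constructible set over $\C$. Koll\'ar's Theorem~4 in \cite{Kollar} (restated in the paper as Theorem~\ref{ThmKollar}) is a \emph{dichotomy}: if $S=f(X(K))$ with $X$ over $K=\C(t)$ and $S$ contains a $d$-dimensional constructible set over $\C$, then either $\dim_K X\ge d$, or $S$ contains rational functions with poles at all but finitely many $\alpha\in\C$. This second alternative is the crucial point your sketch does not address. The witnesses are then the most natural ones: $S_n=\{c_0+c_1t+\dots+c_nt^n:c_i\in\C,\ c_n\ne 0\}$, the polynomials of degree exactly $n$. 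These are p.e.\ $\Lcal$-definable because $\C$ is p.e.\ definable in $\C(t)$ (e.g.\ via $\exists y,\ y^2=x^3+1$, by Riemann--Hurwitz). Since $S_n\cong\C^n\times\C^\times$ over $\C$, it contains an $(n{+}1)$-dimensional constructible set; and since polynomials have no finite poles, the second alternative of Koll\'ar's dichotomy is excluded. Hence any $X_n\subseteq\A^{r(n)+1}_{\C(t)}$ projecting onto $S_n$ satisfies $\dim_{\C(t)}X_n\ge n+1$, so $\rk^{p.e.}_{\C(t)}(S_n)=r(n)\ge n$. No finite-union decomposition or function $\Phi(B)$ is needed.
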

For the proof, we need a consequence of Koll\'ar's work \cite{Kollar} (Theorem \ref{ThmKollar} stated below), which we state only over $\C(t)$ for simplicity, although an analogous result holds over function fields of complex projective curves.

Given $n\ge 0$ let $\Rat_n$ be the set of all rational functions $\phi\in \C(t)$ of (topological) degree $n$ and note that $\C(t)=\cup_{n\ge 0} \Rat_n$. Writing such a $\phi$ as a fraction of polynomials and considering the coefficients of these polynomials (up to scaling, with the necessary non-vanishing conditions to ensure $\deg \phi=n$) we see that each $\Rat_n$ has a natural structure of quasi-projective variety. In particular, for a set $S\subseteq \C(t)$, it makes sense to consider varieties over $\C$ contained in $S$.

\begin{theorem}[Koll\'ar]\label{ThmKollar} Let $K=\C(t)$. Let $X$ be a variety over $K$ and let $f$ be a regular function on $X$ defined over $K$. Let $S=f(X(K))\subseteq K$. Let $d\ge 0$ be an integer such that $S$ contains a constructible set of dimension $d$ over $\C$. Then at least one of the following holds:
\begin{itemize}
\item[(i)] $\dim_K(X)\ge d$
\item[(ii)] For all but finitely many  $\alpha\in \C$, the set $S$ contains rational functions with poles at $\alpha$.
\end{itemize}
\end{theorem}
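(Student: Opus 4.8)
The plan is to deduce Theorem~\ref{ThmKollar} by specializing the main results of Koll\'ar's paper \cite{Kollar} --- which are stated for the function field $K=k(C)$ of a smooth projective curve $C$ over an arbitrary algebraically closed field $k$ --- to the case $k=\C$ and $C=\Pro^1_\C$, and then translating the conclusion into the language of the $\C$-varieties $\Rat_n$. The geometric content is entirely contained in \cite{Kollar}; what remains here is the translation.

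First I would fix the dictionary. Spreading $X$ out over $\Pro^1$, choose a dense open $U\subseteq\Pro^1$, a flat $U$-scheme $\mathcal{X}\to U$ of relative dimension $e:=\dim_K X$ with generic fibre $X$, and a morphism $\mathcal{F}\colon\mathcal{X}\to\A^1_U$ restricting to $f$ over the generic point. A point of $X(K)$ is the same thing as a rational section of $\mathcal{X}\to\Pro^1$, regular over some open subset, and for such a section $\sigma$ the element $f(\sigma)\in K=\C(t)$ is a rational function whose polar divisor is supported on the finite set of points at which $\sigma$ fails to extend to a morphism into $\mathcal{X}$, at which $\mathcal{F}$ acquires poles after one compactifies the fibres, or which lie in $\Pro^1\setminus U$. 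Conversely, the topological degree of $\phi\in\C(t)$ is the degree of its polar divisor, ``$\phi$ has a pole at $\alpha\in\C$'' means that $\alpha$ lies in the support of that divisor, and each $\Rat_n$ carries its natural quasi-projective structure; so ``$S$ contains a constructible set of dimension $d$ over $\C$'' is unambiguous, and any such set lies in $\Rat_{\le N}:=\bigcup_{n\le N}\Rat_n$ for some $N$, so after shrinking we may take it to be an irreducible locally closed $\C$-subvariety $W\subseteq\Rat_{\le N}$ with $\dim_\C W=d$ and $W\subseteq S$.

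Next I would invoke Koll\'ar's dichotomy in the form it takes here: for $S=f(X(K))$ with $e=\dim_K X$, either $S$ contains rational functions with a pole at $\alpha$ for all but finitely many $\alpha\in\C$, or else every constructible $\C$-subvariety of $S$ has dimension at most $e$. Granting this, the theorem is immediate --- if~(ii) fails we are in the second alternative, and the existence of $W\subseteq S$ with $\dim_\C W=d$ forces $d\le e=\dim_K X$, which is~(i). The mechanism behind the dichotomy, which I would only summarise, is a dimension count: the elements of $S$ of topological degree $\le N$ come from a bounded-dimensional family of sections of $\mathcal{X}\to\Pro^1$; using that such a section of bounded degree is determined by its values at sufficiently many points, which (if~(ii) fails) may be chosen among the infinitely many places at which no element of $S$ has a pole, one embeds $W$ into a product of fibres $\mathcal{X}_{\alpha_1}\times\cdots\times\mathcal{X}_{\alpha_m}$, and the estimates of \cite{Kollar} on the rigidity of low-degree sections through many prescribed points combine to give $\dim_\C W\le e=\dim\mathcal{X}_{\alpha_j}$; the opposite case is precisely when the relevant sections are forced, over all but finitely many fibres, to meet the polar locus of the compactified $\mathcal{F}$, that is, when $S$ acquires poles at all but finitely many $\alpha$.

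The main obstacle I anticipate is not geometric but a matter of bookkeeping the translation: one must verify that Koll\'ar's intrinsic formulation, phrased in terms of the places of $K$ (or of an arbitrary curve $C$) and reductions modulo them, really specializes to the statement above; that the various finite exceptional sets of places arising at different stages (the complement of $U$, the points of degeneration of the model, the place at infinity of $\C(t)$) can all be absorbed into the single ``all but finitely many $\alpha\in\C$'' of~(ii); and that ``constructible of dimension $d$'' is compatible with the hypotheses under which the estimates of \cite{Kollar} are proved, so that the reduction to an irreducible $W$ above is legitimate. Once these identifications are in place the deduction is formal, and no new geometry beyond \cite{Kollar} is required.
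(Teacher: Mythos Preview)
Your plan is workable but takes a genuinely different route from the paper's proof. You propose to spread $X$ out over $\Pro^1$, set up the section-parametrization dictionary, and then re-derive a dichotomy by sketching the dimension count behind Koll\'ar's argument (embedding $W$ into a product of fibres via evaluation at good points, invoking rigidity of low-degree sections, etc.). This is essentially a summary of the \emph{proof} of Theorem~4 in \cite{Kollar}, tailored to $C=\Pro^1_\C$.

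The paper instead treats Theorem~4 of \cite{Kollar} as a black box and does only the translation. Item~(i) is read off from Theorem~4(1) via Koll\'ar's notion of \emph{Diophantine dimension}: since $S$ contains a $d$-dimensional constructible set, $S$ cannot lie in a countable union of varieties of dimension $<d$, so its Diophantine dimension is at least $d$, and Theorem~4(1) bounds this by $\dim_K X$. Item~(ii) is obtained from Theorem~4(2)(b), which asserts that for infinitely many $n$ the pole-locus set $\Pole_n(S)\subseteq \Symm^n\Pro^1_\C(\C)$ contains a positive-dimensional constructible set; pulling back to $(\Pro^1_\C)^n$ and projecting to a coordinate gives a positive-dimensional constructible subset of $\Pro^1_\C(\C)$, which must be cofinite. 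This symmetric-product/projection step is the one piece of argument the paper actually supplies, and it is absent from your sketch.

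What each approach buys: the paper's is shorter, pins down the exact citation, and isolates the single nontrivial translation (the $\Symm^n\Pro^1$ argument for (ii)); yours is more self-contained and conveys the geometric mechanism, but the passage ``the estimates of \cite{Kollar} on the rigidity of low-degree sections through many prescribed points combine to give $\dim_\C W\le e$'' is doing a lot of work and would need to be made precise --- at which point you are reproving Koll\'ar's theorem rather than citing it.
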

\begin{proof} This follows from Theorem 4 in \cite{Kollar}. Indeed,  (i) is implied by Theorem 4 (1) in \cite{Kollar} because $d$ is a lower bound for the Diophantine dimension of $S$ over $\C$ as defined there (as $S$ contains a constructible set of dimension $d$ over $\C$, it cannot be contained in a countable union of varieties of dimension smaller than $d$). On the other hand, each $\phi\in \C(t)$ of degree $n$ has an (effective) divisor of poles which in turn gives a point $\Pole(\phi)\in \Symm^n\Pro^1_\C(\C)$ where $\Symm^n\Pro^1_\C(\C)$ is the quotient of $\Pro^1_\C(\C)^n$ by the action of the symmetric group in $n$ letters. Let $\Pole_n(S)=\{\Pole(\phi) : \phi\in S\cap \Rat_n\}\subseteq \Symm^n\Pro^1_\C(\C)$. Theorem 4 (2) (b) in \cite{Kollar} asserts that for certain $n>0$ (in fact, infinitely many values of $n$) one has that $\Pole_n(S)$ contains a positive dimensional constructible set (namely, certain $\rho_m(D_m(\C))$ in the notation of \emph{loc. cit.}). Thus, the pre-image of $\Pole_n(S)\subseteq \Symm^n\Pro^1_\C(\C)$ in $\Pro^1_\C(\C)^n$ contains a positive dimensional constructible set, and so does some  (hence, each) coordinate projection. The constructible sets of $\Pro^1_\C(\C)$ are either finite or cofinite, hence item (ii) holds.
\end{proof}
With this at hand, we can prove Theorem \ref{ThmCt}.

\begin{proof}[Proof of Theorem \ref{ThmCt}] This argument builds on the same construction appearing in Example 6 (1) of \cite{Kollar}. Let $S_n\subseteq \C(t)$ be the set of polynomials of degree  $n$. Note that $\C$ is p.e. $\Lcal$-definable over $\C(t)$ thanks to the Riemann-Hurwitz formula; e.g. using the $\Lcal_a$-formula $\exists y, y^2=x^3+1$. Hence, $S_n=\{c_0+c_1t+...+c_nt^n: c_0,...,c_n\in \C\mbox{ and }c_n\ne 0\}\subseteq \Rat_n$ is p.e. $\Lcal$-definable over $\C(t)$. In particular, taking $r(n)=\rk^{p.e.}_{\C(t)}(S_n)$ we see that  there is an affine variety $X_n\subseteq \A^{r(n)+1}_{\C(t)}$ defined over $\C(t)$ such that 
$$
S_n = \{u_0\in \C(t) : \exists u_1...\exists u_{r(n)}, (u_0,u_1,...,u_{r(n)})\in X_n(\C(t))\}.
$$
Taking $f:X_n\to \A^1_{\C(t)}$ as the projection onto the first coordinate (a morphism defined over $\C(t)$) we see that $S_n=f(X_n(\C(t)))$.  Note that $S_n$ is isomorphic to $\C^{n}\times \C^\times$ as varieties over $\C$, so we can apply Theorem \ref{ThmKollar} with $d=n+1$. As (ii) does not hold, we must have $\dim_{\C(t)} X_n\ge d=n+1$. Since $X_n\subseteq \A^{r(n)+1}_{\C(t)}$, we conclude that $\rk^{p.e.}_{\C(t)}(S_n)=r(n)\ge n$.
\end{proof}

For our discussion, the relevant consequence of Conjecture \ref{ConjBdd} is the following.

\begin{proposition}\label{PropBddnonDioph}  Let $K$ be a global field and let $\Gcal$ be a finite set of field generators for it. Consider $K$ as a structure over  $\Lcal=\Lcal_a\cup\Gcal$. If Conjecture \ref{ConjBdd} holds for $K$, then $K$ does not have the DPRM property and it is not p.e. bi-interpretable with the $\Lcal_a$-structure $\N$. Thus:
\begin{itemize}
\item[(i)] In the case $K=\Q$ this implies that $\Z$ is not Diophantine in $\Q$.
\item[(ii)] In the case $K=k(t)$ for a finite field $k$, this implies that $k[t]$ is not Diophantine in $k(t)$ and that the field $k(t)$ is not p.e. bi-interpretable with $k[t]$.
\end{itemize}
\end{proposition}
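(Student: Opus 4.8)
\emph{Approach.} The plan is to assemble three ingredients already available in the paper: the existence of p.e.\ catalogues for uniquely listable structures with the DPRM property (Theorem \ref{ThmCatDPRM}), the model-theoretic characterization of the DPRM property for infinite uniquely listable structures (Theorem \ref{ThmCharDPRM}), and the equivalences established for $\Q$ and for $k(t)$ in Propositions \ref{PropZQ} and \ref{Propkt}. The only input beyond these is the hypothesis that Conjecture \ref{ConjBdd} holds for $K$, i.e.\ that $K$ (over $\Lcal=\Lcal_a\cup\Gcal$) has unbounded p.e.\ rank.

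\emph{Main argument.} First I would record the structural facts needed to apply the two theorems. Presented over $\Lcal=\Lcal_a\cup\Gcal$ with $\Gcal$ a finite set of field generators, $K$ is uniquely listable by Corollary \ref{CoroULfg}, its domain is infinite, and the relation $\ne$ is p.e.\ $\Lcal$-definable over $K$ --- a standard fact for global fields, using finitely many places (see \cite{MoretBailly}) --- hence $\ne$ is totally listable by Corollary \ref{CoroTotList}. Now suppose toward a contradiction that $K$ has the DPRM property. Then the hypotheses of Theorem \ref{ThmCatDPRM} are all met, so $K$ has a p.e.\ $r$-catalogue for every $r\ge 1$, and Lemma \ref{LemmaCatBdd} (with $r=1$) forces $K$ to have bounded p.e.\ rank, contradicting Conjecture \ref{ConjBdd}. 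Therefore $K$ does not have the DPRM property. Since $K$ is an infinite uniquely listable structure over which $\ne$ is totally listable, Theorem \ref{ThmCharDPRM} gives that the DPRM property for $K$ is equivalent to p.e.\ bi-interpretability with the $\Lcal_a$-structure $\N$; consequently $K$ is not p.e.\ bi-interpretable with $\N$.

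\emph{The two consequences.} These follow by specialization, after matching languages. For $K=\Q$ the prime field is all of $\Q$, so one may take $\Gcal=\emptyset$ and $\Lcal=\Lcal_a$, and Proposition \ref{PropZQ} gives the equivalence of ``$\Z$ is Diophantine in $\Q$'' with ``$\Q$ has the DPRM property''; as the latter fails, $\Z$ is not Diophantine in $\Q$. For $K=k(t)$ with $k$ finite, a finite field is generated over its prime field by finitely many elements, so a set of field generators for $k(t)$ can be taken of the form $\Gcal=\Gcal_0\cup\{t\}$ with $\Gcal_0$ a set of ring generators for $k$; thus $\Lcal_a\cup\Gcal$ is the language of Proposition \ref{Propkt}, which yields the equivalence of ``$k[t]$ is Diophantine in $k(t)$'', ``$k(t)$ has the DPRM property'', and ``$k[t]$ and $k(t)$ are p.e.\ bi-interpretable''; all three fail. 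The only steps requiring care are these bookkeeping comparisons of generating sets and the p.e.\ definability of $\ne$ in the global field $K$; there is no analytic or geometric obstacle here, since the substantive content has already been packaged into Theorems \ref{ThmCatDPRM} and \ref{ThmCharDPRM} and Propositions \ref{PropZQ} and \ref{Propkt}.
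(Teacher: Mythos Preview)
Your proof is correct and follows essentially the same route as the paper: unique listability via Corollary \ref{CoroULfg}, then Theorem \ref{ThmCatDPRM} together with Lemma \ref{LemmaCatBdd} to contradict unbounded p.e.\ rank, Theorem \ref{ThmCharDPRM} for the bi-interpretability clause, and Propositions \ref{PropZQ} and \ref{Propkt} for (i) and (ii). You have simply made explicit the verification of the hypotheses (notably p.e.\ definability of $\ne$ and the matching of generating sets/languages) that the paper leaves implicit.
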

\begin{proof} The $\Lcal$-structure $K$ is uniquely listable by Corollary \ref{CoroULfg}. Hence, the first part follows from Theorem \ref{ThmCatDPRM}, Lemma \ref{LemmaCatBdd}, and Theorem \ref{ThmCharDPRM}. 

In addition, (i) follows from Proposition \ref{PropZQ} while (ii) follows from Proposition \ref{Propkt}.
\end{proof}
We remark that the analogue of items (i) and (ii) in Proposition \ref{PropBddnonDioph} for a number field $K$ is conditional to $\Z$ being Diophantine in $O_K$, which is not known in general. See Proposition \ref{PropOK} and the discussion after it.

For more information about the p.e. rank in the case of fields, we refer the reader to  \cite{DDF} by Daans, Dittmann, and Fehm. In particular, they independently arrived to the observation that if the p.e. rank of Diophantine subsets of $\Q$ is unbounded, then $\Z$ is not Diophantine in $\Q$.

%
%%%%%%%%
%%%%%%%%
%%%%%%%%
%%%%%%%%
\subsection{A Diophantine approximation conjecture}\label{SecKey} Let $K$ be a global field and let $v$ be a place of $K$ with normalized absolute value $|-|_v$; that is, if $v$ corresponds to a prime ideal $\pfrak$ then $|x|_v=[O_K:\pfrak]^{\ord_\pfrak(x)}$ while if $v$ corresponds to a (possibly real) embedding $\sigma: K\to \C$ then $|x|_v=|\sigma(x)|$ where $|-|$ is the usual absolute value on $\C$. Let $K_v$ be the completion of $K$ at $v$. For every projective variety $X$ over $K$ and every Cartier divisor $D$ on $X$ defined over $K$, there is a local Weil function $\lambda_{X,D,v}:X(K_v)-\supp(D)\to \R$, where $\supp(D)$ denotes the support of $D$; see Section 6.2 in \cite{Serre} for a precise definition and construction. Roughly speaking, if $D$ is represented by $\{(U_j, f_j)\}_j$ for some finite open cover $\{U_j\}_j$ of $X$ and $f_j$ are the corresponding local equations of $D$, then  $\lambda_{X,D,v}(P)= -\log |f_{D,j}(P)|_v + \alpha_j(P)$ for all $P\in U_j-\supp(D)$, where $\alpha_j$ is a certain nice bounded function. In particular, if $D$ is effective and $P_1,P_2,...$ is a sequence in $X(K_v)-\supp(D)$, then $\lambda_{X,D,v}(P_j)\to \infty$ if and only if the sequence $P_j$ approaches $\supp(D)(K_v)$ $v$-adically. 

Since we will be concerned with points in the support of divisors, it is worth pointing out the following clarification. If $X$ is a variety over a field $k$ and $D$ is an effective Cartier divisor on $X$ defined over $k$, then $D$ is locally given by equations over $k$ on an open covering of $X$, and there is no need for the variety $\supp(D)$ to have $k$-rational points. For instance, if $k=\Q$ and $X=\Pro^1_\Q = {\rm Proj}\, \Q[x_0,x_1]$, we have the open covering defined over $\Q$ given by $U_0=\{[x_0:x_1]:x_0\ne 0\}$ and $U_1=\{[x_0:x_1]:2x_0^2\ne x_1^2\}$. Let us define the divisor $D$ represented by $\{(U_0, 2x_0^2-x_1^2), (U_1,1) \}$. Then $D$ is an effective divisor of degree $2$ defined over $\Q$ with $\supp(D)(\Q)=\emptyset$ and $\supp(D)(\R)=\{[1:-\sqrt{2}],[1:\sqrt{2}]\}$.

We would like to propose the following:

\begin{conjecture}\label{ConjKey} Let $K$ be a global field and let $v$ be a place of $K$. Let $X$ and $Y$ be positive dimensional irreducible projective varieties over $K$, let $f : X\dasharrow Y$ be a dominant rational map defined over $K$, and let $U$ be a non-empty Zariski open set of $X$ defined over $K$ and contained in the domain of $f$. Suppose that $X(K)$ is Zariski dense in $X$. Then there is an effective Cartier divisor $D$ on $Y$ defined over $K$, such that $\lambda_{Y, D, v}$ is unbounded on $f(U(K))-\supp(D)$. That is, there is a sequence of $K$-rational points in  $U-f^{-1}(\supp (D))$ whose images under $f$ approach $\supp(D)(K_v)$ $v$-adically.
\end{conjecture}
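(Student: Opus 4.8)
\emph{Strategy and first reductions.}
Since Conjecture~\ref{ConjKey} is open, the plan is to reduce it to a transparent statement about $v$-adic accumulation of an infinite Diophantine subset of $K$, and then to establish that statement conditionally on the Mazur-type conjectures of Section~\ref{SecMazur} and unconditionally in special cases. First I would reduce to $Y=\Pro^1_K$: choose a dominant rational map $g\colon Y\dasharrow\Pro^1_K$ over $K$ (possible since $\trdeg_K K(Y)=\dim Y\ge 1$), and shrink $U$ to a smaller nonempty Zariski open $K$-subset so that $\phi:=g\circ f$ is a morphism $U\to\Pro^1_K$; this only removes $f^{-1}(\text{indeterminacy of }g)$, which is proper closed in $X$ because $f$ is dominant. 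By functoriality of local Weil functions, any effective Cartier $K$-divisor $D_0$ on $\Pro^1$ pulls back (after taking closures) to an effective Cartier $K$-divisor $D$ on $Y$ with $\lambda_{Y,D,v}\circ f$ and $\lambda_{\Pro^1,D_0,v}\circ\phi$ agreeing up to a bounded error off the relevant supports, so it is enough to produce $D_0$ with $\lambda_{\Pro^1,D_0,v}$ unbounded on $\phi(U(K))$. Next I would note that $U(K)$ is Zariski dense in $X$: from $X(K)=U(K)\sqcup\bigl(X(K)\cap(X\setminus U)\bigr)$ together with irreducibility of $X$ and $X\setminus U\subsetneq X$ we get $\overline{U(K)}=X$; since $\phi$ is nonconstant, $S:=\phi(U(K))$ is an infinite subset of $\Pro^1(K)$, and after a $\mathrm{PGL}_2(K)$ change of coordinates we may assume $\infty\notin S$, so $S\subseteq K$. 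The conjecture is then equivalent to: $S$ has a $v$-adic accumulation point in $\Pro^1(K_v)$ that is algebraic over $K$ --- given such a point $\alpha$, the closed point of $\Pro^1_K$ with residue field $K(\alpha)$ defines an effective Cartier $K$-divisor $D_0$ with $\alpha\in\supp(D_0)(K_v)$, and $\lambda_{\Pro^1,D_0,v}$ is unbounded on $S\setminus\supp(D_0)$.

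\emph{The archimedean number field case, conditionally on Mazur.}
Take $K=\Q$ and $v=\infty$ (any archimedean place of any number field is identical). Then $S\subseteq\Q$ is Diophantine and, by the previous step, infinite. By Proposition~\ref{PropEndpoint}, which invokes the semi-algebraic version of Mazur's conjecture (Conjecture~\ref{ConjSemiAlg}, itself a consequence of the strong form Conjecture~\ref{ConjStrongMazur}), the closure $\overline{S}$ in $\R$ is a finite union of closed intervals whose endpoints are real algebraic or infinite. Since $S$ is infinite, $\overline{S}$ is not finite, so one of these intervals is non-degenerate, and $S$ accumulates at one of its (real algebraic, or infinite) endpoints $\alpha$. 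As $\alpha$ is an algebraic-over-$\Q$ point of $\Pro^1(\R)$, the last sentence of the reduction produces an effective Cartier $\Q$-divisor $D_0$ on $\Pro^1$ with $\lambda_{\Pro^1,D_0,\infty}$ unbounded on $S$, and pulling back along $g$ gives the divisor $D$ on $Y$ demanded by Conjecture~\ref{ConjKey}.

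\emph{Non-archimedean places and function fields.}
For a non-archimedean place $v$ of a number field $K$ I would run the parallel argument starting from Mazur's Question~\ref{QuestionMazur}, strengthening Lemma~\ref{LemmaApplyMazur}: applying Question~\ref{QuestionMazur} to a projective model of the defining variety, only finitely many proper subvarieties $Z_x$ can absorb the $v$-adic clustering of the $K$-points, so $S$ must genuinely accumulate along the remaining ``full'' directions, which are cut out over $K$; from this one extracts a $K$-algebraic $v$-adic accumulation point of $S$ and concludes as before. For a global function field $K=\F_q(t)$ the picture is different: by Theorem~\ref{ThmDiscrete} there exist $v$-adically discrete infinite Diophantine sets, so no naive accumulation statement can hold; nevertheless such sets still accumulate at \emph{algebraic} points of $\Pro^1$ (for example $\{t^{p^n}\}$ tends to $0$ or to $\infty$ at every place with $|t|_v\ne 1$), which is all the conjecture requires. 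The subtle case is a place with $|t|_v=1$, where the $v$-adic limit points of a sequence governed by an iteration $a_{n+1}=a_n^{\,p}$ must be shown to be algebraic over $K$, and this is where inputs from positive-characteristic functional transcendence are needed.

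\emph{The main obstacle.}
The crux --- and the reason this is stated as a conjecture --- is precisely the dichotomy ``the $v$-adic accumulation locus of $\phi(U(K))$ meets the $K$-algebraic points'' versus ``it sits only at transcendental limit points'': an infinite subset of $\Pro^1(K)$ always accumulates somewhere in the compact space $\Pro^1(K_v)$, but a priori only at transcendental points, and excluding this seems to require essentially all of Mazur's conjectures over number fields and genuinely new functional transcendence over function fields. What one can actually record as evidence is (a) the conditional reduction to Mazur above, and (b) unconditional verification in special cases: when $X$ is a curve it must have genus $\le 1$ by Faltings/Mordell (otherwise $X(K)$ fails to be Zariski dense), the elliptic case then following from the group law together with a Mazur-type connectedness statement for the closure of $X(K)$ in $X(K_v)$; and when $X$ is $K$-rational, weak approximation already makes $S$ dense in $\Pro^1(K_v)$, so $D_0=[0]$ works.
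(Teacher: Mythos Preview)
The statement is a \emph{conjecture}; the paper does not prove it, and you correctly open by acknowledging this. What the paper does provide is (a) a reduction to $Y=\Pro^1_K$ (Proposition~\ref{PropP1}) and (b) a derivation of the number-field case from Mazur's conjectures (the proposition following Proposition~\ref{PropP1}), together with a worked function-field example. Your proposal follows the same architecture, so the comparison is between your versions of (a) and (b) and the paper's.

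For the reduction to $\Pro^1$, your route via functoriality of Weil functions and ``pulling back $D_0$ after taking closures'' is morally correct but glosses over the indeterminacy locus of the rational map $g:Y\dasharrow\Pro^1_K$: the pullback $g^*D_0$ need not extend to a Cartier divisor on all of $Y$. The paper sidesteps this by choosing on $Y$ any effective Cartier divisor $E$ whose support contains $g^{-1}(\supp D_0)\cup Z$ (with $Z$ the indeterminacy locus), and then using compactness of $Y(K_v)$ to force a $v$-adic accumulation point of $f(x_j)$ into $\supp(E)(K_v)$. This is cleaner and avoids any claim about Weil-function comparison near $Z$.

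For the conditional step over $\Q$, you invoke Proposition~\ref{PropEndpoint}, which rests on the \emph{semi-algebraic} (hence the strong) form of Mazur's conjecture, and then look for an algebraic endpoint of $\overline{S}$. The paper gets by with the weaker original Conjecture~\ref{ConjMazur}: by Lemma~\ref{LemmaApplyMazur} the infinite Diophantine set $S=f(U(K))$ has only finitely many $v$-adically isolated points, so some $y\in S$ is non-isolated; since $y\in K$ it is trivially algebraic, and the divisor $D=[y]$ on $\Pro^1_K$ already does the job. So your hypothesis is stronger than necessary and your argument more elaborate than needed. The same remark applies to your non-archimedean paragraph: a positive answer to Question~\ref{QuestionMazur} feeds directly into Lemma~\ref{LemmaApplyMazur}, yielding a non-isolated rational point, with no need to analyze ``full directions'' or extract algebraicity separately.
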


Applications of this conjecture in the study of Diophantine subsets of global fields will be discussed in Section \ref{SecnD}. A slightly different version of Conjecture \ref{ConjKey} using morphisms instead of rational maps was proposed by the author in \cite{OWR}, but the current form of the conjecture is simpler to use in applications and it gives a wider range of search for potential counterexamples (if any).

Even the simplest case when $U=X=Y$ and $f=\Id_X$ is open in general. In this case, Conjecture \ref{ConjKey} says that there is a sequence of $K$-rational points of $X$ that $v$-adically accumulates towards the support of some effective Cartier divisor on $X$ defined over $K$.

Conjecture \ref{ConjKey} can be reduced to a special case.

\begin{proposition}\label{PropP1} Let $K$ be a global field and $v$ a place of $K$. If Conjecture \ref{ConjKey} holds for $K$ and $v$ in the special case when $Y=\Pro^1_K$, then it holds in general for this choice of $K$ and $v$.
\end{proposition}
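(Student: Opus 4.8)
The plan is to reduce the general case of Conjecture~\ref{ConjKey} to the case $Y=\Pro^1_K$ by composing $f$ with a dominant rational map $Y\dasharrow\Pro^1_K$ and transporting the resulting approximation from $\Pro^1_K$ back up to $Y$. The two points requiring care are that such a map exists only as a \emph{rational} map, so its indeterminacy locus must be tracked, and that an arbitrary accumulation point of $f(U(K))$ in $Y(K_v)$ need not sit on any divisor defined over $K$ --- it is precisely the $\Pro^1_K$-case that forces an accumulation towards an \emph{algebraic} point downstairs, hence onto a proper $K$-subvariety of $Y$.

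Here is how I would argue. Since $Y$ is irreducible, projective and positive-dimensional, $K(Y)$ has positive transcendence degree over $K$, so I pick a non-constant $\phi\in K(Y)$, i.e.\ a dominant rational map $\phi\colon Y\dasharrow\Pro^1_K$ over $K$ with indeterminacy locus $I_\phi\subsetneq Y$, and set $g:=\phi\circ f\colon X\dasharrow\Pro^1_K$, again dominant over $K$. Next I shrink $U$ to a non-empty Zariski open $U'\subseteq U$ defined over $K$ with $U'\subseteq\dom(f)\cap\dom(g)$ and $f(U')\cap I_\phi=\emptyset$; each condition removes only a proper closed subset of the irreducible $X$, using that $f$ is dominant. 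Applying the assumed $\Pro^1_K$-case of Conjecture~\ref{ConjKey} to $(X,\Pro^1_K,g,U')$ --- whose hypotheses hold because $X(K)$ is Zariski dense --- produces a nonzero effective Cartier divisor $E$ on $\Pro^1_K$ over $K$ and a sequence $(P_i)$ in $U'(K)\setminus g^{-1}(\supp E)$ with $g(P_i)$ approaching $\supp(E)(K_v)$ $v$-adically. Since $\supp(E)$ is a finite set of closed points and $\supp(E)(K_v)\neq\emptyset$ (otherwise $\lambda_{\Pro^1,E,v}$ is bounded on the compact set $\Pro^1(K_v)$), after passing to a subsequence I may assume $g(P_i)\to\beta$ for a single $\beta\in\supp(E)(K_v)$.

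Now set $y_i:=f(P_i)\in Y(K)$; then $y_i\in f(U(K))$, $y_i\in\dom(\phi)$, and $\phi(y_i)=g(P_i)\notin\supp(E)$. Let $Z\subseteq Y$ be the Zariski closure of $\phi^{-1}(\supp E)\cap\dom(\phi)$ and put $Z^{+}:=Z\cup I_\phi$, a proper closed subvariety of $Y$ defined over $K$. Because $\phi^{-1}(\supp E)\cap\dom(\phi)$ is closed in the open set $\dom(\phi)$, one has $Z\cap\dom(\phi)=\phi^{-1}(\supp E)\cap\dom(\phi)$, so $y_i\notin Z^{+}$ for every $i$. By compactness of $Y(K_v)$ I pass to a further subsequence with $y_i\to y_\infty\in Y(K_v)$, and then $y_\infty\in Z^{+}(K_v)$: otherwise $y_\infty$ lies in the open set $Y\setminus Z^{+}\subseteq\dom(\phi)$, on which $\phi$ is a $K_v$-continuous morphism, whence $\phi(y_\infty)=\lim\phi(y_i)=\beta\in\supp(E)$ and therefore $y_\infty\in\phi^{-1}(\supp E)\cap\dom(\phi)\subseteq Z^{+}$, a contradiction. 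Finally I fix a projective embedding $Y\hookrightarrow\Pro^N_K$ over $K$; the homogeneous ideal of $Z^{+}$ is generated over $K$ by finitely many forms, and discarding those lying in the ideal of $Y$ leaves forms $f_1,\dots,f_r$ (with $r\ge 1$, since $Z^{+}\subsetneq Y$) such that $\bigcap_{j=1}^r\supp(D_j)=Z^{+}$, where each $D_j:=\divi(f_j|_Y)$ is a proper effective Cartier divisor on $Y$ over $K$. If every $\supp(D_j)$ contained all but finitely many $y_i$, then so would $Z^{+}=\bigcap_j\supp(D_j)$, contradicting $y_i\notin Z^{+}$; hence some $D:=D_{j_0}$ has infinitely many $y_i\notin\supp(D)$. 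Along that subsequence $y_i\to y_\infty\in Z^{+}(K_v)\subseteq\supp(D)(K_v)$, so by effectivity of $D$ and the stated relation between Weil functions and $v$-adic approach, $\lambda_{Y,D,v}(y_i)\to\infty$ with $y_i\in f(U(K))\setminus\supp(D)$. Thus $\lambda_{Y,D,v}$ is unbounded on $f(U(K))-\supp(D)$, which is Conjecture~\ref{ConjKey} for $(X,Y,f,U)$.

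I expect the only genuine obstacle to be the rational-map bookkeeping: arranging that $U'$ avoids $f^{-1}(I_\phi)$ so that the auxiliary points stay outside $Z^{+}$, and ruling out that the limit $y_\infty$ escapes to a $K$-general point of $Y$. Both are dealt with above --- the first by shrinking $U$, the second by the continuity argument on $Y\setminus Z^{+}$ --- but they are the steps where a careless treatment would fail. Everything else (existence of $\phi$, dominance of $g$, the compactness extraction, and the passage from the proper $K$-subvariety $Z^{+}$ to a genuine $K$-rational effective Cartier divisor avoided by infinitely many $y_i$) is routine. As a byproduct, the argument makes explicit that the whole arithmetic content of Conjecture~\ref{ConjKey} is concentrated in the case $Y=\Pro^1_K$.
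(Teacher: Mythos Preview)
Your proof is correct and follows essentially the same route as the paper: compose $f$ with a non-constant rational map $\phi\colon Y\dasharrow \Pro^1_K$, apply the $\Pro^1_K$-case on a shrunken open set, and use compactness of $Y(K_v)$ to force the limit of $f(P_i)$ into the proper $K$-closed subset $\overline{\phi^{-1}(\supp E)}\cup I_\phi$. Your treatment of the final step is in fact more careful than the paper's --- you use several hypersurface sections $D_j$ cutting out $Z^{+}$ together with a pigeonhole argument to guarantee that infinitely many $y_i$ actually avoid the chosen divisor, whereas the paper simply picks ``any effective Cartier divisor on $Y$ whose support contains $g^{-1}(\supp D)\cup Z$'' without explicitly checking that the sequence stays off its support.
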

\begin{proof} In the setup of Conjecture \ref{ConjKey}, consider a non-constant rational function $g:Y\dasharrow \Pro^1_K$ and let $Z\subseteq Y$ be the locus where $g$ is not defined. Let $U\subseteq X$ be a non-empty Zariski open set defined over $K$ and contained in $\dom(f)$. Let $U'=U-f^{-1}(Z)$ and note that $U'$ is contained in $\dom(g\circ f)$. Let us apply the conjecture to $g\circ f:X\dasharrow \Pro^1_K$ and $U'$. This gives a divisor $D$ on $\Pro^1_K$. Let $E$ be any effective Cartier divisor on $Y$ defined over $K$ whose support contains $g^{-1}(\supp(D))\cup Z$.

Let $x_1,x_2,...$ be a sequence in $U'(K)$ (hence, in $U(K)$)  such that $(g(f(x_j)))_{j\ge 1}$ $v$-adically approaches $\supp(D)(K_v)$.  Since $Y$ is projective and $K_v$ is locally compact, $Y(K_v)$ is compact and the sequence $(f(x_j))_{j\ge 1}$ has a $v$-adic accumulation point $y\in Y(K_v)$. Note that either $y\in Z(K_v)$ or $g$ is defined at $y$, in which case $y \in g^{-1}(\supp(D))(K_v)$. In either case, $y\in \supp(E)(K_v)$. Therefore, $(f(x_j))_{j\ge 1}$ has a subsequence that $v$-adically approaches $\supp(E)(K_v)$.
\end{proof}

The case $K=\Q$ is particularly relevant for us since, as we will see, Conjecture \ref{ConjKey} for $K=\Q$ implies that $\Z$ is not Diophantine in $\Q$ (cf. Proposition \ref{PropKeyQ} below). In this case we have

\begin{proposition} Mazur's Conjecture \ref{ConjMazur} implies Conjecture \ref{ConjKey} for $K=\Q$ and $v=\infty$ the archimedean place.  Furthermore, a positive answer to Mazur's question \ref{QuestionMazur} for a given number field $K$ and a place $v$ of it implies Conjecture \ref{ConjKey} for this choice of $K$ and $v$.
\end{proposition}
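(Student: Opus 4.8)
The plan is to reduce to the case $Y=\Pro^1_K$ via Proposition~\ref{PropP1}, and then to feed Lemma~\ref{LemmaApplyMazur} --- which already extracts the relevant consequences of Mazur's conjecture and of Mazur's question --- an infinite Diophantine subset of $K$ in order to pin down the divisor $D$. First I would carry out the reductions. By Proposition~\ref{PropP1} we may assume $Y=\Pro^1_K$. Replacing $U$ by a non-empty affine open subvariety $V\subseteq U$ defined over $K$ is harmless, since any sequence in $V(K)-f^{-1}(\supp D)$ lies in $U(K)-f^{-1}(\supp D)$ and $f(V(K))-\supp D\subseteq f(U(K))-\supp D$; so we may assume $U$ is affine. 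Since $X$ is irreducible, $X(K)$ is Zariski dense in $X$, and $V$ is dense open in $X$, the set $V(K)=X(K)\cap V$ is Zariski dense in $V$ (two proper Zariski-closed subsets cannot cover the irreducible $X$), and as $f$ is dominant this forces $S:=f(V(K))$ to be Zariski dense in $\Pro^1_K$; in particular $S$ is infinite. Put $W=V\setminus f^{-1}(\infty)$, a non-empty open subvariety of the affine $V$ (non-empty because $f$ is non-constant on the irreducible $X$). Covering $W$ by finitely many principal affine opens and restricting $f$ to each of them gives finitely many morphisms to $\A^1_K$, so $S\cap K=f(W(K))$ is a finite union of Diophantine subsets of $K$, hence Diophantine over $K$; and $S\cap K$ is infinite because $S$ is infinite while the fibre over $\infty$ removes at most the single point $\infty$.

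Next I would invoke Lemma~\ref{LemmaApplyMazur}. Under Mazur's Conjecture~\ref{ConjMazur} in the case $K=\Q$, $v=\infty$ (part (i) of that lemma), or under a positive answer to Mazur's Question~\ref{QuestionMazur} for the given $(K,v)$ (part (ii)), the infinite Diophantine set $S\cap K$ has only finitely many $v$-adically isolated points, hence it contains a point $\beta\in S\cap K\subseteq K$ which is not $v$-adically isolated. Then there is a sequence of pairwise distinct points $s_j\in(S\cap K)\setminus\{\beta\}$ with $|s_j-\beta|_v\to 0$; choosing $x_j\in V(K)\subseteq U(K)$ with $f(x_j)=s_j$ we get $x_j\notin f^{-1}(\{\beta\})$ since $s_j\neq\beta$. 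Now let $D$ be the $K$-rational point $\beta\in\Pro^1_K$ taken with multiplicity one: this is an effective Cartier divisor on $Y=\Pro^1_K$ defined over $K$, with $\supp(D)=\{\beta\}$ and $\supp(D)(K_v)=\{\beta\}$. In an affine chart near $\beta$ one has $\lambda_{Y,D,v}(P)=-\log|P-\beta|_v+O(1)$, so $\lambda_{Y,D,v}(s_j)=-\log|s_j-\beta|_v+O(1)\to+\infty$; thus $\lambda_{Y,D,v}$ is unbounded on $\{s_j\}\subseteq f(U(K))-\supp(D)$, which is exactly Conjecture~\ref{ConjKey} for $Y=\Pro^1_K$, and the general case follows from Proposition~\ref{PropP1}.

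I expect the whole argument to be essentially bookkeeping on top of Lemma~\ref{LemmaApplyMazur}, which is where Mazur's conjecture and Mazur's question are actually used. The one point that must be handled with care is that we should \emph{not} try to produce an algebraic accumulation point of $S$ in $Y(K_v)$: such a point could a priori be transcendental, in which case $[\beta]$ would not be defined over $K$. Instead we use that a non-isolated point of $S\cap K$ lying in $S\cap K$ itself is automatically $K$-rational, which is precisely what makes $[\beta]$ an effective divisor defined over $K$. The remaining verifications --- that $S\cap K$ is Diophantine over $K$ and that Zariski density descends from $X(K)$ to $f(V(K))$ --- are routine, and the reduction to $Y=\Pro^1_K$ is supplied by Proposition~\ref{PropP1}.
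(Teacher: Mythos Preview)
Your proposal is correct and follows essentially the same route as the paper's proof: reduce to $Y=\Pro^1_K$ via Proposition~\ref{PropP1}, shrink $U$ to an affine open on which $f$ lands in $\A^1_K$, obtain an infinite Diophantine subset of $K$, apply Lemma~\ref{LemmaApplyMazur} to produce a non-isolated $K$-rational point, and take $D$ to be that point. The paper collapses your two-step shrinking (first to affine $V$, then removing $f^{-1}(\infty)$ and covering by principal opens) into a single ``shrinking $U$ if necessary we may assume $U$ is affine and $f:U\to\A^1_K$ is regular,'' and it leaves the infiniteness of $f(U(K))$ implicit, but the content is the same.
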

\begin{proof} By Proposition \ref{PropP1} it suffices to consider the case of an irreducible projective variety $X$ over $K$ with $X(K)$ Zariski dense and a dominant rational function $f:X\dasharrow \Pro^1_K$ defined over $K$. Let $U\subseteq X$ be a non-empty Zariski open set defined over $K$. Shrinking $U$ if necessary we may assume that $U$ is affine and that $f:U\to \A^1_K$ is a regular function defined over $K$. Then $f(U(K))\subseteq K$ is an infinite Diophantine set and in either case (assuming Conjecture \ref{ConjMazur} or a positive answer to Question \ref{QuestionMazur}) Lemma \ref{LemmaApplyMazur} would give that $S=f(U(K))$ has at most finitely many $v$-adically isolated points. Let $y\in S$ be a point which is not $v$-adically isolated in $S$. Then we can take $D$ as the divisor on $\Pro^1_K$ determined by $y\in f(U(K))$, which is as required by Conjecture \ref{ConjKey} because there is a sequence in $S-\{y\}$ that $v$-adically converges to $y$.
\end{proof}

In particular, the available evidence for Mazur's Conjecture \ref{ConjMazur} and for a positive answer to Question \ref{QuestionMazur}  provides evidence for Conjecture \ref{ConjKey} in the number field setting.

Since the analogue of Mazur's conjecture fails over function fields due to the example provided by Theorem \ref{ThmDiscrete} and since the previous result shows a close connection between Mazur's conjecture and Conjecture \ref{ConjKey}, one may ask whether Theorem \ref{ThmDiscrete} can also be used to give a counterexample to the function field version of Conjecture \ref{ConjKey}. On the contrary, it gives a rather non-trivial example where Conjecture \ref{ConjKey} has essentially one chance to work, and it does.

\begin{example} For $p>2$ let $K=\F_p(t)$ and let $v$ be the $t$-adic valuation on $K$. Let $U$ be the afine curve in Theorem \ref{ThmDiscrete} and let $f, g:U\to \A^1_K\subseteq \Pro^1_K$  be the projection maps from $U$ onto the $x$ and $y$ coordinates respectively. Let $X$ be a projective closure of $U$ and extend $f,g$ to rational functions $f,g:X\dasharrow \Pro^1_K$.

Let $T$ be the affine coordinate on $\A^1_K$. For the map $f$ we note that the only $t$-adic limit point of $f(U(K))=S_1=\{t^{p^n}: n\ge 0\}$ in $K_v=\F_p((t))$ is $0$. So, we can take the divisor $D=\{T=0\}$. 

On the other hand, the elements of $g(U(K))=S_2$ $t$-adically accumulate towards the formal power series $f_b=b+t+t^p+t^{p^2}+...\in \F_p[[t]]\subseteq K_v$ for $b\in \F_p$, and these are the only limit points. The power series $f_b$ for $b\in \F_p$ are algebraic over $\F_p(t)$ and, in fact, they are the roots of $T^p-T+t$. We can take the divisor $D=\{T^p-T+t=0\}$ on $\A^1_K\subseteq \Pro^1_K$.

In both cases, Conjecture \ref{ConjKey} holds with the given choices of the effective divisor $D$ on $\A^1_K\subseteq \Pro^1_K$. We note that since the Diophantine sets $S_1$ and $S_2$ have only finitely many $t$-adic limit points, the choice of $D$ is essentially unique (up to multiplicity and up to adding more components).
\end{example}

%%%%%%%%
%%%%%%%%
%%%%%%%%
%%%%%%%%
\subsection{Consequences of the Diophantine approximation conjecture} \label{SecnD}

Applications of Conjecture \ref{ConjKey} are simplified by the following observation.

\begin{lemma}\label{LemmaKey} Let $K$ be a global field and let $v$ be a place of $K$. Let $S\subseteq K$ be an infinite Diophantine subset which is $v$-adically bounded in $K$ and has exactly one $v$-adic limit point  $\alpha\in K_v$. If Conjecture \ref{ConjKey} holds for $K$ and $v$, then $\alpha$ is algebraic over $K$.
\end{lemma}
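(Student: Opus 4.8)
The plan is to derive this as a consequence of Conjecture \ref{ConjKey} applied to a suitable curve, using the hypothesis that $S$ is Diophantine, $v$-adically bounded, and has a unique $v$-adic limit point. First I would write $S = f(X(K))$ for some affine variety $X$ over $K$ and a morphism $f\colon X\to \A^1_K$ defined over $K$; replacing $X$ by the Zariski closure of an irreducible component whose image under $f$ is infinite, I may assume $X$ is irreducible and $X(K)$ is Zariski dense in $X$ (since $f(X(K)) = S$ is infinite, $X(K)$ cannot be contained in a proper closed subvariety, as $f$ would then have finite image on $X(K)$ — more precisely, pass to a component on which $K$-points are dense and the image stays infinite; this is where one uses that $X(K)$ is infinite). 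Next, compactify: let $\bar X$ be a projective closure of $X$ over $K$, and extend $f$ to a dominant rational map $f\colon \bar X \dashrightarrow \Pro^1_K$, with $X$ (or a non-empty open subset $U$ of it contained in $\dom(f)$) playing the role of $U$ in the conjecture, and $Y = \Pro^1_K$.

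Applying Conjecture \ref{ConjKey} to this data produces an effective Cartier divisor $D$ on $\Pro^1_K$ defined over $K$ and a sequence $(x_j)$ of $K$-rational points in $U - f^{-1}(\supp D)$ such that $\lambda_{\Pro^1_K, D, v}$ is unbounded along $(f(x_j))$, i.e. the points $f(x_j)\in S$ approach $\supp(D)(K_v)$ $v$-adically. Since $\supp(D)(K_v)$ is a finite set of points of $\Pro^1_K(K_v)$, after passing to a subsequence we may assume $f(x_j)\to \beta$ for some single $\beta\in \supp(D)(K_v)$. Now I would argue that $\beta\ne\infty$: because $S$ is $v$-adically bounded in $K$, no subsequence of $S$ can converge to the point at infinity of $\Pro^1_K(K_v)$, so $\beta\in K_v$ is a genuine limit point of $S$. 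By the uniqueness hypothesis, $\beta = \alpha$. But $\beta$ lies in $\supp(D)(K_v)$ where $D$ is a divisor on $\Pro^1_K$ defined over $K$, i.e. $D$ is cut out (away from $\infty$) by a single polynomial $g(T)\in K[T]$; hence $\beta = \alpha$ is a root of $g$, and therefore $\alpha$ is algebraic over $K$.

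The one point requiring care — and the likely main obstacle — is the reduction to the irreducible case with $K$-points Zariski dense, together with the verification that the limit point extracted from the conjecture is the \emph{finite} point $\alpha$ rather than $\infty$. The first is handled by noting $f(X(K))=S$ infinite forces some irreducible component $X_0$ of $X$ with $f(X_0(K))$ infinite, whence $X_0(K)$ is Zariski dense in $X_0$ (a proper closed subset would give finitely many subvarieties, and on each $f$ is determined, so the union of images of their $K$-points over components not dense could only be infinite if some such subvariety again had dense points — iterate, using Noetherianity). The second is exactly where $v$-adic boundedness of $S$ is used: it guarantees $\infty\notin\overline{S}^{\,v}$, so the sequence $f(x_j)$, which lies in $S$, has all its $v$-adic accumulation points in $K_v$, and the unique one is $\alpha$. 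Everything else is routine manipulation of Weil functions (Section 6.2 of \cite{Serre}) and the elementary fact that $\supp(D)$ for a divisor on $\Pro^1_K$ defined over $K$ consists of closed points with residue fields finite over $K$, so their $K_v$-points are algebraic over $K$.
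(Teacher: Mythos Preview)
Your proposal is correct and follows essentially the same approach as the paper: write $S=f(X(K))$, reduce to an irreducible component with Zariski-dense $K$-points, compactify to get a rational map to $\Pro^1_K$, apply Conjecture \ref{ConjKey} with $Y=\Pro^1_K$, and use $v$-adic boundedness to rule out $\infty$ so that the accumulation point forced into $\supp(D)(K_v)$ must be $\alpha$, hence algebraic. The paper phrases the final step as a contradiction (if $\alpha$ were transcendental it would lie in no $\supp(D)$ for $D$ over $K$), while you argue directly, but this is purely cosmetic.
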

\begin{proof} Since $S$ is Diophantine, there is an affine variety $U$ over $K$ and a morphism $f:U\to \A^1_K$ such that $f(U(K))=S$. Possibly passing to a subsequence in $S$ and shrinking $U$, we may assume that $U$ is irreducible and that $U(K)$ is Zariski dense in $U$. Let $X$ be a projective closure of $U$ and extend $f$ to a rational function $f: X\dasharrow \Pro^1_K$.

Since $S$ is $v$-adically bounded in $K$, the point $\alpha\in K_v\subseteq \Pro^1_K(K_v)$ is the only $v$-adic limit point of $f(U(K))=S$ in $\Pro^1_K(K_v)$. If $\alpha$ were transcendental over $K$, it would not belong to the support of any Cartier divisor on $\Pro^1_K$ defined over $K$. This would contradict Conjecture \ref{ConjKey}.
\end{proof}
\begin{proposition} If Conjecture \ref{ConjKey} holds for a global field $K$ and any place $v$, then $K$ does not have the DPRM property over the language $\Lcal=\Lcal_a\cup\Gcal$, where $\Gcal$ is a finite set of field generators for $K$. In particular, $K$ is not p.e. bi-interpretable with the $\Lcal_a$-structure $\N$.
\end{proposition}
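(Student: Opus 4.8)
The plan is to build an explicit infinite listable set $S\subseteq K$ which, granting Conjecture~\ref{ConjKey}, cannot be Diophantine, and then to invoke unique listability to contradict the DPRM property. Fix a place $v$ of $K$ for which Conjecture~\ref{ConjKey} holds; such a place exists by hypothesis. By Corollary~\ref{CoroULfg} the $\Lcal$-structure $K$ is uniquely listable, so a subset of $K$ is listable (with respect to some, equivalently any, listable presentation) if and only if it is totally listable. Hence, if $K$ had the DPRM property, then every listable $S\subseteq K$ would be p.e.\ $\Lcal$-definable, and therefore Diophantine over $K$ (the finitely many symbols of $\Lcal$ not in $\Lcal_a$ are constants, which can be replaced by parameters). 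So it suffices to produce a single listable set $S\subseteq K$ which is infinite, $v$-adically bounded in $K$, and has exactly one $v$-adic limit point $\alpha\in K_v$, with $\alpha$ transcendental over $K$: by Lemma~\ref{LemmaKey}, such an $S$ is not Diophantine, and this already proves that $K$ does not have the DPRM property.

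To construct $S$ I would use a Liouville-type series with factorial gaps. If $v$ is non-archimedean, pick a uniformizer $\pi\in K$ at $v$ (for a number field an element of $O_K$ with $\ord_{\pfrak}(\pi)=1$; for a function field the monic irreducible polynomial defining $v$, or $1/t$ at the infinite place), and set $S_N=\sum_{n=1}^{N}\pi^{n!}\in K$ and $\alpha=\sum_{n\ge 1}\pi^{n!}\in K_v$. If $v$ is archimedean (so $K$ is a number field and $\Q\subseteq K$), set instead $S_N=\sum_{n=1}^{N}2^{-n!}\in\Q\subseteq K$ and $\alpha=\sum_{n\ge 1}2^{-n!}\in K_v$. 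In either case put $S=\{S_N:N\ge 1\}$. The $S_N$ are pairwise distinct and converge $v$-adically to $\alpha$, so $S$ is infinite, $v$-adically bounded, and has $\alpha$ as its unique $v$-adic limit point. That $\alpha$ is transcendental over $K$ follows from Liouville's inequality over the global field $K$: the $v$-adic valuation of $\alpha-S_N$ equals $(N+1)!$, whereas if $\alpha$ were algebraic over $K$ of degree $d$ then, since $H(S_N)\le C^{N!}$ for a constant $C$, Liouville's inequality would force this valuation to be $O(N!)$, which is impossible for large $N$ because $(N+1)!/N!\to\infty$. The archimedean case is the classical transcendence proof for $\sum_{n\ge 1}2^{-n!}$.

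It remains to check that $S$ is listable. Fix a listable presentation $\rho$ of $K$. Since every ring operation of $K$ is $\rho$-listable and $\pi$ (resp.\ $2$) is a fixed element, the map sending $N$ to a $\rho$-code of $S_N$ is total recursive: it is obtained from the recursive function $N\mapsto N!$, iterated multiplication forming $\pi^{N!}$, and the recursion $S_{N+1}=S_N+\pi^{(N+1)!}$. By Lemma~\ref{LemmaCharList} it follows that $\rho^*(S)\subseteq\N$ is listable, so $S$ is $\rho$-listable, hence totally listable by unique listability. Combining with the first paragraph, $K$ does not have the DPRM property. Finally, $K$ is a uniquely listable $\Lcal$-structure with infinite domain and, since $K$ is a field, $\ne$ is p.e.\ $\Lcal$-definable (e.g.\ by $x\ne y\leftrightarrow\exists z\,\exists w\,(x=y+w\wedge wz=1)$); therefore Theorem~\ref{ThmCharDPRM} applies and shows that the DPRM property for $K$ is equivalent to p.e.\ bi-interpretability with the $\Lcal_a$-structure $\N$. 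As $K$ lacks the DPRM property, it is not p.e.\ bi-interpretable with $\N$, which gives the ``in particular'' part.

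The main obstacle is ensuring that the unique $v$-adic limit point $\alpha$ is genuinely transcendental over $K$, uniformly over all global fields, \emph{including} those of positive characteristic. There, a series with only geometric gaps, such as $\sum_{n\ge 0}\pi^{p^n}$, can be algebraic over $K$ through Artin--Schreier type relations, exactly as in Theorem~\ref{ThmDiscrete}; this is precisely why the factorial gap is needed. The key point is that Liouville's inequality with exponent equal to the degree holds over every global field in every characteristic (only its Roth-type improvement fails in characteristic $p$), and one needs only the crude estimate $H(S_N)\le C^{N!}$, which is immediate from the shape of $S_N$. The other verification, that $S$ is listable, is routine once one uses that the ring operations of $K$ are computable with respect to any listable presentation.
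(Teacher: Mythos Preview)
Your proof is correct and follows essentially the same route as the paper: construct a listable set of partial sums of a Liouville-type series whose unique $v$-adic limit is transcendental, apply Lemma~\ref{LemmaKey} to conclude it is not Diophantine, and then invoke Theorem~\ref{ThmCharDPRM} for the bi-interpretability statement. The paper's proof is a one-line sketch that merely asserts such a set exists ``using the same idea as in Liouville's explicit construction,'' whereas you carry out the construction and the verifications (listability, boundedness, transcendence via the factorial-gap Liouville argument) explicitly; your added remark that geometric gaps like $\sum\pi^{p^n}$ can fail in positive characteristic is a nice cautionary note absent from the paper.
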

\begin{proof} One can take a listable set $S$ consisting of the terms of a sequence that $v$-adically converges to a transcendental $\alpha\in K_v$ ---such examples are easy to produce using, for instance, the same idea as in Liouville's explicit construction of transcendental numbers. The result now follows from Lemma \ref{LemmaKey}. The final part is due to Theorem \ref{ThmCharDPRM}.
\end{proof}

From Propositions \ref{PropZQ} and \ref{Propkt} we immediately deduce:
\begin{proposition}\label{PropKeyQ} If Conjecture \ref{ConjKey} holds for $\Q$ and any $v$, then $\Z$ is not Diophantine in $\Q$. 
\end{proposition}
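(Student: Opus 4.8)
The plan is to deduce this directly by chaining the two results it cites, with all the real content sitting in the proposition that immediately precedes it. Fix a place $v$ of $\Q$ for which Conjecture~\ref{ConjKey} is assumed to hold. That preceding proposition then says that $\Q$, viewed as an $\Lcal_a$-structure (no field generators are needed, since $\Q$ is the prime field), does not have the DPRM property. Now Proposition~\ref{PropZQ} asserts that ``$\Z$ is Diophantine in $\Q$'' and ``$\Q$ has the DPRM property'' are equivalent, so the failure of the latter forces the failure of the former, which is exactly the claim.

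For completeness I would recall why the preceding proposition applies here, as that is where the actual work is. One constructs a listable set $S\subseteq\Q$ whose elements form a $v$-adically bounded sequence with a unique $v$-adic limit point $\alpha\in\Q_v$ that is transcendental over $\Q$: for $v=\infty$ one may take the partial sums $s_N=\sum_{n=1}^{N}10^{-n!}$, and for a finite place $v=p$ the partial sums $s_N=\sum_{n=1}^{N}p^{n!}$; in either case $(s_N)_{N\ge 1}$ is a computable strictly monotone sequence of rationals, so $S$ is listable for the canonical listable presentation of $\Q$ from Proposition~\ref{PropULQ}, and its limit is transcendental over $\Q$ by Liouville's theorem in the archimedean case or by its $p$-adic analogue (the limit is a $p$-adic Liouville number). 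Since $\Q$ is uniquely listable (Corollary~\ref{CoroQulist}), $S$ is totally listable; were $\Q$ to have the DPRM property, $S$ would then be p.e.\ $\Lcal_a$-definable, i.e.\ Diophantine over $\Q$, and applying Lemma~\ref{LemmaKey} to $S$, $v$ and $\alpha$ (using Conjecture~\ref{ConjKey} for $\Q$ and $v$) would force $\alpha$ to be algebraic over $\Q$, a contradiction. Hence $\Q$ lacks the DPRM property, and Proposition~\ref{PropZQ} finishes the deduction of the statement.

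I do not expect a genuine obstacle here: granting Conjecture~\ref{ConjKey}, the implication is a formal consequence of Lemma~\ref{LemmaKey}, Theorem~\ref{ThmCharDPRM}, and Proposition~\ref{PropZQ}. The only non-formal ingredient is the existence of the listable set $S$ above, which reduces to the classical computability of a Liouville-type partial-sum sequence together with the transcendence of its limit; this is also the single place where the clause ``for any $v$'' is used, since one needs such a set adapted to the given place $v$. The function-field analogue over $K=k(t)$ runs identically, with Proposition~\ref{PropZQ} replaced by Proposition~\ref{Propkt}, which is why both propositions are cited.
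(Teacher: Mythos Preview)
Your proposal is correct and follows the paper's own route: the paper states that Proposition~\ref{PropKeyQ} (together with its function-field companion) is ``immediately deduced'' from Propositions~\ref{PropZQ} and~\ref{Propkt}, using the immediately preceding proposition (Conjecture~\ref{ConjKey} $\Rightarrow$ failure of the DPRM property via Lemma~\ref{LemmaKey} and a Liouville-type listable sequence). Your write-up simply unpacks that preceding proposition in more detail than the paper does, but the logical skeleton---construct a listable $S$ with a transcendental $v$-adic limit, invoke Lemma~\ref{LemmaKey}, conclude $\Q$ lacks DPRM, then apply Proposition~\ref{PropZQ}---is identical.
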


\begin{proposition} Let $k$ be a finite field. If Conjecture \ref{ConjKey} holds for $K=k(t)$ and any place $v$, then $k[t]$ is not Diophantine in $k(t)$. 
\end{proposition}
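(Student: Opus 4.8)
The plan is to deduce this purely from the transference machinery already in place, so that no new ideas are needed. Fix a finite set $\Gcal$ of ring generators for the finite field $k$, and --- exactly as in Proposition \ref{Propkt} --- regard both $k[t]$ and $k(t)$ as $\Lcal$-structures with $\Lcal=\Lcal_a\cup\Gcal\cup\{t\}$. Observe that $\Gcal\cup\{t\}$ is a finite set of \emph{field} generators for the global function field $k(t)$, so that the language $\Lcal$ coincides with the one appearing in the proposition immediately preceding the present statement; this matching of conventions is the only point that requires any attention.

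First I would apply that preceding proposition to $K=k(t)$: the hypothesis of the present statement --- that Conjecture \ref{ConjKey} holds for $k(t)$ and a place $v$ --- is exactly the hypothesis that proposition requires, so we conclude that the $\Lcal$-structure $k(t)$ does not have the DPRM property. (Internally, that proposition exhibits a witness to the failure by taking the terms of a Liouville-type sequence in $k(t)$ converging $v$-adically to a transcendental $\alpha\in K_v$; this set is totally listable because $k(t)$ is uniquely listable by Corollary \ref{CoroULfg}, while Lemma \ref{LemmaKey} shows it cannot be Diophantine, in particular not p.e.\ $\Lcal$-definable without parameters.)

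Finally I would invoke the equivalence of (i) and (ii) in Proposition \ref{Propkt}, which asserts that $k[t]$ is Diophantine in $k(t)$ if and only if the $\Lcal$-structure $k(t)$ has the DPRM property. Having just shown that the DPRM property fails, we conclude that $k[t]$ is not Diophantine in $k(t)$, as desired. One should also keep in mind that ``Diophantine in $k(t)$'' means p.e.\ $\Lcal_a$-definable \emph{with parameters}, a notion unchanged by enlarging $\Lcal_a$ to $\Lcal$ since the extra constants of $\Lcal$ are then available as parameters; hence no ambiguity arises. I do not expect any genuine obstacle here: the argument is a bookkeeping composition of earlier results, the only substantive inputs --- Conjecture \ref{ConjKey} itself and Lemma \ref{LemmaKey} --- being already available.
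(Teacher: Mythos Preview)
Your proposal is correct and follows essentially the same approach as the paper: the paper simply notes that the result follows from Proposition~\ref{Propkt} together with the preceding proposition (the one asserting that Conjecture~\ref{ConjKey} implies failure of the DPRM property for $K$), and your argument unpacks exactly this composition.
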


The non-Diophantineness of $\Z$ in $\Q$ and of $k[t]$ in $k(t)$ is also implied by different conjectures, as  discussed in previous sections. However, Conjecture \ref{ConjKey} has other consequences:

\begin{proposition}\label{Proptn} Let $k$ be a finite field. If Conjecture \ref{ConjKey} holds for $K=k(t)$ and the place $v$ given by the $t$-adic valuation, then $\{t^n: n\ge 0\}$ is not Diophantine in $k(t)$. 
\end{proposition}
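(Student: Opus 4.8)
The plan is to argue by contradiction. Assume $S_0:=\{t^n:n\ge 0\}$ is Diophantine in $K=k(t)$. By Lemma~\ref{LemmaKey} (or, more flexibly, by applying Conjecture~\ref{ConjKey} directly with $Y=\Pro^1_K$), it suffices to produce from this assumption a Diophantine subset $S^\ast\subseteq K$ that is infinite, $v$-adically bounded in $K$, and all of whose $v$-adic limit points in $K_v=k((t))$ are transcendental over $K$: indeed, Conjecture~\ref{ConjKey} applied to a variety dominating such an $S^\ast$ would force a sequence of $K$-rational points whose images accumulate at the support of an effective Cartier divisor defined over $K$, hence at a point of $\Pro^1(K_v)$ algebraic over $K$, a contradiction. (Note that $S_0$ itself is useless here: its unique $v$-adic limit point is $0\in K$.)

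The first ingredient is that the Diophantineness of $S_0$ endows the ``exponent monoid'' $(\N,+)$, realized as $(S_0,\cdot)\subseteq K$, with several Diophantine relations: addition of exponents (the restriction of multiplication in $K$), the order $a\le b$ (equivalent to $t^a\mid t^b$ in $k[t]$), and, crucially, divisibility $a\mid b$, which is equivalent to $(1-t^a)\mid(1-t^b)$ in $k[t]$. The forward implication is the geometric-sum factorization; the reverse follows by comparing roots and, in characteristic $p$, multiplicities, after writing $a=p^ea_0$ and $b=p^fb_0$ with $p\nmid a_0b_0$, since $1-t^{a_0}$ is separable with roots the $a_0$-th roots of unity. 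As divisibility in $k(t)$ is Diophantine and $x\mapsto 1-x$ is a Diophantine function, $\{(t^a,t^b):a\mid b\}$ is a Diophantine subset of $K^2$. We also use the unconditional input of Pheidas (Theorem~\ref{ThmDiscrete} and its analogue over $\F_q(t)$): the Frobenius orbit $\{t^{p^n}:n\ge 0\}$ is Diophantine in $k(t)$.

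The central step is to combine these tools to realize, as a Diophantine subset of $K$, a family of partial sums $\alpha_N=\sum_{a\in F_N}t^a$ of a lacunary power series $\alpha=\sum_{n\ge 0}t^{\lambda_n}\in t\,k[[t]]\subseteq K_v$ in which the exponent set $\Lambda=\{\lambda_n\}$ is \emph{not $p$-automatic} — for instance an exponent set such as $\{p^{2^n}:n\ge 0\}$ (whose characteristic sequence is non-automatic, because ``the number of trailing base-$p$ digits is a power of $2$'' is not a regular condition) extracted by a delicate argument from Pheidas's set and the divisibility relation on exponents. One must steer clear of exponent sets in geometric progression, since the corresponding limits are roots of additive (Artin--Schreier-type) polynomials over $K$ and hence algebraic; this is exactly why divisibility of exponents, not merely their order, is needed. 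Once such a family is in place, the $\alpha_N$ lie in $t\,k[[t]]$, so $|\alpha_N|_v\le|t|_v<1$ ($v$-adically bounded), and, arranging $F_1\subseteq F_2\subseteq\cdots$ with $\bigcup_N F_N=\Lambda$, we get $\alpha_N\to\alpha$ $v$-adically with $\alpha$ the unique limit point. Finally, Christol's theorem — that a power series over $\F_q(t)$ is algebraic precisely when its coefficient sequence is $p$-automatic — is the functional transcendence input: it shows $\alpha$ is transcendental over $K$ (and hence over $k(t)$, since $K/\F_p(t)$ is a finite extension). Taking $S^\ast=\{\alpha_N:N\ge 0\}$ completes the contradiction, so $\{t^n:n\ge 0\}$ is not Diophantine in $k(t)$.

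The main obstacle is the construction in the central step: one must exploit the arithmetic of the exponents (the identity $a\mid b\iff(1-t^a)\mid(1-t^b)$ together with Pheidas's Frobenius set) to \emph{reach} an exponent set whose power series escapes algebraicity, and then certify that escape via Christol's theorem. The subtlety — and the reason a naive argument fails — is that the most accessible Diophantine families of exponents (arithmetic progressions, divisor sets, geometric progressions) all produce algebraic limits, rational or Artin--Schreier. There is also a bookkeeping obstacle in giving a genuine Diophantine description of the family of partial sums: one cannot simply ``read off'' the exponents from a $0/1$-polynomial, so the finite sets $F_N$ must instead be encoded through products of cyclotomic-type factors on which set inclusion becomes divisibility. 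Characteristic-$p$ phenomena (inseparability of $1-t^a$ when $p\mid a$, Frobenius twists) pervade all of these verifications.
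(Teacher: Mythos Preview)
Your proposal has the right overall architecture---produce a Diophantine subset of $k(t)$ whose unique $t$-adic limit is transcendental, then invoke Lemma~\ref{LemmaKey} and Christol's theorem---but the ``central step'' is not carried out, and as stated it rests on a claim that does not hold. You assert that $\{(t^a,t^b):a\mid b\}$ is Diophantine because ``divisibility in $k(t)$ is Diophantine'' and $(1-t^a)\mid(1-t^b)$ in $k[t]$ iff $a\mid b$. The equivalence in $k[t]$ is correct, but divisibility in the \emph{field} $k(t)$ is trivial; what you actually need is divisibility in the ring $k[t]$, and that is not known to be Diophantine in $k(t)$ (indeed it would essentially give that $k[t]$ is Diophantine in $k(t)$, which is open). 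So your pipeline for extracting a non-automatic exponent set, and then a Diophantine family of partial sums $\sum_{a\in F_N}t^a$, has no foundation. You also acknowledge the partial-sum construction itself as an unresolved obstacle; that is the heart of the matter, and the proposal does not supply it.

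The paper's proof avoids both difficulties with two ideas you are missing. First, it applies the automorphism $t\mapsto t+1$ and works with $P=\{(1+t)^n:n\ge 0\}$ instead of $\{t^n\}$; the point is that in characteristic $p$ one has $(1+t)^{p^j}=1+t^{p^j}$, so $(1+t)^{n_r}=\prod_{j\le r}(1+t^{p^{j^j}})$ is \emph{already} a partial sum of the lacunary series $f_A=\sum_{a\in A}t^a$ with $A$ the set of subset-sums of $\{p^{j^j}\}$---no separate ``encoding through cyclotomic factors'' is needed. Second, rather than building divisibility on $\N$ by hand, the paper uses Pheidas's theorems: Theorem~\ref{ThmPheidasps} makes the relation $|_p$ Diophantine on $k(t)$, hence (assuming $P$ Diophantine) on the copy of $\N$ inside $P$, and Theorem~\ref{ThmPheidasdivp} upgrades $(\N;0,1,+,|_p,=)$ to a p.e.\ interpretation of the full semi-ring $\N$. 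Then DPRM over $\N$ lets one pick out \emph{any} listable set of exponents, in particular $\{n_r\}$. The transcendence of $f_A$ is obtained via Christol by showing $A$ is not $p$-recognizable, using the counting estimate of Lemma~\ref{LemmaRec}: the set $A$ is too sparse. Your instinct to use Christol is right, but the route to a usable Diophantine family goes through $(1+t)^n$ and the $|_p$-interpretation, not through $k[t]$-divisibility.
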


This should be compared to the following result of Pheidas (see Lemmas 1 and 3 in \cite{PheidasInv} for characteristic $p\ge 3$, and see \cite{Videla}  for characteristic $2$; see also \cite{PastenFrob} for a generalization to function fields of bounded genus uniformly on the characteristic):

\begin{theorem}[Pheidas]\label{ThmPheidasps} Let $k$ be a finite field of characteristic $p>0$. The binary relation $x\le_p y $ on $k(t)$ defined by $\exists s\ge 0, y=x^{p^s}$ is Diophantine over $k(t)$. In particular, the set $\{t^{p^n} : n\ge 0\}$ is Diophantine  in $k(t)$.
\end{theorem}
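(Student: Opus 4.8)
The plan is to make the binary relation $\le_p$ Diophantine by writing down explicit Artin--Schreier-type equations and verifying them through a place-by-place valuation analysis; the assertion about $\{t^{p^n}:n\ge 0\}$ is then the special case obtained by substituting $x:=t$. Throughout, set $A(w)=w^p-w$, an $\F_p$-linear endomorphism of $k(t)$ with kernel $\F_p$. The elementary input in the \emph{forward} direction is the telescoping identity
$$
x^{p^n}-x=A\!\left(x+x^p+\cdots+x^{p^{n-1}}\right),\qquad x^{-p^n}-x^{-1}=A\!\left(x^{-1}+x^{-p}+\cdots+x^{-p^{n-1}}\right),
$$
which shows that $y=x^{p^n}$ implies $y-x\in A(k(t))$ and $y^{-1}-x^{-1}\in A(k(t))$. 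Mirroring the curve in Theorem~\ref{ThmDiscrete} (whose base point is $t$), I would therefore propose to define $x\le_p y$ by the condition, for transcendental $x$,
$$
\exists u\,\exists w\;\bigl(y-x=u^p-u\ \wedge\ y^{-1}-x^{-1}=w^p-w\ \wedge\ (\text{auxiliary equations as needed})\bigr),
$$
together with a separate finite disjunction over the finitely many pairs $(c,c^{p^s})$ with $c\in k$ and $0\le s<[k:\F_p]$ to cover the case $x\in k$; the latter is Diophantine since finite sets over $k(t)$ are p.e.\ definable (Corollary~\ref{CoroFinSets}). The content of the proof is the converse: for transcendental $x$, these equations force $y=x^{p^n}$ for some $n\ge 0$.

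For the converse I would argue in two stages. In the \emph{divisor stage}, one uses that an element of $A(k(t))$ has, at each pole, order divisible by $p$ (so a pole of order coprime to $p$ obstructs membership in $A(k(t))$) to compare $\divi(y)$ with $\divi(x)$: poles and zeros of $y$ not already present in $x$ must have $p$-divisible order, and at the places occurring in $\divi(x)$ the two equations force the local orders of $y$ to be $p$-power multiples of those of $x$; together with the finer Artin--Schreier obstruction (pole orders alone do not detect membership in $A(k(t))$, e.g.\ $t^{-p}\notin A(k(t))$) one concludes $\divi(y)=p^{n}\divi(x)$ for a single $n\ge 0$, hence $y=c\,x^{p^n}$ with $c\in k^{\times}$. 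In the \emph{constant stage}, substituting $y=c\,x^{p^n}$ into $y-x=u^p-u$ and using the telescoping identity reduces matters to $(d\,x)^{p^n}\in A(k(t))$ with $d\in k^{\times}$ satisfying $d^{p^n}=c-1$; repeated use of the additivity of Frobenius reduces this further to $d\,x\in A(k(t))$. If $x$ has a pole of order coprime to $p$ this is impossible unless $d=0$; if instead some zero of $x$ has order coprime to $p$ one runs the same argument on the equation $y^{-1}-x^{-1}=w^p-w$. Either way $c=1$ and $y=x^{p^n}$.

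The main obstacle is the remaining case $x\in k(t)^{p}$, where every pole and zero of $x$ has $p$-divisible order and the crude pole-order criterion gives no information. Here I would factor off the inseparable part, writing $x=x_0^{p^{e}}$ with $x_0\notin k(t)^{p}$, observe that $y=x^{p^s}\iff y=x_0^{p^{s+e}}$, and reduce to the already-treated case for $x_0$, at the cost of also expressing "$x=x_0^{p^{e}}$ for the appropriate $e$" by a further pair of Artin--Schreier equations linking $x$ and $x_0$; the residual $A(k(t))$-obstruction is then dispatched by a partial-fraction analysis in characteristic $p$, which is precisely the technical heart of Pheidas's argument in \cite{PheidasInv}. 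The characteristic $2$ case needs separate sign and degree bookkeeping, which is why it is treated apart in \cite{Videla}. Putting the pieces together, $x\le_p y$ is a finite disjunction of positive existential conditions over $k(t)$ (using $t$ as a parameter), hence Diophantine; specializing $x:=t$ shows that $\{t^{p^n}:n\ge 0\}$ is Diophantine in $k(t)$.
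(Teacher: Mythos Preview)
The paper does not prove this theorem: it is quoted as a result of Pheidas (Lemmas~1 and~3 of \cite{PheidasInv}, with characteristic~$2$ handled in \cite{Videla}), so there is no in-paper argument to compare against. Your sketch does correctly recover the mechanism behind the special case $x=t$ (Pheidas's Lemma~1, restated here as Theorem~\ref{ThmDiscrete}): the telescoping identities give the forward direction, and a place-by-place valuation analysis of Artin--Schreier images gives the converse, crucially because $t$ has a \emph{simple} zero and a \emph{simple} pole. So the ``in particular'' clause is adequately outlined.

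There is, however, a genuine gap in your passage to the full binary relation $\le_p$. The pair of conditions
\[
y-x\in A(k(t))\quad\text{and}\quad y^{-1}-x^{-1}\in A(k(t))
\]
is \emph{symmetric} in $x$ and $y$, since $A(k(t))$ is an additive subgroup; but $\le_p$ is not symmetric. Concretely, $(x,y)=(t^p,t)$ satisfies both conditions (as $t-t^p=-A(t)$ and $t^{-1}-t^{-p}=-A(t^{-1})$), yet $t^p\not\le_p t$. Your remedy for the case $x\in k(t)^p$ does not close this gap: introducing $x_0\notin k(t)^p$ and imposing Artin--Schreier conditions on $(x_0,x)$ and on $(x_0,y)$ yields $x=x_0^{p^{e}}$ and $y=x_0^{p^{m}}$ for some $e,m\ge 0$, but nothing in your scheme forces $m\ge e$; the counterexample above is exactly $x_0=t$, $e=1$, $m=0$. (There is also the side issue that ``$x_0\notin k(t)^p$'' is a universal condition and not obviously positive-existential.) Some additional, genuinely asymmetric ingredient is required to pass from Lemma~1 to the general relation; in Pheidas's argument this is supplied by tying the Frobenius exponent to the already-Diophantine orbit $\{t^{p^n}\}$ rather than by Artin--Schreier conditions on $(x,y)$ alone. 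Your ``auxiliary equations as needed'' is precisely where this missing idea would have to go.
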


Let us recall some preliminaries for the proof of Proposition \ref{Proptn}.

Let  $m\ge 2$ be an integer. By \emph{$m$-automaton}, we mean a finite deterministic automaton with input language $\{0,1,...,m-1\}$ and output states $0$ (``reject'') and $1$ (``accept''). We say that a set $A\subseteq \N$ is \emph{$p$-recognizable} if there is a $p$-automaton $\Mcal_A$ that computes the characteristic function of $A$. More precisely, for an integer $r\in \N$,  the $m$-automaton $\Mcal_A$ takes as input the string formed by the base $m$ digits of $r$ (from less to most significant) and it reaches the final state $0$ or $1$ according to $r\not\in A$ or $r\in A$ respectively.

For a set $A\subseteq \N$ we define the counting function $N(A,x)=\#\{n\in A : n\le x \}$. Let us recall the following classical estimate.

\begin{lemma}\label{LemmaRec} Let $m\ge 2$ and let $A\subseteq \N$ be $m$-recognizable. If $A$ is infinite, then there are constants $x_0\ge 1$ and  $c>0$ such that for all $x\ge x_0$ we have $N(A,x)>c\cdot \log x$.
\end{lemma}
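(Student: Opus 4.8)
The plan is to use the fact that an infinite $m$-recognizable set, read through its automaton, forces $A$ to contain elements whose base-$m$ representations have lengths running over a full arithmetic progression; counting those elements already yields a logarithmic lower bound for $N(A,x)$.

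First I would translate the problem into language theory. Let $L\subseteq\{0,1,\dots,m-1\}^*$ be the set of base-$m$ digit strings (least significant digit first, hence with nonzero last symbol, as these are the genuine base-$m$ representations) of the elements of $A$. By hypothesis $L$ is exactly the language accepted by the DFA $\mathcal{M}_A$, so $L$ is regular; and since $A$ is infinite it contains integers with arbitrarily many base-$m$ digits, so $L$ contains strings of arbitrarily large length. Now apply the pumping lemma for regular languages: there is a pumping length $N$, and choosing any $z\in L$ with $|z|>N$ we get a factorization $z=uvw$ with $|uv|\le N$, $|v|\ge 1$, and $uv^i w\in L$ for all $i\ge 0$. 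The one structural point to check is that each $uv^i w$ is still a legitimate base-$m$ representation, i.e.\ has nonzero leading digit: since $|uv|\le N<|z|$, the pumped factor $v$ sits inside the length-$N$ prefix of $z$, which in the least-to-most-significant reading does not include the most significant digit; that digit lies in $w$ and is nonzero because $z$ represents a positive integer. Hence $uv^i w$ has the same nonzero leading digit as $z$, so it is the (unique) base-$m$ representation of some $n_i\in A$, and since the lengths $L_i:=|u|+i|v|+|w|$ are strictly increasing, the $n_i$ are pairwise distinct and satisfy $n_i<m^{L_i}$.

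It then remains to count. Given $x\ge m^{L_0}$, set $\ell=\lfloor\log_m x\rfloor$, so $m^\ell\le x$. Every index $i$ with $L_i\le\ell$, that is $i\le(\ell-|u|-|w|)/|v|$, produces an element $n_i\in A$ with $n_i<m^{L_i}\le m^{\ell}\le x$, and distinct $i$ give distinct $n_i$; so, once $\ell\ge|u|+|w|$, there are at least $(\ell-|u|-|w|)/|v|$ such elements. Using $\ell\ge\log_m x-1$ this gives
\[
N(A,x)\ge \frac{\ell-|u|-|w|}{|v|}\ge \frac{1}{|v|}\left(\frac{\log x}{\log m}-1-|u|-|w|\right).
\]
Taking $c=\tfrac{1}{2|v|\log m}$ and then choosing $x_0\ge m^{L_0}$ large enough that the right-hand side exceeds $c\log x$ for all $x\ge x_0$ finishes the proof.

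I do not expect a genuine obstacle here: the only point requiring care is the remark that the pumped words stay free of spurious leading zeros, so that distinct accepted strings correspond to distinct integers of $A$; this is handled precisely by the location $|uv|\le N$ of the pumped block. Everything else is routine bookkeeping with the pumping lemma and logarithms.
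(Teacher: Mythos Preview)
Your pumping-lemma argument is correct and is the standard route to this classical fact. The paper does not actually prove the lemma: it simply cites Proposition~5 of Minsky--Papert for $m=2$ (remarking that the general case is identical) and Theorem~12 of Cobham for a sharper estimate. Your self-contained proof supplies what the paper outsources, and the underlying idea (pump an accepted word to produce accepted words of every length in an arithmetic progression, then count) is exactly what drives the Minsky--Papert argument.

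One small imprecision worth flagging: it is not literally true that $L$ is \emph{exactly} the language accepted by $\mathcal{M}_A$, since the definition only specifies the automaton's behavior on canonical digit strings and says nothing about inputs with a trailing zero. This is harmless: $L$ is the intersection of the language of $\mathcal{M}_A$ with the regular set of strings whose last symbol is nonzero, hence $L$ is still regular and the pumping lemma applies to it directly. Your care in checking that the pumped block $v$ lies strictly before the most significant digit (so that each $uv^iw$ remains canonical) is exactly the point that makes the argument go through, and the final bookkeeping with $\ell=\lfloor\log_m x\rfloor$ is fine.
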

\begin{proof} See Proposition 5 in \cite{MinskyPapert} for the case $m=2$; the general case is proved in the same way. Alternatively, there is the stronger estimate provided by Theorem 12 in \cite{Cobham}.
\end{proof}

Let $A\subseteq \N$ and let $p$ be a prime. The generating series of $A$ over $\F_p$ is the formal power series $f_A=\sum_{a\in A} t^a\in \F_p[[t]]$.  The following result is due to Christol \cite{Christol}

\begin{theorem}[Christol]\label{ThmChristol} Let $A\subseteq \N$ and let $p$ be a prime. The set $A$ is $p$-recognizable if and only if $f_A\in \F_p[[t]]$ is algebraic over $\F_p(t)$.
\end{theorem}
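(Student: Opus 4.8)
The plan is to argue through the Cartier (decimation) operators. For $0\le r<p$ I would use $\Lambda_r\colon\F_p[[t]]\to\F_p[[t]]$ defined by $\Lambda_r(\sum_n c_nt^n)=\sum_n c_{pn+r}t^n$; since the $p$-th power map is the identity on $\F_p$, one has the decomposition $g=\sum_{r=0}^{p-1}t^r\Lambda_r(g)^p$ for every $g\in\F_p[[t]]$, together with the twisted-linearity identity $\Lambda_r(u^pv)=u\Lambda_r(v)$. I would first invoke the classical theorem of Eilenberg: $A$ is $p$-recognizable if and only if the \emph{$p$-kernel} $\mathcal K(f_A):=\{\Lambda_{r_1}\circ\cdots\circ\Lambda_{r_k}(f_A):k\ge 0,\ 0\le r_j<p\}\subseteq\F_p[[t]]$ is a finite set. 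It then suffices to show that $\mathcal K(f_A)$ is finite if and only if $f_A$ is algebraic over $\F_p(t)$.

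For the implication ``finite $p$-kernel $\Rightarrow$ algebraic'', write $\mathcal K(f_A)=\{g_1,\dots,g_m\}$ with $g_1=f_A$. Since $\Lambda_r(g_i)\in\mathcal K(f_A)$, the decomposition yields a matrix relation $\vec g=A\vec g^{\,(p)}$ with $A\in M_m(\F_p[t])$ (its entries are sums of monomials $t^r$) and $\vec g^{\,(p)}=(g_1^p,\dots,g_m^p)^{\mathsf T}$. I would then set $V=\F_p(t)\text{-span}(g_1,\dots,g_m)\subseteq\F_p((t))$, $d=\dim_{\F_p(t)}V$, and observe the chain $V\subseteq V^{(p)}\subseteq V^{(p^2)}\subseteq\cdots$ (where $V^{(p^k)}$ is the $\F_p(t)$-span of $\{g_i^{p^k}\}$), each term having dimension $\le d$; hence the chain stabilizes at once, $V=V^{(p)}$, so $V$ is stable under the $p$-th power map. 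Then $f_A,f_A^p,\dots,f_A^{p^d}$ are $\F_p(t)$-linearly dependent, which after clearing denominators gives a nonzero polynomial $Q(X)=\sum_i\nu_iX^{p^i}\in\F_p[t][X]$ with $Q(f_A)=0$, so $f_A$ is algebraic over $\F_p(t)$ (the case $f_A=0$ being trivial).

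For the converse I would use that, $f_A$ being algebraic, $\F_p(t)(f_A)/\F_p(t)$ is finite and the Frobenius powers $f_A^{p^i}$ span a finite-dimensional $\F_p(t)$-space; hence $f_A$ satisfies a nonzero relation $\sum_{i=0}^{e}c_if_A^{p^i}=0$ with $c_i\in\F_p[t]$. After a standard reduction---dividing the relation by an appropriate power of Frobenius, using the basis $1,t,\dots,t^{p^j-1}$ of $\F_p((t))$ over $\F_p((t^{p^j}))$---I may assume $c_0\ne 0$ (again $f_A=0$ is trivial). Writing $N=\max_i\deg c_i$, I would then show, using the decomposition, the identity $\Lambda_r(u^pv)=u\Lambda_r(v)$, and re-substitution of $c_0f_A=-\sum_{i\ge 1}c_if_A^{p^i}$, that the explicitly finite set
\[
\mathcal F=\Big\{\textstyle\sum_{k=0}^{e-1}(Q_k/c_0)\,f_A^{p^k}\ :\ Q_k\in\F_p[t],\ \deg Q_k\le N\Big\}
\]
contains $f_A$ and is stable under every $\Lambda_r$; therefore $\mathcal K(f_A)\subseteq\mathcal F$ is finite and, by Eilenberg's theorem, $A$ is $p$-recognizable.

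The hard part will be the bookkeeping in this last step: checking that $\mathcal F$ is $\Lambda_r$-stable, and in particular that applying $\Lambda_r$ never raises the power of $c_0$ in the denominator. The mechanism is that each application of $\Lambda_r$ either acts on a genuine $p$-th power---where $\Lambda_r(u^pv)=u\Lambda_r(v)$ keeps everything polynomial and contracts numerator degrees by a factor $\approx 1/p$---or else hits the lone $f_A$ term, which is first rewritten via $c_0f_A=-\sum_{i\ge 1}c_if_A^{p^i}$ as a polynomial combination of the $f_A^{p^i}$ with $i\ge 1$, introducing no new denominator; the bound $\deg Q_k\le N$ is preserved precisely because the contraction factor $1/p$ beats the degree $(p-1)N$ contributed by the powers of $c_0$ that must be cleared. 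A secondary, purely expository point is the appeal to Eilenberg's kernel theorem to pass between finite automata and the $p$-kernel; one could instead thread the automaton explicitly through the argument.
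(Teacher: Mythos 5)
The paper does not actually prove Christol's theorem: it cites it to Christol (1979) and uses it as a black box, so there is no proof to compare against. Your argument is the standard Cartier-operator/$p$-kernel proof (Christol, and Christol--Kamae--Mend\`es France--Rauzy), routed through Eilenberg's characterization of $p$-recognizability, and I believe it is correct as written. The two places that need care both check out: in the forward direction, the chain $V\subseteq V^{(p)}\subseteq\cdots$ does stabilize immediately because Frobenius is a field embedding of $\F_p((t))$ onto $\F_p((t^p))$, so if $g_{i_1},\dots,g_{i_d}$ is an $\F_p(t)$-basis of $V$ then $g_{i_1}^p,\dots,g_{i_d}^p$ already span $V^{(p)}$ over $\F_p(t)$, forcing $\dim V^{(p)}\le d=\dim V$ and hence $V=V^{(p)}$; in the converse, the degree bookkeeping goes through because $\Lambda_r(h)=c_0^{-1}\Lambda_r(c_0^p h)$, each term $c_0^{p-1}Q_k f_A^{p^k}$ with $k\ge 1$ contributes $f_A^{p^{k-1}}\Lambda_r(c_0^{p-1}Q_k)$ with coefficient degree at most $\lfloor pN/p\rfloor=N$, and the $k=0$ term, after the substitution $c_0 f_A=-\sum_{i\ge1}c_i f_A^{p^i}$, contributes terms $c_0^{p-2}Q_0c_i f_A^{p^i}$ whose coefficient degree is again at most $pN$, so the same bound $N$ persists after $\Lambda_r$. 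Two minor points worth making explicit if you write this up: $\Lambda_r$ should be extended to $\F_p((t))$ so that $\mathcal F$ (whose elements need not be power series) is literally a set on which $\Lambda_r$ acts, and you should note that $f_A\in\mathcal F$ via $Q_0=c_0$, $Q_k=0$ for $k\ge1$.
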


For a prime $p$, the binary relation $|_p$ on $\N$ is defined as follows: $x|_py$ if and only if there is $s\ge 0$ with $y=xp^s$. The following definability result is due to Pheidas \cite{Pheidasdivp}.

\begin{theorem}[Pheidas]\label{ThmPheidasdivp} The multiplication function $\cdot: \N\times \N\to \N$ is p.e. definable over the structure $(\N; 0,1,+,|_p,=)$. 
\end{theorem}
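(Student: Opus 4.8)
This is Pheidas's theorem \cite{Pheidasdivp}, and I will take it as known; for orientation I sketch how one would prove it and where the real difficulty sits.

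The plan is to build the graph $\{(x,y,z)\in\N^3:z=xy\}$ out of the three available ingredients: addition, the constants $0,1$, and the relation $|_p$. A number of auxiliary relations come essentially for free. The order $x\le y$ is p.e. as $\exists z\,(x+z=y)$; multiplication by any fixed constant $c$ is p.e. (indeed quantifier-free) as iterated addition; the set $\mathrm{Pow}=\{p^n:n\ge 0\}$ is defined by $1 |_p q$; on $\mathrm{Pow}$ the relation $|_p$ is just the order; and the successor $p^n\mapsto p^{n+1}$ on $\mathrm{Pow}$ is p.e., being multiplication by the constant $p$. The key structural remark is that $x |_p w$ holds precisely for those $w$ whose base-$p$ expansion is an upward shift of that of $x$, so $|_p$ provides a ``shift'' operation on base-$p$ digit strings, while $+$ provides accumulation and the named constants provide the bounded digit values $0,\dots,p-1$.

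With these tools one would express $z=xy$ by the schoolbook algorithm: read off the base-$p$ digits $y_0,y_1,\dots$ of $y$, form the shifted multiples $y_i\cdot(x\,p^i)$ (each being $\underbrace{x\,p^i+\dots+x\,p^i}_{y_i\text{ times}}$, with $x\,p^i$ a shift of $x$ witnessed by $x |_p (x\,p^i)$), and add them up. The difficulty is that this has a variable number of steps, so it must be carried out inside a single auxiliary number $W$ encoding the whole run (say the sequence of partial sums, written in base $B$ for a suitably large power $B=p^M$ so that the relevant quantities sit in separate ``slots'' with no carry leaking between them), the correctness of $W$ being asserted by a p.e. relation among $x$, $y$, $W$, $B$. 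Setting this up in turn forces one to p.e.-define the correspondence $\{(n,q):q=p^n\}$ between an exponent and its power (equivalently, a ``base-$p$ length'' relation), which is the genuinely nontrivial lemma.

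The main obstacle is exactly the positivity of the definitions. With a universal quantifier or a negation available, pinning down a shift amount, or the exponent of a power of $p$, or the statement ``no carry propagates between slots'', would all be routine (``$q'$ is the \emph{least} power of $p$ above $q$'', and so on), but producing \emph{existential} witnesses for these is delicate, and that is the technical heart of \cite{Pheidasdivp}. I would follow that paper for the details rather than reprove it here.
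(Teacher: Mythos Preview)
Your approach matches the paper's exactly: the paper also states this result as due to Pheidas and cites \cite{Pheidasdivp} without giving a proof. Your added orientation sketch is not in the paper, but it is a reasonable outline of the strategy and correctly locates the real work in producing purely positive existential witnesses for the exponent/length correspondence.
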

With these results at hand, we can prove Proposition \ref{Proptn}.

\begin{proof}[Proof of Proposition \ref{Proptn}] Assume Conjecture \ref{ConjKey} for $k(t)$ and $v$ the $t$-adic valuation. Let $p$ be the characteristic of $k$. The rule $t\mapsto t+1$ defines a $k$-linear field automorphism of $k(t)$, so it suffices to show that $P=\{(1+t)^n : n\ge 0\}$ is not Diophantine over $k(t)$.

Let $A\subseteq \N$ be the set of integers which can be written as the sum of different integers of the form $p^{j^j}$ for $j\ge 1$. That is, $A$ consists of all integers whose base $p$ expansion only has digits $0$ and $1$, and the digit $1$ can only occur in front of $p^n$ for $n=1,4,27,...,j^j,...$ Explicitly,
$$
A=\{0, p, p^4, p+p^4, p^{27},p+p^{27}, p^4+p^{27}, p+p^4+p^{27},...\}
$$

We note that for $x=p^{j^j}$ we have $N(A,x)\le 2^{j}$, as every $n\in A$ with $n\le x$ is uniquely determined by some subset of $\{p^{i^i}: 1\le i\le j\}$ ---in fact, $N(A,x)=1+2^{j-1}$. Thus, for every $c>0$ there is some $x_0$ such that for all $x=p^{j^j}>x_0$ we have $N(A,x)<   j^j\cdot c\log p=c\log x$. By Lemma \ref{LemmaRec} and Theorem \ref{ThmChristol}, the power series $f_A\in \F_p[[t]]\subseteq k[[t]]$ is transcendental over $k(t)$.

For $r\ge 1$ let $n_r=\sum_{j=1}^r p^{j^j}$ and define $S=\{n_r : r\ge 1\}=\{p, p+p^4, p+p^4+p^{27},...\}\subseteq \N$. Then 
$$
(1+t)^{n_r} = \prod_{j=1}^r (1+t)^{p^{j^j}}=\prod_{j=1}^r \left(1+t^{p^{j^j}}\right).
$$
From the last product, the sequence of polynomials $(1+t)^{n_r}$ converges $t$-adically to $f_A$ as $r\to \infty$.

For the sake of contradiction, suppose that $P=\{(1+t)^{n} : n\ge 0\}$ is Diophantine over $k(t)$. Then, by Theorem \ref{ThmPheidasps}, the map $\theta: P\to \N$ given by $\theta((1+t)^n)=n$ determines a p.e. interpretation of the structure $(\N;0,1,+,|_p,=)$ in $k(t)$, where $k(t)$ is seen as a structure over $\Lcal=\Lcal_a\cup\Gcal$ for a finite set $\Gcal$ of field generators for $k(t)$. By Theorem \ref{ThmPheidasdivp}, $\theta$ also determines a p.e. interpretation of $\N$ seen as an $\Lcal_a$-structure. Note that the set $S=\{n_r : r\ge 1\}\subseteq \N$ defined above is listable because it is defined from elementary arithmetic functions. By the DPRM theorem (on $\N$) we get that $S$ is p.e. $\Lcal_a$-definable over $\N$. Hence, $\theta^*(S)=\{(1+t)^{n_r} : r\ge 0\}$ is p.e. $\Lcal$-definable over $k(t)$ because $\theta$ is a p.e. interpretation. In particular, $T:=\{(1+t)^{n_r} : r\ge 0\}$ is Diophantine in $k(t)$.

The Diophantine set $T$ is $t$-adically bounded and we proved that its only $t$-adic limit point is $f_A\in k[[t]]$, which is transcendental over $k(t)$. By Lemma \ref{LemmaKey}, this contradicts Conjecture \ref{ConjKey}.
\end{proof}

We remark that, alternatively, the use of Christol's theorem in the previous argument can be replaced by an ad hoc transcendence lemma for lacunary power series, after some modification of the construction. However, we feel that Christol's theorem might be relevant in approaching Conjecture \ref{ConjKey} in the function field setting, which is why we decided to highlight this connection.

%%%%%%%%%
%%%%%%%%%
%%%%%%%%%
%%%%%%%%%
\subsection{Final questions} 
\begin{question} Does every finitely generated infinite domain have the DPRM property? In other words, is every finitely generated infinite domain p.e. bi-interpretable with the semi-ring $\N$?
 \end{question}
\begin{question} Is there some finitely generated  field which has the DPRM property? Namely, is there some finitely generated  field which is p.e. bi-interpretable with the semi-ring $\N$?
 \end{question}
%%
%%

%%%%%%%%%%%%%%%%%%%%%%%%%%%%%%%%%%%%%%
%%%%%%%%%%%%%%%%%%%%%%%%%%%%%%%%%%%%%%

%%%%%%%%%%%%%%%%%%%%%%%%%%%%%%%%%%%%%%
%%%%%%%%%%%%%%%%%%%%%%%%%%%%%%%%%%%%%%
%%%%%%%%%%%%%%%%%%%%%%%%%%%%%%%%%%%%%%
%%%%%%%%%%%%%%%%%%%%%%%%%%%%%%%%%%%%%%
%
\section{Acknowledgments}

I would like to thank Barry Mazur and Bjorn Poonen for several discussions  on the conjectures presented in Section \ref{SecConjectures}. I am also grateful to Philip Dittmann for his comments on a first version of this manuscript and for informing me about his joint work with Nicolas Daans and Arno Fehm \cite{DDF}. The extremely valuable comments by the referee are gratefully acknowledged. This research was supported by ANID (ex CONICYT)  FONDECYT Regular grant 1190442 from Chile.

%%%%%%%%%%%%%%%%%%%%%%%%%%%%%%%%%%%%%%
%%%%%%%%%%%%%%%%%%%%%%%%%%%%%%%%%%%%%%

\end{document}